\theoremstyle{plain}
\newtheorem{proposition}{Proposition}[section]
\newtheorem{theorem}[proposition]{Theorem}
\newtheorem{lemma}[proposition]{Lemma}
\newtheorem{corollary}[proposition]{Corollary}
\theoremstyle{definition}
\newtheorem{example}[proposition]{Example}
\newtheorem{definition}[proposition]{Definition}
\newtheorem{observation}[proposition]{Observation}
\theoremstyle{remark}
\newtheorem{remark}[proposition]{Remark}
\newtheorem{question}[proposition]{Question}
\DeclareMathOperator{\Aut}{Aut}
\DeclareMathOperator{\Vol}{Vol}
\DeclareMathOperator{\SL}{SL}
\DeclareMathOperator{\GL}{GL}
\DeclareMathOperator{\SO}{SO}
\DeclareMathOperator{\PO}{PO}
\DeclareMathOperator{\PSL}{PSL}
\DeclareMathOperator{\PGL}{PGL}
\DeclareMathOperator{\Hom}{Hom}
\DeclareMathOperator{\End}{End}
\DeclareMathOperator{\Imag}{Im} 
\DeclareMathOperator{\Spanset}{Span} 
\DeclareMathOperator{\id}{id} 
\DeclareMathOperator{\Haus}{Haus} 
\DeclareMathOperator{\CAT}{CAT} 
\DeclareMathOperator{\Isom}{Isom} 
\DeclareMathOperator{\Sym}{Sym} 
\DeclareMathOperator{\OO}{O}
\DeclareMathOperator{\Ric}{Ric}
\DeclareMathOperator{\PSp}{PSp}
\DeclareMathOperator{\PSO}{PSO}
\DeclareMathOperator{\Cc}{\mathcal{C}}
\DeclareMathOperator{\Fc}{\mathcal{F}}
\DeclareMathOperator{\Hc}{\mathcal{H}}
\DeclareMathOperator{\Lc}{\mathcal{L}}
\DeclareMathOperator{\Pc}{\mathcal{P}}
\DeclareMathOperator{\Cb}{\mathbb{C}}
\DeclareMathOperator{\Gb}{\mathbb{G}}
\DeclareMathOperator{\Hb}{\mathbb{H}}
\DeclareMathOperator{\Nb}{\mathbb{N}}
\DeclareMathOperator{\Pb}{\mathbb{P}}
\DeclareMathOperator{\Rb}{\mathbb{R}}
\DeclareMathOperator{\Xb}{\mathbb{X}}
\DeclareMathOperator{\Zb}{\mathbb{Z}}
\newcommand{\abs}[1]{\left|#1\right|}
\newcommand{\norm}[1]{\left\|#1\right\|}
\newcommand{\wt}[1]{\widetilde{#1}}
\newcommand{\wh}[1]{\widehat{#1}}
\newcommand{\ip}[1]{\left\langle #1\right\rangle}
\newcommand{\ceil}[1]{\left\lceil #1\right\rceil}
\begin{document}

\title[Anosov representations and convex cocompact actions]{Projective Anosov representations, convex cocompact actions, and rigidity}
\author{Andrew Zimmer}\address{Department of Mathematics, University of Chicago, Chicago, IL 60637.}
\curraddr{Department of Mathematics, University of Wisconsin-Madison, Madison, WI, 53706}
\email{amzimmer2@wisc.edu}
\date{\today}
\keywords{}
\subjclass[2010]{}

\begin{abstract} 
In this paper we show that many projective Anosov representations act convex cocompactly on some properly convex domain in real projective space. In particular, if a non-elementary word hyperbolic group is not commensurable to a non-trivial free product or the fundamental group of a closed hyperbolic surface, then any projective Anosov representation of that group acts convex cocompactly on some properly convex domain in real projective space. We also show that if a projective Anosov representation preserves a properly convex domain, then it acts convex cocompactly on some (possibly different) properly convex domain. 

We then give three applications. First, we show that Anosov representations into general semisimple Lie groups can be defined in terms of the existence of a convex cocompact action on a properly convex domain in some real projective space (which depends on the semisimple Lie group and parabolic subgroup). Next, we prove a rigidity result involving the Hilbert entropy of a projective Anosov representation. Finally, we prove a rigidity result which shows that the image of the boundary map associated to a projective Anosov representation is rarely a $C^2$ submanifold of projective space. This final  rigidity result also applies to Hitchin representations. 
\end{abstract}

\maketitle

\section{Introduction}

If $G$ is a connected simple Lie group with trivial center and $K \leq G$ is a maximal compact subgroup, then $X=G/K$ has a unique (up to scaling) Riemannian symmetric metric $g$ such that $G = \Isom_0(X,g)$. The metric $g$ is non-positively curved and $X$ is simply connected, hence every two points in $X$ are joined by a unique geodesic segment. A subset $\Cc \subset X$ is called \emph{convex} if for every $x,y \in \Cc$ the geodesic joining them is also in $\Cc$.  Finally, a discrete group $\Gamma \leq G$ is said to be \emph{convex cocompact} if there exists a non-empty closed convex set $\Cc \subset X$ such that $\gamma(\Cc) = \Cc$ for all $\gamma \in \Gamma$ and the quotient $\Gamma \backslash \Cc$ is compact.

In the case in which $G$ has real rank one, there are an abundance of examples of convex cocompact subgroups and one has the following characterization:

\begin{theorem}\label{thm:rank1} Suppose $G$ is a real rank one simple Lie group with trivial center, $(X,g)$ is the symmetric space associated to $G$, and $\Gamma \leq G$ is a discrete subgroup. Then the following are equivalent: 
\begin{enumerate}
\item $\Gamma \leq G$ is a convex cocompact subgroup,
\item $\Gamma$ is finitely generated and for some (hence any) $x \in X$ the map 
\begin{align*}
\gamma \in \Gamma \rightarrow \gamma \cdot x
\end{align*}
 induces a quasi-isometric embedding of $\Gamma$ into $X$,
\item $\Gamma$ is word hyperbolic and there exists an injective, continuous, $\Gamma$-equivariant map $\xi: \partial \Gamma \rightarrow X(\infty)$.
\end{enumerate}
\end{theorem}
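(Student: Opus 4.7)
The plan is to prove the cyclic implications $(1)\Rightarrow(2)\Rightarrow(3)\Rightarrow(1)$.

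For $(1)\Rightarrow(2)$, since $\Gamma$ is discrete in $G$ it acts properly discontinuously on $X$, and hence also on the closed convex set $\Cc$. Together with the assumed cocompactness, the Milnor--\v{S}varc lemma yields finite generation of $\Gamma$ and a quasi-isometry $\Gamma\to \Cc$ via any orbit map. Because $\Cc$ is a geodesically convex subset of the CAT$(0)$ space $X$, the inclusion $\Cc\hookrightarrow X$ is an isometric embedding, so composing produces the required quasi-isometric embedding $\gamma\mapsto \gamma\cdot x$.

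For $(2)\Rightarrow(3)$, the key input is that rank-one symmetric spaces are CAT$(-\kappa)$ for a suitable $\kappa>0$, and in particular Gromov hyperbolic. A finitely generated group admitting a quasi-isometric embedding into a proper Gromov hyperbolic space is itself word hyperbolic, which gives the hyperbolicity of $\Gamma$. Standard boundary theory for quasi-isometric embeddings into Gromov hyperbolic spaces then produces a continuous equivariant extension $\xi\colon \partial\Gamma\to X(\infty)$, and injectivity follows from the Morse lemma applied to distinct quasi-geodesic rays in $\Gamma$, whose images in $X$ have distinct endpoints at infinity.

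For $(3)\Rightarrow(1)$, set $\Lambda=\xi(\partial\Gamma)\subset X(\infty)$; this is closed and $\Gamma$-invariant. Let $\Cc$ be the closed convex hull of $\Lambda$ in $X$, which is closed and $\Gamma$-invariant. Proper discontinuity of $\Gamma$ on $X$ in the CAT$(-\kappa)$ setting implies that $\Gamma$ acts as a convergence group on $X(\infty)$, hence on $\Lambda$. Injectivity and equivariance of $\xi$ transfer the uniform convergence action of the hyperbolic group $\Gamma$ on $\partial\Gamma$ to an action of the same type on $\Lambda$, from which cocompactness of the $\Gamma$-action on the space of distinct triples in $\Lambda$ follows. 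A standard negative curvature argument, using the thinness of geodesic triangles and the fact that every point of $\Cc$ lies within bounded distance of a geodesic joining two points of $\Lambda$, then upgrades this to cocompactness of $\Gamma\backslash\Cc$.

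The main obstacle is the final implication $(3)\Rightarrow(1)$: constructing the invariant closed convex set is easy, but converting the purely topological data of $\xi$ into the dynamical statement of cocompactness forces a detour through convergence group dynamics. In particular, verifying the uniform convergence property on $\Lambda$ requires carefully relating the abstract topology of $\partial\Gamma$ to the visual topology on $X(\infty)$, and packaging the output into cocompactness on $\Cc$ uses negative curvature in an essential way, which is precisely why this clean characterization is special to rank one and breaks in higher rank.
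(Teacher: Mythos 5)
First, note that the paper does not actually prove this statement: the remark immediately following it defers entirely to \cite{GW2012}, Theorem 5.15, and to Bowditch's work \cite{B1995}. So the comparison here is between your reconstruction and the standard argument in that cited literature rather than a proof written out in this paper. Your implications $(1)\Rightarrow(2)$ and $(2)\Rightarrow(3)$ are correct and complete in outline: a nonempty closed convex $\Cc\subset X$ is a proper geodesic space in the restricted metric, so Milnor--\v{S}varc applies, and the pullback of hyperbolicity along a quasi-isometric embedding of geodesic spaces together with the induced boundary embedding is standard. These match what any proof, including the cited one, would do.

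The genuine gap is in the last step of $(3)\Rightarrow(1)$. Everything up to ``$\Gamma$ acts on $\Lambda=\xi(\partial\Gamma)$ as a uniform convergence group, hence cocompactly on the space of distinct triples of $\Lambda$'' is fine (modulo the elementary cases $\#\partial\Gamma\in\{0,2\}$, where the triple space is empty and you must argue directly that a finite group fixes a point and that an infinite virtually cyclic $\Gamma$ with two fixed points at infinity acts cocompactly on the corresponding axis). But the sentence claiming that thin triangles plus ``every point of $\Cc$ lies within bounded distance of a geodesic joining two points of $\Lambda$'' upgrades cocompactness on triples to cocompactness on $\Gamma\backslash\Cc$ is not a proof: those two facts give you, for a point $p\in(a,b)$ with $a,b\in\Lambda$, no group element carrying $p$ into a fixed compact set. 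The natural center map $\Theta(\Lambda)\to X$ has image only coarsely equal to the set of points of $(a,b)$ that are shadowed by projections of third points of $\Lambda$, and showing this set is coarsely dense in the join is exactly the hard content. The standard way to close this gap is the route the paper's citation points to: a uniform convergence action on a perfect compactum forces every point of $\Lambda$ to be a conical limit point (Tukia, Bowditch), and then Bowditch's geometrical-finiteness theorem in variable negative curvature \cite{B1995} (the Beardon--Maskit--type characterization: all limit points conical and no parabolics implies cocompactness on the convex hull) delivers $(1)$. As written, your ``standard negative curvature argument'' is really this theorem, and it needs to be either cited or proved by the compactness/contradiction argument underlying it; it does not follow from the two facts you list. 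With that step properly attributed or filled in, your cyclic scheme is a correct and faithful reconstruction of the argument the paper is relying on.
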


\begin{remark} For a proof of this theorem see Theorem 5.15 in~\cite{GW2012} which relies on results in~\cite{B1995}. \end{remark}

When $G$ has higher rank, the situation is much more rigid:

\begin{theorem}[Kleiner-Leeb~\cite{KL2006}, Quint~\cite{Q2005}] Suppose $G$ is  a simple Lie group with real rank at least two and $\Gamma \leq G$ is a Zariski dense discrete subgroup. If $\Gamma$ is convex cocompact, then $\Gamma$ is a cocompact lattice in $G$. 
\end{theorem}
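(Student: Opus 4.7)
The plan is to show that any non-empty closed convex $\Gamma$-invariant subset $\Cc \subseteq X$ with compact quotient $\Gamma \backslash \Cc$ must in fact equal $X$. Once this is established, $\Gamma$ acts cocompactly on $X$; since $\Gamma$ is discrete in $G = \Isom_0(X,g)$ and hence acts properly on $X$, it follows immediately that $\Gamma$ is a cocompact lattice in $G$.

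I would first reduce to the case where $\Cc$ is the closed convex hull of a $\Gamma$-orbit (equivalently, has Tits boundary equal to the limit set $\Lambda(\Gamma) \subseteq X(\infty)$); this is at bounded Hausdorff distance from the original $\Cc$ by the convex cocompactness assumption. The central input is then Benoist's theorem on Zariski dense subgroups of semisimple Lie groups: the limit cone of $\Gamma$, namely the closed cone in the closed positive Weyl chamber $\overline{\mathfrak{a}^+}$ generated by the Jordan projections $\lambda(\gamma)$, $\gamma \in \Gamma$, has non-empty interior in $\mathfrak{a}^+$. Consequently $\Lambda(\Gamma)$ contains regular limit points of every type in the Furstenberg boundary, and the set of pairs of opposite regular points in $\Lambda(\Gamma) \times \Lambda(\Gamma)$ is Zariski dense in the corresponding product of flag varieties.

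The second step is a geometric argument exploiting higher rank. For each pair of opposite regular limit points $(\xi_+,\xi_-)$ there is a unique maximal flat $F(\xi_+,\xi_-) \subseteq X$ asymptotic to them; since $\Gamma$ contains (again by Benoist) many hyperbolic elements with axes running through such flats, convexity and cocompactness force $F(\xi_+,\xi_-) \subseteq \Cc$ up to bounded distance. The closed convex hull of the union of these flats, taken over the Zariski dense set of opposite regular pairs, is a $\Gamma$-invariant closed convex subset of $\Cc$ whose Tits boundary contains every regular chamber of $X(\infty)$. Invoking the rigidity of closed convex subsets in higher rank symmetric spaces (Kleiner--Leeb), such a subset must coincide with $X$, giving $\Cc = X$.

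The hard part will be the final geometric rigidity step. Controlling how the flats $F(\xi_+,\xi_-)$ assemble to cover $X$ requires the structure theory of parallel sets, together with the fact that in higher rank no proper closed convex subset of $X$ can have Tits boundary containing every regular chamber; this is where real rank at least two is used decisively, since in rank one the analogous statement fails and produces the rich family of quasi-Fuchsian type examples. Quint's alternative route bypasses this geometry by working measure-theoretically on the Furstenberg boundary via Patterson--Sullivan theory, comparing the critical exponents of $\Gamma$ in each regular direction against those forced by convex cocompactness. In either approach, it is the interplay between higher rank rigidity and Zariski density that drives the conclusion.
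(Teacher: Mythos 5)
This theorem is quoted in the paper purely as background, with attribution to Kleiner--Leeb and Quint; the paper contains no proof of it, so there is no internal argument to compare yours against. Judged on its own terms, your outline is a faithful road map of the two known strategies (the geometric one via Tits-boundary rigidity of invariant convex sets, and Quint's dynamical one via the limit cone and Patterson--Sullivan theory), and the framing steps are sound: the closed convex hull of a $\Gamma$-orbit is indeed contained in $\Cc$ and at bounded Hausdorff distance from it, and a closed convex subset of $X$ whose visual boundary is all of $X(\infty)$ must equal $X$.

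The difficulty is that the step carrying all the weight is cited rather than proved: ``invoking the rigidity of closed convex subsets in higher rank symmetric spaces (Kleiner--Leeb)'' is essentially invoking the theorem under discussion, since Kleiner--Leeb's main result is precisely that a closed convex subset of a higher rank symmetric space with cocompact stabilizer is a totally geodesic subsymmetric space (whence, by Zariski density, all of $X$). Two intermediate assertions are also left unsupported. First, the claim that convexity and cocompactness force $F(\xi_+,\xi_-)$ into $\Cc$ up to bounded distance: what comes for free is that $\Cc$ contains an axis of each regular loxodromic element (push an axis into $\Cc$ by the $1$-Lipschitz, $\gamma$-equivariant nearest-point projection and note that translation lengths cannot decrease), but promoting a single axis to the entire maximal flat is exactly the parallel-set and Tits-geometry analysis that constitutes the hard part of Kleiner--Leeb's paper. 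Second, the claim that the hull of these flats has Tits boundary containing \emph{every} regular chamber: Benoist's theorem gives a limit cone with non-empty interior, hence regular limit points of an open set of types whose chambers form a Zariski dense (but generally proper and closed) subset of the Furstenberg boundary; passing from ``chambers over the limit set'' to ``all chambers'' is again the rigidity being proved, not an input to it. So the proposal is a correct outline rather than a proof; to make it self-contained one must either carry out the Tits boundary analysis of Kleiner--Leeb or execute the Patterson--Sullivan comparison of critical exponents that you only allude to in describing Quint's approach.
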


Although the most natural definition of convex cocompact subgroups leads to no interesting examples in higher rank, it turns out that the third characterization in Theorem~\ref{thm:rank1} can be used to define a rich class of representations called \emph{Anosov representations}. This class of representations was originally introduced by Labourie~\cite{L2006} and then extended by Guichard-Wienhard~\cite{GW2012}.  Since then several other characterizations have been given by Kapovich-Leeb-Porti~\cite{KLP2013, KLP2014,KLP2014b, KLP_TG_2016, KLP_EJM_2017}, Kapovich-Leeb~\cite{KL2018}, Gu{\'e}ritaud-Guichard-Kassel-Wienhard~\cite{GGKW2015}, and Bochi-Potrie-Sambarino~\cite{BPS2016}.

We refer the reader to~\cite{GW2012} for a precise definition of Anosov representations, but informally: if $\Gamma$ is word hyperbolic, $G$ is a semisimple Lie group, and $P$ is a parabolic subgroup, then  a representation $\rho: \Gamma \rightarrow G$ is called \emph{$P$-Anosov} if there exists an injective, continuous, $\rho$-equivariant map $\xi: \partial \Gamma \rightarrow G/P$ satisfying certain dynamical properties. In the case in which $G$ has real rank one, every two parabolic subgroups are conjugate and the quotient $G/P$ can naturally be identified with $X(\infty)$. 

Recently, Danciger, Gu{\'e}ritaud, and Kassel  established a close connection between Anosov representations into $\PO(p,q)$ and convex cocompact actions. However, the convex cocompact action is not on the associated symmetric space $X=\PO(p,q)/{\rm P}(\OO(p) \times \OO(q))$, but on a properly convex domain in the projective model of the pseudo-Riemannian hyperbolic space $\Hb^{p,q-1}$. In this context convex cocompactness can be defined as follows:

\begin{definition}\cite{DGK2017a} 
Let
\begin{align*}
\Hb^{p,q-1} = \{ [x]\in \Pb(\Rb^{p+q}) : \ip{x,x}_{p,q} < 0\}
\end{align*}
where $\ip{\cdot, \cdot}_{p,q}$ is the standard bilinear form of signature $(p,q)$. Then an irreducible discrete subgroup $\Lambda \leq \PO(p,q)$ is called \emph{$\Hb^{p,q-1}$-convex cocompact} if there exists a non-empty properly convex subset $\Cc$ of $\Pb(\Rb^{p+q})$ such that 
\begin{enumerate}
\item $\Cc$ is a closed subset of $\Hb^{p,q-1}$,
 \item $\Lambda$ acts properly discontinuously and cocompactly on $\Cc$, 
 \item $\Cc$ has non-empty interior, and 
 \item $\overline{\Cc} \backslash \Cc$ contains no projective line segments. 
 \end{enumerate}
\end{definition}

Danciger, Gu{\'e}ritaud, and Kassel then proved the following theorem. 

\begin{theorem}[{Danciger-Gu{\'e}ritaud-Kassel~\cite{DGK2017a}}]\label{thm:SO_case} For $p,q \in \Nb^*$ with $p+q \geq 3$, let $\Lambda$ be an irreducible discrete subgroup of $\PO(p,q)$ and let $P^{p,q}_1 \leq \PO(p,q)$ be the stabilizer of an isotropic line in $(\Rb^{p+q}, \ip{\cdot, \cdot}_{p,q})$. 
\begin{enumerate}
\item If $\Lambda$ is $\Hb^{p,q-1}$-convex cocompact, then it is word hyperbolic and the inclusion representation $\Lambda \hookrightarrow \PO(p,q)$ is $P^{p,q}_1$-Anosov.
\item Conversely, if $\Lambda$ is word hyperbolic, $\partial \Lambda$ is connected, and $\Lambda \hookrightarrow \PO(p,q)$ is $P^{p,q}_1$-Anosov, then $\Lambda$ is either $\Hb^{p,q-1}$-convex cocompact or $\Hb^{q,p-1}$-convex cocompact (after identifying $\PO(p,q)$ with $\PO(q,p)$). 
\end{enumerate}
 \end{theorem}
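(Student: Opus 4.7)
The plan splits into the two implications. For part (1), the approach is to exhibit a Gromov hyperbolic metric on $\Cc$ on which $\Lambda$ acts geometrically. Working inside a properly convex open neighborhood of $\Cc$ in $\Pb(\Rb^{p+q})$, I would transport its Hilbert metric to $\Cc$; the hypothesis that $\overline{\Cc}\setminus\Cc$ contains no projective line segments is a Benoist-type criterion forcing this metric to be Gromov hyperbolic along $\Cc$. The Milnor--Švarc lemma then yields that $\Lambda$ is finitely generated, word hyperbolic, and quasi-isometric to $\Cc$, and the orbit map extends to a continuous, injective, $\Lambda$-equivariant map $\xi: \partial \Lambda \to \partial \Cc$. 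Since $\Cc \subset \Hb^{p,q-1}$, accumulation can only occur on $\partial \Hb^{p,q-1}$, so the image of $\xi$ lies in the variety of isotropic lines, which is exactly $\PO(p,q)/P_1^{p,q}$. The Anosov dynamical condition, amounting here to a uniform exponential gap between the first two singular values of $\rho(\gamma)$, I would deduce from the quasi-isometric embedding together with standard comparisons between word length, Hilbert translation length, and Cartan projection.

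For part (2) the goal is reversed: build a properly convex $\Cc$ from the Anosov data. The $P_1^{p,q}$-Anosov property supplies an equivariant continuous injective map $\xi: \partial \Lambda \to \Pb(\Rb^{p+q})$ into isotropic lines, together with a transverse map $\xi^*$ into isotropic hyperplanes. Transversality translates, via the bilinear form, to the statement that for distinct $\eta, \eta' \in \partial \Lambda$ the $2$-plane $\xi(\eta) + \xi(\eta')$ is non-degenerate; since both lines are isotropic this plane automatically has signature $(1,1)$, and the open projective segment from $\xi(\eta)$ to $\xi(\eta')$ lies entirely in $\Hb^{p,q-1}$ or entirely in $\Hb^{q,p-1}$. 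A careful local-constancy argument, using continuity of $\xi$ together with connectedness of $\partial \Lambda$ and equivariance (to propagate the sign around via hyperbolic dynamics), forces one single case to hold for all pairs; after swapping $p \leftrightarrow q$ if necessary, assume it is the $\Hb^{p,q-1}$ case. I would then take $\Cc$ to be the closure in $\Hb^{p,q-1}$ of the projective convex hull of $\xi(\partial \Lambda)$ and verify the four axioms: proper convexity follows from transversality of $\xi$, nonempty interior from irreducibility forcing $\xi(\partial \Lambda)$ to span $\Rb^{p+q}$, $\Lambda$-invariance is automatic from equivariance, and the no-line-segment condition on $\overline{\Cc} \setminus \Cc$ follows from injectivity of $\xi$ together with the absence of local line-segment structure on the hyperbolic boundary $\partial \Lambda$.

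The main obstacle is cocompactness of the $\Lambda$-action on $\Cc$ in part (2). Proper discontinuity follows relatively quickly from properness of projective actions on properly convex sets, but cocompactness requires uniform geometric control near the ideal boundary. I would attack this by equipping $\inte(\Cc)$ with its own Hilbert metric and using the Anosov singular-value estimates to bound the Hilbert translation lengths of the $\rho(\gamma)$ from below in terms of word length, producing a reverse quasi-isometric inequality that then yields a bounded-diameter fundamental domain via the contraction/expansion dynamics on $\Pb(\Rb^{p+q})$. A subtler secondary difficulty, needed already during construction of $\Cc$, is verifying item~(4) in the definition of $\Hb^{p,q-1}$-convex cocompactness: one must rule out accidental alignment of several $\xi$-images along a projective line, which I expect to handle by invoking the transverse contraction dynamics of the Anosov representation to show that $\xi(\partial \Lambda)$ is sufficiently ``spread out'' in $\Pb(\Rb^{p+q})$ for no nontrivial segment to persist in $\overline{\Cc} \setminus \Cc$.
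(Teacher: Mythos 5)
First, a point of order: the paper does not actually prove this theorem --- it is quoted verbatim from Danciger--Gu\'eritaud--Kassel \cite{DGK2017a}. The natural objects of comparison inside this paper are its own generalizations: Theorem~\ref{thm:RCC_anosov} (Section~\ref{sec:convex_cocpct_implies_A}) for part (1), and Theorems~\ref{thm:condC} and~\ref{thm:proper_action_implies} (Section~\ref{sec:construction}) for part (2). Your overall architecture matches those proofs: Hilbert metric plus Milnor--\v{S}varc for hyperbolicity, a sign/lifting argument exploiting connectedness of $\partial\Lambda$, and the convex hull of the limit set as the candidate $\Cc$. But two of your key steps, as described, would not go through.

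In part (1), you propose to extract the Anosov gap condition from ``standard comparisons between word length, Hilbert translation length, and Cartan projection.'' Those comparisons control $\lambda_1/\lambda_{d}$ (the Hilbert translation length equals $\tfrac12\log(\lambda_1/\lambda_d)$, cf.\ Proposition~\ref{prop:CK}), not the top gap $\lambda_1/\lambda_2$, which is what projective Anosovness requires. You also never construct the dual map $\eta:\partial\Lambda\rightarrow\Pb(\Rb^{(p+q)*})$. The actual mechanism (both here and in \cite{DGK2017a}) is that the no-segment hypothesis forces each ideal point of $\Cc$ to be a $C^1$ extreme point, so that $\eta(x)=\langle\cdot,\xi(x)\rangle_{p,q}$ is the unique supporting hyperplane; transversality of the pair $(\xi,\eta)$ then feeds into a criterion such as Proposition 4.10 of \cite{GW2012}. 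Without $\eta$ and the $C^1$/extremality analysis, the dominated-splitting condition is not reached. In part (2), your cocompactness mechanism is the real gap: a lower bound on translation lengths, or equivalently a quasi-isometric embedding of $\Lambda$ into $(\inte\Cc,H_{\Cc})$, does not produce a bounded-diameter fundamental domain (Example~\ref{ex:bad_example} illustrates how orbits can be quasi-isometrically embedded while the invariant convex set is far from canonical). The paper's Section~\ref{sec:construction} instead proves cocompactness by showing that orbit ``geodesics'' uniformly shadow the segments $[\xi(x),\xi(y)]$ (Lemma~\ref{lem:geod_shadowing}, via a compactness argument using the attracting dynamics of Lemma~\ref{lem:dynamics}), and then bootstraps to arbitrary points of the hull by Carath\'eodory plus induction on the number of extreme points in a convex combination. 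Finally, a smaller but genuine omission: $\Hb^{p,q-1}$ is not a convex subset of $\Pb(\Rb^{p+q})$ when $q\geq 2$, so containment of the convex hull in $\Hb^{p,q-1}$ does not follow from the pairwise statement about segments; one needs the computation $\langle\sum_i\lambda_i\wt{\xi}(x_i),\sum_j\lambda_j\wt{\xi}(x_j)\rangle_{p,q}=\sum_{i\neq j}\lambda_i\lambda_j\langle\wt{\xi}(x_i),\wt{\xi}(x_j)\rangle_{p,q}<0$, which is exactly where the globally consistent choice of sign is used.
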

 
 \begin{remark}
 The special case when $q=2$ and $\Lambda$ is the fundamental group of a closed hyperbolic $p$-manifold follows from work of Mess~\cite{M2007} for $p = 2$ and  work of Barbot-M{\'e}rigot~\cite{BM2012} for $p \geq  3$.
 \end{remark}

In this paper we further explore connections between Anosov representations and convex cocompact actions on domains in real projective space.  In the general case, we make the following definition.

\begin{definition}\label{defn:CC}
Suppose $V$ is a finite dimensional real vector space, $\Omega \subset \Pb(V)$ is a properly convex domain, and $\Lambda \leq \Aut(\Omega)$ is a discrete subgroup. Then $\Lambda$ is a \emph{convex cocompact subgroup} of $\Aut(\Omega)$ if there exists a non-empty closed convex subset $\Cc \subset \Omega$ such that $g(\Cc) = \Cc$ for all $g \in \Lambda$ and the quotient $\Lambda \backslash \Cc$ is compact.  
\end{definition}

In the context of Anosov representations a more refined notion of convex cocompactness seems to be necessary: there exist properly convex domains $\Omega \subset \Pb(V)$ with convex cocompact subgroups $\Lambda \leq \Aut(\Omega)$ which are not word hyperbolic. To avoid such examples, we make the following stronger definition.

\begin{definition}\label{def:RCC} Suppose $V$ is a finite dimensional real vector space, $\Omega \subset \Pb(V)$ is a properly convex domain, and $\Lambda \leq \Aut(\Omega)$ is a discrete subgroup. Then $\Lambda$ is a \emph{regular convex cocompact subgroup} of $\Aut(\Omega)$ if there exists a non-empty closed convex subset $\Cc \subset \Omega$ such that $g(\Cc) = \Cc$ for all $g \in \Lambda$, the quotient $\Lambda \backslash \Cc$ is compact, and every point in $\overline{\Cc} \cap \partial \Omega$ is a $C^1$ extreme point of $\Omega$.
\end{definition}

\begin{remark}\label{rmk:connections_2} \ 
\begin{enumerate}
\item When $\Lambda$ is an irreducible subgroup of $\PGL(V)$, we show that every point in $\overline{\Cc} \cap \partial \Omega$ is a $C^1$ point of $\Omega$ if and only if every point in $\overline{\Cc} \cap \partial \Omega$ is a extreme point of $\Omega$ (see Theorem~\ref{thm:RCC_anosov_intro} below).
\item It turns out that $\Hb^{p,q-1}$-convex cocompact subgroups always satisfy this stronger condition. In particular, by Proposition 1.14 in~\cite{DGK2017a}: If $\Gamma \leq \PO(p,q)$ is irreducible, discrete, and $\Hb^{p,q-1}$-convex cocompact, then there exists a properly convex domain $\Omega \subset \Pb(\Rb^{p+q})$ such that $\Lambda$ is a regular convex cocompact subgroup of $\Aut(\Omega)$. 
\end{enumerate}
\end{remark}

Finally we are ready to state our first main result.

\begin{theorem}\label{thm:main}(see Section~\ref{subsec:pf_main_thm})
Suppose $G$ is a semisimple Lie group with finite center and $P \leq G$ is a parabolic subgroup. Then there exists a finite dimensional real vector space $V$ and an irreducible representation $\phi:G \rightarrow \PSL(V)$ with the following property: if $\Gamma$ is a word hyperbolic group and $\rho:\Gamma \rightarrow G$ is a Zariski dense representation with finite kernel, then the following are equivalent:
\begin{enumerate}
\item $\rho$ is $P$-Anosov,
\item there exists a properly convex domain $\Omega \subset \Pb(V)$ such that $(\phi\circ\rho)(\Gamma)$ is a regular convex cocompact subgroup of $\Aut(\Omega)$. 
\end{enumerate}
\end{theorem}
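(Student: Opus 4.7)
The plan is to reduce the assertion to the projective (i.e.\ $P_1$-)Anosov case in $\PSL(V)$ and then invoke the two earlier main results of the paper announced in the abstract: that ``most'' projective Anosov representations act convex cocompactly on some properly convex domain, and that a projective Anosov representation preserving \emph{any} properly convex domain acts convex cocompactly on some (possibly different) properly convex domain. The bridge is a carefully chosen irreducible representation $\phi : G \to \PSL(V)$. The first ingredient is the standard Pl\"ucker-type construction of Guichard--Wienhard: for every parabolic $P \leq G$ there is an irreducible representation $\tau : G \to \PSL(W)$, built from fundamental weights transverse to the simple roots defining $P$, such that for any word hyperbolic $\Gamma$ and any $\rho : \Gamma \to G$, the composition $\tau \circ \rho$ is projective Anosov iff $\rho$ is $P$-Anosov. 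To additionally force the existence of an invariant properly convex domain, I would pass to $V = \Sym^2(W)$ (or an irreducible constituent in which the top weight of $\tau$ still appears) and set $\phi = \Sym^2(\tau)$; the image preserves the projectivization of the cone of positive semidefinite symmetric forms, which is properly convex, and squaring doubles the top weight while preserving the spectral gap with the next largest weight, so $\phi \circ \rho$ remains projective Anosov.

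\emph{Direction $(1) \Rightarrow (2)$.} Assume $\rho$ is $P$-Anosov, Zariski dense, with finite kernel. Then $\phi \circ \rho$ is projective Anosov, its image is Zariski dense in the irreducible subgroup $\phi(G) \leq \PSL(V)$, and it preserves a properly convex domain. The paper's earlier main result (the one about projective Anosov representations that preserve a properly convex domain) then yields a properly convex domain $\Omega$ on which $(\phi \circ \rho)(\Gamma)$ acts convex cocompactly. To upgrade this to \emph{regular} convex cocompactness, I would verify that the image of the Anosov boundary map $\xi : \partial \Gamma \to \Pb(V)$ consists of $C^1$-extreme points of $\Omega$: the transversality and contraction/expansion encoded in the pair $(\xi, \xi^*)$ force each $\xi(x)$ to admit a unique supporting hyperplane of $\overline{\Omega}$, and cocompactness together with this transversality rules out nontrivial projective segments in $\overline{\Cc} \cap \partial \Omega$, in the spirit of Remark~\ref{rmk:connections_2} and Theorem~\ref{thm:SO_case}.

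\emph{Direction $(2) \Rightarrow (1)$.} Conversely, suppose $(\phi \circ \rho)(\Gamma)$ is a regular convex cocompact subgroup of $\Aut(\Omega)$. The limit set in $\overline{\Cc} \cap \partial \Omega$ is then a $C^1$-extreme, $\Gamma$-equivariant copy of $\partial \Gamma$, and cocompactness of the action on $\Cc$ produces the uniform contraction/expansion estimates for the boundary map $\xi$ (together with its dual $\xi^*$ coming from the unique supporting hyperplanes at extreme points). This exhibits $\phi \circ \rho$ as a projective Anosov representation, in the same vein as Theorem~\ref{thm:SO_case}(1). By the defining property of $\phi$ inherited from the Guichard--Wienhard construction, projective Anosov of $\phi \circ \rho$ is equivalent to $P$-Anosov of $\rho$, giving (1).

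\emph{Main obstacle.} The hardest step is arranging all three properties simultaneously: $\phi$ irreducible, $\phi \circ \rho$ projective Anosov iff $\rho$ is $P$-Anosov, and $\phi(G)$ preserving a properly convex domain in $\Pb(V)$. The $\Sym^2$ construction supplies the invariant properly convex domain for free, but the symmetric square of an irreducible representation typically decomposes, so one probably must select a self-dual irreducible representation of $G$ whose highest weight lies in the correct Weyl chamber relative to $P$, or argue that the properly convex domain descends to an irreducible constituent containing the top weight. Establishing that this choice of $\phi$ is irreducible and that the restricted invariant domain is still properly convex is where the heart of the argument lies; everything downstream is a bookkeeping exercise combining Guichard--Wienhard with the paper's projective convex cocompactness theorems.
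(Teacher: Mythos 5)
Your strategy is essentially the paper's: take the Guichard--Wienhard representation of Theorem~\ref{thm:GW} (your $\tau$ is the paper's $\phi_0$), pass to a symmetric-square construction to acquire an invariant properly convex cone, restrict to the irreducible constituent generated by the squares of the proximal attracting lines (the paper takes $V=\Spanset_{\Rb}\{\ell_g^+\otimes\ell_g^+ : g\in\phi_0(G) \text{ proximal}\}$ and proves irreducibility via minimality of the $\phi_0(G)$-action on the proximal limit set), and then apply Theorem~\ref{thm:proper_action_implies} for $(1)\Rightarrow(2)$ and Theorem~\ref{thm:RCC_anosov_intro} for $(2)\Rightarrow(1)$. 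You correctly identify the irreducibility of the constituent as a key obstacle, and your $(1)\Rightarrow(2)$ is fine except that the separate ``upgrade to regular'' step is unnecessary, since Theorem~\ref{thm:proper_action_implies} already delivers a \emph{regular} convex cocompact action.

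The one genuine gap is in $(2)\Rightarrow(1)$, where you appeal to ``the defining property of $\phi$ inherited from the Guichard--Wienhard construction.'' That property is only known for $\tau$ itself, not for $\phi=\Sym^2(\tau)|_V$: what you need is that projective Anosovness of $\phi\circ\rho$ implies projective Anosovness of $\tau\circ\rho$. The eigenvalue-gap condition transfers immediately (the top two singular/eigenvalue ratios of $\Sym^2(g)$ and $g$ agree), but the boundary maps do not come for free in this direction: one must show that the Anosov boundary maps $\xi,\eta$ of $\phi\circ\rho$ take values in the Veronese-type sets $\{[v\otimes v]\}$ and $\{[f\otimes f]\}$, so that they admit continuous, equivariant, dynamics-preserving and transverse ``square roots'' $\xi_0,\eta_0$ for $\tau\circ\rho$. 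The paper closes this in Section~\ref{subsec:pf_main_thm} by identifying $\xi(x^+)$ with $\ell^+\otimes\ell^+$ for the attracting fixed point $x^+$ of an element $\varphi$ with $(\phi_0\circ\rho)(\varphi)$ proximal, then using density of $\Gamma\cdot x^+$ in $\partial\Gamma$ together with closedness and $\phi(G)$-invariance of $\{[v\otimes v]\}$, and finally invoking Proposition 4.10 of Guichard--Wienhard. Without some version of this factorization argument, your final sentence of $(2)\Rightarrow(1)$ is an assertion rather than a proof.
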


Properly convex domains and their projective automorphism groups have been extensively studied, especially in the case in which there exists a discrete group $\Gamma \leq \Aut(\Omega)$ such that $\Gamma \backslash \Omega$ is compact. Such domains are called \emph{convex divisible domains} and have a number of remarkable properties, see the survey papers by Benoist~\cite{B2008}, Marquis~\cite{M2014}, and Quint~\cite{Q2010}.

Theorem~\ref{thm:main} provides a way to use the rich theory of convex divisible domains to study general Anosov representations. For instance, the proofs of Theorem~\ref{thm:ent_rig} and Theorem~\ref{thm:main_reg_rigid_intro} below are inspired by  rigidity results for convex divisible domains. 

\subsection{Projective Anosov representations}

The first step in the proof of Theorem~\ref{thm:main} is to use a result of Guichard and Wienhard to reduce to the case of projective Anosov representations. A projective Anosov representation is simply an $P$-Anosov representation in the special case when $G=\PGL_d(\Rb)$ and $P \leq \PGL_d(\Rb)$ is the stabilizer of a line. This special class of Anosov representations can be defined as follows.

\begin{definition} Suppose that $\Gamma$ is a word hyperbolic group, $\partial \Gamma$ is the Gromov boundary of $\Gamma$, and $\rho: \Gamma \rightarrow \PGL_{d}(\Rb)$ is a representation.  Two maps $\xi: \partial \Gamma \rightarrow \Pb(\Rb^{d})$ and $\eta: \partial \Gamma \rightarrow \Pb(\Rb^{d*})$ are called:
\begin{enumerate}
\item \emph{$\rho$-equivariant} if $\xi \circ \gamma  = \rho(\gamma) \circ \xi$ and $\eta\circ \gamma = \rho(\gamma)\circ \eta$ for all  $\gamma \in \Gamma$,
\item \emph{dynamics-preserving} if for every $\gamma \in \Gamma$ of infinite order with attracting fixed point $x_\gamma^+ \in \partial \Gamma$ the points $\xi(x^+_{\gamma}) \in \Pb(\Rb^{d})$ and $\eta(x^+_{\gamma}) \in \Pb(\Rb^{d*})$ are attracting fixed points of the action of $\rho(\gamma)$ on $\Pb(\Rb^{d})$ and $\Pb(\Rb^{d*})$, and
\item \emph{transverse} if for every distinct pair $x, y \in \partial \Gamma$ we have $\xi(x) + \ker \eta(y) = \Rb^{d}$.
\end{enumerate}
\end{definition}

\begin{definition} Given an element $g \in \PGL_{d}(\Rb)$ let 
\begin{align*}
\lambda_1(g) \geq \dots \geq \lambda_{d}(g)
\end{align*}
denote the absolute values of the eigenvalues (counted with multiplicity) of some (any) lift $\tilde{g} \in \GL_d(\Rb)$ of $g$ with $\det \tilde{g}=\pm 1$. 
\end{definition}
 
\begin{definition} Suppose that $\Gamma$ is word hyperbolic, $S$ is a finite symmetric generating set, and $d_S$ is the associated word metric on $\Gamma$. Then for $\gamma \in \Gamma$, let $\ell_S(\gamma)$ denote the minimal translation distance of $\gamma$ acting on the Cayley graph of $(\Gamma,S)$, that is
\begin{align*}
\ell_S(\gamma) = \inf_{x \in \Gamma} d_S(\gamma x, x).
\end{align*}
A representation $\rho: \Gamma \rightarrow \PGL_d(\Rb)$ is then called a \emph{projective Anosov representation} if there exist continuous, $\rho$-equivariant, dynamics preserving, and transverse maps $\xi: \partial \Gamma \rightarrow \Pb(\Rb^{d})$, $\eta: \partial \Gamma \rightarrow \Pb(\Rb^{d*})$ and constants $C,c>0$ such that
\begin{align}
\label{eq:eigenvalue_est}
\log  \frac{\lambda_1(\rho(\gamma))}{\lambda_{2}(\rho(\gamma))} \geq C \ell_S(\gamma) -c
 \end{align}
for all $\gamma \in \Gamma$.
 \end{definition}
 
 \begin{remark} This is not the initial definition of Anosov representations given by Labourie~\cite{L2006} or Guichard-Wienhard~\cite{GW2012}, but a nontrivial characterization proved in~\cite[Theorem 1.7]{GGKW2015}. We use this characterization as our definition because it is more elementary to state than the original definition, but it is not necessary for any of the proofs in the paper. We should also note that this is far from the simplest definition of Anosov representations, for instance if one replaces the estimate in Equation~\eqref{eq:eigenvalue_est} with a similar estimate on singular values, then it follows from work of Kapovich, Leeb, and Porti~\cite{KLP2014b} that one does not need to assume that the maps $\eta,\xi$ exist or even that $\Gamma$ is a word hyperbolic group (only finite generation is required). But since many of the results that follow involve these boundary maps and eigenvalues, it seems like this definition is the most natural in the context of this paper. 
 \end{remark}
 
Guichard and Wienhard proved the following connection between general Anosov representations and projective Anosov representations. 
 
 \begin{theorem}[{Guichard-Wienhard~\cite[Section 4]{GW2012}}]\label{thm:GW}
Suppose $G$ is a semisimple Lie group with finite center and $P \leq G$ is a parabolic subgroup. Then there exist a finite dimensional real vector space $V_0$ and an irreducible representation $\phi_0:G \rightarrow \PSL(V_0)$ with the following property: if $\Gamma$ is a word hyperbolic group and $\rho:\Gamma \rightarrow G$ is a representation, then the following are equivalent:
\begin{enumerate}
\item $\rho$ is $P$-Anosov,
\item $\phi_0 \circ \rho$ is projective Anosov.
\end{enumerate}
\end{theorem}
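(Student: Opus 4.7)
The plan is to exploit the representation-theoretic fact that every parabolic subgroup of a semisimple Lie group is the stabilizer of a line in some irreducible representation. Concretely, parameterize standard parabolics $P_\Theta \leq G$ by subsets $\Theta$ of the simple roots. Choose a dominant weight $\chi$ in the interior of the face of the Weyl chamber dual to $\Theta$, that is, $\langle\chi,\alpha^\vee\rangle = 0$ for $\alpha \in \Theta$ and $\langle\chi,\alpha^\vee\rangle > 0$ for $\alpha \notin \Theta$. Let $V_0$ be the irreducible $G$-module with highest weight $\chi$; then the stabilizer of the highest-weight line $[v_\chi] \in \Pb(V_0)$ is precisely $P_\Theta$, and the dual $V_0^*$ provides an analogous highest-weight line whose stabilizer is the opposite parabolic $P_\Theta^{\mathrm{opp}}$ (after identifying the lowest-weight line in $V_0$ with a highest-weight line in $V_0^*$). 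Because $G$ has finite center and $\phi_0$ is irreducible, $\phi_0$ factors through $\PSL(V_0)$ (possibly after replacing $\chi$ by a multiple to trivialize the center); this produces the required $\phi_0 : G \to \PSL(V_0)$ together with $G$-equivariant closed embeddings $\iota_+ : G/P_\Theta \hookrightarrow \Pb(V_0)$ and $\iota_- : G/P_\Theta^{\mathrm{opp}} \hookrightarrow \Pb(V_0^*)$.

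For the implication $(1) \Rightarrow (2)$, given $P_\Theta$-Anosov boundary maps $\xi_\pm : \partial\Gamma \to G/P_\Theta^{(\mathrm{opp})}$, define $\xi := \iota_+ \circ \xi_+$ and $\eta := \iota_- \circ \xi_-$. Continuity, $\phi_0\circ\rho$-equivariance, and the dynamics-preserving property transfer immediately from $\xi_\pm$ since $\iota_\pm$ are $G$-equivariant homeomorphisms onto their images. Transversality translates because a line $\ell \in \iota_+(G/P_\Theta)$ and a hyperplane $H \in \iota_-(G/P_\Theta^{\mathrm{opp}})$ are transverse in $\Pb(V_0)$ if and only if the corresponding flags are in opposition in $G$. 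For the eigenvalue gap, note that $\log(\lambda_1/\lambda_2)(\phi_0(\rho(\gamma))) = (\chi - \chi_2)(\mu(\rho(\gamma)))$, where $\mu$ is the Cartan projection and $\chi_2$ is the second-highest weight of $V_0$. By the choice of $\chi$, the functional $\chi - \chi_2$ is a positive combination of simple roots in $\Delta \setminus \Theta$, so it is comparable to $\min_{\alpha \notin \Theta}\alpha\circ\mu$, which is the gap function characterizing $P_\Theta$-Anosov.

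For $(2) \Rightarrow (1)$, suppose $\phi_0\circ\rho$ is projective Anosov with boundary maps $\xi : \partial\Gamma \to \Pb(V_0)$ and $\eta : \partial\Gamma \to \Pb(V_0^*)$. The key step is to show $\xi(\partial\Gamma) \subset \iota_+(G/P_\Theta)$. For each infinite-order $\gamma \in \Gamma$, the point $\xi(x_\gamma^+)$ is the top eigenline of $\phi_0(\rho(\gamma))$; Zariski density of $\rho(\Gamma)$ in $G$ forces a dense set of such $\rho(\gamma)$ to be proximal with top eigenline in the $G$-orbit $G\cdot[v_\chi] = \iota_+(G/P_\Theta)$ (this uses that proximality in $\phi_0$ is an open Zariski-dense condition, ensured by the highest weight having multiplicity one). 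Since the attracting fixed points of infinite-order elements are dense in $\xi(\partial\Gamma)$ and $\iota_+(G/P_\Theta)$ is closed, the inclusion follows. Pulling back via $\iota_+$ gives $\xi_+ := \iota_+^{-1} \circ \xi$, and analogously $\xi_-$. The Anosov gap inequality for $\rho$ follows by inverting the weight identity of the previous paragraph: the projective gap for $\phi_0\circ\rho$ dominates $(\chi-\chi_2)\circ\mu\circ\rho$, which in turn dominates $\alpha\circ\mu\circ\rho$ for each $\alpha \notin \Theta$.

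The main obstacle is the containment $\xi(\partial\Gamma) \subset \iota_+(G/P_\Theta)$ in the converse direction: without Zariski density, the top eigenlines of $\phi_0(\rho(\gamma))$ need not be highest-weight lines of $G$ (they could be top eigenlines for the restriction of $\phi_0$ to the Zariski closure of $\rho(\Gamma)$, which might embed into $\Pb(V_0)$ via a different orbit). The secondary difficulty is the careful verification that the weight $\chi$ can always be chosen so that $\chi - \chi_2$ is a \emph{strictly} positive combination of exactly the simple roots outside $\Theta$; this is a finite case analysis using the structure of the weight lattice, and is the reason one chooses $\chi$ with full support on $\Delta \setminus \Theta$ (rather than a single fundamental weight, which could fail for non-minuscule cases).
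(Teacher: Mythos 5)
The paper contains no proof of Theorem~\ref{thm:GW}: it is imported verbatim from Guichard--Wienhard, and the remark immediately following it points to~\cite{GGKW2015} and~\cite{BCLS2015} for proofs. Your sketch reconstructs exactly the argument of those sources --- take an irreducible $G$-module whose highest weight $\chi$ is orthogonal to $\Theta$ and has full support on the complementary simple roots, so that the stabilizer of the highest-weight line is $P_\Theta$, push the flag varieties into $\Pb(V_0)$ and $\Pb(V_0^*)$ equivariantly, and observe that the first gap of $\phi_0(g)$ is $\min_{\alpha \notin \Theta} \alpha$ evaluated on the appropriate projection of $g$ --- so in outline this is the intended proof. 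Two calibration points. First, with the definition of projective Anosov adopted in this paper the gap is measured on eigenvalue moduli against the translation length $\ell_S(\gamma)$, so the weight identity should be stated for the Jordan projection of $\rho(\gamma)$ rather than the Cartan projection $\mu$; the two formulations are equivalent by~\cite{GGKW2015}, but as written your identity mixes the two. Second, in the converse direction the containment $\xi(\partial \Gamma) \subset \iota_+(G/P_\Theta)$ does not actually need Zariski density: for any $g \in G$ with $\phi_0(g)$ proximal, the eigenvalue moduli of $\phi_0(g)$ are the weights of $V_0$ evaluated on the Jordan projection of $g$ in $G$, so the attracting eigenline is automatically a translate of $[v_\chi]$, and the dynamics-preserving property plus density of attracting fixed points and closedness of the orbit finish the claim. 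Where Zariski density genuinely enters is in guaranteeing that $\phi_0 \circ \rho$ is irreducible, which is what lets one pass from the existence of continuous, equivariant, transverse, dynamics-preserving boundary maps back to the Anosov property via~\cite[Proposition 4.10]{GW2012}; your proposal does not isolate this step, but it is the only place the hypothesis is indispensable.
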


\begin{remark} Proofs of this theorem can also be found in~\cite[Section 3]{GGKW2015} and~\cite[Subsection 2.3]{BCLS2015}. \end{remark}

Using Theorem~\ref{thm:GW}, the proof of Theorem~\ref{thm:main} essentially reduces to the case of projective Anosov representations. In this case, we consider the following two questions.

\begin{question} \ \begin{enumerate}
\item Given a properly convex domain $\Omega \subset \Pb(V)$ and a convex cocompact subgroup $\Lambda \leq \Aut(\Omega)$, what geometric conditions on $\Omega$ imply that the inclusion representation $\Lambda \hookrightarrow \PGL(V)$ is a projective Anosov representation?
\item Given a projective Anosov representation $\rho: \Gamma \rightarrow \PGL(V)$ what conditions on $\rho$ or $\Gamma$ imply that $\rho(\Gamma)$ acts convex cocompactly on a properly convex domain in $\Pb(V)$?
\end{enumerate}
\end{question}

\begin{remark}\label{rmk:connections}
Projective Anosov representations are closely related to the representations studied by Danciger, Gu{\'e}ritaud, and Kassel~\cite{DGK2017a} in the $\PO(p,q)$ case. In particular, if $\rho: \Gamma \rightarrow \PO(p,q)$ is a representation of a word hyperbolic group, then (by definition) the following are equivalent:
 \begin{enumerate}
 \item $\rho$ is  $P^{p,q}_1$-Anosov where $P^{p,q}_1 \leq \PO(p,q)$ be the stabilizer of an isotropic line in $(\Rb^{p+q}, \ip{\cdot, \cdot}_{p,q})$,
 \item $\rho$ is projective Anosov when viewed as a representation into $\PGL_{p+q}(\Rb)$. 
 \end{enumerate}
 Thus Theorem~\ref{thm:SO_case} provides answers to the above questions for projective Anosov representations whose images preserve a non-degenerate bilinear form.
 \end{remark}
 
\subsection{When a convex cocompact action leads to a projective Anosov representation}

When $\Omega \subset \Pb(\Rb^{d})$ and $\Lambda \leq \Aut(\Omega)$ is a discrete group which acts cocompactly on $\Omega$, Benoist has provided geometric conditions on $\Omega$ so that the inclusion representation $\Lambda \hookrightarrow \PGL_{d}(\Rb)$ is projective Anosov.

\begin{theorem}[Benoist~\cite{B2004}]\label{thm:ben_char}
Suppose $\Omega \subset \Pb(\Rb^{d})$ is a properly convex domain and $\Lambda \leq \Aut(\Omega)$ is a discrete group which acts cocompactly on $\Omega$. Then the following are equivalent:
\begin{enumerate}
\item $\Lambda$ is word hyperbolic, 
\item $\partial \Omega$ is a $C^1$ hypersurface, 
\item $\Omega$ is strictly convex.
\end{enumerate}
Moreover, when these conditions are satisfied the inclusion representation $\Lambda \hookrightarrow \PGL_{d}(\Rb)$ is projective Anosov.
\end{theorem}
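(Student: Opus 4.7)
The plan is to exploit the Hilbert metric $d_\Omega$ on $\Omega$, for which $\Aut(\Omega)$ acts by isometries. Since $\Lambda \leq \Aut(\Omega)$ acts properly discontinuously and cocompactly on $(\Omega,d_\Omega)$, the Milnor--Svarc lemma produces a $\Lambda$-equivariant quasi-isometry from the Cayley graph of $(\Lambda,S)$ to $(\Omega,d_\Omega)$. Thus condition (1) is equivalent to Gromov hyperbolicity of $(\Omega,d_\Omega)$, and all three equivalences reduce to characterizing this hyperbolicity in terms of the convex-geometric data of $\Omega$.

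For (2) $\Leftrightarrow$ (3) I would appeal to projective duality. The dual $\Omega^* \subset \Pb(\Rb^{d*})$ is properly convex, $\Lambda$ acts cocompactly on $\Omega^*$ via the dual representation, and the classical correspondence between boundary points and their supporting hyperplanes exchanges strict convexity and $C^1$ smoothness of the boundary. Consequently it suffices to prove (3) $\Rightarrow$ (2); applying this to $\Omega^*$ then yields the converse.

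The heart of the proof is the equivalence of (1) and (3). For (3) $\Rightarrow$ (1): Hilbert geodesics in a strictly convex $C^1$ domain are straight segments with unique endpoints on $\partial \Omega$; strict convexity forces distinct geodesic rays to split apart, while the $C^1$ condition, combined with cocompactness of $\Lambda$, quantifies this splitting uniformly near the boundary and yields the thin-triangle condition. For (1) $\Rightarrow$ (3): any failure of strict convexity or $C^1$ smoothness produces a quasi-isometrically embedded Euclidean flat strip in $(\Omega,d_\Omega)$. A non-trivial segment $[p,q] \subset \partial \Omega$ (failure of strict convexity), or its dual analogue (failure of $C^1$), is transported around by $\Lambda$ to give a two-dimensional region on which the Hilbert metric degenerates toward a Euclidean one, contradicting Gromov hyperbolicity and hence (1). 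Converting a local boundary defect into a genuinely quasi-isometric Euclidean strip via the cocompact action is the main technical obstacle.

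For the \emph{moreover} conclusion, the boundary extension of the Milnor--Svarc quasi-isometry yields a $\Lambda$-equivariant homeomorphism $\xi: \partial \Lambda \to \partial \Omega \subset \Pb(\Rb^d)$. Define $\eta: \partial \Lambda \to \Pb(\Rb^{d*})$ by sending $x$ to the unique supporting hyperplane of $\Omega$ at $\xi(x)$, which is well-defined by (2); transversality of $(\xi,\eta)$ follows from (3), and dynamics preservation is automatic since every infinite-order $\gamma \in \Lambda$ acts loxodromically on $(\Omega,d_\Omega)$ with attracting fixed point $\xi(x_\gamma^+)$. The remaining eigenvalue-gap estimate combines two ingredients: the Hilbert translation length of $\gamma$ equals $\tfrac{1}{2}\log(\lambda_1(\gamma)/\lambda_d(\gamma))$ and is comparable to $\ell_S(\gamma)$ by Milnor--Svarc; and the boundary points $\xi(x_\gamma^\pm)$ being simultaneously extreme and smooth forces $\gamma$ to be biproximal, so that $\lambda_1(\gamma) > \lambda_2(\gamma)$ and $\lambda_{d-1}(\gamma) > \lambda_d(\gamma)$. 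A compactness argument using the cocompact action $\Lambda \curvearrowright \Omega$ promotes these pointwise gaps to a uniform estimate $\log(\lambda_1(\gamma)/\lambda_2(\gamma)) \geq C \log(\lambda_1(\gamma)/\lambda_d(\gamma))$ over all infinite-order $\gamma$, which combined with the previous comparability yields the required bound $\log(\lambda_1(\rho(\gamma))/\lambda_2(\rho(\gamma))) \geq C' \ell_S(\gamma) - c$.
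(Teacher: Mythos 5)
First, a point of reference: the paper does not prove this statement at all --- it is quoted from Benoist~\cite{B2004} --- so the only internal comparison is with the paper's proof of its generalization, Theorem~\ref{thm:RCC_anosov} in Section~\ref{sec:convex_cocpct_implies_A}. Your overall skeleton (Milnor--Svarc to reduce (1) to Gromov hyperbolicity of $(\Omega,H_\Omega)$, projective duality to exchange strict convexity of $\Omega$ with $C^1$-smoothness of $\partial\Omega^*$) is the correct one and matches Benoist. But two steps contain genuine gaps. For $(1)\Rightarrow(3)$ you propose to upgrade a segment in $\partial\Omega$ to a quasi-isometrically embedded Euclidean strip, and you correctly identify this as the main obstacle --- but it is an obstacle of your own making. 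Hyperbolicity is contradicted by a sequence of uniformly fat triangles, not only by a flat, and the working mechanism (used in the paper's Lemma on word hyperbolicity in Section~\ref{sec:convex_cocpct_implies_A}, and in its Benz\'ecri-type rescaling Theorem~\ref{thm:rescaling}) is to translate degenerating triangles back to a compact fundamental domain and pass to a limit, or to invoke the Karlsson--Noskov estimate (Lemma~\ref{lem:GP}) which converts a boundary segment directly into divergence of Gromov products. Producing genuine flats from boundary segments is a much deeper theorem and is not needed. There is also a circularity in your logical ordering: you reduce $(2)\Leftrightarrow(3)$ to $(3)\Rightarrow(2)$ by duality but never prove $(3)\Rightarrow(2)$ independently, and your sketch of $(3)\Rightarrow(1)$ then uses the $C^1$ condition. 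The repair is standard --- prove $(3)\Leftrightarrow(1)$ using strict convexity alone, then deduce (2) by running the equivalence on $\Omega^*$ with the same group --- but as written the argument does not close.

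The more serious gap is the final eigenvalue estimate. Pointwise biproximality of each infinite-order $\gamma$ gives $\lambda_1(\rho(\gamma))>\lambda_2(\rho(\gamma))$ for each $\gamma$ separately, but ``a compactness argument promotes these pointwise gaps to a uniform estimate'' does not work as stated: $\Lambda$ is infinite, the elements $\rho(\gamma)$ leave every compact subset of $\PGL_d(\Rb)$, and there is no compact parameter space over which to minimize $\log(\lambda_1/\lambda_2)/\log(\lambda_1/\lambda_d)$. This uniform gap is precisely the content of the Anosov condition and is the hard part of the ``moreover'' clause. The paper's proof of the generalization sidesteps it: after constructing $\xi$ as a $\Lambda$-equivariant homeomorphism onto $\overline{\Cc}\cap\partial\Omega$, defining $\eta$ by supporting hyperplanes (well defined by the $C^1$ condition), and checking transversality from extremality, it invokes Proposition 4.10 of~\cite{GW2012}, which supplies the uniform contraction for irreducible representations admitting such boundary maps. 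Benoist's original route instead proves the Anosov property of the Hilbert geodesic flow on the compact quotient. Either way, a genuinely dynamical input beyond pointwise proximality plus compactness is required, and your sketch does not contain it.
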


\begin{remark} There exist examples of properly convex domains $\Omega \subset \Pb(\Rb^d)$ with discrete subgroups $\Lambda \leq \Aut(\Omega)$ where $\Lambda$ acts co-compactly on $\Omega$ and $\Lambda$ is not word hyperbolic, see~\cite{B2006} and~\cite{BDL2018}.
\end{remark}

The case of convex cocompact actions is more complicated as the next example shows.

\begin{example}\label{ex:bad_example} Let 
\begin{align*}
C = \left\{ (x_1, x_2, y) \in \Rb^3 : y> \sqrt{x_1^2 + x_2^2} \right\}.
\end{align*}
Then $C$ is a properly convex cone and the group $\SO_0(1,2)$ preserves $C$. Let $\Lambda_0 \leq \SO_0(1,2)$ be a cocompact lattice. Next consider the properly convex domain
\begin{align*}
\Omega = \{ [(v_1, v_2)] \in \Pb(\Rb^6) : v_1 \in C, v_2 \in C\}
\end{align*}
and the discrete group 
\begin{align*}
\Lambda = \left\{ \begin{bmatrix} \varphi & 0 \\ 0 & \varphi \end{bmatrix}  \in \PGL_6(\Rb) : \varphi \in \Lambda_0 \right\}.
\end{align*}
Let $\Cc_0 = \{ [(v,v)] \in \Pb(\Rb^d) : v \in C\}$ and for  $r > 0$ let 
\begin{align*}
\Cc_{r} = \{ p \in \Omega :  d_\Omega(p,\Cc_0) \leq r \} \subset \Omega.
\end{align*}
Then each $\Cc_r$ is convex (see \cite[Result 18.9]{HB1955} or \cite[Corollary 1.10]{CLT2015}) and the quotient $\Lambda \backslash \Cc_r$ is compact. This example has the following properties:
\begin{enumerate}
\item $\Lambda$ is word hyperbolic (since $\Lambda_0$ is word hyperbolic),
\item the inclusion representation $\Lambda \hookrightarrow \PGL_6(\Rb)$ is not projective Anosov, 
\item $\Lambda \leq \Aut(\Omega)$ is a convex cocompact subgroup,
\item when $r>0$  there exist line segments in $\partial \Omega \cap \overline{\Cc_{r}}$, and
\item every point in $\partial \Omega \cap \overline{\Cc_{r}}$ is not a $C^1$ point of $\partial\Omega$. 
\end{enumerate}
\end{example}

Despite examples like these, we will prove the following analogue of Benoist's theorem for convex cocompact subgroups 

\begin{theorem}(see Section~\ref{sec:convex_cocpct_implies_A})\label{thm:RCC_anosov_intro}
Suppose $\Omega \subset \Pb(\Rb^{d})$ is a properly convex domain and $\Lambda \leq \Aut(\Omega)$ is a discrete irreducible subgroup of $\PGL_{d}(\Rb)$. If $\Lambda$ preserves and acts cocompactly on a closed convex subset $\Cc \subset \Omega$, then the following are equivalent:
\begin{enumerate}
\item every point in $\overline{\Cc} \cap \partial \Omega$ is a $C^1$ point of $\partial \Omega$, 
\item every point in $\overline{\Cc} \cap \partial \Omega$ is an extreme point of $ \Omega$.
\end{enumerate}
Moreover, when these conditions are satisfied $\Lambda$ is word hyperbolic and the inclusion representation $\Lambda \hookrightarrow \PGL_{d}(\Rb)$ is projective Anosov.
\end{theorem}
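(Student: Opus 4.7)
The plan is to leverage projective duality and the cocompact dynamics on $\Cc$ to upgrade the local conditions on $\overline{\Cc}\cap\partial\Omega$ into a uniform convergence-group action on a Gromov boundary, from which word hyperbolicity and the projective Anosov property will follow.

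\emph{Equivalence of (1) and (2).} I would use projective duality together with the cocompact dynamics. The dual properly convex domain $\Omega^*\subset\Pb((\Rb^d)^*)$ carries a $\Lambda$-action via the contragredient representation, and there is a canonical $\Lambda$-invariant convex subset $\Cc^*\subset\Omega^*$ built from the supporting hyperplanes of $\Omega$ at points of $\overline{\Cc}\cap\partial\Omega$. Projective duality exchanges $C^1$ points and exposed points, so up to the exposed/extreme distinction the equivalence is essentially tautological. The gap between exposed and extreme would then be closed by a Benoist-style argument: if a point of $\overline{\Cc}\cap\partial\Omega$ is extreme but fails to be exposed, the cocompact action can be used to translate it, and limits produce flat faces in $\overline{\Cc}\cap\partial\Omega$ that contradict extremality elsewhere. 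The main technical content is verifying that $\Lambda$ still acts cocompactly on a suitable $\Cc^*$, for which I would produce a $\Lambda$-equivariant continuous correspondence between $\Cc$ and $\Cc^*$ using that supporting hyperplanes vary continuously on the $C^1$ locus.

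\emph{Convergence dynamics and boundary maps.} Now assume (1) and (2), and set $\partial_\infty\Cc := \overline{\Cc}\cap\partial\Omega$. Equip $\Omega$ with its Hilbert metric, so $\Lambda$ acts properly and cocompactly on the proper geodesic space $\Cc$. Using North--South dynamics coming from cocompactness together with the $C^1$ and extreme conditions, I would show the $\Lambda$-action on $\partial_\infty\Cc$ is a uniform convergence action: attracting and repelling fixed points of each infinite-order element are proximal precisely because they are simultaneously extreme and $C^1$, while irreducibility of $\Lambda$ rules out reducible (elementary) limits. By Bowditch's characterization of word hyperbolic groups via uniform convergence, $\Lambda$ is word hyperbolic and $\partial\Lambda$ is canonically identified with $\partial_\infty\Cc$. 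Define $\xi:\partial\Lambda\to\Pb(\Rb^d)$ to be this identification and $\eta:\partial\Lambda\to\Pb(\Rb^{d*})$ by sending $z$ to the unique supporting hyperplane of $\Omega$ at $\xi(z)$, well-defined by (1). Equivariance and the dynamics-preserving property are immediate, and transversality follows from (2): if $\xi(x)\in\ker\eta(y)$ for distinct $x,y\in\partial\Lambda$, then $[\xi(x),\xi(y)]$ lies in the supporting hyperplane $\ker\eta(y)$ and hence in $\partial\Omega$, so by convexity of $\overline{\Cc}$ the interior of this segment gives non-extreme points of $\overline{\Cc}\cap\partial\Omega$, contradicting (2).

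\emph{Eigenvalue gap and main obstacle.} To finish, one verifies the eigenvalue-gap characterization of projective Anosov from~\cite{GGKW2015}: bound $\log\bigl(\lambda_1(\rho(\gamma))/\lambda_2(\rho(\gamma))\bigr)$ from below by the Hilbert translation length of $\gamma$ on $\Cc$, which by \v{S}varc--Milnor is coarsely $\ell_S(\gamma)$. The bound on $\lambda_1$ follows from proximality at the attracting boundary point. The main obstacle is the bound on $\lambda_2$, which requires \emph{uniform} proximality: the angle between the attracting eigenline $\xi(\gamma^+)$ and the hyperplane $\ker\eta(\gamma^+)$ must be uniform over a compact fundamental domain for $\Cc$. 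I expect this uniformity to follow from a compactness argument exploiting the closedness of (1) and (2) on $\overline{\Cc}\cap\partial\Omega$ together with irreducibility of $\Lambda$, which rules out sequences along which $\rho(\gamma_n)$ degenerates so that $\xi(\gamma_n^+)$ ceases to be a simple eigendirection.
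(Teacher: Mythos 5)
Your construction of the boundary maps and the transversality check are sound, but the final step --- verifying the eigenvalue-gap characterization of projective Anosov directly --- contains a genuine gap, and it is exactly the step you flag as the ``main obstacle.'' What is needed is a \emph{quantitative} statement: $\log\bigl(\lambda_1(\rho(\gamma))/\lambda_2(\rho(\gamma))\bigr)\geq C\,\ell_S(\gamma)-c$ uniformly over all $\gamma\in\Gamma$. The compactness argument you sketch (irreducibility plus closedness of the conditions on $\overline{\Cc}\cap\partial\Omega$ preventing $\xi(\gamma_n^+)$ from degenerating as a simple eigendirection) only controls qualitative proximality of individual elements; it does not produce the uniform exponential lower bound on the gap in terms of word length, which is the actual analytic content of the Anosov condition. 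The paper avoids this entirely: once one has continuous, $\rho$-equivariant, transverse, dynamics-preserving maps $\xi$ and $\eta$ for an \emph{irreducible} representation, Proposition 4.10 of~\cite{GW2012} says the representation is projective Anosov --- no direct eigenvalue estimate is required. Your argument up to that point produces exactly the hypotheses of that proposition, so the fix is to invoke it rather than attempt the estimate by hand.

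On the earlier steps you take genuinely different routes from the paper, and they are plausible but under-specified. For the equivalence of (1) and (2), the paper does not use duality at all: it proves a Benz\'ecri-type rescaling theorem for convex cocompact actions (Theorem~\ref{thm:rescaling}) and applies it twice --- blowing up at a non-extreme $C^1$ point produces a non-$C^1$ point of $\overline{\Cc}\cap\partial\Omega$ in the limit, and blowing up at a non-$C^1$ extreme point produces a segment in $\overline{\Cc}\cap\partial\Omega$; both directions are handled by the same tool. Your duality route requires establishing cocompactness of the dual action on a dual convex set and then closing the exposed-versus-extreme gap, neither of which is routine (this is closer to the approach of Danciger--Gu\'eritaud--Kassel). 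For word hyperbolicity, the paper argues directly that $(\Cc,H_\Omega)$ has uniformly thin triangles, translating degenerating triangles back to a fundamental domain and using extremality of boundary points to force a contradiction via the Gromov product (Lemma~\ref{lem:GP}); your Bowditch route is viable, but cocompactness of the action on distinct triples of $\partial_\infty\Cc$ is essentially equivalent in difficulty to the thin-triangle estimate, so it buys nothing and still needs to be carried out.
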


\begin{remark} \ \begin{enumerate}
\item Theorem~\ref{thm:RCC_anosov_intro} can be seen as a generalization of Theorem~\ref{thm:SO_case} part (1) to the case when the representation is not assumed to preserve a non-degenerate bilinear form (see Remarks~\ref{rmk:connections_2} and~\ref{rmk:connections}). 
\item This result was established independently by Danciger, Gu{\'e}ritaud, and Kassel, see Theorems 1.4 and 1.15 in~\cite{DGK2017b} and Subsection~\ref{subsec:DGK} below. 
\end{enumerate}
\end{remark}

\subsection{When a projective Anosov representation acts convex cocompactly}

In general a projective Anosov representation will not preserve a properly convex domain:

\begin{example}\label{ex:non_convex_cocompact} Consider a cocompact lattice $\Lambda \leq \SL_2(\Rb)$ and consider the representation $\rho: \SL_2(\Rb) \rightarrow \SL_3(\Rb)$ given by 
\begin{align*}
\rho(g) = \begin{pmatrix} g & \\ & 1 \end{pmatrix}.
\end{align*}
Then the representation $\rho|_\Lambda: \Lambda \rightarrow \PSL_3(\Rb)$ is projective Anosov and the image of the boundary map is 
\begin{align*}
\Lc:=\{ [ x_1 : x_2 : 0 ] \in \Pb(\Rb^3) : x_1, x_2 \in \Rb, (x_1, x_2) \neq 0\}.
\end{align*}
From this, it is easy to see that $\rho(\Lambda)$ cannot preserve a properly convex domain $\Omega$ because then we would have $\Lc \subset \partial \Omega$. 
\end{example}

The above example is simple to construct, but is not an irreducible representation. To obtain an example of an irreducible projective Anosov representation which does not preserve a properly convex domain, one can consider Hitchin representations of surface groups in $\SL_{2d}(\Rb)$, see Proposition 1.7 in~\cite{DGK2017b}.

With some mild conditions on $\Gamma$ we can prove that every projective Anosov representation of $\Gamma$ acts convex cocompactly on a properly convex domain.

\begin{theorem}\label{thm:condC} (see Section~\ref{sec:construction}) Suppose $\Gamma$ is a non-elementary word hyperbolic group which is not commensurable to a non-trivial free product or the fundamental group of a closed hyperbolic surface. If $\rho: \Gamma \rightarrow \PGL_{d}(\Rb)$ is an irreducible projective Anosov representation, then there exists a properly convex domain $\Omega \subset \Pb(\Rb^{d})$ such that $\rho(\Gamma)$ is a regular convex cocompact subgroup of $\Aut(\Omega)$.
\end{theorem}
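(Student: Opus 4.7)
The plan is to construct, from the boundary data of $\rho$, a $\rho(\Gamma)$-invariant properly convex domain $\Omega \subset \Pb(\Rb^{d})$ on which $\rho(\Gamma)$ acts regular convex cocompactly. Throughout, let $\xi\colon\partial\Gamma\to\Pb(\Rb^{d})$ and $\eta\colon\partial\Gamma\to\Pb(\Rb^{d*})$ denote the transverse, dynamics-preserving, $\rho$-equivariant boundary maps provided by projective Anosovness. First, since $\Gamma$ is non-elementary word hyperbolic and not commensurable to a non-trivial free product, Stallings' theorem on ends forces $\Gamma$ to be one-ended, so $\partial\Gamma$ is connected. The second hypothesis---that $\Gamma$ is not commensurable to a closed hyperbolic surface group---ensures $\partial\Gamma$ is not homeomorphic to $S^{1}$, by the Tukia--Gabai--Casson--Jungreis characterization of convergence groups on $S^{1}$.

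I construct $\Omega$ as follows. Each $\eta(x)$ determines a projective hyperplane $H_{x}\subset\Pb(\Rb^{d})$; transversality gives $\xi(y)\notin H_{x}$ when $y\neq x$, and dynamics-preservation at periodic points makes $H_{x}$ the unique supporting hyperplane to $\xi(\partial\Gamma)$ at $\xi(x)$. Using connectedness of $\partial\Gamma$, and after possibly replacing $\Gamma$ by an index-two subgroup (which preserves the hypotheses), I fix a continuous equivariant lift $\tilde\eta\colon\partial\Gamma\to\Rb^{d*}\setminus\{0\}$ and let $C^{\vee}\subset\Rb^{d*}$ be the closed convex cone generated by $\tilde\eta(\partial\Gamma)$. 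Irreducibility of $\rho$ (and hence of its dual) implies that $\tilde\eta(\partial\Gamma)$ spans $\Rb^{d*}$, and the hypotheses on $\Gamma$ are used to rule out antipodal pairings in $\tilde\eta(\partial\Gamma)$ and thereby show that $C^{\vee}$ is salient (contains no line). The open dual cone $C\subset\Rb^{d}$ is then properly convex, and $\Omega$ is its projectivization; $\rho(\Gamma)$ preserves $\Omega$ by equivariance, and $\xi(\partial\Gamma)\subset\partial\Omega$ with $H_{x}$ supporting $\Omega$ at $\xi(x)$.

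Now let $\Cc\subset\Omega$ be the closed convex hull of $\xi(\partial\Gamma)$ inside $\Omega$; it is closed, convex, and $\rho(\Gamma)$-invariant, with $\overline{\Cc}\cap\partial\Omega=\xi(\partial\Gamma)$. For cocompactness of $\rho(\Gamma)\curvearrowright\Cc$, I use the standard fact that the $\Gamma$-action on $\partial^{(2)}\Gamma\times\Rb$ is cocompact (since $\Gamma$ is word hyperbolic) together with the $\Gamma$-equivariant map $(x,y,t)\mapsto$ point at Hilbert-parameter $t$ on the geodesic of $\Omega$ from $\xi(x)$ to $\xi(y)$, whose image is $\rho(\Gamma)$-cocompact in $\Cc$ by continuity of $\xi$; proper discontinuity of $\rho(\Gamma)\curvearrowright\Omega$ on $\Cc$ follows from Anosov contraction. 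Regularity at each $\xi(x)\in\partial\Omega$ is immediate: uniqueness of the supporting hyperplane $H_{x}$ gives the $C^{1}$ condition, and any nontrivial segment in $\partial\Omega$ through $\xi(x)$ would lie in $H_{x}$, contradicting $\xi(y)\notin H_{x}$ for $y$ in a punctured neighborhood of $x$, which gives extremeness.

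The main obstacle is showing salience of $C^{\vee}$---equivalently, that the convex-hull construction does not collapse. This is precisely where the hypotheses on $\Gamma$ bite: for irreducible Hitchin representations of surface groups into $\PSL_{2d}(\Rb)$, the boundary curve $\xi(S^{1})$ winds in projective space so that every natural candidate for $C^{\vee}$ contains a line, and indeed no invariant properly convex domain exists (cf.\ Proposition~1.7 of~\cite{DGK2017b}). For one-ended, non-surface hyperbolic $\Gamma$, the higher topological complexity of $\partial\Gamma$---in particular the absence of any natural circular ordering---combined with irreducibility, should provide enough room to eliminate the antipodal pairings and force salience, completing the construction.
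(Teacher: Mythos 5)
Your overall strategy coincides with the paper's: lift $\eta$ to $\widetilde\eta\colon\partial\Gamma\to\Rb^{d*}\setminus\{0\}$, take $\Omega$ to be (the projectivization of the interior of) the dual of the cone generated by $\widetilde\eta(\partial\Gamma)$, and let $\Cc$ be the convex hull of $\xi(\partial\Gamma)$ in $\Omega$. But the one step that carries all the difficulty --- salience of $C^{\vee}$, or more precisely the existence of continuous equivariant lifts with $\widetilde\eta(y)\bigl(\widetilde\xi(x)\bigr)>0$ for all distinct $x,y$ --- is exactly the step you do not prove: you end by saying the hypotheses ``should provide enough room \dots to force salience.'' That is the theorem's core content, and it is where the hypotheses on $\Gamma$ actually enter. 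The paper's argument is concrete: by Swarup, $\partial\Gamma\setminus\{x\}$ is connected for every $x$, and since $\partial\Gamma$ is not a circle there is a pair $u,w$ with $\partial\Gamma\setminus\{u,w\}$ connected; transversality puts $\xi(\partial\Gamma\setminus\{u,w\})$ inside one component of $\Pb(\Rb^{d})\setminus(\ker\eta(u)\cup\ker\eta(w))$, which produces a single hyperplane $\ker f$ disjoint from all of $\xi(\partial\Gamma)$; one then lifts $\xi$ by requiring $\widetilde f(\widetilde\xi(x))>0$ and lifts $\eta(x)$ using connectedness of $\partial\Gamma\setminus\{x\}$ to get the sign condition. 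Note also that passing to an index-two subgroup does not help here: a finite-index subgroup has the same boundary and the same image $\xi(\partial\Gamma)$, so it cannot create the missing hyperplane avoiding $\xi(\partial\Gamma)$.

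Two secondary points would also need repair even granting the lifts. First, your cocompactness argument maps $\partial^{(2)}\Gamma\times\Rb$ onto the union of the open segments $(\xi(x),\xi(y))$, but that union is not $\Cc$; one must show every point of the convex hull lies within uniformly bounded Hilbert distance of such a segment (the paper does this via Carath\'eodory's theorem and an induction on the number of extreme points in a convex combination, together with a geodesic-shadowing lemma). Second, your extremeness argument is not valid as stated: a nontrivial segment of $\partial\Omega$ through $\xi(x)$ does lie in the unique supporting hyperplane $\ker\eta(x)$, but its points need not belong to $\xi(\partial\Gamma)$, so there is no contradiction with $\xi(y)\notin\ker\eta(x)$ for $y\neq x$. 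The paper proves extremeness of $\xi(x)$ either by a rescaling argument (Lemma~\ref{lem:E}) or by a dynamical argument using the north--south behavior of $\rho(\gamma_n)$ from Lemma~\ref{lem:dynamics}; some such input is needed.
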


\begin{remark} Work of Stallings implies that $\Gamma$ is not commensurable to a non-trivial free product if and only if $\partial \Gamma$ is connected ~\cite{S1971, S1968}. So Theorem~\ref{thm:condC} can be seen as an analogue of Theorem~\ref{thm:SO_case} part (2) in the case when the representation is not assumed to preserve a non-degenerate bilinear form. 
\end{remark}

We can also prove that once the image acts on some properly convex domain, then it acts convex cocompactly on some (possibly different) properly convex domain: 

\begin{theorem}\label{thm:proper_action_implies} (see Section~\ref{sec:construction}) Suppose $\Gamma$ is a word hyperbolic group. If $\rho: \Gamma \rightarrow \PGL_{d}(\Rb)$ is an irreducible projective Anosov representation and $\rho(\Gamma)$ preserves a properly convex domain in $\Pb(\Rb^{d})$, then there exists a properly convex domain $\Omega \subset \Pb(\Rb^{d})$ such that $\rho(\Gamma)$ is a regular convex cocompact subgroup of $\Aut(\Omega)$.
\end{theorem}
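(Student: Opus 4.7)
The plan is to construct $\Omega$ intrinsically from the Anosov boundary data $(\xi, \eta)$ together with the given invariant domain $\Omega_0$, verify that the boundary of a canonically associated convex core meets $\partial \Omega$ in $C^1$ extreme points, and finally establish cocompactness. The preliminary observation is that $\xi(\partial \Gamma) \subset \partial \Omega_0$. Indeed, for any infinite-order $\gamma \in \Gamma$, the point $\xi(x_\gamma^+)$ is the forward limit under $\rho(\gamma)$ of any orbit in $\Omega_0$ and hence lies in $\overline{\Omega_0}$; an interior fixed point of the attracting element $\rho(\gamma)$ is forbidden, because $\rho(\gamma^{-1})$ would then expand a neighborhood of $\xi(x_\gamma^+)$, contradicting invariance of $\Omega_0$. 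Density of attracting fixed points in $\partial \Gamma$ and continuity of $\xi$ give $\xi(\partial \Gamma) \subset \partial \Omega_0$, and the same argument applied to the contragredient representation on $\Pb(\Rb^{d*})$ with the dual domain $\Omega_0^*$ yields $\eta(\partial \Gamma) \subset \partial \Omega_0^*$; in particular each hyperplane $H_x := \Pb(\ker \eta(x))$ supports $\Omega_0$ at $\xi(x)$.

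I then construct $\Omega$ by a duality procedure. Lift $\Omega_0$ to a sharp open convex cone $C_0 \subset \Rb^d$, lift each $\eta(x)$ compatibly to a functional $\eta_x$ nonnegative on $C_0$ with $\eta_x(\xi(x)) = 0$, and let $\Cc^*$ be the closed convex cone hull of $\{ \eta_x : x \in \partial \Gamma \}$ inside $\overline{C_0^*}$. Define
\[ \Omega := \mathrm{int}\{ [v] \in \Pb(\Rb^d) : \phi(v) \geq 0 \text{ for all } \phi \in \Cc^* \}. \]
This is a $\rho(\Gamma)$-invariant properly convex domain containing $\Omega_0$, and $H_x$ is a supporting hyperplane of $\Omega$ at $\xi(x)$ by construction. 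Let $\Cc$ be the closed convex hull of $\xi(\partial \Gamma)$ inside $\overline{\Omega}$; this is well-defined since $\overline{\Omega}$ is properly convex. The transversality condition $\xi(x) \oplus \ker \eta(y) = \Rb^d$ for $x \neq y$ then forces $\overline{\Cc} \cap \partial \Omega = \xi(\partial \Gamma)$, makes each $\xi(x)$ an extreme point of $\Omega$, and makes $H_x$ the unique supporting hyperplane of $\Omega$ at $\xi(x)$. Thus every point of $\overline{\Cc} \cap \partial \Omega$ is automatically a $C^1$ extreme point.

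The main obstacle is showing that $\rho(\Gamma)$ acts cocompactly on $\Cc$. My plan is to show that every point of $\Cc \cap \Omega$ lies within uniformly bounded Hilbert distance of a projective segment $[\xi(x), \xi(y)]$ with $x \neq y$ in $\partial \Gamma$, producing a coarsely $\Gamma$-equivariant surjection from $\partial^{(2)} \Gamma \times \Rb$ onto $\Cc \cap \Omega$. The projective Anosov uniform estimates, applied both to $\rho$ and to its contragredient, yield $\log \bigl(\lambda_1(\rho(\gamma))/\lambda_d(\rho(\gamma))\bigr) \gtrsim \ell_S(\gamma)$, which translates into a uniform positive lower bound on the Hilbert translation length of $\rho(\gamma)$ on $\Cc \cap \Omega$ comparable to word length. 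Combined with properness of the $\Gamma$-action on $\partial^{(2)} \Gamma \times \Rb$, a Svarc--Milnor-style argument then yields the desired cocompactness. Once this is in hand, $(\Omega, \Cc)$ satisfies all conditions of Definition~\ref{def:RCC}, and so $\rho(\Gamma)$ is a regular convex cocompact subgroup of $\Aut(\Omega)$. Steps 1--3 are essentially formal consequences of transversality and invariance; the cocompactness step is the crux and will require a careful marriage of Hilbert-metric estimates on $\Omega$ with the uniform expansion of projective Anosov representations.
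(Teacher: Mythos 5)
Your construction of $\Omega$ is the one the paper uses: in the presence of an invariant properly convex domain $\Omega_0$ one lifts $\xi$ into the closure of a cone over $\Omega_0$ and $\eta$ to functionals positive on that cone, so that $\wt{\eta}(y)(\wt{\xi}(x))>0$ for $x\neq y$, and then sets $\Omega=\{[v]:\wt{\eta}(x)(v)>0 \text{ for all } x\in\partial\Gamma\}$, with $\Cc$ the closed convex hull of $\xi(\partial\Gamma)$. The $C^1$ statement is indeed formal from this: $\overline{\Omega^*}$ is the projectivized convex hull of $\wt{\eta}(\partial\Gamma)$, so by Carath\'eodory any supporting functional at $\xi(x)$ has the form $\left[\sum\lambda_i\wt{\eta}(x_i)\right]$, and vanishing at $\wt{\xi}(x)$ forces $N=1$ and $x_1=x$.

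However, two steps you label as formal or defer are exactly where the content of the theorem lies, and your plan as written does not close them. (i) Extremality of $\xi(x)$ as a point of $\Omega$ (which Definition~\ref{def:RCC} requires, not merely extremality in $\overline{\Cc}$) is not a consequence of transversality: transversality excludes segments of $\partial\Omega$ through $\xi(x)$ that meet $\xi(\partial\Gamma)$ a second time, but says nothing about a segment $[\xi(x),p]\subset\partial\Omega\cap\ker\eta(x)$ with $p\notin\xi(\partial\Gamma)$. The paper rules this out using the already-established cocompactness of the action on $\Cc$ together with Lemma~\ref{lem:dynamics}: the $\rho(\gamma_n)$ converge in $\Pb(\End(\Rb^d))$ to a rank-one projection with image $\xi(x)$, which is incompatible with such a segment. (ii) For cocompactness, the tools you name cannot deliver the conclusion: a lower bound on translation lengths comparable to word length gives a quasi-isometric embedding of $\Gamma$ into $(\Omega,H_\Omega)$, and Svarc--Milnor runs in the opposite direction --- neither implies that the orbit is coarsely dense in $\Cc$, which is what cocompactness amounts to. The substantive claim in your plan, that every point of $\Cc$ is uniformly Hilbert-close to some $[\xi(x),\xi(y)]$ and that each such segment is uniformly close to the orbit, is precisely what must be proved. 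The paper does this with a geodesic-shadowing lemma (word geodesics from $x$ to $y$ uniformly fellow-travel $[\xi(x),\xi(y)]$ in $H_\Omega$, proved by a compactness argument again resting on Lemma~\ref{lem:dynamics}) followed by an induction on the number of terms in a Carath\'eodory expression $\left[\sum_{i=1}^{N}\lambda_i\wt{\xi}(x_i)\right]$, $N\le d+1$, of a point of $\Cc$. Until you supply an argument of that kind, your step (a) is an assertion rather than a proof.
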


\begin{remark} This result was established independently by Danciger, Gu{\'e}ritaud, and Kassel, see Theorems 1.4 and 1.15 in~\cite{DGK2017b} and Subsection~\ref{subsec:DGK} below. 
\end{remark}

Using Theorem~\ref{thm:proper_action_implies}, we can construct a convex cocompact action for any projective Anosov representation by post composing with another representation. 

\begin{example}\label{ex:pos_def_cone} Let $\Sym_{d}(\Rb)$ be the vector space of symmetric $d$-by-$d$ real matrices and consider the representation 
\begin{align*}
S: \PGL_{d}(\Rb) \rightarrow \PGL( \Sym_{d}(\Rb))
\end{align*}
given by 
\begin{align*}
S(g) X = gX \, \prescript{t}{}{g}
\end{align*}
Then 
\begin{align*}
\Pc := \{ [X] \in \Pb( \Sym_{d}(\Rb) ): X > 0\}
\end{align*}
is a properly convex domain in $\Pb(\Sym_{d}(\Rb))$ and $S(\PGL_{d}(\Rb)) \leq \Aut(\Pc)$. 
\end{example}

Combining Theorem~\ref{thm:proper_action_implies} with the above examples establishes the following corollary. 

\begin{corollary}\label{cor:convex_cocompact} (see Section~\ref{sec:cor_convex_cocompact})
 Suppose $\Gamma$ is a word hyperbolic group and $\rho: \Gamma \rightarrow \PGL_{d}(\Rb)$ is an irreducible projective Anosov representation. 
 Let
 \begin{align*}
 V = \Spanset_{\Rb} \{ \xi(x)  \prescript{t}{}{\xi(x)}: x \in \partial \Gamma\} \subset \Sym_{d}(\Rb).
 \end{align*}
 Then there exists a properly convex domain $\Omega \subset \Pb(V)$ such that $(S\circ \rho)(\Gamma)$ is a regular convex cocompact subgroup of $\Aut(\Omega)$.  \end{corollary}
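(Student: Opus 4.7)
The plan is to apply Theorem~\ref{thm:proper_action_implies} to the induced representation $S\circ\rho:\Gamma\to\PGL(V)$. Three points need verification: (i) $V$ is $(S\circ\rho)(\Gamma)$-invariant; (ii) $S\circ\rho:\Gamma\to\PGL(V)$ is irreducible and projective Anosov; and (iii) $(S\circ\rho)(\Gamma)$ preserves a properly convex domain in $\Pb(V)$.

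Invariance of $V$ follows immediately from the equivariance of $\xi$, since $S(\rho(\gamma))(\xi(x)\,\prescript{t}{}{\xi(x)})=\xi(\gamma x)\,\prescript{t}{}{\xi(\gamma x)}$. For (iii), I would take $\Omega_0:=\Pc\cap\Pb(V)$, which is preserved by $(S\circ\rho)(\Gamma)$ and properly convex as the intersection of $\Pc$ with a projective subspace. Its interior in $\Pb(V)$ is non-empty: by irreducibility of $\rho$ the invariant set $\{\xi(x):x\in\partial\Gamma\}$ spans $\Rb^{d}$, so one can pick $x_1,\dots,x_{d}\in\partial\Gamma$ with $\{\xi(x_i)\}$ linearly independent, and then $\sum_i\xi(x_i)\,\prescript{t}{}{\xi(x_i)}\in V$ is positive definite.

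For the projective Anosov part of (ii), the boundary maps are $\xi_V(x):=[\xi(x)\,\prescript{t}{}{\xi(x)}]\in\Pb(V)$ and $\eta_V(x):=[\eta(x)\otimes\eta(x)|_V]\in\Pb(V^*)$; they inherit continuity, equivariance, and the dynamics-preserving property from $\xi,\eta$, and transversality follows from $\eta(y)(\xi(x))^2\ne 0$ for $x\ne y$. For the eigenvalue gap, since $\xi(\gamma^+)\,\prescript{t}{}{\xi(\gamma^+)}$ spans the $1$-dimensional top eigenspace of $S(\rho(\gamma))$ on $\Sym_d(\Rb)$ and lies in $V$, the top eigenvalue of $S(\rho(\gamma))|_V$ equals $\lambda_1(\rho(\gamma))^2$; the second eigenvalue of $S(\rho(\gamma))|_V$ is bounded above by the second eigenvalue $\lambda_1(\rho(\gamma))\lambda_2(\rho(\gamma))$ of $S(\rho(\gamma))$ on $\Sym_d(\Rb)$, so $\lambda_1/\lambda_2\geq\lambda_1(\rho(\gamma))/\lambda_2(\rho(\gamma))\geq e^{C\ell_S(\gamma)-c}$ on $V$.

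The main obstacle is the irreducibility of $(S\circ\rho)|_V$, which I would argue in two steps. First, $V$ is $\Gamma$-indecomposable: for any $\Gamma$-equivariant decomposition $V=V_1\oplus V_2$, the $1$-dimensional top eigenspace $\Rb\cdot\xi(\gamma^+)\,\prescript{t}{}{\xi(\gamma^+)}$ of $(S\circ\rho)(\gamma)$ must lie entirely in one summand (say $V_1$), and then $\Gamma$-invariance together with density of $\Gamma\cdot\gamma^+$ in $\partial\Gamma$ (valid since $\Gamma$ is non-elementary in all nontrivial cases) and continuity of $\xi$ force $V_1\supseteq V$, so $V_2=0$. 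Second, since $\rho$ is irreducible, the Zariski closure $G':=\overline{\rho(\Gamma)}^Z\subset\SL_{d}(\Rb)$ is reductive, so the $G'$-invariant subspace $V$ of $\Sym^2\Rb^{d}$ is semisimple as a $G'$-representation; combined with $\Gamma$-indecomposability (which implies $G'$-indecomposability since $\Gamma$-invariance equals $G'$-invariance by Zariski density), $V$ must be $G'$-irreducible, hence $\Gamma$-irreducible. With (i), (ii), (iii) in hand, Theorem~\ref{thm:proper_action_implies} yields the desired properly convex $\Omega\subset\Pb(V)$ on which $(S\circ\rho)(\Gamma)$ acts as a regular convex cocompact subgroup.
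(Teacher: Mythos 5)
Your proposal is correct and follows exactly the route the paper intends: the paper's entire ``proof'' is the sentence that the corollary follows by combining Theorem~\ref{thm:proper_action_implies} with the example of $S$ and the cone $\Pc$, i.e.\ one applies Theorem~\ref{thm:proper_action_implies} to $S\circ\rho$ acting on $\Pb(V)$ and preserving $\Pc\cap\Pb(V)$. The details you supply --- non-emptiness of $\Pc\cap\Pb(V)$ via a positive-definite sum of rank-one projections, the eigenvalue gap from the top eigenline $\xi(x^+)\,\prescript{t}{}{\xi(x^+)}\in V$, and especially the irreducibility of $(S\circ\rho)|_V$ via indecomposability plus semisimplicity over the reductive Zariski closure --- are exactly the verifications the paper leaves implicit, and they are carried out correctly.
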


In the context of Theorem~\ref{thm:proper_action_implies}, it is also worth mentioning a theorem of Benoist which gives a necessary and sufficient condition for a subgroup of $\GL_{d}(\Rb)$ to preserve a properly convex cone. Before stating Benoist theorem we need some terminology. An element $g \in \GL_{d}(\Rb)$ is called \emph{proximal} if it has a unique eigenvalue of maximal absolute value and a proximal element $g \in \GL_{d}(\Rb)$ is called \emph{positively proximal} if its unique eigenvalue of maximal absolute value is positive. Then a subgroup $G \leq \GL_{d}(\Rb)$ is called \emph{positively proximal} if $G$ contains a proximal element and every proximal element in $G$ is positively proximal. With this language, Benoist proved the following theorem. 

\begin{theorem}[{Benoist~\cite[Proposition 1.1]{B2000}}] If $G \leq \GL_{d}(\Rb)$ is an irreducible subgroup, then the following are equivalent: 
\begin{enumerate}
\item $G$ is positively proximal
\item $G$ preserves a properly convex cone $\Cc \subset \Rb^{d}$.
\end{enumerate}
\end{theorem}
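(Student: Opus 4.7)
My plan is to address the two implications separately, with the easier direction being $(2)\Rightarrow(1)$ and the substantive direction being $(1)\Rightarrow(2)$.

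For $(2)\Rightarrow(1)$, assume $G$ preserves a properly convex cone $\Cc\subset\Rb^d$. First I would show every proximal $g\in G$ is positively proximal. Let $v_g^+$ span the top eigenline of $g$ with eigenvalue $\mu$, and let $H_g$ be the complementary $g$-invariant hyperplane. Since $H_g$ is proper and $\Cc$ has nonempty interior (by irreducibility, the span of $\Cc$ is $\Rb^d$), pick $v\in \interior(\Cc)\setminus H_g$. Then $\abs{\mu}^{-n}g^n v$ converges to some nonzero $c\,v_g^+$; by closedness of $\overline{\Cc}$, the vector $\epsilon v_g^+$ lies in $\overline{\Cc}$ for some sign $\epsilon\in\{\pm1\}$. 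But then $g(\epsilon v_g^+)=\mu\epsilon v_g^+\in\overline{\Cc}$, and if $\mu<0$ we would have two antipodal rays in $\overline{\Cc}$, contradicting proper convexity. Hence $\mu>0$. For the existence of a proximal element in $G$, I would first note that irreducibility forces the Zariski closure of $G$ to be reductive (Burnside), and argue that a reductive subgroup of $\GL_d(\Rb)$ preserving a properly convex cone must contain a proximal element (one such argument uses the Hilbert metric on $\Pb(\interior\Cc)$ to reduce to a standard north-south dynamics / ping-pong construction on the induced isometric action).

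For $(1)\Rightarrow(2)$, I would construct the cone explicitly. For each proximal $g\in G$, positive proximality ensures the attracting eigenline carries a canonical positive ray, so the half-line $\Rb_{\geq0} v_g^+$ is well-defined. Let $X_G\subset\Rb^d$ be the union of these rays, and let $\Cc$ be the convex cone generated by $X_G$. Invariance under $G$ is immediate since $h\cdot v_g^+$ is an attracting eigenvector of $hgh^{-1}$ with the same positive eigenvalue. Nonemptiness follows from the existence of a proximal element, and since $\Spanset(X_G)$ is a nonzero $G$-invariant subspace, irreducibility gives $\Spanset(X_G)=\Rb^d$, so $\Cc$ has nonempty interior.

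The main obstacle is to show $\Cc$ is properly convex, i.e., $\Cc\cap(-\Cc)=\{0\}$. Suppose not: then $\sum_i t_i v_{g_i}^+ + \sum_j s_j v_{h_j}^+=0$ for positive scalars $t_i,s_j$ and proximal elements $g_i, h_j$ in $G$. The strategy is to apply a carefully chosen proximal element $g\in G$, after possibly perturbing the $g_i,h_j$ so that none of their attracting eigenvectors lies in the repelling hyperplane $H_g$, and analyze $\lim_{n\to\infty}\lambda_1(g)^{-n}g^n$ applied to the relation; this yields $\sum_i t_i\pi_g(v_{g_i}^+)+\sum_j s_j\pi_g(v_{h_j}^+)=0$, where $\pi_g$ is the linear functional vanishing on $H_g$ and equal to $1$ on $v_g^+$. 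The key lemma is that positive proximality forces $\pi_g(v_\gamma^+)\geq0$ for every proximal $\gamma\in G$ (with strict inequality for at least one choice of $g$). To prove this lemma, I would consider the sequence $g^N\gamma^N$, which by standard north-south dynamics is proximal for large $N$, and compute that its top eigenvalue is asymptotic to $\lambda_1(g)^N\lambda_1(\gamma)^N\cdot\pi_g(v_\gamma^+)\cdot\pi_\gamma^*(v_g^+)$ where $\pi_\gamma^*$ is the analogous dual functional for $\gamma$. Positive proximality forces this quantity to be positive, and a symmetric argument (swapping roles) pins down the sign of $\pi_g(v_\gamma^+)$ to be nonnegative. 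This nonnegativity, combined with the positivity of the $t_i,s_j$, contradicts the vanishing sum, completing the proof.

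The hardest step is the sign lemma $\pi_g(v_\gamma^+)\geq0$: it is where the hypothesis that \emph{every} proximal element (not merely the original ones) is positively proximal plays a crucial role, since the asymptotic analysis requires controlling signs of top eigenvalues of arbitrary words in $G$.
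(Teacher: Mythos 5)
This statement is quoted in the paper from Benoist~\cite[Proposition 1.1]{B2000}; the paper gives no proof of its own, so I can only assess your argument on its merits.

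Your direction $(2)\Rightarrow(1)$ is fine as far as the sign of the top eigenvalue of a given proximal element goes (the antipodal-rays argument is correct), but the existence of a proximal element is dispatched too quickly: acting by isometries of the Hilbert metric on $\Pb(\interior\Cc)$ does not by itself produce north--south dynamics (isometries can be elliptic or parabolic). The standard route is to show that $\Gamma$ is unbounded modulo scalars (else average an interior point of $\Cc$ over the compact closure to get an invariant line, contradicting irreducibility) and then that the minimal rank of a nonzero element of $\overline{\Rb_{>0}\Gamma}\subset\End(\Rb^d)$ is $1$, using the invariant cone and irreducibility; a rank-one limit $u\otimes f$ with $f(u)\neq 0$ then forces nearby group elements to be proximal. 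This can be filled in, but it is not a one-line reduction.

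The genuine gap is in $(1)\Rightarrow(2)$, at the very first step: the claim that ``positive proximality ensures the attracting eigenline carries a canonical positive ray'' is false. The attracting eigenline of a proximal $g$ is a one-dimensional subspace with two rays, and the positivity of the eigenvalue does not distinguish between them ($g$ preserves both). Consequently the set $X_G$, the cone $\Cc$ it generates, and your key inequality $\pi_g(v_\gamma^+)\geq 0$ are not well defined: each changes under the replacement $v_\gamma^+\mapsto -v_\gamma^+$. The only quantity whose sign is actually pinned down by positive proximality of the products $g^N\gamma^N$ is the lift-independent product $\pi_g(v_\gamma^+)\,\pi_\gamma(v_g^+)$ (normalized so that $\pi_g(v_g^+)=1$); this tells you the two factors have the \emph{same} sign, not which sign. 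Upgrading this pairwise compatibility to a globally consistent, $G$-equivariant choice of lifts $v_\gamma^+$ --- for instance by fixing a reference proximal element $g_0$ with lift $v_0$, declaring $v_\gamma^+$ to be the lift with $f_{g_0}(v_\gamma^+)>0$, and then verifying independence of the reference and equivariance via the pairwise positivity together with a transversality/density argument on the set of attracting points --- is precisely the heart of Benoist's proof, and it is the step your proposal assumes rather than proves. A smaller but real issue in the same paragraph: you cannot ``perturb the $g_i,h_j$'' appearing in the linear relation $\sum_i t_i v_{g_i}^+ + \sum_j s_j v_{h_j}^+=0$, since the relation involves those specific vectors; what you need instead is to choose the test element $g$ so that its repelling hyperplane $H_g$ misses the finitely many attracting lines in the relation, which requires a separate (irreducibility-based) argument.
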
 

As an application, we will apply Theorem~\ref{thm:proper_action_implies} and Benoist's theorem to Hitchin representations in certain dimensions.

\begin{definition} Suppose that $\Gamma \leq \PSL_2(\Rb)$ is a torsion-free cocompact lattice and $\iota: \Gamma \hookrightarrow \PSL_2(\Rb)$ is the inclusion representation. For $d > 2$, let $\tau_d : \PSL_2(\Rb) \rightarrow \PSL_d(\Rb)$ be the unique (up to conjugation) irreducible representation. Then the connected component of $\tau_d \circ \iota$ in $\Hom(\Gamma, \PSL_d(\Rb))$, denoted $\Hc_d(\Gamma)$, is called the \emph{Hitchin component of $\Gamma$ in $\PSL_d(\Rb)$}.  Labourie~\cite{L2006} proved that every representation in $\Hc_d(\Gamma)$ is projective Anosov (it is actually $B$-Anosov where $B \leq \PSL_d(\Rb)$ is a minimal parabolic subgroup). 
\end{definition}

\begin{corollary}\label{cor:H_acts_CC}  Suppose that $\Gamma \leq \PSL_2(\Rb)$ is a torsion-free cocompact lattice and $\rho: \Gamma \rightarrow \PSL_d(\Rb)$ is in the Hitchin component. If $d$ is odd, then there exists a properly convex domain $\Omega \subset \Pb(\Rb^d)$ such that $\rho(\Gamma)$ is a regular convex cocompact subgroup of $\Aut(\Omega)$.
\end{corollary}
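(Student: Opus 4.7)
The plan is to combine Theorem~\ref{thm:proper_action_implies} with Benoist's theorem on positively proximal groups. By Theorem~\ref{thm:proper_action_implies}, it suffices to verify that $\rho$ is an irreducible projective Anosov representation and that $\rho(\Gamma)$ preserves a properly convex domain in $\Pb(\Rb^d)$. Irreducibility and projective Anosovness hold for every Hitchin representation (the latter by Labourie's theorem cited just above the corollary; irreducibility is also due to Labourie). So the entire content of the corollary is the existence of an invariant properly convex domain, and by Benoist's theorem this reduces to showing that the irreducible subgroup $\rho(\Gamma) \leq \PSL_d(\Rb) = \GL_d(\Rb)/\Rb^*$ is positively proximal, which in turn reduces (after choosing a lift) to showing positive proximality of a lift to $\SL_d(\Rb) = \PSL_d(\Rb)$ (using $d$ odd).

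I would first treat the Fuchsian representation $\rho_0 := \tau_d \circ \iota$. Since $d$ is odd, we have $\tau_d(-I) = (-1)^{d-1} I = I$, so $\tau_d$ factors through $\PSL_2(\Rb)$ and identifies with the $(d-1)$-st symmetric power acting on $\Rb^d \cong \Sym^{d-1}(\Rb^2)$. For any hyperbolic $\gamma \in \Gamma$ one may choose a lift $\tilde\gamma \in \SL_2(\Rb)$ with positive eigenvalues $\lambda, \lambda^{-1}$ where $\lambda > 1$; then $\tau_d(\gamma)$ has eigenvalues $\lambda^{d-1} > \lambda^{d-3} > \cdots > \lambda^{-(d-1)}$, all positive, with a simple top eigenvalue $\lambda^{d-1} > 0$. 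Hence $\rho_0(\gamma)$ is proximal with positive top eigenvalue, so $\rho_0(\Gamma)$ is positively proximal.

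Next, I would propagate positive proximality along the Hitchin component. Fix $\gamma \in \Gamma$ of infinite order. By Labourie's theorem, $\rho(\gamma)$ is proximal for every $\rho \in \Hc_d(\Gamma)$; equivalently the top eigenvalue $\lambda_1(\rho(\gamma))$ is a simple real eigenvalue whose modulus strictly dominates the moduli of all other eigenvalues. As $\rho$ varies in $\Hc_d(\Gamma)$ the simple eigenvalue $\lambda_1(\rho(\gamma)) \in \Rb \setminus \{0\}$ depends continuously on $\rho$ (once a consistent $\SL_d$-lift is fixed, which is unambiguous since $\PSL_d(\Rb) = \SL_d(\Rb)$ for odd $d$), so by connectedness of $\Hc_d(\Gamma)$ its sign is constant. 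Since it is positive at $\rho_0$, it is positive everywhere on $\Hc_d(\Gamma)$. Thus every proximal element of $\rho(\Gamma)$ is positively proximal, and $\rho(\Gamma)$ contains proximal elements, so $\rho(\Gamma)$ is positively proximal.

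By Benoist's theorem, the irreducible positively proximal subgroup $\rho(\Gamma)$ preserves a properly convex cone $\Cc \subset \Rb^d$, hence a properly convex domain $\Pb(\Cc) \subset \Pb(\Rb^d)$. Applying Theorem~\ref{thm:proper_action_implies} now produces a (possibly different) properly convex domain $\Omega \subset \Pb(\Rb^d)$ on which $\rho(\Gamma)$ acts as a regular convex cocompact subgroup. The main technical point is the parity step: for even $d$ the symmetric-power eigenvalues $\lambda^{d-1}, \ldots, \lambda^{-(d-1)}$ alternate in sign when one lifts to $\SL_2$, and the identification $\tau_d(-I) = -I$ obstructs the clean sign argument above, which is exactly why the hypothesis $d$ odd is essential.
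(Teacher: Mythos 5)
Your proposal is correct and follows essentially the same route as the paper: compute the eigenvalues of $\tau_d(g)$ for the Fuchsian representation to see they are all positive when $d$ is odd, use connectedness of $\Hc_d(\Gamma)$ together with Labourie's proximality to conclude $\rho(\Gamma)$ is positively proximal, invoke Benoist's theorem to produce an invariant properly convex cone, and finish with Theorem~\ref{thm:proper_action_implies}. The only difference is that you spell out the continuity-of-the-sign argument along the Hitchin component, which the paper leaves implicit.
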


\begin{remark} This result was also established independently by Danciger, Gu{\'e}ritaud, and Kassel, see Proposition 1.7 in~\cite{DGK2017b} and Subsection~\ref{subsec:DGK} below. In the case where $d$ is even, Danciger, Gu{\'e}ritaud, and Kassel showed that $\rho(\Gamma)$ cannot even preserve a properly convex domain in $\Rb(\Rb^d)$. 
\end{remark}

Since the proof is short we include it here.

\begin{proof} If we identify $\Rb^d$ with the vector space of homogenous polynomials $P: \Rb^d \rightarrow \Rb$ of degree $d-1$, then the representation $\tau_d : \PSL_2(\Rb) \rightarrow \PSL_d(\Rb)$ is given by 
\begin{align*}
\tau_d(g) \cdot P = P \circ g^{-1}
\end{align*}

Since $d$ is odd, $\PSL_d(\Rb) = \SL_d(\Rb)$ and if $g \in \PSL_2(\Rb)$ has eigenvalues with absolute values $\lambda, \lambda^{-1}$ then $\tau_d(g)$ has eigenvalues 
\begin{align*}
\lambda^{d-1}, \lambda^{d-3}, \dots, \lambda^{-(d-1)}.
\end{align*}
Hence each eigenvalue of $\tau_d(g)$ is positive. 

Now fix some $\rho \in \Hc_d(\Gamma)$. Since $\Hc_d(\Gamma)$ is connected, we see that $\rho(\Gamma)$ is positively proximal. So by Benoist's theorem $\rho(\Gamma)$ preserves a properly convex cone $\Cc \subset \Rb^{d}$. So by Theorem~\ref{thm:proper_action_implies},  there exists a properly convex domain $\Omega \subset \Pb(\Rb^d)$ so that $\rho(\Gamma)$ is a regular convex cocompact subgroup of $\Aut(\Omega)$.
\end{proof}

\subsection{Other applications:}

\subsubsection{Entropy rigidity:}

Suppose that $\Gamma$ is a group and let $[\Gamma]$ be the conjugacy classes of $\Gamma$. Given a representation $\rho: \Gamma \rightarrow \PGL_{d}(\Rb)$ define the \emph{Hilbert entropy} to be 
\begin{align*}
H_\rho= \limsup_{r \rightarrow \infty} \frac{1}{r} \log \# \left\{ [\gamma] \in [\Gamma] : \frac{1}{2} \log \left(\frac{\lambda_{1}(\rho(\gamma))}{\lambda_{d}(\rho(\gamma))} \right)\leq r\right\}.
\end{align*}

We will prove the following upper bound on entropy.

\begin{theorem}\label{thm:ent_rig}(see Section~\ref{sec:ent_rigidity})
Suppose $\Gamma$ is a word hyperbolic group and $\rho:\Gamma \rightarrow \PGL_{d}(\Rb)$ is an irreducible projective Anosov representation. If $\rho(\Gamma)$ preserves a properly convex domain in $\Pb(\Rb^{d})$, then 
\begin{align*}
H_\rho \leq d-2
\end{align*}
with equality if and only if $\rho(\Gamma)$ is conjugate to a cocompact lattice in $\PO(1,d-1)$. 
\end{theorem}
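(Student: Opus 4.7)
The plan is to reduce to an entropy bound for the Hilbert geodesic flow on a convex cocompact quotient, and then invoke (and suitably adapt) Crampon's entropy rigidity for convex divisible domains. First, by Theorem~\ref{thm:proper_action_implies} one may replace the ambient target with a regular convex cocompact action: there exists a properly convex $\Omega \subset \Pb(\Rb^d)$ and a $\rho(\Gamma)$-invariant closed convex $\Cc \subset \Omega$ with $\rho(\Gamma)\backslash \Cc$ compact and every point of $\overline{\Cc} \cap \partial \Omega$ a $C^1$ extreme point of $\Omega$. Because $\rho$ is projective Anosov, every infinite-order $\rho(\gamma)$ is biproximal, so its translation length in the Hilbert metric $d_\Omega$ equals $\tfrac{1}{2}\log\bigl(\lambda_1(\rho(\gamma))/\lambda_d(\rho(\gamma))\bigr)$. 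Hence $H_\rho$ is exactly the exponential growth rate of Hilbert translation lengths among conjugacy classes of $\Gamma$.

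Next, standard orbit-counting arguments on the compact quotient $\rho(\Gamma)\backslash \Cc$, together with the hyperbolicity of the Hilbert geodesic flow guaranteed by the $C^1$ extreme regularity along $\overline{\Cc} \cap \partial \Omega$, identify $H_\rho$ with both the topological entropy of the geodesic flow on $\rho(\Gamma)\backslash \Cc$ and the critical exponent of the Poincar\'e series of $\rho(\Gamma)$ acting on $(\Omega, d_\Omega)$. The upper bound $H_\rho \leq d-2$ then follows by adapting Crampon's Bishop-type Jacobi field estimate in Hilbert geometry: along any Hilbert geodesic converging to a $C^1$ extreme point of $\partial \Omega$, the Jacobian of the exponential map in the $(d-2)$ transverse directions grows at rate at most $1$ per unit length, while the radial direction is isometric. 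This is essentially a local computation near $\overline{\Cc}\cap \partial \Omega$, so it transfers from the divisible setting to the convex cocompact setting once the regularity built into Definition~\ref{def:RCC} is in hand.

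For the equality case, if $H_\rho = d-2$, Crampon's rigidity forces the Hilbert analogue of the Jacobi comparison to be saturated along every Hilbert geodesic with both endpoints in $\overline{\Cc}\cap \partial \Omega$, which in turn forces $\partial \Omega$ to osculate an ellipsoid to second order at every such endpoint. Density of the $\rho(\Gamma)$-orbit of these geodesics in the unit tangent bundle of $\Cc$, together with irreducibility of $\rho$ and the transitive $\PGL$-action on the space of ellipsoids, propagates this infinitesimal information into a global projective identification $\Omega = E$ for some ellipsoid $E$; consequently $\rho(\Gamma) \leq \Aut(E) \cong \PO(1,d-1)$. Since every point of $\partial E$ is a $C^1$ extreme point, one may enlarge $\Cc$ to a $\rho(\Gamma)$-invariant closed convex set exhausting $E$ while preserving cocompactness (using that the Hilbert metric on the ellipsoid is the usual hyperbolic metric and that the geodesic flow is ergodic on $\rho(\Gamma)\backslash E$), and conclude that $\rho(\Gamma)$ is a cocompact lattice in $\PO(1,d-1)$.

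The main obstacle I expect is the rigidity step: converting Crampon's infinitesimal equality into a global projective equivalence $\Omega = E$, and then bootstrapping convex cocompactness on $\Cc$ into a genuine cocompact lattice structure on $\Omega$. The upper bound itself also requires care, since Crampon's original argument is stated for divisible domains and must be re-localized near $\overline{\Cc}\cap \partial \Omega$, using only the regularity provided by Definition~\ref{def:RCC} rather than global strict convexity and $C^1$ smoothness of $\partial \Omega$.
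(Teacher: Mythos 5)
Your opening reduction is the same as the paper's: invoke Theorem~\ref{thm:proper_action_implies} to get a regular convex cocompact action on some $\Omega$, and identify $H_\rho$ with the exponential growth rate of Hilbert translation lengths, hence with the critical exponent $\delta_{\rho(\Gamma)}(\Omega,H_\Omega)$ (this is exactly Proposition~\ref{prop:CK}, via Coornaert--Knieper, Coornaert, and the Cooper--Long--Tillmann translation-length formula). The problem is everything after that. Crampon's bound is not a ``Bishop-type Jacobi field estimate'' in the Hilbert metric that can be re-localized near $\overline{\Cc}\cap\partial\Omega$: the Hilbert metric is Finsler, carries no Ricci lower bound, and Crampon's actual proof is a global dynamical argument (Oseledets, the Ruelle inequality for the Bowen--Margulis measure, and a symmetry of the Lyapunov spectrum of the geodesic flow) run on the compact quotient of the \emph{entire} strictly convex domain with $C^1$ boundary. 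In the convex cocompact setting you only control $\partial\Omega$ at the points of $\overline{\Cc}\cap\partial\Omega$, and only to the extent of being $C^1$ extreme points; the regularity of the flow needed for that dynamical machinery is exactly what fails to localize, so ``it transfers once the regularity built into Definition~\ref{def:RCC} is in hand'' is the entire difficulty, not a remark. The paper sidesteps this by changing the metric rather than localizing the estimate: it replaces $H_\Omega$ by the Blaschke metric $B_\Omega$, which is Riemannian with $\Ric\geq -(d-1)$ by Calabi, uses Tholozan's inequality $B_\Omega < H_\Omega+1$ (Theorem~\ref{thm:tho}) to compare critical exponents, and then gets the upper bound from Bishop--Gromov. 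No Hilbert-geometric comparison estimate is ever proved.

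The equality case has the same issue. Saturation of a (nonexistent) Hilbert Jacobi comparison forcing second-order osculation by ellipsoids, followed by propagation by density, is not an argument either Crampon or the paper makes, and I do not see how to make it one: you have no second-order information about $\partial\Omega$ anywhere. The paper instead uses the lower orbit-counting bound from Proposition~\ref{prop:CK} together with Tholozan's inequality to get $\#\{\gamma: B_\Omega(p_0,\rho(\gamma)p_0)\leq r\}\gtrsim e^{(d-2)r}$, and then applies Proposition~\ref{prop:liu} (Liu's adaptation of Ledrappier--Wang, which needs only this counting lower bound plus bounded sectional curvature, supplied by Benz\'ecri) to conclude that the \emph{Blaschke} metric is isometric to real hyperbolic space; this identifies $\Omega$ with the Klein--Beltrami ball and $\Aut(\Omega)$ with $\PO(1,d-1)$. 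The final upgrade from convex cocompact to cocompact lattice is then done by Tukia's theorem (critical exponent $d-2$ forces the limit set to be all of $\partial\Omega$, hence $\Cc=\Omega$), not by an ergodicity argument for the geodesic flow, which as you state it is circular. So the skeleton of your reduction is right, but both the upper bound and the rigidity rest on a localization of Crampon's theorem that is the missing idea, and the paper's actual mechanism (Blaschke metric, Calabi, Tholozan, Ledrappier--Wang/Liu, Tukia) is genuinely different.
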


\begin{remark}
Theorem~\ref{thm:condC} shows that Theorem~\ref{thm:ent_rig} applies to many Anosov representations. 
\end{remark}

Theorem~\ref{thm:ent_rig} is a generalization of a theorem of Crampon.

\begin{theorem}[Crampon~\cite{Cra2009}]\label{thm:crampon} Suppose $\Omega \subset \Pb(\Rb^{d})$ is a properly convex domain and $\Lambda \leq \Aut(\Omega)$ is a discrete word hyperbolic group which acts cocompactly on $\Omega$. If $\iota: \Lambda \hookrightarrow \PGL_{d}(\Rb)$ is the inclusion representation, then 
\begin{align*}
H_{\iota} \leq d-2
\end{align*}
with equality if and only if $\Lambda$ is conjugate to a cocompact lattice  in $\PO(1,d-1)$. 
\end{theorem}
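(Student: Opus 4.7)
My plan is to reinterpret $H_\iota$ as a dynamical entropy and bound it via Lyapunov exponents of the Hilbert geodesic flow. First, by Theorem~\ref{thm:ben_char}, cocompactness together with word hyperbolicity of $\Lambda$ imply that $\Omega$ is strictly convex with $C^1$ boundary, so the Hilbert metric on $\Omega$ is a Finsler metric whose geodesic flow $\phi_t$ on the compact quotient $\Lambda \backslash T^1\Omega$ is a topologically transitive Anosov flow (also due to Benoist). For any $\gamma \in \Lambda$ of infinite order, the translation length of $\iota(\gamma)$ in the Hilbert metric equals $\tfrac{1}{2}\log(\lambda_{1}(\iota(\gamma))/\lambda_{d}(\iota(\gamma)))$, which is exactly the period of the corresponding closed orbit of $\phi_t$. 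Hence Margulis' counting theorem for closed orbits of transitive Anosov flows identifies $H_\iota$ with the topological entropy $h_{\rm top}(\phi_t)$.

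Next, combining the variational principle with Ruelle's inequality yields
\begin{align*}
h_{\rm top}(\phi_t) \;\leq\; \sup_{\mu}\, \int \chi^+(x,v)\, d\mu(x,v),
\end{align*}
where the supremum ranges over $\phi_t$-invariant Borel probability measures and $\chi^+(x,v)$ is the sum of positive Lyapunov exponents at $(x,v)$. The heart of the matter is a pointwise estimate $\chi^+ \leq d-2$. To obtain it, I would compute the linearised flow in an affine chart in which the forward endpoint $\xi := \lim_{t\to\infty} \phi_t(x,v)$ sits at the origin with horizontal supporting hyperplane; then $\partial\Omega$ is locally the graph of a $C^1$ convex function $f$ vanishing to first order, and each of the $d-2$ unstable Lyapunov exponents is controlled by the second-order behaviour of $f$ in the corresponding direction. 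A direct cross-ratio estimate then shows each such exponent is bounded above by $1$, with equality if and only if $f$ is asymptotically quadratic in that direction; summing over the $d-2$ unstable directions gives $\chi^+\leq d-2$ and hence $H_\iota \leq d-2$.

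For the rigidity statement, suppose $H_\iota = d-2$. Then equality must hold in Ruelle's inequality for the Bowen--Margulis measure of maximal entropy, which by Pesin--Ledrappier--Young forces $\chi^+ \equiv d-2$ almost everywhere and in fact forces this measure to be absolutely continuous with respect to the Busemann--Hausdorff volume on $\Lambda \backslash T^1\Omega$. Combined with the local picture above, at almost every boundary point and in every unstable direction $\partial\Omega$ is asymptotically quadratic; since $\Lambda$ acts on $\partial\Omega$ with dense orbits, this infinitesimal quadratic behaviour propagates to all of $\partial\Omega$, so the boundary is $C^2$ with nondegenerate shape operator. I expect the main obstacle to lie precisely in upgrading this second-order regularity to full projective rigidity; here I would invoke Benoist's rigidity results for divisible properly convex domains, which imply that a divisible strictly convex $\Omega$ with this level of boundary regularity must be projectively equivalent to an ellipsoid. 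Once $\Omega$ is an ellipsoid, $\Aut(\Omega)$ is conjugate to $\PO(1,d-1)$ and $\Lambda$ is a cocompact lattice therein, completing the proof.
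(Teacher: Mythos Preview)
Your proposal is essentially Crampon's original argument, and it is a valid route to the theorem. Note, however, that the paper does not reprove Crampon's result by this dynamical method; it instead deduces it as the special case $\Cc=\Omega$ of Theorem~\ref{thm:ent_rig}, whose proof is completely different. There the key steps are: (i) identify $H_\iota$ with the Poincar\'e exponent $\delta_{\Lambda}(\Omega,H_\Omega)$ via Coornaert--Knieper and Cooper--Long--Tillmann (Proposition~\ref{prop:CK}); (ii) invoke Tholozan's inequality $B_\Omega < H_\Omega + 1$ between the Blaschke and Hilbert distances, together with Calabi's Ricci lower bound $\Ric_{B_\Omega}\ge -(d-2)$ and Bishop--Gromov, to get the upper bound $d-2$; (iii) for rigidity, feed the resulting precise growth estimate into Liu's Riemannian argument (Proposition~\ref{prop:liu}) to conclude that $(\Omega,B_\Omega)$ is real hyperbolic space, hence $\Omega$ is an ellipsoid. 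Your route is dynamical (Anosov geodesic flow, Ruelle's inequality, Pesin--Ledrappier--Young), while the paper's is Riemannian (Blaschke metric, volume comparison, harmonic-function rigidity); the paper's method has the advantage of extending verbatim to the non-cocompact convex cocompact setting, whereas your approach needs an honest Anosov flow on a compact manifold.

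One point in your sketch deserves care: the passage from ``$\chi^+\equiv d-2$ and the Bowen--Margulis measure is absolutely continuous'' to ``$\partial\Omega$ is $C^2$'' is not as direct as you suggest, since $\partial\Omega$ is a priori only $C^1$. Crampon's actual argument here extracts from the equality of each unstable exponent with $1$ a uniform $C^{1,\alpha}$-type regularity at almost every boundary point, and then uses the cocompact $\Lambda$-action to propagate; the conclusion is that $\partial\Omega$ is $C^{1,\alpha}$ for every $\alpha<1$, at which point Benoist's rigidity applies. Your outline gestures at this, but the intermediate claim ``$\partial\Omega$ is $C^2$ with nondegenerate shape operator'' overstates what one actually proves before invoking Benoist.
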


\begin{remark} In the context of Theorem~\ref{thm:crampon}, Theorem~\ref{thm:ben_char} implies that $\iota$ is a projective Anosov representation and so Theorem~\ref{thm:ent_rig} is a true generalization of Theorem~\ref{thm:crampon}. Recently, Theorem~\ref{thm:crampon} was also generalized in a different direction in~\cite{BMZ2015}. \end{remark}

Theorem~\ref{thm:ent_rig} also improves, in some cases, bounds due to Sambarino.

\begin{theorem}[{Sambarino~\cite[Theorem A]{S2016}}]\label{thm:S} Suppose $\Gamma$ is a convex cocompact group of a $\CAT(-1)$ space $X$ and let $\rho:\Gamma \rightarrow \PGL_{d}(\Rb)$ be an irreducible projective Anosov representation with $d \geq 3$. Then 
\begin{align*}
\alpha H_{\rho} \leq \delta_\Gamma(X)
\end{align*}
where the boundary map $\xi: \partial \Gamma \rightarrow \Pb(\Rb^{d})$ is $\alpha$-H{\"o}lder and $\delta_\Gamma(X)$ is the Poincar{\'e} exponent of $\Gamma$ acting on $X$.
\end{theorem}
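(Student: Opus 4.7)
The plan is to deduce the entropy inequality from a pointwise comparison
\[
\tfrac{1}{2}\log \tfrac{\lambda_1(\rho(\gamma))}{\lambda_d(\rho(\gamma))} \geq \alpha \cdot \ell_X(\gamma)
\]
valid for every infinite order $\gamma \in \Gamma$, where $\ell_X(\gamma)$ denotes the translation length of $\gamma$ acting on $X$. Granting this, the containment
\[
\left\{[\gamma] : \tfrac{1}{2}\log(\lambda_1/\lambda_d)(\rho(\gamma)) \leq r\right\} \subseteq \left\{[\gamma] : \ell_X(\gamma) \leq r/\alpha\right\}
\]
is immediate; combining this with the classical fact (via Roblin's orbital counting for convex cocompact actions on $\CAT(-1)$ spaces) that $\delta_\Gamma(X)$ equals the exponential growth rate of conjugacy classes counted by $X$-translation length then yields $\alpha H_\rho \leq \delta_\Gamma(X)$.

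To establish the pointwise comparison, I would analyze the dynamics at the fixed points of a hyperbolic element $\gamma \in \Gamma$. Let $x_\gamma^\pm \in \partial \Gamma$ be its attracting and repelling fixed points. In the visual metric $d_v$ from a basepoint $o \in X$, the iterates of $\gamma$ contract neighborhoods of $x_\gamma^+$ at rate $e^{-\ell_X(\gamma)}$: for any $z \neq x_\gamma^-$, $d_v(\gamma^n z, x_\gamma^+) \lesssim e^{-n\ell_X(\gamma)}$. Composing with the $\alpha$-H{\"o}lder map $\xi$ gives
\[
d_{\Pb}\left(\rho(\gamma)^n \xi(z),\ \xi(x_\gamma^+)\right) \lesssim e^{-n\alpha \ell_X(\gamma)}.
\]
On the other hand, transversality of $\xi$ and $\eta$ forces $\xi(z)$ to lie off the $\rho(\gamma)$-invariant hyperplane $\ker \eta(x_\gamma^-)$, and for such $z$ the standard linear-cocycle asymptotics near an attracting fixed point furnish a matching lower bound
\[
d_{\Pb}\left(\rho(\gamma)^n \xi(z),\ \xi(x_\gamma^+)\right) \gtrsim (\lambda_2/\lambda_1)^n.
\]
Combining the two estimates and taking $n$-th roots as $n \to \infty$ yields $\log(\lambda_1/\lambda_2)(\rho(\gamma)) \geq \alpha \ell_X(\gamma)$. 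Rerunning the identical argument for the dual representation $\rho^*$, whose boundary map $\eta \colon \partial \Gamma \to \Pb(\Rb^{d*})$ is also $\alpha$-H{\"o}lder by duality of the projective Anosov property, produces $\log(\lambda_{d-1}/\lambda_d)(\rho(\gamma)) \geq \alpha \ell_X(\gamma)$. Since $\lambda_2 \geq \lambda_{d-1}$, summing these two inequalities gives $\log(\lambda_1/\lambda_d) \geq 2\alpha \ell_X(\gamma)$, which is the required bound.

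The main obstacle I anticipate is rigorously establishing the projective lower bound $d_{\Pb}(\rho(\gamma)^n \xi(z), \xi(x_\gamma^+)) \gtrsim (\lambda_2/\lambda_1)^n$: this requires $\xi(z)$ to have nontrivial component along the $\lambda_2$-eigendirection of $\rho(\gamma)$, which is generically but not automatically true. Handling this carefully would use the Jordan decomposition of $\rho(\gamma)$ together with the fact that $\xi(\partial \Gamma)$ is not contained in any proper $\rho$-invariant projective subspace (a consequence of irreducibility, after restricting to a proximal component if necessary). The hypothesis $d \geq 3$ is essential to the end: it is what permits the step $\log(\lambda_1/\lambda_d) \geq \log(\lambda_1/\lambda_2) + \log(\lambda_{d-1}/\lambda_d)$ to combine \emph{independent} H{\"o}lder estimates from $\xi$ and $\eta$, which is precisely what recovers the sharp constant $\alpha$ rather than the weaker constant $\alpha/2$ that a one-sided argument alone would give.
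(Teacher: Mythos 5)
First, a remark on the comparison you asked for: the paper does not prove Theorem~\ref{thm:S} at all --- it is quoted from Sambarino~\cite{S2016} as background, with a pointer to that paper for the proof --- so there is no in-house argument to measure yours against, and I can only assess the proposal on its own terms. Your overall architecture is reasonable: reducing $\alpha H_\rho\leq\delta_\Gamma(X)$ to the pointwise inequality $\tfrac12\log(\lambda_1/\lambda_d)(\rho(\gamma))\geq\alpha\,\ell_X(\gamma)$ is correct, and for the counting step you only need the \emph{easy} inequality $\limsup_r\tfrac1r\log\#\{[\gamma]:\ell_X(\gamma)\leq r\}\leq\delta_\Gamma(X)$, which follows by conjugating each hyperbolic element so that its axis passes within bounded distance of the basepoint (Roblin's equidistribution is overkill). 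The upper bound $d_{\Pb}(\rho(\gamma)^n\xi(z),\xi(x_\gamma^+))\lesssim e^{-n\alpha\ell_X(\gamma)}$ via visual-metric contraction plus H{\"o}lder continuity is fine, and your proposed repair of the generic lower bound $\gtrsim(\lambda_2/\lambda_1)^n$ --- choose $z$ with $\xi(z)$ outside the exceptional proper subspace, possible by irreducibility and perfectness of $\partial\Gamma$ --- is exactly the device the paper itself uses around Observation~\ref{obs:dynamics}, so that worry is handled correctly.

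The genuine gap is the sentence asserting that $\eta$ is ``also $\alpha$-H{\"o}lder by duality.'' The hypothesis supplies a H{\"o}lder exponent only for $\xi$; the dual boundary map $\eta$ is H{\"o}lder, but nothing forces its exponent $\beta$ to satisfy $\beta\geq\alpha$, and as written your argument only delivers $\tfrac{\alpha+\beta}{2}H_\rho\leq\delta_\Gamma(X)$. Since you yourself observe that the one-sided estimate gives only the constant $\alpha/2$, this unproved assertion is load-bearing. Fortunately there is a fix that stays entirely within the hypothesis on $\xi$: run the identical argument for $\gamma^{-1}$. Its attracting fixed point is $x_\gamma^-$, the line $\xi(x_\gamma^-)$ is the attracting eigenline of $\rho(\gamma)^{-1}$ (biproximality of $\rho(\gamma)$ follows from the dynamics-preserving property of $\eta$, with no regularity needed), $\ell_X(\gamma^{-1})=\ell_X(\gamma)$, and $\lambda_2(\rho(\gamma)^{-1})/\lambda_1(\rho(\gamma)^{-1})=\lambda_d(\rho(\gamma))/\lambda_{d-1}(\rho(\gamma))$, so the same reasoning yields $\log\bigl(\lambda_{d-1}/\lambda_d\bigr)(\rho(\gamma))\geq\alpha\,\ell_X(\gamma)$ using only the $\alpha$-H{\"o}lder continuity of $\xi$. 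Summing with the estimate for $\gamma$ and using $\lambda_2\geq\lambda_{d-1}$ (which is where $d\geq3$ enters, as you noted) gives $\log(\lambda_1/\lambda_d)\geq2\alpha\ell_X(\gamma)$ and completes the proof along the lines you intended.
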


\begin{remark} In Theorem~\ref{thm:S}, $\xi$ is H{\"o}lder with respect to a visual metric of $X$ restricted to the limit set of $\Gamma$ and a distance on $\Pb(\Rb^{d})$ induced by a Riemannian metric. Sambarino also proves a rigidity result in the case when $\alpha H_{\rho} = \delta_\Gamma(X)$ and $X$ is real hyperbolic $k$-space, for details see Corollary 3.1 in~\cite{S2016}. \end{remark}

\begin{remark} If $\Gamma$ satisfies the hypothesis of Theorem~\ref{thm:condC} and
\begin{align*}
\alpha < \frac{\delta_\Gamma(X)}{d-2},
\end{align*}
then Theorem~\ref{thm:ent_rig} can be used to provide a better upper bound on entropy 
\end{remark}

\subsubsection{Regularity rigidity} In this subsection we describe some rigidity results related to the regularity of the limit curve of a projective Anosov representation. We should note that if the boundary of a word hyperbolic group is a topological manifold, then it actually must be a sphere (see for instance~\cite[Theorem 4.4]{KB2002}).

For certain types of projective Anosov representations, the image of the boundary map is actually a $C^1$ submanifold. 

\begin{example} 
Suppose $\Omega \subset \Pb(\Rb^{d})$ is a properly convex domain and $\Lambda \leq \Aut(\Omega)$ is a discrete group which acts cocompactly on $\Omega$. If $\Lambda$ is word hyperbolic, then Theorem~\ref{thm:ben_char} implies that the inclusion representation $\Lambda \hookrightarrow \PGL_{d}(\Rb)$ is projective Anosov. The image of the associated boundary map is $\partial \Omega$ which is a $C^1$ submanifold of $\Pb(\Rb^{d})$ by Theorem~\ref{thm:ben_char}.
\end{example}

\begin{example} Suppose that $\Gamma \leq \PSL_2(\Rb)$ is a torsion-free cocompact lattice and $\rho : \Gamma \rightarrow \PSL_d(\Rb)$ is in the Hitchin component. If $\xi: \partial \Gamma \rightarrow \Pb(\Rb^{d})$ is the boundary map associated to $\rho$, then $\xi(\partial \Gamma)$ is a $C^1$ submanifold of $\Pb(\Rb^d)$. This follows from the fact that $\xi$ is a \emph{hyperconvex Frenet curve}, see ~\cite[Theorem 1.4]{L2006}.
\end{example}

In both of theses cases it is known that the image of the boundary map cannot be too regular unless the representation is very special.

\begin{theorem}[Benoist~\cite{B2004}] Suppose $\Omega \subset \Pb(\Rb^{d})$ is a properly convex domain and $\Lambda \leq \Aut(\Omega)$ is a discrete group which acts cocompactly on $\Omega$. If $\partial \Omega$ is a $C^{1,\alpha}$ hypersurface for every $\alpha \in (0,1)$, then $\Omega$ is projectively isomorphic to the ball and hence $\Lambda$ is conjugate to a cocompact lattice in $\PO(1,d-1)$. 
\end{theorem}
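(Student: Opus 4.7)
The plan is to use the $C^{1,\alpha}$ regularity at attracting fixed points of proximal elements to force a rigid eigenvalue pattern on $\Lambda$, and then conclude that the Zariski closure of $\Lambda$ must be conjugate to $\PO(1,d-1)$, so $\Omega$ is projectively the Klein ball. First, since $\partial \Omega$ is in particular $C^1$, Theorem~\ref{thm:ben_char} implies that $\Omega$ is strictly convex, $\Lambda$ is word hyperbolic, and the inclusion $\Lambda \hookrightarrow \PGL_d(\Rb)$ is projective Anosov. So every infinite-order $\gamma \in \Lambda$ is proximal, with a unique attracting fixed point $x^+_\gamma \in \partial \Omega$; moreover the (unique) supporting hyperplane $T_{x^+_\gamma}\partial \Omega$ is the projectivization of the sum of eigenspaces of $\rho(\gamma)$ for eigenvalues $\lambda_2(\gamma),\dots,\lambda_d(\gamma)$.

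The core of the argument is a local dynamical computation at $x^+_\gamma$. In affine coordinates centered at $x^+_\gamma$ with $T_{x^+_\gamma}\partial \Omega$ horizontal, write $\partial \Omega$ locally as the graph of a nonnegative $C^{1,\alpha}$ function $f$ with $f(0)=0$ and $df(0)=0$. Given a real eigenvector $v_i$ of $\gamma$ in the tangent hyperplane with eigenvalue $\lambda_i(\gamma)$ for some $2 \leq i \leq d-1$, consider the point $p_t=(t v_i, f(t v_i)) \in \partial \Omega$. Under iteration of $\gamma^n$ the tangential coordinate of $\gamma^n p_t$ has order $(\lambda_i/\lambda_1)^n|t|$ and the transverse coordinate has order $(\lambda_d/\lambda_1)^n f(t v_i)$. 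The $C^{1,\alpha}$ bound then gives
\begin{align*}
(\lambda_d/\lambda_1)^n f(t v_i) \;\leq\; C\bigl[(\lambda_i/\lambda_1)^n |t|\bigr]^{1+\alpha},
\end{align*}
and since strict convexity forces $f(t v_i)>0$, sending $n \to \infty$ yields $\log(\lambda_1/\lambda_d) \geq (1+\alpha)\log(\lambda_1/\lambda_i)$. The same argument applied to the dual properly convex domain $\Omega^*$ (on which $\Lambda$ acts via the contragredient, with attracting fixed point dual to the supporting hyperplane at $x^-_\gamma$) delivers the reverse inequality. Letting $\alpha \to 1$ then forces $\lambda_i(\gamma)^2=\lambda_1(\gamma)\lambda_d(\gamma)$ for every $2 \leq i \leq d-1$. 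After normalizing determinant one, this is exactly the spectral signature of a loxodromic isometry of real hyperbolic $(d-1)$-space in the Klein model: $\lambda_2(\gamma)=\cdots=\lambda_{d-1}(\gamma)=1$ and $\lambda_1(\gamma)\lambda_d(\gamma)=1$.

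Finally, the strategy is to promote this universal pointwise eigenvalue constraint to a statement about the Zariski closure $H$ of $\Lambda$ in $\PGL_d(\Rb)$. Since $\Lambda$ acts cocompactly on $\Omega$, the group $H$ is reductive with no compact factor acting trivially, and the Jordan projection of each proximal $\gamma \in \Lambda$ lies along a single simple root direction. Density of proximal elements in $\Lambda$ (from the Anosov/proximal dynamics on $\partial \Omega$) together with the classification of semisimple subgroups of $\PGL_d(\Rb)$ whose Jordan projection is one-dimensional forces $H$ to be conjugate to $\PO(1,d-1)$; alternatively, one can directly construct a $\Lambda$-invariant Lorentzian bilinear form from the proximal boundary data by averaging rank-one spectral projectors associated to the boundary map $\xi$. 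The only properly convex $\PO(1,d-1)$-invariant domain in $\Pb(\Rb^d)$ (up to projective isomorphism) is the Klein ball, so $\Omega$ is projectively the ball and $\Lambda$ is a cocompact lattice in $\PO(1,d-1)$. The main obstacle I anticipate is this final rigidity step: deducing Zariski-closure information from pointwise spectral data requires either a careful classification argument or an explicit construction of an invariant quadratic form, and both require care to rule out embeddings into larger groups that share the same loxodromic spectrum on $\Lambda$.
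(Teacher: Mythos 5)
The paper does not prove this statement; it quotes it from Benoist~\cite{B2004}, so I am judging your proposal against the known argument. Your central mechanism --- reading off eigenvalue constraints from the H\"older regularity of $\partial\Omega$ at the attracting fixed point of a proximal element --- is exactly the right one, and the inequality $\log(\lambda_1/\lambda_d)\geq(1+\alpha)\log(\lambda_1/\lambda_i)$ is correctly derived. Two local issues: first, the supporting hyperplane at $x^+_\gamma$ is the projectivization of the span of the eigenspaces for $\lambda_1,\dots,\lambda_{d-1}$ (it must contain $x^+_\gamma=[v_1]$; it is $\ker\eta(x^+_\gamma)$, the attracting fixed point of the dual action), not for $\lambda_2,\dots,\lambda_d$ as you wrote --- your displayed estimate is the one for the correct setup, so this is a slip rather than a fatal error. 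Second, and more seriously, your route to the reverse inequality does not work as stated: passing to $\Omega^*$ requires knowing that $\partial\Omega^*$ is $C^{1,\alpha}$ for all $\alpha<1$, and convex duality converts an upper H\"older bound on the graph function of $\partial\Omega$ into a \emph{lower} convexity bound on that of $\partial\Omega^*$, not into $C^{1,\alpha}$ regularity. The fix is easy and stays inside $\Omega$: run the identical computation for $\gamma^{-1}$ at its attracting fixed point $x^-_\gamma\in\partial\Omega$, whose eigenvalues are $\lambda_d^{-1}\geq\lambda_{d-1}^{-1}\geq\cdots\geq\lambda_1^{-1}$; this yields $\lambda_1\lambda_d\geq\lambda_i^2$ for all middle $i$, and combined with the first inequality gives $\lambda_i^2=\lambda_1\lambda_d$.

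The genuine gap is the last step, which you flag yourself: passing from the uniform spectral pattern $(\lambda,1,\dots,1,\lambda^{-1})$ to the conclusion that $\Lambda$ is conjugate into $\PO(1,d-1)$ and $\Omega$ is an ellipsoid. As written this is an assertion, not an argument. The viable route is the one you mention first: $\Lambda$ acts strongly irreducibly (this needs justification; it follows from Vey/Benoist for groups dividing a strictly convex domain), its Zariski closure $G$ is reductive, and by Benoist's limit cone theorem~\cite{B1997} a Zariski dense subgroup of $G$ has Jordan projections generating a cone with nonempty interior in the Cartan subspace of $G$; since all Jordan projections of $\Lambda$ lie on a single ray, $G$ has real rank one, and one must then check that a rank-one simple group with an irreducible proximal representation whose restricted weight multiset is $\{1,0,\dots,0,-1\}$ with simple top weight is (the image of) $\SO(1,d-1)$ in its standard representation, whence $\Omega$ is the unique invariant properly convex domain, the ball. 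Each of these steps is true but none is automatic, and your alternative --- ``averaging rank-one spectral projectors'' over the infinite discrete group $\Lambda$ to produce an invariant Lorentzian form --- has no convergent meaning as stated. So the proposal correctly identifies the mechanism and the shape of the proof, but the rigidity endgame is missing, and the dual-domain step needs to be replaced by the $\gamma^{-1}$ argument.
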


\begin{theorem}[Potrie-Sambarino~\cite{PS2014}] Suppose that $\Gamma \leq \PSL_2(\Rb)$ is a torsion-free cocompact lattice  and $\rho : \Gamma \rightarrow \PSL_d(\Rb)$ is in the Hitchin component. If $\xi: \partial \Gamma \rightarrow \Pb(\Rb^{d})$ is the associated boundary map and $\xi(\partial \Gamma)$ is a $C^\infty$ submanifold of $\Pb(\Rb^d)$, then there exists a representation $\rho_0 : \Gamma \rightarrow \PSL_2(\Rb)$ such that $\rho$ is conjugate to $\tau_d \circ \rho_0$. 
\end{theorem}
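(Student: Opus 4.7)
The plan is to exploit Labourie's theorem that the boundary curve $\xi:\partial\Gamma\to\Pb(\Rb^d)$ of a Hitchin representation is a hyperconvex Frenet curve, so there exist $\rho$-equivariant continuous maps $\xi^{(k)}:\partial\Gamma\to\Gr_k(\Rb^d)$ for $1\leq k\leq d-1$ with $\xi^{(1)}=\xi$, satisfying the usual transversality and osculation relations as points collide. In particular, for any $\gamma\in\Gamma$ of infinite order with attracting fixed point $x_\gamma^+\in\partial\Gamma$, the subspace $\xi^{(k)}(x_\gamma^+)$ is the sum of the top $k$ eigenlines of $\rho(\gamma)$, so the Frenet flag at a periodic point is completely determined by the Jordan decomposition.

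The central step is to convert the $C^\infty$ hypothesis into rigid identities on the Jordan spectrum. Since $\rho(\gamma)$ preserves $\xi(\partial\Gamma)$ and fixes $\xi(x_\gamma^+)$, its differential preserves $T_{\xi(x_\gamma^+)}\xi(\partial\Gamma)$, which must therefore be an eigenline of $d\rho(\gamma)$ on $T_{\xi(x_\gamma^+)}\Pb(\Rb^d)$; the Frenet property forces this line to be the projective direction coming from the eigenvalue $\lambda_2(\rho(\gamma))$, so the contraction rate along the tangent is $\lambda_2(\rho(\gamma))/\lambda_1(\rho(\gamma))$. Iterating, $C^k$-smoothness of $\xi(\partial\Gamma)$ at $\xi(x_\gamma^+)$ yields a well-defined $k$-jet on which $\rho(\gamma)$ contracts at rate $(\lambda_2/\lambda_1)^k$, while the osculation relations with $\xi^{(k+1)}$ identify this jet with the line in $\xi^{(k+1)}(x_\gamma^+)/\xi^{(k)}(x_\gamma^+)$, which contracts at rate $\lambda_{k+1}/\lambda_1$. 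Comparing these contraction rates for all $k\geq 1$ and all infinite-order $\gamma$ produces the rigid identity
\begin{equation*}
\lambda_{k+1}(\rho(\gamma))=\lambda_1(\rho(\gamma))\left(\frac{\lambda_2(\rho(\gamma))}{\lambda_1(\rho(\gamma))}\right)^{k},
\end{equation*}
which is precisely the eigenvalue pattern produced by $\tau_d$.

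To finish, I would promote this Jordan-projection identity to an algebraic statement about $\rho$. Hitchin representations have Zariski-dense image in a reductive subgroup $H\leq\PSL_d(\Rb)$, and the classification of irreducible $\PSL_2(\Rb)$-representations together with the above identity (applied to the Zariski-dense family of Jordan projections of $\rho(\Gamma)$, via Benoist's limit-cone theorem) singles out $\tau_d(\PSL_2(\Rb))$ as the only irreducibly acting reductive subgroup of $\PSL_d(\Rb)$ whose Jordan projections obey this relation. Hence $H$ is conjugate to $\tau_d(\PSL_2(\Rb))$, and since $\tau_d(\PSL_2(\Rb))$ has trivial centralizer in $\PSL_d(\Rb)$, the representation $\rho$ lifts uniquely to $\rho_0:\Gamma\to\PSL_2(\Rb)$ with $\rho=\tau_d\circ\rho_0$ after conjugation. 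The step I expect to be hardest is the middle one: the $C^\infty$ hypothesis is on $\xi(\partial\Gamma)$ as a set, so one must show that the successive set-theoretic jets at a smooth fixed point are honestly captured by the Frenet filtration $\xi^{(k)}$, and not merely by the first one or two of them. This is delicate because the filtration is a priori only continuous, and the higher osculating spaces are defined by collision limits rather than by differentiating a parametrization; some argument of Pesin--Sinai type along the geodesic flow in Labourie's suspension model seems needed to align the set-theoretic jet with the Frenet jet at every periodic orbit.
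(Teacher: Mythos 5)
First, a caveat on the comparison itself: the paper does not prove this statement. It is quoted from Potrie--Sambarino~\cite{PS2014} as background, so there is no internal proof to measure your argument against. The closest result the paper does prove is Theorem~\ref{thm:main_reg_rigid_hitchin_intro} in Section~\ref{sec:reg_rigid}, which under the weaker hypothesis that $\xi(\partial\Gamma)$ is only $C^2$ derives the single identity $\lambda_1/\lambda_2=\lambda_2/\lambda_3$. The $k=1$ instance of your middle step is essentially that proof: the paper shows the tangent map $\Phi\colon\xi(\partial\Gamma)\to\Pb(\wedge^2\Rb^d)$ satisfies $\Phi\circ\xi=\xi^{(2)}$ and is a $C^1$ embedding, hence bi-Lipschitz, and then equates the contraction rate $\lambda_2/\lambda_1$ on the curve with the rate $\lambda_3/\lambda_2$ on its image of tangent lines via Lemma~\ref{lem:hithcin_eigenvalue_est}. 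So your overall architecture (smoothness forces the Jordan projections onto the ray $\lambda_{k+1}=\lambda_1(\lambda_2/\lambda_1)^k$, then identify the Zariski closure) is the right one; but the paper only supplies the first-order piece, and your write-up leaves two genuine gaps beyond it.

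The first gap is the one you flag yourself, and acknowledging it is not the same as closing it. For $k\geq 2$ you must show (i) that the order-$(k+1)$ osculating space of the smooth set $\xi(\partial\Gamma)$ at $\xi(x^+_\gamma)$ coincides with the Frenet space $\xi^{(k+1)}(x^+_\gamma)$, i.e.\ the smooth curve has no flattening, its first $k$ derivatives being independent, and (ii) that in an eigenbasis-adapted affine chart the $e_{k+2}$-coordinate of the graph has nonvanishing Taylor coefficient at order exactly $k+1$; only then does invariance of the Taylor series under $t\mapsto(\lambda_2/\lambda_1)t$, $y_j\mapsto(\lambda_j/\lambda_1)y_j$ force $\lambda_{k+2}/\lambda_1=(\lambda_2/\lambda_1)^{k+1}$. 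This is provable -- divided differences identify the collision limits defining $\xi^{(k)}$ with osculating spaces once non-flattening is known, and non-flattening follows inductively from hyperconvexity -- but it is the heart of the theorem and ``some argument of Pesin--Sinai type seems needed'' is a placeholder, not an argument. The second gap is the final step. Knowing the Jordan projections lie on a ray, you need Benoist's theorem that the limit cone of a Zariski-dense subgroup of a reductive group has nonempty interior in its Cartan subspace to conclude that the Zariski closure $H$ has real rank one; and you must then exclude every rank-one $H$ other than the principal $\PSL_2(\Rb)$. Your phrase ``the classification of irreducible $\PSL_2(\Rb)$-representations \dots singles out $\tau_d(\PSL_2(\Rb))$'' quietly replaces the class of rank-one reductive subgroups acting irreducibly on $\Rb^d$ by the class of images of $\PSL_2(\Rb)$. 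The missing link is that every $\rho(\gamma)$ is diagonalizable with $d$ distinct eigenvalue moduli, so the restriction of the standard representation to a maximal split torus of $H$ has $d$ distinct weights of multiplicity one, which forces $H$ to be split of rank one, hence locally $\SL_2(\Rb)$, after which irreducibility identifies the representation with $\tau_d$ (and the rigidity statement of~\cite{B1997} quoted in the paper is what guarantees the ray condition genuinely constrains $H$). That chain has to appear for the proof to stand.
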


Using Theorem~\ref{thm:condC}, we will prove the following.

\begin{theorem}\label{thm:main_reg_rigid_intro}(see Section~\ref{sec:reg_rigid}) Suppose $d > 2$, $\Gamma$ is a word hyperbolic group, and $\rho:\Gamma \rightarrow \PGL_{d}(\Rb)$ is an irreducible projective Anosov representation with boundary map $\xi: \partial \Gamma \rightarrow \Pb(\Rb^{d})$. If 
\begin{enumerate}
\item $M = \xi(\partial \Gamma)$ is a $C^2$ $k$-dimensional submanifold of $\Pb(\Rb^{d})$ and
\item the representation $\wedge^{k+1} \rho : \Gamma \rightarrow \PGL(\wedge^{k+1} \Rb^{d})$ is  irreducible,
\end{enumerate}
then
\begin{align*}
\frac{\lambda_1(\rho(\gamma))}{\lambda_2(\rho(\gamma))} =\frac{\lambda_{k+1}(\rho(\gamma))}{\lambda_{k+2}(\rho(\gamma))}
\end{align*}
for all $\gamma \in \Gamma$. 
\end{theorem}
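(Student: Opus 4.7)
The plan is to use the $C^2$ regularity of $M = \xi(\partial\Gamma)$ to produce an equivariant $(k+1)$-plane field over $\partial\Gamma$, lift it by the Pl\"ucker embedding to boundary data for $\wedge^{k+1}\rho$, and then read off the spectral identity from the resonances that equivariance imposes on the second fundamental form of $M$ at loxodromic fixed points.

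First, for each $x \in \partial\Gamma$ let $W(x) \subset \Rb^d$ be the unique $(k+1)$-dimensional linear subspace whose projectivization is the affine tangent $k$-plane to $M$ at $\xi(x)$; equivalently $W(x)$ is spanned by the line $\xi(x)$ together with any lift of $T_{\xi(x)}M$ to $\Rb^d$. The $C^2$ hypothesis on $M$ makes $W \colon \partial\Gamma \to \Gr_{k+1}(\Rb^d)$ continuous, and naturality of the tangent bundle under projective transformations gives $\rho$-equivariance. Next fix $\gamma \in \Gamma$ of infinite order with attracting fixed point $x_\gamma^+$. By the projective Anosov hypothesis $\xi(x_\gamma^+) = [e_1]$ is the attracting fixed point of $\rho(\gamma)$ on $\Pb(\Rb^d)$, and in an adapted eigenbasis the derivative of $\rho(\gamma)$ at $[e_1]$, viewed in the affine chart, has spectrum $\{\lambda_i(\rho(\gamma))/\lambda_1(\rho(\gamma))\}_{i\ge 2}$. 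The $C^2$ submanifold $M$ is locally $\rho(\gamma)$-invariant through this fixed point, so by the classical slow invariant manifold theorem the tangent space $T_{\xi(x_\gamma^+)}M$ is spanned by the $k$ slowest eigendirections, namely those of $\lambda_2/\lambda_1, \dots, \lambda_{k+1}/\lambda_1$. Consequently $W(x_\gamma^+)$ is the sum of the generalized eigenspaces of $\rho(\gamma)$ for its top $k+1$ eigenvalues, which already forces the strict gap $\lambda_{k+1}(\rho(\gamma)) > \lambda_{k+2}(\rho(\gamma))$ and makes the Pl\"ucker lift $\tilde W(x) := [\wedge^{k+1} W(x)]$ a continuous, $\wedge^{k+1}\rho$-equivariant, dynamics-preserving map $\partial\Gamma \to \Pb(\wedge^{k+1}\Rb^d)$.

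In the same coordinates $M$ is locally the graph $u_j = f_j(u_2, \dots, u_{k+1})$ for $j = k+2, \dots, d$, with $f_j$ and $Df_j$ vanishing at the origin. The invariance of $M$ yields the Schr\"oder-type equation
\[
(\lambda_j/\lambda_1)\, f_j(u) = f_j\bigl((\lambda_2/\lambda_1)\, u_2, \dots, (\lambda_{k+1}/\lambda_1)\, u_{k+1}\bigr),
\]
and matching Hessians at the origin gives the resonance condition $(\lambda_1 \lambda_j - \lambda_a \lambda_b)(H_{f_j})_{ab} = 0$ for every admissible $(j,a,b)$. The second fundamental form of $M$ cannot vanish identically on the dense set of loxodromic fixed points, for otherwise by continuity and $\rho$-equivariance it would vanish everywhere, forcing $M$ to be a projective $k$-plane and producing a proper linear subspace of $\wedge^{k+1}\Rb^d$ invariant under $\wedge^{k+1}\rho$, contradicting the irreducibility hypothesis. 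The geometric expectation---which I would make precise by decomposing $\Sym^2 TM^* \otimes NM$ into irreducible $\rho$-sub-bundles and matching the second fundamental form to the extremal piece---is that the \emph{extremal} Hessian entry $(H_{f_{k+2}})_{2,k+1}$ must be nonzero: the slowest normal direction $e_{k+2}$ must pick up a genuinely mixed quadratic contribution from the extremal tangent directions $e_2$ and $e_{k+1}$, for otherwise the image of $\tilde W$ is confined to a proper $\wedge^{k+1}\rho$-invariant sub-bundle of $\wedge^{k+1}\Rb^d$. This nonvanishing converts the resonance into $\lambda_1 \lambda_{k+2} = \lambda_2 \lambda_{k+1}$, that is, $\lambda_1/\lambda_2 = \lambda_{k+1}/\lambda_{k+2}$; the identity then extends from generic-spectrum loxodromic $\gamma$ to all of $\Gamma$ by density of such conjugacy classes and continuity of eigenvalue ratios.

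The main obstacle is the ``which Hessian entry'' step, i.e.\ using the irreducibility of $\wedge^{k+1}\rho$ to isolate the extremal resonance $(a,b,j) = (2, k+1, k+2)$ rather than some other admissible triple; this requires a careful analysis of how the second fundamental form of $M$ sits inside the $\rho$-equivariant bundle $\Sym^2 TM^* \otimes NM$, with the irreducibility of $\wedge^{k+1}\rho$ used to rule out the cheaper resonances. A secondary technical point is the slow-manifold identification of the tangent space at the fixed point when the top $k+1$ eigenvalues of $\rho(\gamma)$ fail to be simple, which is handled by approximating $\gamma$ by Zariski-generic elements and invoking continuity of $W$.
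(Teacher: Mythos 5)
Your proposal shares the paper's starting point (the tangent-plane map $x \mapsto W(x)$ and its Pl\"ucker lift into $\Pb(\wedge^{k+1}\Rb^d)$), but it then diverges into a local normal-form analysis, and the step you yourself flag as the ``main obstacle'' is a genuine gap, not a technicality. Your resonance computation only yields: for each admissible triple $(j,a,b)$ with $2 \le a \le b \le k+1$ and $j \ge k+2$, either $(H_{f_j})_{ab} = 0$ or $\lambda_1\lambda_j = \lambda_a\lambda_b$. The non-vanishing of the second fundamental form (which does follow from irreducibility of $\rho$ plus minimality of the boundary action) only tells you that \emph{some} entry is nonzero at $x_\gamma^+$, hence that \emph{some} resonance $\lambda_1\lambda_j = \lambda_a\lambda_b$ holds --- for instance $(H_{f_{k+2}})_{k+1,k+1} \neq 0$ would give $\lambda_1\lambda_{k+2} = \lambda_{k+1}^2$, which is not the desired identity. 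You offer no mechanism to isolate the extremal entry $(H_{f_{k+2}})_{2,k+1}$, and the suggested decomposition of $\Sym^2 T M^* \otimes NM$ into $\rho$-sub-bundles is not carried out; it is not clear it can be, since that bundle is only associated to the restriction of $\rho$ to a boundary action, not to a linear representation with an accessible decomposition theory. A secondary problem is the ``slow invariant manifold'' identification of $T_{\xi(x_\gamma^+)}M$ and the claimed gap $\lambda_{k+1} > \lambda_{k+2}$: for a general projective Anosov representation only $\lambda_1 > \lambda_2$ is guaranteed, the remaining eigenvalues may be complex or carry Jordan blocks, and proximality of $\wedge^{k+1}\rho(\gamma)$ is a conclusion that must be proved, not an input.

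For comparison, the paper avoids the second-order local analysis entirely. It shows the Pl\"ucker lift $\Phi(m) = [v_1\wedge\cdots\wedge v_{k+1}]$ (with $T_mM = [\Spanset\{v_1,\dots,v_{k+1}\}]$) is a $C^1$ \emph{immersion} of the compact set $M$ --- injectivity for $k>1$ uses the regular convex cocompact action from Theorem~\ref{thm:condC} to identify $T_{\xi(x)}\partial\Omega$ with $\ker\eta(x)$ --- hence locally bi-Lipschitz. It then compares exponential contraction rates: $\rho(\gamma)^n\xi(x) \to \xi(x^+)$ at rate $\log(\lambda_2/\lambda_1)$ while $(\wedge^{k+1}\rho(\gamma))^n\Phi(\xi(x)) \to \Phi(\xi(x^+))$ at rate $\log(\lambda_{k+2}/\lambda_{k+1})$ for generic $x$ (Observation~\ref{obs:dynamics}, with irreducibility of $\rho$ and of $\wedge^{k+1}\rho$ used to find points avoiding the exceptional subspaces), and the bi-Lipschitz bound forces the two rates to coincide. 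Only first-order (Lipschitz) information about $\Phi$ is needed, which is why the $C^2$ hypothesis on $M$ (giving $C^1$ for $\Phi$) suffices and why no resonance bookkeeping arises. If you want to salvage your approach, the missing ingredient is precisely some substitute for this rate comparison that pins down which resonance actually occurs.
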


\begin{remark}\ \begin{enumerate} \item Notice that the regularity assumption concerns the set $\xi(\partial \Gamma)$ and not the map  $\xi: \partial \Gamma \rightarrow \Pb(\Rb^d)$.
 \item As before, $\lambda_1(g)  \geq \dots \geq \lambda_{d}(g)$ denote the absolute values of the eigenvalues (counted with multiplicity) of some (any) lift $\tilde{g} \in \GL_d(\Rb)$ of $g$ with $\det \tilde{g}=\pm 1$. 
\item Theorem~\ref{thm:condC} is only needed in the case when $k >1$. 
\end{enumerate}
\end{remark}

When $\rho: \Gamma \rightarrow \PGL_{d}(\Rb)$ has Zariski dense image, then $\rho$ and $\wedge^{k+1} \rho$ are  irreducible. Moreover in this case the main result in~\cite{B1997} implies that there exists some $\gamma \in \Gamma$ such that 
\begin{align*}
\frac{\lambda_1(\rho(\gamma))}{\lambda_2(\rho(\gamma))} \neq \frac{\lambda_{k+1}(\rho(\gamma))}{\lambda_{k+2}(\rho(\gamma))}.
\end{align*}
So we have the following corollary of Theorem~\ref{thm:main_reg_rigid_intro}.

\begin{corollary}\label{cor:hitchin}Suppose $d > 2$, $\Gamma$ is a word hyperbolic group, and $\rho:\Gamma \rightarrow \PGL_{d}(\Rb)$ is a Zariski dense projective Anosov representation with boundary map $\xi: \partial \Gamma \rightarrow \Pb(\Rb^{d})$. Then $\xi(\partial \Gamma)$ is not a $C^2$ submanifold of $\Pb(\Rb^{d})$. 
\end{corollary}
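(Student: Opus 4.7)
The plan is to argue by contradiction, combining Theorem~\ref{thm:main_reg_rigid_intro} with Benoist's density theorem for Jordan projections of Zariski dense subgroups~\cite{B1997}. Suppose for contradiction that $M := \xi(\partial \Gamma)$ is a $C^2$ submanifold of $\Pb(\Rb^d)$ of some dimension $k$. My first task would be to verify that the irreducibility hypotheses of Theorem~\ref{thm:main_reg_rigid_intro} are automatic under Zariski density: for every $1 \leq j \leq d-1$ the exterior power $\wedge^{j}\Rb^d$ is an irreducible representation of $\SL_d(\Rb)$, so Zariski density of $\rho(\Gamma) \subset \PGL_d(\Rb)$ forces both $\rho$ and $\wedge^{k+1}\rho$ to be irreducible, provided $0 \leq k \leq d-2$.

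Next I would rule out the two extreme values of $k$. Because $d > 2$ and $\rho$ is Zariski dense in $\PGL_d(\Rb)$, the group $\rho(\Gamma)$ cannot be elementary, hence $\Gamma$ is non-elementary and $\partial \Gamma$ is compact and perfect. Since $\xi$ is continuous and injective, $M$ is also perfect and in particular infinite, so $k \neq 0$. For $k = d-1$, invariance of domain would make $M$ open in $\Pb(\Rb^d)$; compactness of $M$ and connectedness of $\Pb(\Rb^d)$ would then force $M = \Pb(\Rb^d)$. But the transversality condition $\xi(x) + \ker\eta(y) = \Rb^d$ for $x \neq y$ (where $\eta$ is the dual boundary map) shows that $M \setminus \{\xi(y)\}$ avoids the projective hyperplane $\Pb(\ker\eta(y))$ for each $y \in \partial \Gamma$, and this hyperplane is infinite since $d \geq 3$, a contradiction. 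Thus $1 \leq k \leq d-2$ and Theorem~\ref{thm:main_reg_rigid_intro} applies to give
\begin{align*}
\frac{\lambda_1(\rho(\gamma))}{\lambda_2(\rho(\gamma))} = \frac{\lambda_{k+1}(\rho(\gamma))}{\lambda_{k+2}(\rho(\gamma))}
\end{align*}
for every $\gamma \in \Gamma$.

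Taking logarithms, this identity says that for every $\gamma \in \Gamma$ the Jordan projection of $\rho(\gamma)$ lies in the kernel of the linear functional $\alpha_1 - \alpha_{k+1}$ on the standard Cartan subalgebra of $\SL_d(\Rb)$, where $\alpha_j(t) = t_j - t_{j+1}$ denotes the $j$-th simple root. Since $k \geq 1$ we have $k + 1 \neq 1$, so $\alpha_1 - \alpha_{k+1}$ is a non-zero linear functional. This contradicts the main theorem of~\cite{B1997}, which asserts that the collection of Jordan projections of any Zariski-dense subgroup of $\SL_d(\Rb)$ generates a closed convex cone with non-empty interior in the positive Weyl chamber, and in particular cannot be contained in any proper linear subspace.

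The main obstacle really lies in the underlying Theorem~\ref{thm:main_reg_rigid_intro}, whose proof occupies its own section; once that is in hand, the deduction of the corollary is essentially formal, with the only genuine subtleties being the automatic irreducibility of $\wedge^{k+1}\rho$ under Zariski density and the removal of the two edge dimensions $k = 0$ and $k = d-1$.
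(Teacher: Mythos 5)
Your proposal is correct and follows essentially the same route as the paper: the paper likewise deduces the corollary from Theorem~\ref{thm:main_reg_rigid_intro} by noting that Zariski density forces $\rho$ and $\wedge^{k+1}\rho$ to be irreducible and then invoking the main theorem of~\cite{B1997} to produce a $\gamma$ violating the eigenvalue identity. Your additional care in ruling out the edge dimensions $k=0$ and $k=d-1$ is a welcome precision that the paper leaves implicit.
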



The proof of Theorem~\ref{thm:main_reg_rigid_intro} can also be used to prove the following rigidity result for Hitchin representations. 

\begin{theorem}\label{thm:main_reg_rigid_hitchin_intro}(see Section~\ref{sec:reg_rigid}) Suppose that $\Gamma \leq \PSL_2(\Rb)$ is a torsion-free cocompact lattice  and  $\rho : \Gamma \rightarrow \PSL_d(\Rb)$ is in the Hitchin component. If $\xi: \partial \Gamma \rightarrow \Pb(\Rb^{d})$ is the associated boundary map and $\xi(\partial \Gamma)$ is a $C^2$ submanifold of $\Pb(\Rb^d)$, then 
\begin{align*}
\frac{\lambda_1(\rho(\gamma))}{\lambda_2(\rho(\gamma))} = \frac{\lambda_2(\rho(\gamma))}{\lambda_3(\rho(\gamma))}
\end{align*}
for all $\gamma \in \Gamma$. 
\end{theorem}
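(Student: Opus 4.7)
The plan is to follow the strategy of Theorem~\ref{thm:main_reg_rigid_intro} for $k = 1$, using the Frenet structure available for Hitchin representations in place of the hypothesis that $\wedge^{2}\rho$ is irreducible. Fix a non-identity element $\gamma \in \Gamma$ (necessarily of infinite order, since $\Gamma$ is torsion-free) with attracting fixed point $x_\gamma^+ \in \partial \Gamma$. Since $\rho$ lies in the Hitchin component, $\rho(\gamma)$ is diagonalizable with distinct positive eigenvalues $\mu_1 > \mu_2 > \cdots > \mu_d$ and corresponding eigenvectors $v_1, \ldots, v_d$. By Labourie's theorem the boundary map $\xi$ extends to a hyperconvex Frenet curve with continuous osculating maps $\xi^{(k)} \colon \partial \Gamma \to \Gr_k(\Rb^d)$, and the dynamics-preserving property gives
\[
\xi^{(k)}(x_\gamma^+) \;=\; \Spanset_{\Rb}\{v_1, \ldots, v_k\}
\]
for each $1 \leq k \leq d-1$.

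First I locate the $2$-jet of $M = \xi(\partial \Gamma)$ at the fixed point $[v_1]$. Since $\xi^{(2)}(x_\gamma^+) = \Spanset_{\Rb}\{v_1, v_2\}$ is the limit of projective secant lines $\Spanset_{\Rb}\{\xi(x_\gamma^+), \xi(y)\}$ as $y \to x_\gamma^+$, the $C^1$ tangent line to $M$ at $[v_1]$ is $[v_2]$. Working in the affine chart $[1 : x_2 : \cdots : x_d]$ near $[v_1]$, I can therefore write $M$ locally as a $C^2$ curve
\[
c(s) = \bigl(s,\; f_3(s),\; \ldots,\; f_d(s)\bigr), \qquad f_i(0) = f_i'(0) = 0 \text{ for } i \geq 3.
\]
Comparing the $3$-point limit defining $\xi^{(3)}(x_\gamma^+)$ with the osculating $3$-plane of $c$ at $s=0$, which in homogeneous coordinates is $\Spanset_{\Rb}\bigl\{v_1,\, v_2,\, \sum_{i \geq 3} f_i''(0) v_i\bigr\}$, and using $\xi^{(3)}(x_\gamma^+) = \Spanset_{\Rb}\{v_1, v_2, v_3\}$, I conclude that $f_3''(0) \neq 0$ and $f_i''(0) = 0$ for every $i \geq 4$.

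Finally I exploit the invariance $\rho(\gamma)(M) = M$. In the chosen affine chart the action of $\rho(\gamma)$ is the diagonal linear map $(x_2, \ldots, x_d) \mapsto \bigl((\mu_2/\mu_1) x_2, \ldots, (\mu_d/\mu_1) x_d\bigr)$, so there is a local reparameterization $h$ with $\rho(\gamma) \cdot c(s) = c(h(s))$. Matching the first coordinate forces $h(s) = (\mu_2/\mu_1)\, s$, and the $i$-th coordinate for $i \geq 3$ yields the functional equation
\[
f_i\!\bigl((\mu_2/\mu_1)\, s\bigr) \;=\; (\mu_i/\mu_1)\, f_i(s).
\]
Reading off the $s^2$-coefficient and specializing to $i=3$ (where $f_3''(0) \neq 0$) gives $(\mu_2/\mu_1)^2 = \mu_3/\mu_1$, i.e.\ $\mu_2^2 = \mu_1 \mu_3$, which is the desired identity $\lambda_1(\rho(\gamma))/\lambda_2(\rho(\gamma)) = \lambda_2(\rho(\gamma))/\lambda_3(\rho(\gamma))$.

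The main obstacle is the second paragraph: carefully identifying the Frenet $3$-plane $\xi^{(3)}(x_\gamma^+)$, defined via a $3$-point Grassmannian limit, with the osculating $3$-plane of the $C^2$ submanifold $M$, and in particular ruling out the degenerate case $f_i''(0) = 0$ for all $i \geq 3$ (which would collapse the limiting $3$-plane to $\Spanset_{\Rb}\{v_1,v_2\}$ and contradict $\dim \xi^{(3)}(x_\gamma^+) = 3$). Once this compatibility is in place, the rest is a short Taylor computation, and the conclusion follows for every $\gamma \in \Gamma$.
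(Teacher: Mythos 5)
Your overall strategy --- writing $M$ locally as a $C^2$ graph $c(s)=(s,f_3(s),\dots,f_d(s))$ over the tangent direction $v_2$ and extracting the eigenvalue relation from the functional equation $f_i\bigl((\mu_2/\mu_1)s\bigr)=(\mu_i/\mu_1)f_i(s)$ --- is coherent and genuinely different from the paper's argument, but it has a real gap at exactly the point you flag, and the justification you sketch for closing it does not work. You need $f_3''(0)\neq 0$; the degenerate alternative is that \emph{all} $f_i''(0)$ vanish, i.e.\ the second derivative of the homogeneous lift $v_1+sv_2+\sum_{i\ge3}f_i(s)v_i$ lies in $\Spanset_{\Rb}\{v_1,v_2\}$. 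In that case the limit of secant $3$-planes does \emph{not} ``collapse to $\Spanset_{\Rb}\{v_1,v_2\}$'': a limit of $3$-planes in the Grassmannian is always a $3$-plane, and when the second divided difference tends to $0$ the limiting plane is simply no longer determined by the $2$-jet (for instance $f_3(s)=s^3$, $f_i\equiv 0$ for $i\ge 4$ still has secant $3$-planes converging to $\Spanset_{\Rb}\{v_1,v_2,v_3\}$). So there is no contradiction with $\dim\xi^{(3)}(x^+_\gamma)=3$, and in the degenerate case your Taylor comparison yields only a strict inequality between $\lambda_1/\lambda_2$ and $\lambda_2/\lambda_3$ --- which is precisely the failure mode the theorem must rule out. (Conversely, once second-order non-flatness is known, your second paragraph is fine: the Frenet limit then forces $f_3''(0)\neq0$ and $f_i''(0)=0$ for $i\ge4$, and the rest goes through.)

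The missing non-degeneracy is exactly what the paper supplies by a different mechanism. It considers the tangent-line map $\Phi:M\to\Pb(\wedge^2\Rb^d)$, $\Phi(m)=T_mM$, which is $C^1$ because $M$ is $C^2$; a short computation shows $d\Phi_{c(0)}\neq0$ is equivalent to your condition $\sum_{i\ge3}f_i''(0)v_i\neq0$. The paper proves $\Phi$ is injective (via $\Phi\circ\xi=\xi^{(2)}$ and the transversality properties of the Frenet flag), hence $d\Phi\neq0$ somewhere, hence everywhere, because the non-vanishing locus of $d\Phi$ is open, $\Gamma$-invariant, and $\Gamma$ acts minimally on $M$. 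It then avoids Taylor coefficients altogether by comparing the exponential contraction rates of $\gamma^n$ on $M$ and on $\Phi(M)$, which Lemma~\ref{lem:hithcin_eigenvalue_est} identifies with $\log(\lambda_2/\lambda_1)$ and $\log(\lambda_3/\lambda_2)$. If you import that minimality argument to get $f_3''(0)\neq0$ at every attracting fixed point, your more computational route closes; as written, the proposal is incomplete at its acknowledged crux.
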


\begin{remark} This corollary greatly restricts the Zariski closure of $\rho(\Gamma)$ when $\rho$ is Hitchin and $\xi(\partial \Gamma)$ is a $C^2$ submanifold (see~\cite{B1997} again). In particular, the corollary implies that in this case:
\begin{enumerate}
\item $\rho(\Gamma)$ cannot be Zariski dense, 
\item if $d = 2n>2$, then the Zariski closure of $\rho(\Gamma)$ cannot be conjugate to $\PSp(2n,\Rb)$, 
\item if $d = 2n+1 > 3$ then the Zariski closure of $\rho(\Gamma)$ cannot be conjugate to $\PSO(n,n+1)$, and
\item if $d=7$, then the Zariski closure of $\rho(\Gamma)$ cannot be conjugate to the standard realization of $G_2$ in $\PSL_{7}(\Rb)$.
\end{enumerate}
See Section~\ref{sec:eigenvalues} in the appendix for details.

Guichard has announced that these are the only possibilities for the Zariski closure of $\rho(\Gamma)$ when $\rho$ is Hitchin but not Fuchsian (that is conjugate to a representation of the form $\tau_d \circ \rho_0$), see for instance~\cite[Section 11.3]{BCLS2015}.
\end{remark}

\subsection{Convex cocompactness in the work of Danciger, Gu{\'e}ritaud, and Kassel}\label{subsec:DGK} 

After I finished writing this paper, Danciger, Gu{\'e}ritaud, and Kassel informed me of their preprint~\cite{DGK2017b} which has some overlapping results with this paper. They consider a class of subgroups of $\PGL_d(\Rb)$ which they call \emph{strongly convex cocompact} which (using the terminology of this paper) are discrete subgroups $\Gamma \leq \PGL_d(\Rb)$ which act convex cocompactly on a properly convex domain which is strictly convex and has $C^1$ boundary. This notion appears to be first studied in work of Crampon and Marquis~\cite{CM2014}. Danciger, Gu{\'e}ritaud, and Kassel also show (stated with different terminology) that if $\Lambda \leq \Aut(\Omega)$ is a regular convex cocompact subgroup (as in Definition~\ref{def:RCC}), then it is actually a strongly convex cocompact subgroup of $\PGL_d(\Rb)$, that is there exists a possibly different properly convex domain $\Omega^\prime$ where $\Lambda \leq \Aut(\Omega^\prime)$ is a convex cocompact subgroup and $\Omega^\prime$ is a strictly convex domain with $C^1$ boundary (see Theorem 1.15 in~\cite{DGK2017b}). Danciger, Gu{\'e}ritaud, and Kassel also study a notion of convex cocompact actions on general properly convex domains (see Definition 1.11 in~\cite{DGK2017b}) that is different than the one we consider in Definition~\ref{defn:CC} above.

The main overlap in the two papers is in Theorems~\ref{thm:RCC_anosov_intro}, ~\ref{thm:proper_action_implies}, and Corollary~\ref{cor:H_acts_CC} above and Theorems 1.4, 1.15 and Proposition 1.7 in~\cite{DGK2017a}.

\subsection*{Acknowledgements} 

I would like to thank Thomas Barthelm\'e and Ludovic Marquis for many helpful conversations. In particular, we jointly observed the fact that an argument due to G. Liu could be used to prove Proposition~\ref{prop:liu} during the course of writing our joint paper \emph{Entropy rigidity of Hilbert and Riemannian metrics}~\cite{BMZ2015}.  

I would also like to thank the referees for their careful reading of this paper and their many helpful comments and corrections. 

This material is based upon work supported by the National Science Foundation under grants DMS-1400919 and DMS-1760233.

\section{Preliminaries}

In this section we recall some facts that we will use in the arguments that follow.

\subsection{Some notations} \ \begin{enumerate}
\item If $M \subset \Pb(\Rb^d)$ is a $C^1$ $k$-dimensional submanifold of $\Pb(\Rb^{d})$ and $m \in M$ we will let $T_m M \subset \Pb(\Rb^d)$ be the $k$-dimensional projective subspace of $\Pb(\Rb^d)$ which is tangent to $M$ at $m$. 
\item If $V \subset \Rb^d$ is a linear subspace, we will let $\Pb(V) \subset \Pb(\Rb^d)$ denote its projectivization. In most other cases, we will use $[o]$ to denote the projective equivalence class of an object $o$, for instance: 
\begin{enumerate}
\item if $v \in \Rb^{d} \setminus \{0\}$, then $[v]$ denotes the image of $v$ in $\Pb(\Rb^{d})$, 
\item if $\phi \in \GL_{d}(\Rb)$, then $[\phi]$ denotes the image of $\phi$ in $\PGL_{d}(\Rb)$, and 
\item if $T \in \End(\Rb^{d}) \setminus\{0\}$, then $[T]$ denotes the image of $T$ in $\Pb(\End(\Rb^{d}))$. 
\end{enumerate}
\item A \emph{line segment} in $\Pb(\Rb^{d})$ is a connected subset of a projective line. Given two points $x,y \in \Pb(\Rb^{d})$ there is no canonical line segment with endpoints $x$ and $y$, but we will use the following convention: if $\Omega$ is a properly convex domain and $x,y \in \overline{\Omega}$, then (when the context is clear) we will let $[x,y]$ denote the closed line segment joining $x$ to $y$ which is contained in $\overline{\Omega}$. In this case, we will also let $(x,y)=[x,y]\setminus\{x,y\}$, $[x,y)=[x,y]\setminus\{y\}$, and $(x,y]=[x,y]\setminus\{x\}$.
\end{enumerate}

\subsection{Gromov hyperbolicity}

Suppose $(X,d)$ is a metric space. If $I \subset \Rb$ is an interval, a curve $\sigma: I \rightarrow X$ is a \emph{geodesic} if 
\begin{align*}
d(\sigma(t_1),\sigma(t_2)) = \abs{t_1-t_2}
\end{align*}
for all $t_1, t_2 \in I$.  A \emph{geodesic triangle} in a metric space is a choice of three points in $X$ and geodesic segments  connecting these points. A geodesic triangle is said to be \emph{$\delta$-thin} if any point on any of the sides of the triangle is within distance $\delta$ of the other two sides. 

\begin{definition}
A proper geodesic metric space $(X,d)$ is called \emph{$\delta$-hyperbolic} if every geodesic triangle is $\delta$-thin. If $(X,d)$ is $\delta$-hyperbolic for some $\delta\geq0$ then $(X,d)$ is called \emph{Gromov hyperbolic}.
\end{definition}

We will use the following (probably well known) characterization of Gromov hyperbolicity.

\begin{proposition}\label{prop:GH_suff} Suppose $(X,d)$ is a proper geodesic metric space, $\delta > 0$, and there exists a map 
\begin{align*}
(x,y) \in X \times X \rightarrow \sigma_{x,y}  \in C([0,d(x,y)],X)
\end{align*}
where $\sigma_{x,y}$ is a geodesic segment joining $x$ to $y$. If for every $x,y,z \in X$ distinct, the geodesic triangle formed by $\sigma_{x,y}, \sigma_{y,z}, \sigma_{z,x}$ is $\delta$-thin, then $(X,d)$ is Gromov hyperbolic. 
\end{proposition}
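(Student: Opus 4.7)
The plan is to prove a uniform Hausdorff bound between any two geodesics joining the same pair of points, and then to deduce thinness of arbitrary geodesic triangles by comparison with the corresponding $\sigma$-triangle. The key technical step is the first one.

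\emph{Step 1 (key lemma).} I will show that for any geodesic $\alpha:[0,L] \to X$ from $p$ to $q$, every point of $\alpha$ lies within $4\delta$ of $\sigma_{p,q}$. Fix $r = \alpha(s)$; the cases $s \leq 3\delta$ or $L-s \leq 3\delta$ are immediate since $p, q \in \sigma_{p,q}$, so assume $3\delta < s < L - 3\delta$. Because $\alpha$ is a geodesic, the $\sigma$-triangle on the distinct points $p, r, q$ has a saturated triangle inequality $d(p,r) + d(r,q) = d(p,q)$. Consider $m = \sigma_{p,r}(s - 3\delta)$. By $\delta$-thinness, $m$ lies within $\delta$ of $\sigma_{r,q} \cup \sigma_{p,q}$; if there were $\eta \in [0, L-s]$ with $d(m, \sigma_{r,q}(\eta)) \leq \delta$, the triangle inequality would yield
\[
L = d(p,q) \leq d(p,m) + \delta + d(\sigma_{r,q}(\eta), q) = (s-3\delta) + \delta + (L-s-\eta),
\]
forcing $\eta \leq -2\delta$, a contradiction. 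Hence $m$ lies within $\delta$ of $\sigma_{p,q}$, and so $d(r, \sigma_{p,q}) \leq d(r, m) + \delta = 4\delta$.

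\emph{Step 2 (Hausdorff bound and thinness).} For $t \in [0,L]$, Step 1 produces $t' \in [0,L]$ with $d(\alpha(t), \sigma_{p,q}(t')) \leq 4\delta$. Since these two points lie at distance $t$ and $t'$ from $p$, one gets $|t-t'| \leq 4\delta$, whence $d(\sigma_{p,q}(t), \alpha(t)) \leq 8\delta$. Together with Step 1, this gives $\Haus(\alpha, \sigma_{p,q}) \leq 8\delta$ for every geodesic $\alpha$ from $p$ to $q$. Now let $\alpha_{xy}, \alpha_{yz}, \alpha_{zx}$ form an arbitrary geodesic triangle on distinct vertices $x, y, z$. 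For $p \in \alpha_{xy}$, pick $p' \in \sigma_{xy}$ with $d(p,p') \leq 8\delta$, then $q' \in \sigma_{yz} \cup \sigma_{zx}$ with $d(p',q') \leq \delta$ (by $\delta$-thinness of the $\sigma$-triangle), then $q \in \alpha_{yz} \cup \alpha_{zx}$ with $d(q',q) \leq 8\delta$. Thus $d(p, \alpha_{yz} \cup \alpha_{zx}) \leq 17\delta$, and $X$ is $17\delta$-hyperbolic.

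The technical heart is Step 1: $\delta$-thinness of a $\sigma$-triangle with geodesically collinear vertices must be exploited to place the middle vertex close to the long opposite side. Rips thinness does not directly constrain vertices, but the saturated triangle inequality forces any point on $\sigma_{p,r}$ sufficiently far from $r$ to be close to $\sigma_{p,q}$ rather than $\sigma_{r,q}$, as the computation above makes precise.
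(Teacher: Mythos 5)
Your argument is correct, but it takes a genuinely different route from the paper's. The paper works through the Gromov product: it first proves that if $t \leq (x|y)_o - \delta$ then $d(\sigma_{ox}(t),\sigma_{oy}(t)) \leq 2\delta$, uses this to verify the four-point inequality $(x|y)_o \geq \min\{ (x|z)_o, (y|z)_o\} - 3\delta$ for all $o,x,y,z$, and then invokes Proposition 1.22 of Chapter III.H of Bridson--Haefliger to pass from that inequality back to hyperbolicity. You instead prove that \emph{every} geodesic from $p$ to $q$ lies within Hausdorff distance $8\delta$ of the chosen geodesic $\sigma_{p,q}$, and then transfer thinness from $\sigma$-triangles to arbitrary triangles; the key device is the degenerate $\sigma$-triangle on $p$, $r=\alpha(s)$, $q$, where the saturated equality $d(p,r)+d(r,q)=d(p,q)$ forces the point of $\sigma_{p,r}$ at distance $3\delta$ from $r$ to be $\delta$-close to $\sigma_{p,q}$ rather than to $\sigma_{r,q}$. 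Your route is more self-contained --- it verifies the Rips condition directly, with an explicit constant $17\delta$ (in fact $13\delta$, since Step 1 already puts a point of $\alpha_{xy}$ within $4\delta$ of $\sigma_{x,y}$) --- while the paper's is shorter on the page because the equivalence of hyperbolicity criteria is outsourced to the reference. Two cosmetic points, neither a real gap: the triangle you invoke in Step 1 has sides $\sigma_{p,r},\sigma_{r,q},\sigma_{p,q}$ whereas the hypothesis literally supplies $\sigma_{p,r},\sigma_{r,q},\sigma_{q,p}$ (harmless, since Step 1 applied to $\alpha=\sigma_{p,q}$ also bounds the Hausdorff distance between $\sigma_{p,q}$ and $\sigma_{q,p}$, and the paper's own proof is equally casual about orientation); and triangles with two coincident vertices, which Step 2 excludes, are disposed of immediately by the same $8\delta$ Hausdorff bound between any two geodesics sharing endpoints.
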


We begin the proof with a definition and a lemma. Define the Gromov product of $x,y \in X$ with respect to $o\in X$ to be 
\begin{align*}
(x | y)_o := \frac{1}{2} \left( d(x,o)+d(o,y) - d(x,y) \right).
\end{align*}

\begin{lemma} Suppose $(X,d)$ is a metric space, $x,y,o \in X$, and $\sigma:[0,T] \rightarrow X$ is a geodesic with $\sigma(0)=x$ and $\sigma(T)=y$. Then 
\begin{align*}
(x|y)_o \leq d(o,\sigma) : = \inf\{ d(o,\sigma(t)) : t \in [0,T] \}.
\end{align*}
\end{lemma}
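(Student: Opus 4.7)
The plan is to use the triangle inequality twice, once for each ``half'' of the geodesic, and combine the two estimates.

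Fix an arbitrary $t \in [0,T]$ and set $p = \sigma(t)$. Since $\sigma$ is a geodesic with $\sigma(0) = x$ and $\sigma(T) = y$, we have $d(x,p) = t$ and $d(p,y) = T - t$, and in particular $d(x,p) + d(p,y) = T = d(x,y)$. The triangle inequality applied to the triples $(x,p,o)$ and $(o,p,y)$ then yields
\begin{align*}
d(x,o) &\leq d(x,p) + d(p,o) = t + d(p,o), \\
d(o,y) &\leq d(o,p) + d(p,y) = d(o,p) + (T-t).
\end{align*}
Adding these and using $t + (T-t) = d(x,y)$ gives
\begin{equation*}
d(x,o) + d(o,y) - d(x,y) \leq 2\, d(o,p).
\end{equation*}
Dividing by $2$ and recalling the definition of the Gromov product, we obtain $(x|y)_o \leq d(o,\sigma(t))$ for every $t \in [0,T]$.

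Taking the infimum over $t \in [0,T]$ yields $(x|y)_o \leq d(o,\sigma)$, as desired. There is no real obstacle here: the statement is a direct consequence of the triangle inequality together with the fact that $\sigma$ is parametrized by arc length, and the whole argument is two lines of estimates.
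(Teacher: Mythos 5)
Your proof is correct and is essentially identical to the paper's: both use the geodesic identity $d(x,\sigma(t))+d(\sigma(t),y)=d(x,y)$ together with two applications of the triangle inequality to bound $2(x|y)_o$ by $2d(o,\sigma(t))$, then take the infimum over $t$. No issues.
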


\begin{proof} For $t \in [0,T]$,
\begin{align*}
d(x,y) = d(x, \sigma(t)) + d(\sigma(t), y)
\end{align*}
and so the triangle inequality implies that: 
\begin{align*}
2(x|y)_o & = d(x,o)+d(o,y)-d(x,y)  \leq 2d(o, \sigma(t)). \qedhere
\end{align*}

\end{proof}

\begin{proof}[Proof of Proposition~\ref{prop:GH_suff}] We start by proving the following claim: \\

\textbf{Claim:} If $x,y,o \in X$ and $t \leq (x|y)_o - \delta$, then
\begin{align*}
d(\sigma_{ox}(t), \sigma_{oy}(t)) \leq 2 \delta.
\end{align*}

It is enough to consider the case when $t < (x|y)_o -\delta$. In this case 
\begin{align*}
d(\sigma_{ox}(t), \sigma_{xy}) \geq d(o, \sigma_{xy})-d(\sigma_{ox}(t), o) \geq (x|y)_o - t > \delta.
\end{align*}
So by the thin triangle condition, there exists $s$ such that $d(\sigma_{ox}(t), \sigma_{oy}(s)) \leq \delta$. Then 
\begin{align*}
\delta \geq d(\sigma_{ox}(t), \sigma_{oy}(s)) \geq \abs{d(\sigma_{ox}(t), o) - d(o,\sigma_{oy}(s))} = \abs{t-s}.
\end{align*}
So 
\begin{align*}
d(\sigma_{ox}(t), \sigma_{oy}(t)) \leq d(\sigma_{ox}(t), \sigma_{oy}(s))+d(\sigma_{oy}(s), \sigma_{oy}(t)) \leq 2 \delta
\end{align*}
and the claim is established. 

By Proposition 1.22 in Chapter III.H in~\cite{BH1999}, $(X,d)$ is Gromov hyperbolic if and only if there exists some $\delta_0 >0$ such that
\begin{align*}
(x|y)_o \geq \min\{ (x|z)_o, (y|z)_o\} - \delta_0
\end{align*}
 for all $o,x,y,z \in X$.
 
Fix $o,x,y,z \in X$. We claim that
\begin{align*}
(x|y)_o \geq \min\{ (x|z)_o, (y|z)_o\} - 3\delta.
\end{align*}
Let $m =  \min\{ (x|z)_o, (y|z)_o\}$. Since $(x|y)_o \geq 0$, the inequality is trivial when $m \leq \delta$. So we can assume $m > \delta$. Then the triangle inequality implies that
\begin{align*}
\min\{ d(x,o), d(y,o), d(z,o)\} \geq m > \delta. 
\end{align*}
Then let $x^\prime = \sigma_{ox}(m-\delta)$, $y^\prime = \sigma_{oy}(m-\delta)$, and $z^\prime = \sigma_{oz}(m-\delta)$. Then by the claim
\begin{align*}
d(x^\prime, y^\prime) \leq d(x^\prime, z^\prime)+d(z^\prime, y^\prime) \leq 4 \delta.
\end{align*}
Then 
\begin{align*}
2(x|y)_o & = d(x,o)+d(o,y)-d(x,y) = d(x,x^\prime)+d(x^\prime,o)+d(o,y^\prime)+d(y^\prime,y)-d(x,y)\\
& \geq d(x^\prime,o)+d(o,y^\prime) - d(x^\prime, y^\prime)  \geq m-\delta + m-\delta - 4\delta = 2m - 6\delta.
\end{align*}
So 
\begin{equation*}
(x|y)_o \geq \min\{ (x|z)_o, (y|z)_o\} - 3\delta. \qedhere
\end{equation*}
\end{proof}

By combining several deep theorems from geometric group theory we can deduce the following.

\begin{theorem}\label{thm:connected}
Suppose $\Gamma$ is a non-elementary word hyperbolic group which does not split over a finite group and is not commensurable to the fundamental group of a closed hyperbolic surface. Then 
\begin{enumerate}
\item $\partial \Gamma$ is connected, 
\item $\partial \Gamma \setminus \{x\}$ is connected for every $x \in \partial \Gamma$, and
\item there exist $u,w \in \partial \Gamma$ distinct such that $\partial \Gamma \setminus \{u,v\}$ is connected.
\end{enumerate}
\end{theorem}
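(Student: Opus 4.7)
The plan is to assemble three classical results from geometric group theory together with one classical characterization of the circle from continuum theory.

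For part (1), I would invoke Stallings' structure theorem on ends of finitely generated groups. Since $\Gamma$ is non-elementary, it is infinite and not virtually cyclic, so it has either one end or infinitely many ends. In the latter case, Stallings would produce a non-trivial splitting of $\Gamma$ as an amalgamated product or HNN extension over a finite subgroup, contradicting the hypothesis. Hence $\Gamma$ is one-ended, and for a one-ended word hyperbolic group the boundary $\partial \Gamma$ is connected.

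For part (2), I would cite the theorem of Swarup resolving Bestvina's cut-point conjecture, building on work of Bowditch, Levitt, and Bestvina--Mess: the boundary of a one-ended word hyperbolic group contains no global cut points. Combined with (1), this immediately gives that $\partial \Gamma \setminus \{x\}$ is connected for every $x \in \partial \Gamma$.

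For part (3), note that $\partial \Gamma$ is a non-degenerate compact metrizable space (non-degeneracy follows from $\Gamma$ being non-elementary) and is connected by (1). By the Casson--Jungreis/Gabai convergence group theorem, if $\partial \Gamma$ were homeomorphic to $S^1$, then $\Gamma$ would contain a finite-index surface subgroup and therefore be commensurable to the fundamental group of a closed hyperbolic surface, which is excluded by hypothesis. Hence $\partial \Gamma \not\cong S^1$. I would then apply Whyburn's classical topological characterization of the circle: a non-degenerate metric continuum in which every pair of distinct points separates the space is homeomorphic to $S^1$. Its contrapositive produces distinct points $u, w \in \partial \Gamma$ with $\partial \Gamma \setminus \{u,w\}$ connected, proving (3).

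The argument is short once the right inputs are in hand, so the ``hard part'' is really one of bookkeeping rather than of genuinely new argument: identifying the correct deep black boxes and verifying their hypotheses. The only mildly delicate point is the passage from ``virtually a closed hyperbolic surface group'' (the output of the convergence group theorem) to ``commensurable to'' such a group (the condition explicitly excluded by hypothesis), but this implication is immediate since a finite-index surface subgroup of $\Gamma$ is automatically a common finite-index subgroup of both $\Gamma$ and the surface group.
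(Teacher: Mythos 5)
Your proposal is correct and follows essentially the same route as the paper: Stallings' theorem for connectedness, Swarup's no-cut-point theorem for part (2), and the topological characterization of the circle combined with the convergence group theorem (the paper cites Tukia and Gabai) for part (3). The only cosmetic difference is that you phrase Stallings via ends while the paper invokes the splitting criterion directly, and your closing remark on ``virtually'' versus ``commensurable'' is a correct, if implicit, step in the paper's argument as well.
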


The argument below comes from the proof of Theorem 3.1 in~\cite{P2005}. 

\begin{proof} By work of Stallings, $\partial \Gamma$ is disconnected if and only if $\Gamma$ splits over a finite group ~\cite{S1971, S1968}.
So $\partial \Gamma$ must be connected. Then a theorem of Swarup~\cite{S1996} implies that $\partial \Gamma \setminus \{x\}$ is connected for every $x \in \partial \Gamma$.

Now suppose for a contradiction that $\partial \Gamma \setminus \{u,v\}$ is disconnected for every $u,v \in \partial \Gamma$ distinct. Then $ \partial \Gamma$ is homeomorphic to the circle by~\cite[Chapter IV, Theorem 12.1]{N1992}. But then by work of Gabai~\cite{G1992} and Tukia~\cite{T1988}, $\Gamma$ is commensurable to the fundamental group of a closed hyperbolic surface.
\end{proof}

\subsection{Properly convex domains} 

In this subsection we review some basic definitions involving convexity in real projective space. 

\begin{definition} \ \begin{enumerate}
\item A set $\Omega \subset \Pb(\Rb^d)$ is called a \emph{domain} if $\Omega$ is open and connected
\item A set $\Omega \subset \Pb(\Rb^{d})$ is called \emph{convex} if  $L \cap \Omega$ is connected and $L \cap \Omega \neq L$ for every projective line $L \subset \Pb(\Rb^{d})$. 
\item A convex domain $\Omega \subset \Pb(\Rb^{d})$ is called a \emph{properly convex domain} if $\overline{L \cap \Omega} \neq L$ for every projective line $L \subset \Pb(\Rb^{d})$.
\end{enumerate}
\end{definition}

When $\Omega \subset \Pb(\Rb^d)$ is a properly convex domain, there exists an affine chart $\mathbb{A} \subset \Pb(\Rb^d)$ which contains $\Omega$ as a bounded convex domain (see for instance~\cite[Chapter 1]{APS2004}). 

\begin{definition}
Given a properly convex domain $\Omega \subset \Pb(\Rb^d)$, a hyperplane $H \subset \Pb(\Rb^d)$ is a \emph{supporting hyperplane of $\Omega$ at $x \in \partial \Omega$} if $x \in H$ and $H \cap \Omega = \emptyset$. 
\end{definition}

One of the most important properties of properly convex domains is that every boundary point is contained in at least one supporting hyperplane (which follows from the supporting hyperplane characterization of convexity in Euclidean space). 

\begin{definition} Suppose that $\Omega \subset \Pb(\Rb^d)$ is a properly convex domain. Then
\begin{enumerate}
\item a point $x \in \partial \Omega$ is a \emph{$C^1$ point of $\Omega$} if $x$ is contained in a unique supporting hyperplane of $\Omega$. In this case, we let $T_x \partial \Omega$ denote this unique supporting hyperplane. 
\item a point $x \in \partial \Omega$ is an \emph{extreme point of $\Omega$} if there does not exist a line segment $(p,q)$ in $\partial \Omega$ with $x \in (p,q)$. 
\end{enumerate}
\end{definition}

It is straightforward to show that $x \in \partial \Omega$ is a $C^1$ point of $\Omega$ (in the sense above) if and only if $\partial\Omega$ is locally the graph of a function which is differentiable at $x$. Moreover, in this case if $x_n \in \partial \Omega$ is a sequence converging to $x$ and $H_n$ is a supporting hyperplane at $x_n$, then $\lim_{n \rightarrow \infty} H_n=T_x \partial \Omega$.

Given a properly convex domain $\Omega \subset \Pb(\Rb^{d})$ the \emph{dual set} is defined to be:
\begin{align*}
\Omega^* = \{ f \in \Pb(\Rb^{d*}): f(x) \neq 0 \text{ for all } x \in \overline{\Omega} \}.
\end{align*}
The set $\Omega^*$ is a properly convex domain in $\Pb(\Rb^{d*})$ and the two sets have the following relation.

\begin{observation}
If $f \in \partial \Omega^*$, then $\Pb(\ker f)$ is a supporting hyperplane of $\Omega$. 
\end{observation}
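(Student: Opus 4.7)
The plan is to work with lifts to $\Rb^d$ and $\Rb^{d*}$, and to establish two things: that $\ker f$ contains no point of $\Omega$, and that $\ker f$ contains at least one point of $\overline{\Omega}$ (necessarily a boundary point by the first assertion).

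First, I would pick a properly convex open cone $C \subset \Rb^d \setminus \{0\}$ projecting onto $\Omega$ (the preimage of $\Omega$ has two components; choose one). Then for any $f^\prime \in \Omega^*$ and any lift $\wt{f}^\prime \in \Rb^{d*}$, $\wt{f}^\prime$ does not vanish on $\overline{C}\setminus\{0\}$; since $C$ is connected, $\wt{f}^\prime$ has a fixed sign there, and by replacing $\wt{f}^\prime$ with $-\wt{f}^\prime$ if needed, we can arrange $\wt{f}^\prime|_C > 0$. Now take a sequence $f_n \in \Omega^*$ converging to $f \in \partial \Omega^*$, and choose unit-norm lifts $\wt{f}_n$ with $\wt{f}_n|_C > 0$; after passing to a subsequence, $\wt{f}_n \to \wt{f}$ for some unit vector $\wt{f} \in \Rb^{d*}$ lifting $f$.

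Next, I would show $\ker f \cap \Omega = \emptyset$. Suppose for contradiction some $x \in \Omega$ satisfies $\wt{f}(\wt{x}) = 0$ for a lift $\wt{x} \in C$. Passing to the limit, $\wt{f}|_C \geq 0$. Since $C$ is open in $\Rb^d$, the point $\wt{x}$ is interior to $C$, so $C$ contains an open $\Rb^d$-neighborhood of $\wt{x}$. But $\wt{f}$ is a nonzero linear functional vanishing at an interior point $\wt{x}$, so $\wt{f}$ must take negative values on any neighborhood of $\wt{x}$, contradicting $\wt{f}|_C \geq 0$. Hence $\ker f \cap \Omega = \emptyset$.

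Finally, I would show $\ker f \cap \overline{\Omega} \neq \emptyset$ by noting that $\Omega^*$ is open: the set of $f^\prime \in \Pb(\Rb^{d*})$ such that $\ker f^\prime$ misses the compact set $\overline{\Omega}$ is open in $\Pb(\Rb^{d*})$ (this is where proper convexity enters, ensuring $\overline{\Omega}$ is compact and contained in an affine chart). Since $\Omega^*$ is open, $\partial \Omega^* \cap \Omega^* = \emptyset$, so $f \notin \Omega^*$, which by definition means $\ker f \cap \overline{\Omega} \neq \emptyset$. Combining the two assertions, any intersection point must lie in $\partial \Omega$, and $\ker f$ is a supporting hyperplane. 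The main subtlety is just the bookkeeping for projective lifts in step one; everything else is a direct consequence of openness of $\Omega$ and compactness of $\overline{\Omega}$.
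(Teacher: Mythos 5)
Your argument is correct: the cone-lifting step, the openness of $\Omega$ forcing $\ker f \cap \Omega = \emptyset$, and the openness of $\Omega^*$ (which follows from compactness of $\overline{\Omega}$) forcing $\ker f \cap \overline{\Omega} \neq \emptyset$ together give exactly the claim. The paper states this as an observation with no proof at all, so there is nothing to compare against; your write-up is the standard argument one would expect to fill that gap.
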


\subsection{The Hilbert metric}

For distinct points $x,y \in \Pb(\Rb^{d})$ let $\overline{xy}$ be the projective line containing them. Suppose $\Omega \subset \Pb(\Rb^{d})$ is a properly convex domain. If $x,y \in \Omega$ are distinct let $a,b$ be the two points in $\overline{xy} \cap \partial\Omega$ ordered $a, x, y, b$ along $\overline{xy}$. Then define \emph{the Hilbert distance between $x$ and $y$ to be}
\begin{align*}
d_{\Omega}(x,y) = \frac{1}{2}\log [a, x,y, b]
\end{align*}
 where 
 \begin{align*}
 [a,x,y,b] = \frac{\abs{x-b}\abs{y-a}}{\abs{x-a}\abs{y-b}}
 \end{align*}
 is the cross ratio. Using the invariance of the cross ratio under projective maps and the convexity of $\Omega$ it is possible to establish the following (see for instance~\cite[Section 28]{BK1953}). 
 
 \begin{proposition}\label{prop:hilbert_basic}
Suppose $\Omega \subset \Pb(\Rb^{d})$ is a properly convex domain. Then $d_{\Omega}$ is a complete $\Aut(\Omega)$-invariant metric on $\Omega$ which generates the standard topology on $\Omega$. Moreover, if $p,q \in \Omega$, then there exists a geodesic joining $p$ and $q$ whose image is the line segment $[p,q]$.
\end{proposition}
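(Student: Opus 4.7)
The plan is to verify the classical properties of the Hilbert metric in turn, relying throughout on the projective invariance and multiplicativity of the cross-ratio. First, proper convexity guarantees that the projective line $\overline{xy}$ meets $\partial\Omega$ in exactly two points $a, b$, so $H_\Omega(x,y)$ is well-defined. The ordering $a,x,y,b$ together with convexity forces $[a,x,y,b] \geq 1$ with equality iff $x=y$, giving non-negativity and non-degeneracy. Symmetry in $(x,y)$ holds because interchanging $x$ and $y$ also interchanges $a$ with $b$ and $[b,y,x,a]=[a,x,y,b]$. For $\Aut(\Omega)$-invariance, any $g \in \Aut(\Omega)$ acts projectively on $\overline{xy}$ and sends the ordered quadruple $(a,x,y,b)$ to the quadruple associated to $(gx,gy)$, while the cross-ratio is a projective invariant.

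The main technical step is the triangle inequality. I would first reduce to the planar case by observing that $\Omega \cap \Pb(W)$ is properly convex in $\Pb(W)$ whenever $W \subset \Rb^d$ is a linear subspace meeting $\Omega$, and that $H_{\Omega \cap \Pb(W)}$ agrees with the restriction of $H_\Omega$ to points in the slice. For three points $x,y,z$ in a planar properly convex domain, I would follow the classical argument: draw the lines through $x,z$ and through $y,z$, and use a projective transformation to move to a convenient affine chart where the inequality reduces to an elementary comparison of cross-ratios via an auxiliary circumscribed triangle. The multiplicativity identity $[a,x,r,b]\cdot[a,r,y,b]=[a,x,y,b]$ for $r$ between $x$ and $y$ on the same line then gives additivity of $H_\Omega$ along line segments, so $[p,q]$ parametrized by Hilbert arc-length is a geodesic joining $p$ and $q$.

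To see that $H_\Omega$ generates the standard topology, I would fix an affine chart in which $\Omega$ is bounded and use the explicit cross-ratio formula. If $x_n \to x \in \Omega$ in the standard topology, the endpoints of $\overline{x_n y} \cap \partial\Omega$ depend continuously on $x_n$ for fixed $y$, so $H_\Omega(x_n,x) \to 0$; conversely, if $H_\Omega(x_n,x) \to 0$ then $[a,x,x_n,b] \to 1$, which forces $x_n \to x$ in the chart since $\abs{x_n-a}$ and $\abs{x_n-b}$ stay bounded away from zero for $x_n$ near $x$. For completeness, any Cauchy sequence $(x_n)$ is $H_\Omega$-bounded and hence has a standard-topology accumulation point $x_\infty \in \overline{\Omega}$. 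The key step is to rule out $x_\infty \in \partial\Omega$: for a fixed $y \in \Omega$, if $x_n \to x_\infty \in \partial\Omega$ in the chart then one of the boundary points $a_n \in \overline{x_n y} \cap \partial\Omega$ approaches $x_\infty$, forcing $\abs{x_n - a_n} \to 0$ and $H_\Omega(x_n,y) \to \infty$, contradicting $H_\Omega$-boundedness. Hence $x_\infty \in \Omega$ and $x_n \to x_\infty$ in $H_\Omega$ by the previous step.

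The only genuinely non-trivial step is the triangle inequality; everything else follows from properness of $\Omega$ together with continuity and positivity of the cross-ratio. I expect this to be the main obstacle, though the planar reduction and the standard comparison via an auxiliary triangle make it manageable, and this argument is well-documented in the literature on Hilbert geometry.
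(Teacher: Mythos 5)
Your outline is correct: it is the standard classical verification of the Hilbert metric's properties (projective invariance of the cross-ratio, the planar reduction and auxiliary-triangle argument for the triangle inequality, multiplicativity of the cross-ratio for additivity along chords, and divergence of the metric near $\partial\Omega$ for completeness and properness). The paper itself gives no proof of this proposition --- it states it as a classical fact following from cross-ratio invariance and convexity --- so your sketch supplies exactly the argument the paper is implicitly invoking; the only step you defer rather than carry out is the triangle inequality, which is indeed the one genuinely nontrivial point and is treated in detail in the standard references on Hilbert geometry.
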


We will use the following observation (which follows immediately from the definition of $d_\Omega$). 

\begin{observation}[see Lemma 3 in~\cite{V1970}]\label{obs:dist_zero} Suppose $\Omega \subset \Pb(\Rb^{d})$ is a properly convex domain and $p_n, q_n \in \Omega$ are sequences. If $p_n \rightarrow p \in \overline{\Omega}$ and $d_\Omega(p_n, q_n) \rightarrow 0$, then $q_n \rightarrow p$. 
\end{observation}

We will also use the following estimate.

\begin{lemma}[see Lemma 8.3 in~\cite{Cra2009}] \label{lem:hilbert_1} Suppose $\Omega \subset \Pb(\Rb^{d})$ is a properly convex domain and $[a,b], [c,d] \subset \Omega$ are line segments. If $p \in [a,b]$, then
\begin{align*}
d_\Omega(p, [c,d]) \leq d_\Omega(a,c) + d_\Omega(b,d).
\end{align*}
\end{lemma}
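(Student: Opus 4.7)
The plan is to reduce the inequality to a single cross-ratio computation on the projective line through two judiciously chosen points. In any affine chart in which $\Omega$ is bounded, write $p = (1-t)a + tb$ for some $t \in [0,1]$ and define the \emph{synchronized} point $q := (1-t)c + td \in [c,d]$. Since $q \in [c,d]$ we have $H_\Omega(p,[c,d]) \leq H_\Omega(p,q)$, so it suffices to prove
\begin{align*}
H_\Omega(p,q) \leq H_\Omega(a,c) + H_\Omega(b,d).
\end{align*}

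Next, I would parameterize the line through $p$ and $q$ by $r_s := (1-s)p + sq$ and use the algebraic identity
\begin{align*}
r_s = (1-t)\alpha_s + t\beta_s, \qquad \alpha_s := (1-s)a + sc, \qquad \beta_s := (1-s)b + sd,
\end{align*}
which displays every point of line $(pq)$ as a convex combination of a point on line $(ac)$ and a point on line $(bd)$. Let $[s_0,s_1]$ and $[s_2,s_3]$ be the intervals (with $s_0,s_2 \leq 0 < 1 \leq s_1,s_3$) for which $\alpha_s \in \overline{\Omega}$, respectively $\beta_s \in \overline{\Omega}$. By convexity of $\overline{\Omega}$, $r_s \in \overline{\Omega}$ whenever $s \in [\max(s_0,s_2),\min(s_1,s_3)]$, so the true exit parameters $s^\pm$ of line $(pq)$ from $\overline{\Omega}$ satisfy $s^- \leq \max(s_0,s_2)$ and $s^+ \geq \min(s_1,s_3)$.

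The cross-ratio formula for the Hilbert distance yields
\begin{align*}
2H_\Omega(p,q) = \log \frac{s^+(1-s^-)}{(-s^-)(s^+-1)},
\end{align*}
and elementary calculus shows this expression is increasing in $s^-$ and decreasing in $s^+$ on the relevant region. Replacing $(s^-,s^+)$ with the inner values $(\max(s_0,s_2),\min(s_1,s_3))$ therefore gives an upper bound on $H_\Omega(p,q)$.

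Finally, I would compare this upper bound against
\begin{align*}
2H_\Omega(a,c) + 2H_\Omega(b,d) = \log\left(\frac{s_1(1-s_0)}{(-s_0)(s_1-1)} \cdot \frac{s_3(1-s_2)}{(-s_2)(s_3-1)}\right).
\end{align*}
By the symmetry $(a,c) \leftrightarrow (b,d)$ assume $s_0 \geq s_2$. If $s_1 \leq s_3$ the upper bound collapses to $2H_\Omega(a,c)$ and the conclusion follows from $H_\Omega(b,d) \geq 0$. If $s_1 > s_3$, the ratio of right to left sides telescopes to $\frac{s_1(1-s_2)}{(-s_2)(s_1-1)}$, which exceeds $1$ since $s_1 > 1$ and $s_2 < 0$. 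The main obstacle is keeping track of this final two-case check; the rest is routine manipulation of cross ratios, with the essential content packaged in the convex-combination identity for $r_s$.
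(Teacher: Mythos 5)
The paper offers no proof of this lemma at all --- it is quoted verbatim from Crampon (Lemma 8.3 of \cite{Cra2009}) --- so there is no in-paper argument to compare yours against; I can only assess your proof on its own, and it is correct and complete. The route is the standard one for this estimate: synchronize the affine parameter to get $q=(1-t)c+td\in[c,d]$, so that it suffices to bound $H_\Omega(p,q)$; use the bilinear identity $r_s=(1-t)\alpha_s+t\beta_s$ together with convexity of $\overline{\Omega}$ to show the chord of the line $(pq)$ contains the ``intersection'' $[\max(s_0,s_2),\min(s_1,s_3)]$ of the chords of $(ac)$ and $(bd)$ in the common parameter $s$; then use monotonicity of the cross ratio in the endpoints. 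The computations check: in this parametrization $2H_\Omega(p,q)=\log\frac{s^+(1-s^-)}{(-s^-)(s^+-1)}$, this is increasing in $s^-<0$ and decreasing in $s^+>1$, and after the harmless normalization $s_0\ge s_2$ the comparison either collapses to $2H_\Omega(a,c)$ (when $s_1\le s_3$) or differs from the right-hand side by the factor $\frac{s_1(1-s_2)}{(-s_2)(s_1-1)}>1$ (when $s_1>s_3$). Two cosmetic points only: since $a,b,c,d$ lie in the open set $\Omega$ the inequalities are strict, $s_0,s_2<0<1<s_1,s_3$, which is what keeps all four cross ratios finite; and the degenerate cases ($p=q$, or $a=c$, or $b=d$, where some line is undefined) should be dispatched separately, which is immediate since the corresponding Hilbert distances vanish or the relevant interval of parameters becomes all of $\Rb$. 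Neither affects the validity of the argument.
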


We will also consider the Gromov product induced by the Hilbert metric: given a properly convex domain $\Omega \subset \Pb(\Rb^d)$ define the \emph{Gromov product of $p,q \in \Omega$ based at $o \in \Omega$} to be 
\begin{align*}
(p|q)_o^{\Omega} = \frac{1}{2} \left( d_\Omega(p,o)+d_\Omega(o,q)-d_\Omega(p,q) \right).
\end{align*}

Karlsson and Noskov established the following estimates.

\begin{lemma}\label{lem:GP}\cite[Theorem 5.2]{KN2002} Suppose $\Omega \subset \Pb(\Rb^{d})$ is a properly convex domain, $o\in \Omega$, $p_n \in \Omega$ is a sequence with $p_n \rightarrow p \in \partial \Omega$, and $q_m \in \Omega$ is a sequence with $q_m \rightarrow q \in \partial \Omega$.
\begin{enumerate}
\item If $p= q$, then $\lim_{n,m \rightarrow \infty} (p_n|q_m)_o^{\Omega} = \infty$. 
\item If $\limsup_{n,m \rightarrow \infty} (p_n|q_m)_o^{\Omega} = \infty$, then $[p,q] \subset \partial \Omega$. 
\end{enumerate}
\end{lemma}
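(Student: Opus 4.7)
My plan is to handle the two parts separately, with Part~(2) following quickly from the triangle inequality and convexity, and Part~(1) requiring a more delicate cross-ratio computation.

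For Part~(2), I would argue by contrapositive. Assuming $[\xi,\eta] \not\subset \partial\Omega$, convexity of $\Omega$ forces the open segment $(\xi,\eta)$ to meet $\Omega$, so I fix a point $z_\ast \in (\xi,\eta) \cap \Omega$. Because $p_n \to \xi$ and $q_m \to \eta$ in $\Pb(\Rb^{d})$, the segments $[p_n, q_m]$ Hausdorff-converge to $[\xi,\eta]$, so I can choose $z_{n,m} \in [p_n, q_m]$ with $z_{n,m} \to z_\ast$. Since $[p_n, q_m]$ is a Hilbert geodesic by Proposition~\ref{prop:hilbert_basic} and $z_{n,m}$ lies on it, I have $H_\Omega(p_n, q_m) = H_\Omega(p_n, z_{n,m}) + H_\Omega(z_{n,m}, q_m)$. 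Combining this with the triangle inequalities $H_\Omega(o,p_n) \leq H_\Omega(o,z_{n,m}) + H_\Omega(z_{n,m}, p_n)$ and $H_\Omega(o,q_m) \leq H_\Omega(o,z_{n,m}) + H_\Omega(z_{n,m}, q_m)$ and simplifying yields
\begin{align*}
(p_n|q_m)_o^\Omega \leq H_\Omega(o, z_{n,m}) \to H_\Omega(o, z_\ast) < \infty,
\end{align*}
contradicting $\limsup (p_n|q_m)_o^\Omega = \infty$.

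For Part~(1), I would work in an affine chart where $\overline{\Omega}$ is compact and expand the Hilbert distances as cross-ratios. For each $n$, let $a_n, b_n \in \partial\Omega$ be the two intersections of the line through $o$ and $p_n$ with $\partial\Omega$, ordered $a_n, o, p_n, b_n$; define $a'_m, b'_m$ analogously using $q_m$; and let $c_{n,m}, d_{n,m}$ be the two intersections of the line through $p_n$ and $q_m$ with $\partial\Omega$, ordered $c_{n,m}, p_n, q_m, d_{n,m}$. Since $\xi = \eta$, the four points $b_n$, $b'_m$, $c_{n,m}$, $d_{n,m}$ all converge to $\xi$ as $n, m \to \infty$, whereas $a_n, a'_m$ converge to other boundary points on their respective lines. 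Normalizing coordinates so that $\xi$ sits at the origin, the divergent behavior of $2(p_n|q_m)_o^\Omega$ comes from the four factors $|p_n - b_n|$, $|q_m - b'_m|$, $|p_n - c_{n,m}|$, $|q_m - d_{n,m}|$ that appear in denominators and tend to zero. After passing to subsequences so that the mutual ratios of $|p_n - b_n|$, $|q_m - b'_m|$, $|p_n - c_{n,m}|$, $|q_m - d_{n,m}|$, $|p_n - \xi|$, $|q_m - \xi|$, $|p_n - q_m|$ all converge, a direct accounting shows that the signed sum of the three cross-ratio logarithms defining $2(p_n|q_m)_o^\Omega$ does not cancel at leading order and in fact diverges to $+\infty$.

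The hard part will be the cross-ratio cancellation in Part~(1) when $\xi$ is an arbitrary boundary point, possibly non-$C^1$ or lying in the relative interior of a positive-dimensional face of $\partial\Omega$: in such cases the four boundary points $b_n, b'_m, c_{n,m}, d_{n,m}$ may approach $\xi$ along very different rays, and the subsequential extraction becomes delicate. One route around this difficulty is to fix a supporting hyperplane $H$ of $\Omega$ at $\xi$ and work with the associated Funk asymmetric metric (whose symmetrization is the Hilbert metric); the relevant Busemann-type limits toward $\xi$ then reduce to distances to $H$, which are easy to compute explicitly.
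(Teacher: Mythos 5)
Your argument for Part~(2) is correct and is essentially the paper's own proof in contrapositive form: both reduce to the observation that the Gromov product $(p_n|q_m)_o^\Omega$ is bounded above by the Hilbert distance from $o$ to the segment $[p_n,q_m]$, which stays bounded whenever $(\xi,\eta)$ meets $\Omega$.

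Part~(1), however, has a genuine gap. The entire content of the statement is the assertion that ``a direct accounting shows that the signed sum of the three cross-ratio logarithms \dots does not cancel at leading order and in fact diverges to $+\infty$,'' and this is never established. The difficulty is not cosmetic: in the term $H_\Omega(p_n,q_m)$ all four points $c_{n,m}, p_n, q_m, d_{n,m}$ collapse to $\xi$ simultaneously, so one must compare the rates at which $\abs{p_n - c_{n,m}}$ and $\abs{q_m - d_{n,m}}$ vanish against the rates for $\abs{p_n - b_n}$ and $\abs{q_m - b'_m}$, and these comparisons require real geometric input about $\partial\Omega$ near $\xi$ (which, as you note, may be non-$C^1$ or contain a face through $\xi$). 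Passing to subsequences where ``mutual ratios converge'' does not by itself rule out cancellation, and the proposed escape via a supporting hyperplane and the Funk metric is also left unexecuted; at a non-$C^1$ point the supporting hyperplane is not unique, and $H_\Omega(p_n,q_m)$ is not a Busemann-type quantity since both of its arguments degenerate.

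There is a much more elementary route, which is the one the paper takes and which avoids cross-ratio asymptotics entirely. Let $\sigma_n$ and $\overline{\sigma}_m$ be the unit-speed Hilbert geodesics from $o$ whose images are the segments $[o,p_n]$ and $[o,q_m]$ (Proposition~\ref{prop:hilbert_basic}), with $\sigma_n(T_n)=p_n$ and $\overline{\sigma}_m(S_m)=q_m$. For any fixed $t \leq \min\{T_n,S_m\}$ the triangle inequality gives
\begin{align*}
2(p_n|q_m)_o^\Omega = 2t + H_\Omega(p_n,\sigma_n(t)) + H_\Omega(\overline{\sigma}_m(t),q_m) - H_\Omega(p_n,q_m) \geq 2t - H_\Omega(\sigma_n(t),\overline{\sigma}_m(t)),
\end{align*}
and since $p_n,q_m \rightarrow \xi$ forces both $\sigma_n(t)$ and $\overline{\sigma}_m(t)$ to converge to the single point of $[o,\xi)$ at Hilbert distance $t$ from $o$, the correction term tends to $0$ for each fixed $t$. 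Letting $t \rightarrow \infty$ gives the claim. I would recommend replacing your Part~(1) with this argument rather than attempting to complete the cross-ratio computation.
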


Since the proof is short we include it. 

\begin{proof} Both parts are consequences of the fact that every line segment in $\Omega$ can be parametrized to be a geodesic. 

First suppose that $p_n, q_m \in \Omega$ both converge to some $p \in \partial \Omega$. Let $\sigma_n : \Rb_{\geq 0} \rightarrow \Omega$ and $\overline{\sigma}_m:\Rb_{\geq 0} \rightarrow \Omega$ be the geodesic rays in $(\Omega, d_\Omega)$ whose images are line segments with $\sigma_n(0)=\overline{\sigma}_m(0)=o$ and $\sigma_n(T_n) = p_n$, $\overline{\sigma}_m(S_m) = q_m$ for some $T_n, S_m \in \Rb_{\geq0}$. Then if $t \leq \min\{T_n, S_m\}$ we have
\begin{align*}
2(p_n|q_m)_o^{\Omega} 
&= 2t + d_\Omega(p_n,\sigma_n(t))+d_\Omega(\overline{\sigma}_m(t), q_m) - d_{\Omega}(p_n,q_m) \\
&\geq 2t-d_\Omega(\sigma_n(t), \overline{\sigma}_m(t)).
\end{align*}
Since 
\begin{align*}
\limsup_{n,m \rightarrow \infty} d_\Omega(\sigma_n(t), \overline{\sigma}_m(t)) = 0
\end{align*}
for any fixed $t > 0$, we then see that 
\begin{align*}
\lim_{n,m \rightarrow \infty} (p_n|q_m)_o^{\Omega} = \infty.
\end{align*}

Next suppose that $p_n \rightarrow p \in \partial \Omega$, $q_m \rightarrow q \in \partial \Omega$, and 
\begin{align*}
\limsup_{n,m \rightarrow \infty} (p_n|q_m)_o^{\Omega} = \infty.
\end{align*}
By passing to subsequences we can suppose that 
\begin{align*}
\limsup_{n \rightarrow \infty} (p_n|q_n)_o^{\Omega} = \infty.
\end{align*}
Next let $\sigma_n : [0,R_n] \rightarrow \Omega$ be a geodesic joining $p_n$ to $q_n$ whose image is a line segment.  Then for any $t \in [0,R_n]$ we have
\begin{align*}
2(p_n|q_n)_o^{\Omega} 
&=  d_\Omega(p_n,o)+d_\Omega(o,q_n)-d_\Omega(p_n,\sigma_n(t)) - d_\Omega(\sigma_n(t),q_n) \\
& \leq 2d_\Omega(o, \sigma_n(t))
\end{align*}
and so
\begin{align*}
(p_n|q_n)_o^{\Omega} \leq \inf_{t \in [0,R_n]} d_{\Omega}(o,\sigma_n(t)).
\end{align*}
Since the image of $\sigma_n$ is the line segment $[p_n,q_n]$ we then must have $[p,q] \subset \partial \Omega$. 
\end{proof}

\subsection{A fact about Anosov representations} In this subsection we describe the behavior of sequences of elements in a projective Anosov representation.

When a matrix is proximal, its iterates have the following behavior. 

\begin{observation}\label{obs:proximal_iterates} Suppose $g \in \PGL_d(\Rb)$ is proximal. Viewing $\PGL_d(\Rb)$ as a subset of $\Pb(\End(\Rb^d))$, the limit
\begin{align*}
T = \lim_{n \rightarrow \infty} g^n
\end{align*}
exists in $\Pb(\End(\Rb^d))$. Moreover, the image of $T$ is the eigenline of $g$ corresponding to the eigenvalue with maximal modulus.
\end{observation} 

\begin{proof} By changing coordinates we can assume that 
\begin{align*}
g = \begin{bmatrix} \lambda & 0 \\ 0 & A \end{bmatrix}
\end{align*}
where $[1:0:\dots:0]$ is the eigenline of $g$ corresponding to the eigenvalue with maximal modulus and $A$ is a $(d-1)$-by-$(d-1)$ matrix. Then
\begin{align*}
g^n = \begin{bmatrix} 1 & 0 \\ 0 & \frac{1}{\lambda^n}A^n \end{bmatrix}
\end{align*}
and the observation immediately follows from Gelfand's formula (see Theorem~\ref{thm:Gelfand}). 
\end{proof}

Notice that if $g \in \PGL_d(\Rb)$ is proximal, then the representation $m \in \Zb \rightarrow g^m$ is projective Anosov. A well known analogue of the above observation holds for  general projective Anosov representations.

\begin{lemma}\label{lem:dynamics} Suppose that $\Gamma$ is a word hyperbolic group. Let $\rho: \Gamma \rightarrow \PGL_{d}(\Rb)$ be an irreducible projective Anosov representation with boundary maps $\xi: \partial \Gamma \rightarrow \Pb(\Rb^{d})$ and $ \eta : \partial \Gamma \rightarrow \Pb(\Rb^{d*})$. Assume $\gamma_n \in \Gamma$ is a sequence such that $\gamma_n \rightarrow x^+ \in \partial \Gamma$ and $\gamma_n^{-1} \rightarrow x^- \in \partial \Gamma$. Then viewing $\PGL_d(\Rb)$ as a subset of $\Pb(\End(\Rb^d))$, 
\begin{align*}
T=\lim_{n \rightarrow \infty} \rho(\gamma_n)
\end{align*}
where $\Imag(T) = \xi(x^+)$ and $\ker T = \ker \eta(x^-)$. In particular, 
\begin{align*}
\xi(x^+) = \lim_{n \rightarrow \infty} \rho(\gamma_n)v
\end{align*}
for all $v \in \Pb(\Rb^d) \setminus \Pb(\ker \eta(x^-))$ and the convergence is uniform on compact subsets of $\Pb(\Rb^d) \setminus \Pb(\ker \eta(x^-))$. 
\end{lemma}

Since the proof is short we include it. 

\begin{proof} We first consider the case in which $\# \partial \Gamma = 2$. Then since $\rho$ is irreducible and $\rho$ preserves $\xi(\partial \Gamma)$ we see that $d=2$. Then the lemma follows easily from the dynamics of $2$-by-$2$ matrices acting on $\Pb(\Rb^2)$.

So suppose that $\# \partial \Gamma >2$. Then $\#\partial\Gamma = \infty$ and $\partial \Gamma$ is a perfect space. Since $\Pb( \End(\Rb^{d}))$ is compact it is enough to show that every convergent subsequence of $\rho(\gamma_n)$ converges to $T$. So suppose that $\rho(\gamma_n) \rightarrow S$ in $\Pb( \End(\Rb^{d}))$. 

We first claim that $\Imag(S) = \xi(x^+)$. Since $\rho: \Gamma \rightarrow \PGL_{d}(\Rb)$ is irreducible, there exists $x_1, \dots, x_{d} \in \partial \Gamma$ such that  $\xi(x_1), \dots, \xi(x_{d})$ spans $\Rb^{d}$. Since $\partial \Gamma$ is a perfect space, we can perturb the $x_i$ (if necessary) and assume that
\begin{align*}
x^- \notin  \{ x_1, \dots, x_d\}.
\end{align*}

Then $\gamma_n x_i \rightarrow x^+$ and since $\xi$ is $\rho$-equivariant, we then see that $\rho(\gamma_n)\xi(x_i) \rightarrow \xi(x^+)$. Since $\xi(x_1), \dots, \xi(x_{d})$ spans $\Rb^{d}$ this implies that 
\begin{align*}
\Imag(S) = \xi(x^+).
\end{align*}

Next view $^t\rho(\gamma_n)$ as an element of $\Pb(\End(\Rb^{d*}))$. Then $^t\rho(\gamma_n)$ converges to $^tS$ in $\Pb(\End(\Rb^{d*}))$. Since 
\begin{align*}
^t\rho(\gamma_n) \eta(x) = \eta( \gamma_n^{-1} x),
\end{align*}
repeating the argument above shows that 
\begin{align*}
\Imag(^tS) = \eta(x^-).
\end{align*}
But this implies that 
\begin{equation*}
\ker S = \ker \eta(x^-). \qedhere
\end{equation*}
\end{proof}

This lemma has the following corollary.

\begin{corollary}\label{cor:dynamics} Suppose that $\Gamma$ is a word hyperbolic group. Let $\rho: \Gamma \rightarrow \PGL_{d}(\Rb)$ be an irreducible projective Anosov representation with boundary maps $\xi: \partial \Gamma \rightarrow \Pb(\Rb^{d})$ and $ \eta : \partial \Gamma \rightarrow \Pb(\Rb^{d*})$. If $\rho(\Gamma)$ preserves a properly convex domain $\Omega \subset \Pb(\Rb^{d})$, then 
\begin{align*}
\xi(\partial \Gamma) \subset \partial \Omega \text{ and } \eta(\partial \Gamma) \subset \partial \Omega^*.
\end{align*}
\end{corollary}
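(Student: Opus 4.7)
The plan is to combine Lemma~\ref{lem:dynamics} with the fact that a discrete group of projective automorphisms of a properly convex domain acts properly discontinuously on that domain. I would prove $\xi(\partial\Gamma) \subset \partial\Omega$ directly and then recover $\eta(\partial\Gamma) \subset \partial\Omega^*$ by applying the same argument to the dual representation.

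First, fix $x \in \partial\Gamma$ and choose a sequence of pairwise distinct elements $\gamma_n \in \Gamma$ with $\gamma_n \to x$ in $\Gamma \cup \partial\Gamma$. By compactness of $\partial\Gamma$, after passing to a subsequence we may assume $\gamma_n^{-1} \to y$ for some $y \in \partial\Gamma$. Since $\Omega$ is open in $\Pb(\Rb^d)$ and the projective hyperplane $\ker\eta(y)$ is nowhere dense, we may pick $p \in \Omega$ with $p \notin \ker\eta(y)$. Lemma~\ref{lem:dynamics} then yields $\rho(\gamma_n)p \to \xi(x)$ in $\Pb(\Rb^d)$. Because $\rho(\Gamma)$ preserves $\Omega$, each $\rho(\gamma_n)p$ lies in $\Omega$, so $\xi(x) \in \overline{\Omega}$.

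To upgrade this to $\xi(x) \in \partial\Omega$ I rule out $\xi(x) \in \Omega$. A standard consequence of the eigenvalue gap in the definition of a projective Anosov representation is that $\rho$ has finite kernel and discrete image, so after passing to a further subsequence the $\rho(\gamma_n)$ are pairwise distinct. By Proposition~\ref{prop:hilbert_basic}, $\Aut(\Omega)$ acts on $\Omega$ by isometries of the complete metric $H_\Omega$, and since $\Aut(\Omega)$ is a closed subgroup of $\PGL_d(\Rb)$ this action is proper; hence the discrete group $\rho(\Gamma) \leq \Aut(\Omega)$ acts properly discontinuously on $\Omega$. If we had $\xi(x) \in \Omega$, then $\rho(\gamma_n)p \to \xi(x)$ would place all but finitely many $\rho(\gamma_n)p$ inside a compact neighborhood $K \subset \Omega$ of $\{p, \xi(x)\}$, and the pairwise distinct $\rho(\gamma_n)$ would satisfy $\rho(\gamma_n)K \cap K \ni \rho(\gamma_n)p$, contradicting proper discontinuity. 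Hence $\xi(x) \in \partial\Omega$, and since $x$ was arbitrary, $\xi(\partial\Gamma) \subset \partial\Omega$.

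For the second inclusion, apply the first half of the argument to the dual representation $\rho^* : \Gamma \to \PGL(\Rb^{d*})$ defined by $\rho^*(\gamma) = {^t\rho(\gamma^{-1})}$. This representation is irreducible and projective Anosov with boundary maps $\eta : \partial\Gamma \to \Pb(\Rb^{d*})$ and $\xi : \partial\Gamma \to \Pb(\Rb^d)$, as can be read off from the transpose computation in the proof of Lemma~\ref{lem:dynamics}. Moreover, if $g \in \Aut(\Omega)$ and $f \in \Omega^*$, then $f \circ g^{-1}$ is nonvanishing on $\overline{\Omega}$, so $\rho^*(\Gamma)$ preserves the properly convex domain $\Omega^* \subset \Pb(\Rb^{d*})$. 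The first half of the argument, applied to $\rho^*$ and $\Omega^*$, then yields $\eta(\partial\Gamma) \subset \partial\Omega^*$. The main technical point in the whole argument is the upgrade from $\overline{\Omega}$ to $\partial\Omega$, which depends on discreteness of $\rho(\Gamma)$ (from the eigenvalue gap) together with properness of the $\Aut(\Omega)$-action on $(\Omega, H_\Omega)$.
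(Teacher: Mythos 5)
Your proposal is correct and follows essentially the same route as the paper: obtain $\xi(x)\in\overline{\Omega}$ from Lemma~\ref{lem:dynamics} applied to a point of $\Omega$ off $\ker\eta(x^-)$, rule out $\xi(x)\in\Omega$ using finite kernel, discreteness of the image, and properness of the $\Aut(\Omega)$-action on $(\Omega,H_\Omega)$, and then dualize to get $\eta(\partial\Gamma)\subset\partial\Omega^*$. You simply spell out the subsequence extraction, the proper-discontinuity contradiction, and the passage to the dual representation in more detail than the paper does.
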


\begin{proof} Fix some $x \in \partial \Gamma$. Then there exists $\gamma_n \in \Gamma$ such that $\gamma_n \rightarrow x$. Now suppose that $\gamma_n^{-1} \rightarrow x^-$. Since $\Omega$ is open, there exists some $v \in \Omega \setminus \Pb(\ker \eta(x^-))$. Then 
\begin{align*}
\xi(x) = \lim_{n \rightarrow \infty} \rho(\gamma_n) v \in \overline{\Omega}.
\end{align*}
On the other hand, $\rho$ has finite kernel (by definition) and discrete image by Theorem 5.3 in~\cite{GW2012}. Further, since $\Aut(\Omega)$ preserves the Hilbert metric on $\Omega$, $\Aut(\Omega)$ acts properly on $\Omega$. So we must have $\xi(x) \in \partial \Omega$. Since $x \in \partial \Gamma$ was an arbitrary point we then have $\xi(\partial \Gamma) \subset \partial \Omega$. 

Repeating the same argument on $\Omega^*$ shows that $\eta(\partial \Gamma) \subset \partial \Omega^*$.
\end{proof}

\section{Constructing a convex cocompact action}\label{sec:construction}

In this section we establish Theorems ~\ref{thm:condC} and~\ref{thm:proper_action_implies} from the introduction. The argument has two parts: first we show that we can lift the boundary maps $\xi, \eta$ to maps into $\Rb^d, \Rb^{d*}$ and then we will show that whenever we can lift $\xi,\eta$ we obtain a regular convex cocompact action. 

\subsection{Lifting the maps}

Before stating the theorem we need some notation: fix a norm $\norm{\cdot}$ on $\Rb^d$, this induces a norm on $\Rb^{d*}$ by 
\begin{align*}
\norm{f} = \max\{ \abs{f(v)} : \norm{v}=1\}.
\end{align*}
Then let $S^{d-1} \subset \Rb^d$ and $S^{(d-1)*} \subset \Rb^{d*}$ be the unit spheres relative to these norms. In the statement and proof of the next theorem we will use the standard action of $\GL_{d}(\Rb)$ on $S^{d-1}$ and $S^{(d-1)*}$ given by 
\begin{align*}
g \cdot v = \frac{gv}{\norm{gv}} \text{ and } g \cdot f = \frac{ f \circ {g^{-1}}}{\norm{f \circ {g^{-1}}}}.
\end{align*}
Finally let
\begin{align*}
\SL_{d}^{\pm}(\Rb) = \{ g \in \GL_d(\Rb) : \det g = \pm 1 \}.
\end{align*}

\begin{theorem}\label{thm:bd_maps_lift}
Suppose $\Gamma$ is a word hyperbolic group. Let $\rho: \Gamma \rightarrow \PGL_{d}(\Rb)$ be an irreducible projective Anosov representation with boundary maps $\xi: \partial \Gamma \rightarrow \Pb(\Rb^{d})$ and $\eta:\partial \Gamma \rightarrow \Pb(\Rb^{d*})$. 

If one of the following conditions hold:
\begin{enumerate}
\item there exists a properly convex domain $\Omega_0 \subset \Pb(\Rb^{d})$ such that $\rho(\Gamma) \leq \Aut(\Omega_0)$ or
\item $\Gamma$ is a non-elementary word hyperbolic group which is not commensurable to a non-trivial free product or the fundamental group of a closed hyperbolic surface, 
\end{enumerate}
then there exist lifts $\wt{\rho}: \Gamma \rightarrow \SL^{\pm}_{d}(\Rb)$, $\wt{\xi}: \partial \Gamma \rightarrow S^{d-1}, \wt{\eta}: \partial \Gamma \rightarrow S^{(d-1)*}$ of $\rho, \xi, \eta$ respectively such that $\wt{\xi}$ and $\wt{\eta}$ are continuous, $\wt{\rho}$-equivariant, and 
\begin{align*}
\wt{\eta}(y) \Big( \wt{\xi}(x) \Big) > 0
\end{align*}
for all $x,y \in \partial \Gamma$ distinct. 
\end{theorem}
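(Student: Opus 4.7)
The plan is to construct $\wt{\xi}$ and $\wt{\rho}$ first (differently in the two cases), then define $\wt{\eta}$ uniformly by a positivity normalization against $\wt{\xi}$, and finally deduce full positivity by a connectedness argument. Elementary $\Gamma$ (finite or with $|\partial \Gamma| = 2$) are handled directly, since then $d \leq 2$ and the conclusion is immediate.

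Under hypothesis~(2), choose a properly convex open cone $C_0 \subset \Rb^d$ with $[C_0] = \Omega_0$; each $g \in \Aut(\Omega_0)$ has a unique lift to $\SL_d^\pm(\Rb)$ preserving $C_0$, and these assemble into a homomorphism $\wt{\rho}: \Gamma \to \SL_d^\pm(\Rb)$. Since $\xi(\partial\Gamma) \subset \partial \Omega_0$ by Corollary~\ref{cor:dynamics}, taking $\wt{\xi}(x)$ to be the unique preimage of $\xi(x)$ in $\overline{C_0} \cap S^{d-1}$ gives a continuous $\wt{\rho}$-equivariant lift. Under hypothesis~(1), apply Theorem~\ref{thm:connected} to obtain $u, v \in \partial\Gamma$ with both $\partial\Gamma$ and $\partial\Gamma\setminus\{u,v\}$ connected. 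Transversality places $\xi(\partial\Gamma \setminus \{y\})$ inside the affine chart $\Pb(\Rb^d) \setminus \ker\eta(y)$, which is contractible and so trivializes the double cover $S^{d-1} \to \Pb(\Rb^d)$; this produces continuous lifts $\wt{\xi}_u$ and $\wt{\xi}_v$ of $\xi$ on $\partial\Gamma \setminus \{u\}$ and $\partial\Gamma \setminus \{v\}$. On the connected overlap these two lifts agree up to a single global sign, which we adjust; the glued map is a continuous $\wt{\xi}: \partial \Gamma \to S^{d-1}$. For each $\gamma$, define $\wt{\rho}(\gamma)$ to be the unique lift of $\rho(\gamma)$ in $\SL_d^\pm(\Rb)$ satisfying $\wt{\rho}(\gamma) \cdot \wt{\xi}(x) = \wt{\xi}(\gamma x)$; the required sign is a continuous $\{\pm 1\}$-valued function of $x$ on the connected space $\partial\Gamma$, hence constant, so the definition is consistent. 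Homomorphy of $\wt{\rho}$ follows from irreducibility (so that $\wt{\xi}(\partial\Gamma)$ spans $\Rb^d$) together with the fact that two elements of $\SL_d^\pm(\Rb)$ inducing the same normalized $S^{d-1}$-action must coincide.

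Next we establish the key fact that $\xi(x) \notin \ker \eta(x)$ for every $x \in \partial \Gamma$: the set $F = \{x : \xi(x) \subset \ker \eta(x)\}$ is closed and $\Gamma$-invariant, and equals $\emptyset$ or all of $\partial\Gamma$ by minimality of the $\Gamma$-action (in the non-elementary case); but at any attracting fixed point $x^+$ of a hyperbolic element $\gamma \in \Gamma$, the dynamics-preserving property identifies $\xi(x^+)$ with the top eigenline of $\rho(\gamma)$ and $\ker\eta(x^+)$ with the complementary invariant hyperplane, so $F \neq \partial\Gamma$, whence $F = \emptyset$. Now define $\wt{\eta}(y)$ to be the unique lift of $\eta(y)$ satisfying $\wt{\eta}(y)(\wt{\xi}(y)) > 0$: well-defined and continuous by the key fact. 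For $\wt{\rho}$-equivariance, write $\wt{\rho}(\gamma)\wt{\xi}(y) = c_{\gamma,y}\wt{\xi}(\gamma y)$ with $c_{\gamma,y} > 0$; a direct computation then shows that $(\wt{\rho}(\gamma) \cdot \wt{\eta}(y))(\wt{\xi}(\gamma y))$ is a positive multiple of $\wt{\eta}(y)(\wt{\xi}(y)) > 0$, so by uniqueness of the positively-normalized lift at $\wt{\xi}(\gamma y)$ we get $\wt{\rho}(\gamma) \cdot \wt{\eta}(y) = \wt{\eta}(\gamma y)$.

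Finally, for the positivity at distinct pairs: $\wt{\eta}(y)(\wt{\xi}(x))$ is nonzero for every $(x,y) \in \partial\Gamma \times \partial\Gamma$ (by transversality off the diagonal and by the key fact on it), so the map $(x,y) \mapsto \mathrm{sign}(\wt{\eta}(y)(\wt{\xi}(x)))$ is continuous on the connected product $\partial\Gamma \times \partial\Gamma$; since it equals $+1$ on the diagonal, it is $+1$ everywhere. The main obstacle is constructing $\wt{\xi}$ under hypothesis~(1), which rests essentially on the existence of $u,v$ with $\partial \Gamma \setminus \{u,v\}$ connected (from Theorem~\ref{thm:connected}); the second delicate step is the key fact, which uses minimality of the $\Gamma$-action on $\partial \Gamma$ together with the Anosov dynamics at attracting fixed points.
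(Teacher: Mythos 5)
Your construction of $\wt{\xi}$ and $\wt{\rho}$ is essentially sound (in case (1) you glue two local lifts over the charts $\Pb(\Rb^d)\setminus\ker\eta(u)$ and $\Pb(\Rb^d)\setminus\ker\eta(v)$ using connectedness of $\partial\Gamma\setminus\{u,v\}$, where the paper instead produces a single functional $f$ with $\ker f\cap\xi(\partial\Gamma)=\emptyset$; in case (2) both you and the paper lift into an invariant cone). But the construction of $\wt{\eta}$ rests on a ``key fact'' that is false: for a projective Anosov representation one has $\eta(x)\big(\xi(x)\big)=0$ for \emph{every} $x\in\partial\Gamma$, not $\neq 0$. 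Indeed, if $x^+$ is the attracting fixed point of $\gamma$ and $\rho(\gamma)$ has eigenvalues $\abs{\lambda_1}>\abs{\lambda_2}\geq\dots\geq\abs{\lambda_d}$ with dual eigenfunctionals $f_1,\dots,f_d$, then the attracting fixed point of the dual action $f\mapsto f\circ\rho(\gamma)^{-1}$ on $\Pb(\Rb^{d*})$ is the eigenline for the eigenvalue of largest modulus $\abs{\lambda_d}^{-1}$, namely $[f_d]$; hence $\ker\eta(x^+)=\Spanset\{v_1,\dots,v_{d-1}\}$, which \emph{contains} the top eigenline $\xi(x^+)$. (This is why the theorem asserts positivity only for \emph{distinct} $x,y$, and the paper explicitly invokes $\eta(x)(\xi(x))=0$ in the proof of Lemma~\ref{lem:CC2}.) You have confused $\ker\eta(x^+_\gamma)$ with the repelling hyperplane $\ker\eta(x^-_\gamma)=\Spanset\{v_2,\dots,v_d\}$.

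Consequently your normalization ``$\wt{\eta}(y)(\wt{\xi}(y))>0$'' is impossible, and the concluding argument collapses: the function $(x,y)\mapsto\wt{\eta}(y)(\wt{\xi}(x))$ vanishes identically on the diagonal, so you cannot propagate a sign from the diagonal, and the complement of the diagonal in $\partial\Gamma\times\partial\Gamma$ need not be connected, so constancy of the sign off the diagonal does not follow either. The paper avoids this by normalizing $\wt{\eta}(x)$ against all of $\wt{\xi}(\partial\Gamma)$ rather than against $\wt{\xi}(x)$: in case (1) it uses that $\partial\Gamma\setminus\{x\}$ is connected and that $\wt{\xi}(\partial\Gamma)$ meets the hyperplane $H_x$ with $[H_x]=\ker\eta(x)$ only at $\wt{\xi}(x)$, so $\wt{\xi}(\partial\Gamma\setminus\{x\})$ lies in one component of $S^{d-1}\setminus H_x$ and one may choose the lift $\wt{\eta}(x)$ positive on that component; in case (2) it chooses $\wt{\eta}(x)$ positive on the invariant cone $\Cc_1$, which contains all $\wt{\xi}(y)$ with $y\neq x$ in its closure. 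Repairing your proof requires replacing your diagonal normalization with one of these (or an equivalent) global positivity conditions.
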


\begin{proof}[Proof of Theorem~\ref{thm:bd_maps_lift}] We will consider each case separately.\newline

\textbf{Case 1:} Suppose that there exists a properly convex domain $\Omega_0 \subset \Pb(\Rb^{d})$ such that $\rho(\Gamma) \leq \Aut(\Omega_0)$. \newline

Let $\pi: \Rb^{d} \setminus \{0\} \rightarrow \Pb(\Rb^{d})$ be the natural projection. Since $\Omega_0$ is properly convex, $\pi^{-1}(\Omega_0)$ has two connected components $\Cc_1$ and $\Cc_2$. Moreover, $\Cc_1$ and $\Cc_2$ are properly convex cones and $\Cc_1 = -\Cc_2$. 

By Corollary~\ref{cor:dynamics}, we see that $\xi(\partial \Gamma) \subset \partial \Omega_0$ and $\eta(\partial \Gamma) \subset \partial \Omega_0^*$. Now for $x \in \partial \Gamma$ let $\wt{\xi}(x) \in S^{d-1}$ be the unique representative of $\xi(x)$ such that $\wt{\xi}(x) \in \overline{\Cc_1}$ and let $\wt{\eta}(x) \in S^{(d-1)*}$ be the unique representative of $\eta(x)$ such that 
\begin{align*}
\wt{\eta}(x)(v) > 0
\end{align*}
for all $v \in \Cc_1$. Then by construction,   
\begin{align*}
\wt{\eta}(x) \left(\wt{\xi}(y) \right) \geq 0
\end{align*}
with equality if and only if $x=y$. Moreover, uniqueness implies that $\wt{\xi}$ and $\wt{\eta}$ are continuous. 

Now for $\gamma \in \Gamma$ let $\wt{\rho}(\gamma) \in \SL_{d}^{\pm}(\Rb)$ be the unique lift that preserves $\Cc_1$. Then $\wt{\rho} : \Gamma \rightarrow \SL_{d}^{\pm}(\Rb)$ is a homomorphism and $\wt{\xi}$ and $\wt{\eta}$ are $\wt{\rho}$-equivariant. \newline

\noindent \textbf{Case 2:} Suppose that $\Gamma$ is a non-elementary word hyperbolic group which is not commensurable to a non-trivial free product or a fundamental group of a closed hyperbolic surface. \newline

We will reduce to Case 1 by constructing a properly convex domain $\Omega_0 \subset \Pb(\Rb^{d})$ such that $\rho(\Gamma) \leq \Aut(\Omega_0)$.

Let $\Lambda = \rho(\Gamma)$. Then by Selberg's lemma $\Lambda$ has a torsion-free finite index subgroup $\Lambda_0$. Moreover, $\Lambda_0$ is commensurable to $\Gamma$ and $\partial \Lambda_0$ is homeomorphic to $\partial \Gamma$. Since $\Lambda_0$ is torsion-free, the condition on $\Gamma$ implies that $\Lambda_0$ does not split over a finite group and is not commensurable to the fundamental group of a closed hyperbolic surface. Hence by Theorem~\ref{thm:connected}, we see that 
\begin{enumerate}
\item $\partial \Gamma$ is connected, 
\item $\partial \Gamma \setminus \{x\}$ is connected for every $x \in \partial \Gamma$, and
\item there exist $u,w \in \partial \Gamma$ distinct such that $\partial \Gamma \setminus \{u,w\}$ is connected.
\end{enumerate}

The space $\Pb(\Rb^{d}) \setminus (\Pb( \ker \eta(u)) \cup \Pb(\ker \eta(w) ))$ has two connected components which we denote by $A^+$ and $A^-$. Since $\xi(\partial \Gamma \setminus\{u,w\})$ is connected, by relabelling we can assume that $\xi(\partial \Gamma \setminus \{u,w\}) \subset A^+$. Then $\xi(\partial \Gamma) \subset \overline{A^+}$. 

Next define $C : = \cap_{\gamma \in \Gamma} \rho(\gamma)\overline{A^+}$. By construction $C$ is closed, $\xi(\partial \Gamma) \subset C$, and $\rho(\gamma) C = C$ for every $\gamma \in \Gamma$. Let $C_0$ denote the connected component of $C$ which contains $\xi(\partial \Gamma)$ and let $\Omega_0$ denote the interior of $C_0$. 

We claim that $\Omega_0$ is a properly convex domain and $\rho(\Gamma) \leq \Aut(\Omega_0)$. By construction, $\rho(\gamma) \Omega_0 = \Omega_0$ for every $\gamma \in \Gamma$ and so it is enough to show that $\Omega_0$ is a properly convex domain. To accomplish this we recall the following terminology: a subset $E \subsetneq \Pb(\Rb^d)$ is called \emph{linearly convex} if for every $x \in \Pb(\Rb^d) \setminus E$ there exists a hyperplane $H \subset \Pb(\Rb^d)$ such that $H \cap \Omega = \emptyset$. We also recall the following basic properties of these sets:
\begin{enumerate}  
 \item\label{enum:lin_conx_1} every convex set is linearly convex,
 \item\label{enum:lin_conx_2} every connected component of a linearly convex set is convex, 
\item\label{enum:lin_conx_3} the intersection of a collection of linearly convex sets is linearly convex, and 
\item\label{enum:lin_conx_4} if $E \subset \Pb(\Rb^d)$ is linearly convex and $g \in \PGL_d(\Rb)$, then $gE$ is linearly convex. 
\end{enumerate}
Proofs of Properties~\ref{enum:lin_conx_1} and~\ref{enum:lin_conx_2} can be found in~\cite[Chapter 1]{APS2004}. Properties~\ref{enum:lin_conx_3} and~\ref{enum:lin_conx_4} are direct consequences of the definition. Since $A^+$ is projectively equivalent to $ \{ [1:x_1:\dots:x_{d-1}] : x_1 > 0\}$, we see that $\overline{A^+}$ is linearly convex. Thus by Properties~\ref{enum:lin_conx_2}, \ref{enum:lin_conx_3}, and \ref{enum:lin_conx_4}, $C_0$ is convex. 

Since $\rho$ is irreducible, $\{ \xi(x) : x \in \partial \Gamma\}$ spans $\Rb^d$. Since $\xi(\partial \Gamma) \subset C_0$, this implies that $C_0$ has non-empty interior. So $\Omega_0$ is a non-empty convex domain. Since $\Omega_0 \subset A^+$, we see that $\Omega_0 \cap \Pb(\ker \eta(u)) = \emptyset$. Since $\Gamma \cdot u \subset \partial \Gamma$ is dense, $\eta$ is continuous, and $\rho(\gamma) \Omega_0 = \Omega_0$ for every $\gamma \in \Gamma$, we then have
\begin{align*}
\Omega_0 \cap\Pb( \ker \eta(x)) = \emptyset 
\end{align*}
for all $x \in \partial \Gamma$. Since $\rho$ is irreducible, $\eta(\partial \Gamma)$ spans $\Rb^{d*}$ and so $\Omega_0$ must be properly convex. 

\end{proof}

\begin{remark} It is easy to construct examples of ``half spaces'' $E_1, E_2 \subset \Pb(\Rb^d)$ such that $E_1 \cap E_2$ is disconnected (and hence not convex). For instance, let $E_1$ be the connected component of $\Pb(\Rb^d) \setminus ( \{x_1 = 0\} \cup \{x_2=0\})$ which contains $[1:1:0:\dots:0]$. And let $E_2$ be the connected component of \begin{align*}
   \Pb(\Rb^d)\setminus  ( \{ x_1-x_2=0\} \cup \{ 2x_1 - x_2 =0\})                                                                                                                                                                                                                                                                                                                                                                                                                                                                                                                                                                                                                                                                                                    \end{align*}
which contains $[1:3:0:\dots:0]$. Then 
\begin{align*}
E_1 \cap E_2 = \{ [1:x_2 : \dots :x_d] : x_2 \in (0,1) \cup (2,\infty) \}.
\end{align*}
Examples like these are why we consider linearly convex sets in the proof of Theorem~\ref{thm:bd_maps_lift}. 
\end{remark}

\subsection{Showing the action is convex cocompact} 

\begin{theorem}\label{thm:convex_exist} Suppose $\Gamma$ is a word hyperbolic group. Let $\rho: \Gamma \rightarrow \PGL_{d}(\Rb)$ be an irreducible projective Anosov representation with boundary maps $\xi: \partial \Gamma \rightarrow \Pb(\Rb^{d})$ and $\eta:\partial \Gamma \rightarrow \Pb(\Rb^{d*})$. 

If there exist lifts $\wt{\rho}: \Gamma \rightarrow \SL^{\pm}_{d}(\Rb)$, $\wt{\xi}: \partial \Gamma \rightarrow S^{d-1}, \wt{\eta}: \partial \Gamma \rightarrow S^{(d-1)*}$ of $\rho, \xi, \eta$ respectively such that $\wt{\xi}$ and $\wt{\eta}$ are continuous, $\wt{\rho}$-equivariant, and 
\begin{align*}
\wt{\eta}(y) \Big( \wt{\xi}(x) \Big) > 0 
\end{align*}
for all $x,y \in \partial \Gamma$ distinct, then there exists a properly convex domain $\Omega \subset \Pb(\Rb^{d})$ such that $\rho(\Gamma)$ is a regular convex cocompact subgroup of $\Aut(\Omega)$. 
\end{theorem}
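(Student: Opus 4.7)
My approach is to dualize: use $\wt\eta$ to carve out a properly convex ambient domain $\Omega$, and use $\wt\xi$ to carve out a $\rho(\Gamma)$-stable convex core $\Cc$.

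\textbf{Step 1: the domain.} Define
\[
\wt\Omega := \bigl\{v \in \Rb^d : \wt\eta(y)(v) > 0 \text{ for every } y \in \partial \Gamma\bigr\},
\]
an open, $\wt\rho(\Gamma)$-invariant convex cone, and set $\Omega := \Pb(\wt\Omega)$. Nonemptyness: averaging against any non-atomic Borel probability measure $\mu$ on $\partial\Gamma$ yields $v_0 := \int \wt\xi\,d\mu \in \wt\Omega$, because the integrand $x \mapsto \wt\eta(y)(\wt\xi(x))$ is non-negative and vanishes only at the single point $x=y$. Salience of $\wt\Omega$ (equivalently, proper convexity of $\Omega$) follows from irreducibility of $\rho$: if $\wt\eta(\partial \Gamma)$ failed to span $\Rb^{d*}$, its span would yield a proper $\rho$-invariant subspace. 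Equivariance of $\wt\eta$ then gives $\rho(\Gamma) \leq \Aut(\Omega)$.

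\textbf{Step 2: the core and regularity of its boundary.} Let $\wt\Cc := \Rb_{\geq 0}\cdot\operatorname{conv}\wt\xi(\partial \Gamma)$; this is a closed convex cone since $\wt\xi(\partial \Gamma)$ is a compact subset of $\Rb^d\setminus\{0\}$. Set $\Cc := \Pb(\wt\Cc) \cap \Omega$. Then $\Cc$ is convex, $\rho(\Gamma)$-invariant, closed in $\Omega$, and non-empty (any convex combination of two or more distinct $\wt\xi(x_i)$'s already lies in $\wt\Omega$). The crucial identity
\[
\overline\Cc \cap \partial \Omega = \xi(\partial \Gamma)
\]
I would prove by lifting a boundary point to $v = \sum_i t_i\wt\xi(x_i) \in \partial\wt\Omega$, choosing $y_0$ with $\wt\eta(y_0)(v)=0$, and applying the vanishing criterion termwise to collapse $v$ onto a positive multiple of $\wt\xi(y_0)$. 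A parallel argument identifies the dual cone $\wt\Omega^*$ with the closed convex cone generated by $\wt\eta(\partial \Gamma)$, so every supporting functional at $\wt\xi(y)$ is forced by the same vanishing criterion onto $\Rb_{>0}\cdot\wt\eta(y)$, giving the $C^1$ property. Extremality: if $[\xi(y),q]\subset\overline\Omega$ were non-degenerate, then $q\in\ker\eta(y)$, and I would push $q$ against transversality by iterating $\rho(\gamma)$ for $\gamma \in \Gamma$ with attracting fixed point close to $y$, via Lemma~\ref{lem:dynamics}.

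\textbf{Step 3: cocompactness — the main obstacle.} Properness of $\rho(\Gamma) \curvearrowright \Cc$ is immediate from discreteness of $\rho(\Gamma)$ in $\PGL_d(\Rb)$ combined with properness of $\Aut(\Omega)$ acting on $(\Omega,H_\Omega)$. For cocompactness, fix $o \in \Cc$ and argue by contradiction: suppose $p_n \in \Cc$ with $\inf_\gamma H_\Omega(\rho(\gamma)p_n, o) \to \infty$. By Carathéodory's theorem, write $\wt p_n = \sum_{i=1}^k t_{i,n}\wt\xi(x_{i,n})$ with $k\leq d$, and extract so $x_{i,n}\to x_i \in \partial\Gamma$. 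The identification from Step 2 forces any limit on $\partial\Omega$ of any translate $\rho(\gamma_n)p_n$ to be a single point $\xi(x_{i_0})$. Now use cocompactness of $\Gamma\curvearrowright\partial^{(2)}\Gamma$ (valid for non-elementary hyperbolic $\Gamma$; the elementary case reduces to $d=2$ by irreducibility) to pick $\gamma_n$ placing $(\gamma_n x_{i_0,n}, \gamma_n x_{j,n})$ into a fixed compact of $\partial^{(2)}\Gamma$ for some $j\neq i_0$. The Anosov singular-value gap then provides uniform control over the coefficient ratios $t_{i,n}/t_{i_0,n}$ after applying $\wt\rho(\gamma_n)$, confining $\wt\rho(\gamma_n)\wt p_n$ to a compact subset of $\wt\Cc\setminus\partial\wt\Omega$ and contradicting the escape of $p_n$. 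The hard work lies in this coefficient-control: it amounts to converting the Anosov inequality on singular values into a quantitative statement about how $\wt\rho(\gamma)$ distorts Carathéodory decompositions along $\wt\xi(\partial\Gamma)$, and is where the full strength of the projective Anosov hypothesis is really needed.
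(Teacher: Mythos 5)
Your Steps 1 and 2 are essentially the paper's Lemmas~\ref{lem:CC1}--\ref{lem:CC3}: the same domain $\Omega$, the same termwise-vanishing argument identifying $\overline{\Cc}\cap\partial\Omega$ with $\xi(\partial\Gamma)$, and the same identification of $\overline{\Omega^*}$ with the projectivized convex hull of $\wt{\eta}(\partial\Gamma)$ to get the $C^1$ property. The genuine gap is in Step 3, and it is exactly the step you flag as ``the hard work'': the claim that the Anosov singular-value gap ``provides uniform control over the coefficient ratios $t_{i,n}/t_{i_0,n}$ after applying $\wt{\rho}(\gamma_n)$'' is not an argument, and it is not clear it can be made into one along the lines you sketch. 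Two concrete problems. First, the Carath\'eodory points $x_{1,n},\dots,x_{k,n}$ may all converge to the same boundary point while remaining distinct with comparable coefficients; then there is no pair $(x_{i_0,n},x_{j,n})$ staying in a compact of $\partial^{(2)}\Gamma$ after renormalization without simultaneously losing control of the remaining points, and your dichotomy collapses. Second, even when two of the $x_{i,n}$ stay separated, applying $\wt{\rho}(\gamma_n)$ rescales each term by $\norm{\wt{\rho}(\gamma_n)\wt{\xi}(x_{i,n})}$, and comparing these norms across $i$ requires knowing where each $x_{i,n}$ sits relative to the repelling hyperplane of $\gamma_n$ --- precisely the quantitative input you have not supplied. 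Since cocompactness is the substance of the theorem, the proof is incomplete.

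The paper closes this gap by a softer route that avoids coefficient estimates entirely. Lemma~\ref{lem:geod_shadowing} shows, by a compactness-and-contradiction argument using only Lemma~\ref{lem:dynamics} and the fact that $(\xi(x^-),\xi(x^+))\subset\Omega$ for $x^-\neq x^+$, that orbit points along a word geodesic stay within uniformly bounded Hausdorff distance of the projective segment joining the endpoint orbit points; this immediately handles convex combinations of two boundary points. General points of $\Cc$ are then treated by induction on the number $N\le d+1$ of Carath\'eodory terms: split the sum into two halves, apply the inductive hypothesis to each, and use Lemma~\ref{lem:hilbert_1} to compare the segment $[p_1,p_2]$ with $[\rho(g_1)p_0,\rho(g_2)p_0]$. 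You would do well to adopt this splitting argument, or else to supply the missing uniform distortion estimate in full. A secondary issue: your extremality argument in Step 2 is sketched before cocompactness is available, whereas the paper's proof of extremality uses the already-established cocompact action to produce the group elements $\gamma_n$ whose limits in $\Pb(\End(\Rb^d))$ force the contradiction; as ordered, your Step 2 extremality claim is also unsupported.
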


For the rest of this subsection let $\Gamma$, $\rho$, $\xi$, $\eta$, $\wt{\rho}$, $\wt{\xi}$, and $\wt{\eta}$ satisfy the hypothesis of Theorem~\ref{thm:convex_exist}. 

Define
\begin{align*}
\Omega := \left\{ [v] \in \Pb(\Rb^{d}) : \wt{\eta}(x)(v) > 0 \text{ for all } x \in \partial \Gamma\right\}.
\end{align*}

\begin{lemma}\label{lem:CC1} With the notation above, $\Omega$ is a properly convex domain, $\rho(\Gamma) \leq \Aut(\Omega)$, and if
$N > 1$; $\lambda_1,\dots, \lambda_N > 0$;  and  $x_1, \dots, x_N \in \partial \Gamma$ are distinct, then
\begin{align*}
 \left[ \sum_{i=1}^N \lambda_i \wt{\xi}(x_i) \right] \in \Omega. 
\end{align*}
\end{lemma}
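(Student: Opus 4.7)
The plan is to verify the four assertions in sequence: that $\Omega$ is open and convex, that the positivity claim for $\sum_i \lambda_i \wt{\xi}(x_i)$ holds (which also supplies non-emptiness of $\Omega$), that $\Omega$ is properly convex, and finally that $\rho(\Gamma) \leq \Aut(\Omega)$. The whole argument is essentially a formal unpacking of the definition of $\Omega$ together with the positivity and equivariance hypotheses.

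It is convenient to lift to the cone
\begin{align*}
\wt{\Omega} \;:=\; \left\{v \in \Rb^{d}\setminus\{0\} : \wt{\eta}(x)(v) > 0 \text{ for all } x \in \partial \Gamma\right\}.
\end{align*}
Convexity of $\wt{\Omega}$ is immediate as an intersection of open half-spaces. For openness, I would use compactness of $\partial \Gamma$ together with continuity of $\wt{\eta}$: for each $v \in \wt{\Omega}$ the continuous function $x \mapsto \wt{\eta}(x)(v)$ attains a positive minimum on $\partial \Gamma$, so a small perturbation of $v$ remains in $\wt{\Omega}$, making $\wt{\Omega}$ open. Proper convexity is likewise immediate from the cone description: if both $v$ and $-v$ belonged to $\wt{\Omega}$, then $\wt{\eta}(x)(v) > 0$ and $\wt{\eta}(x)(-v) > 0$ would hold simultaneously for any $x \in \partial\Gamma$, which is impossible. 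Thus no projective line lies in $\overline{\Omega}$.

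For $\rho$-invariance I would exploit the $\wt{\rho}$-equivariance of $\wt{\eta}$ under the dual action $g\cdot f = (f \circ g^{-1})/\norm{f \circ g^{-1}}$. This yields a positive scalar $c(\gamma,x) > 0$ such that $\wt{\eta}(\gamma x) = c(\gamma,x)\,\wt{\eta}(x)\circ \wt{\rho}(\gamma)^{-1}$. For $[v] \in \Omega$, $\gamma \in \Gamma$, and any $y \in \partial\Gamma$, setting $x := \gamma^{-1} y$ gives
\begin{align*}
\wt{\eta}(y)\bigl(\wt{\rho}(\gamma)v\bigr) \;=\; c(\gamma,x)\,\wt{\eta}(x)(v) \;>\; 0,
\end{align*}
so $\rho(\gamma)[v] \in \Omega$.

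Finally, for the positivity claim, fix $y \in \partial\Gamma$ and expand
\begin{align*}
\wt{\eta}(y)\Big(\sum_{i=1}^{N} \lambda_i \wt{\xi}(x_i)\Big) \;=\; \sum_{i=1}^{N} \lambda_i\, \wt{\eta}(y)(\wt{\xi}(x_i)).
\end{align*}
Each summand with $x_i \neq y$ is strictly positive by hypothesis. Any summand with $x_i = y$ is non-negative, which follows from continuity of $\wt{\xi}$ and $\wt{\eta}$ applied to sequences of points distinct from $y$ approaching $y$ (the interesting case being $\partial\Gamma$ perfect). Since the $x_i$ are distinct and $N \geq 2$, at least $N-1 \geq 1$ of the summands are strictly positive, so the total is strictly positive. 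Hence $\bigl[\sum_i \lambda_i \wt{\xi}(x_i)\bigr] \in \Omega$, and in particular $\Omega$ is non-empty. The only mildly delicate point in the whole argument is tracking the positive cocycle $c(\gamma,x)$ coming from the normalization in the dual action; otherwise the proof is routine.
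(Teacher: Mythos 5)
Most of your argument tracks the paper's proof: openness via compactness of $\partial\Gamma$ and the normalization $\wt{\eta}(x)\in S^{(d-1)*}$, invariance via the positive cocycle hidden in the normalized dual action, and positivity of $\wt{\eta}(y)\bigl(\sum_i\lambda_i\wt{\xi}(x_i)\bigr)$ by isolating the at most one diagonal term (which is in fact exactly zero, since $\eta(x)(\xi(x))=0$ for projective Anosov boundary maps; your continuity argument gives $\geq 0$, which suffices, though it needs the $\#\partial\Gamma=2$ case to be handled by this identity rather than by perturbation).

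There is, however, a genuine gap in the proper convexity step. The implication you assert --- that $\wt{\Omega}\cap(-\wt{\Omega})=\emptyset$ forces no projective line to lie in $\overline{\Omega}$ --- is false. Take a single functional $f$ and the open half-space $C=\{v: f(v)>0\}$: then $v$ and $-v$ are never both in $C$, yet $[C]=\Pb(\Rb^{d})\setminus\Pb(\ker f)$, whose closure is all of $\Pb(\Rb^{d})$; more to the point, for any projective line $L$ not contained in $\Pb(\ker f)$ one has $\overline{L\cap[C]}=L$, so $[C]$ is not properly convex. The disjointness of $\wt{\Omega}$ and $-\wt{\Omega}$ only rules out a projective line contained in $\Omega$ itself, not in its closure; what is needed is that $\overline{\wt{\Omega}}$ contains no line through the origin. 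This is exactly where the irreducibility hypothesis enters, and your proof never uses it. The paper's argument is: $\Omega\cap\ker\eta(x)=\emptyset$ for every $x\in\partial\Gamma$, and since $\rho$ is irreducible the set $\eta(\partial\Gamma)$ spans $\Rb^{d*}$ (otherwise the annihilator of its span would be a proper nonzero invariant subspace); choosing $x_1,\dots,x_d$ with $\wt{\eta}(x_1),\dots,\wt{\eta}(x_d)$ a basis, $\wt{\Omega}$ is contained in the simplicial cone $\bigcap_{i}\{\wt{\eta}(x_i)>0\}$, whose projectivization is an open simplex, hence $\Omega$ is properly convex. You should repair your proof by inserting this spanning argument.
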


\begin{proof} If $N > 1$; $\lambda_1,\dots, \lambda_N > 0$;  and  $x_1, \dots, x_N \in \partial \Gamma$ are distinct, then
\begin{align*}
\wt{\eta}(y) \left( \sum_{i=1}^N \lambda_i \wt{\xi}(x_i) \right)>0
\end{align*}
for all $y \in \partial \Gamma$. So 
\begin{align*}
 \left[ \sum_{i=1}^N \lambda_i \wt{\xi}(x_i) \right] \in \Omega. 
\end{align*}
In particular, $\Omega$ is non-empty.

We now show that $\Omega$ is open. Suppose $p_0 \in \Omega$. Then there exists $v_0 \in \Rb^d$ such that $p_0=[v_0]$ and $\wt{\eta}(x)(v_0) > 0$ for all $x \in \partial \Gamma$. Since $\partial \Gamma$ is compact and $\wt{\eta}:\partial \Gamma \rightarrow S^{(d-1)*}$ is continuous, we have
\begin{align*}
0< r:=\inf_{x \in \partial \Gamma} \wt{\eta}(x)(v_0).
\end{align*}
So 
\begin{align*}
\{ [v] \in \Pb(\Rb^d) : \norm{v-v_0} < r\} \subset \Omega.
\end{align*}
Hence  $\Omega$ is open. 

By construction $\Omega$ is a convex domain and
\begin{align*}
\Omega \cap \Pb(\ker \eta(x)) = \emptyset 
\end{align*}
for all $x \in \partial \Gamma$. Since $\rho$ is irreducible, $\eta(\partial \Gamma)$ spans $\Rb^{d*}$ and so $\Omega$ must be properly convex. 

Finally, since 
\begin{align*}
\rho(\gamma)[v] = [\wt{\rho}(\gamma)(v)]
\end{align*}
when $v \in \Rb^d$ and $\gamma \in \Gamma$, we see that $\rho(\Gamma) \leq \Aut(\Omega)$.
 \end{proof}
 
 \begin{lemma}\label{lem:CC2} With the notation above, $\xi(\partial \Gamma) \subset \partial \Omega$ and $\eta(\partial \Gamma) \subset \partial \Omega^*$.
  \end{lemma}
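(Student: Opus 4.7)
The plan is to reduce the lemma to the single identity $\wt{\eta}(x)\bigl(\wt{\xi}(x)\bigr)=0$ for every $x\in\partial\Gamma$, and then to argue both containments by soft inequalities. This identity is not part of the hypothesis of Theorem~\ref{thm:convex_exist}, but it is extractable from the Anosov structure: for every infinite-order $\gamma\in\Gamma$ with attracting fixed point $x_\gamma^+\in\partial\Gamma$, the dynamics-preserving property forces $\rho(\gamma)$ to be proximal on $\Pb(\Rb^d)$ and on $\Pb(\Rb^{d*})$ with attracting fixed points $\xi(x_\gamma^+)$ and $\eta(x_\gamma^+)$ respectively. A direct Jordan-form computation shows that the attracting eigenline of $\rho(\gamma)$ on $\Pb(\Rb^d)$ is precisely the top eigenline of $\rho(\gamma)$, while $\ker\eta(x_\gamma^+)$ is the $\rho(\gamma)$-invariant hyperplane spanned by all non-top (generalized) eigenvectors; in particular $\xi(x_\gamma^+)\subset\ker\eta(x_\gamma^+)$. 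Since attracting fixed points of infinite-order elements are dense in $\partial\Gamma$ for non-elementary word hyperbolic groups (and the elementary cases are directly verified), continuity of $\wt{\xi}$ and $\wt{\eta}$ upgrades this to $\wt{\eta}(x)\bigl(\wt{\xi}(x)\bigr)=0$ for every $x\in\partial\Gamma$.

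Granting this identity, the inclusion $\xi(\partial\Gamma)\subset\partial\Omega$ is immediate. Indeed, the strict positivity on distinct pairs and continuity of $\wt{\xi},\wt{\eta}$ give $\wt{\eta}(y)\bigl(\wt{\xi}(x)\bigr)\geq 0$ for all $y\in\partial\Gamma$, so $\xi(x)\in\overline{\Omega}$; but $\wt{\eta}(x)\bigl(\wt{\xi}(x)\bigr)=0$ prevents $\xi(x)$ from lying in the open set $\Omega$, so $\xi(x)\in\partial\Omega$.

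For the second containment, let $\pi\colon\Rb^d\setminus\{0\}\to\Pb(\Rb^d)$ denote projection and let $\Cc=\{v\in\Rb^d:\wt{\eta}(y)(v)>0\text{ for all }y\in\partial\Gamma\}$, an open properly convex cone with $\pi(\Cc)=\Omega$. The sign convention of the hypothesis, together with continuity, gives $\wt{\xi}(\partial\Gamma)\subset\overline{\Cc}$. Then $\Cc^{\vee}:=\{f\in\Rb^{d*}:f(v)>0\text{ for all }v\in\overline{\Cc}\setminus\{0\}\}$ is a lift of $\Omega^*$, and for any fixed $f_0\in\Cc^{\vee}$ the functional $\wt{\eta}(x)+\epsilon f_0$ is strictly positive on $\overline{\Cc}\setminus\{0\}$ for every $\epsilon>0$ (since $\wt{\eta}(x)\geq 0$ on $\overline{\Cc}$ by limits of the hypothesis) and converges to $\wt{\eta}(x)$ as $\epsilon\to 0^+$; hence $\eta(x)\in\overline{\Omega^*}$. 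Finally, $\wt{\xi}(x)\in\overline{\Cc}\setminus\{0\}$ together with $\wt{\eta}(x)\bigl(\wt{\xi}(x)\bigr)=0$ forces $\eta(x)\notin\Omega^*$, so $\eta(x)\in\partial\Omega^*$.

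The main obstacle is the first step, since the equality $\wt{\eta}(x)\bigl(\wt{\xi}(x)\bigr)=0$ does not follow directly from the transversality in the definition of projective Anosov (which only addresses distinct pairs) and must instead be obtained from the dynamics-preserving property together with a density-plus-continuity argument on $\partial\Gamma$. Once this step is in place, both inclusions are formal consequences of the definitions of $\Omega$ and $\Omega^*$ and the sign conventions governing the lifts.
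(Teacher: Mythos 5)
Your proof is correct and follows essentially the same route as the paper's direct proof: both reduce the lemma to the incidence relation $\eta(x)(\xi(x))=0$ for all $x\in\partial\Gamma$, which the paper simply asserts and you derive from the dynamics-preserving property together with density of attracting fixed points, and both then conclude by soft closure arguments from the definitions of $\Omega$ and $\Omega^*$. (The paper gets $\xi(\partial\Gamma)\subset\overline{\Omega}$ by taking limits of the convex combinations supplied by Lemma~\ref{lem:CC1}, while you use that the closed cone cut out by the non-strict inequalities equals $\overline{\Cc}$ once $\Omega\neq\emptyset$; both are fine.) One sentence needs repair: $\ker\eta(x_\gamma^+)$ is the invariant hyperplane spanned by the generalized eigenvectors for all eigenvalues \emph{other than the one of smallest modulus} --- the attracting fixed point in $\Pb(\Rb^{d*})$ for the contragredient action of $\rho(\gamma)$ is the eigenfunctional of ${}^{t}\rho(\gamma)$ with eigenvalue $\lambda_d$, which annihilates every other generalized eigenspace. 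As written, ``spanned by all non-top generalized eigenvectors'' describes instead the hyperplane complementary to $\xi(x_\gamma^+)$ (namely $\ker\eta(x_\gamma^-)$), which would contradict the incidence you assert immediately afterwards. With that wording corrected the argument is complete.
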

  
  \begin{proof} This follows immediately from Corollary~\ref{cor:dynamics}, but here is a direct proof: by the definition of $\Omega$ we see that $\eta(\partial \Gamma) \subset \overline{\Omega^*}$. Moreover, if $x,y \in \partial \Gamma$ are distinct, then 
  \begin{align*}
\xi(x) = \lim_{\lambda \rightarrow \infty} \left[ \lambda \wt{\xi}(x) + \wt{\xi}(y) \right] \in \overline{\Omega}.
\end{align*}
So $\xi(\partial \Gamma) \subset \overline{\Omega}$. Then, since 
\begin{align*}
\eta(x)(\xi(x)) = 0
\end{align*}
for all  $x \in \partial \Gamma$ we see that $\xi(\partial \Gamma) \subset \partial \Omega$ and $\eta(\partial \Gamma) \subset \partial \Omega^*$.
\end{proof}

Next let $\Cc$ be the closed convex hull of $\xi(\partial \Gamma)$ in $\Omega$. 

\begin{proposition}\label{prop:CC_key_step} With the notation above, $\rho(\Gamma)$ acts cocompactly on $\Cc$. \end{proposition}

Proposition~\ref{prop:CC_key_step} follows from either a recent result of Kapovich and Leeb~\cite{KL2018} or a recent result of Kapovich, Leeb, and Porti~\cite{KLP2013}. In particular, the action of $\rho(\Gamma)$ on $\xi(\partial \Gamma)$ is a uniform convergence action and so $\rho(\Gamma)$ acts cocompactly on $\Cc$ by Theorem 1.9 in~\cite{KL2018}. Alternatively, one can use $\Cc$ to construct an invariant set in the space of flags of the form (line, hyperplane) and then apply Theorem 1.5 in~\cite{KLP2013} to see that $\rho(\Gamma)$ acts cocompactly on $\Cc$. 

We will provide a proof of Proposition~\ref{prop:CC_key_step} that only uses elementary properties of convex sets. This direct argument requires a few preliminary lemmas.

Given a set $A \subset \Omega$ and a point $p \in \Omega$ define
\begin{align*}
d_\Omega(p,A) := \inf_{a \in A} d_\Omega(p,a).
\end{align*}
Then given two sets $A,B \subset \Omega$ define the \emph{Hausdorff distance in $d_\Omega$ between $A$ and $B$} to be:
\begin{align*}
d_{\Omega}^{\Haus}(A,B) := \max\left\{ \sup_{a \in A} d_\Omega(a,B), \sup_{b \in B} d_\Omega(b,A) \right\}.
\end{align*}

Next fix a finite, symmetric generating set $S$ of $\Gamma$ and let $d_S$ be the induced word metric on $\Gamma$.

\begin{lemma}\label{lem:geod_shadowing} With the notation above, suppose that $p_0 \in \Omega$. Then there exists some $R >0$ with the following property: if $g_1, \dots, g_N \in \Gamma$ is a geodesic in $(\Gamma, d_S)$, then 
\begin{align*}
d_{\Omega}^{\Haus}\Big( \{ \rho(g_1)p_0, \dots, \rho(g_N)p_0\}, [\rho(g_1) p_0, \rho(g_N) p_0] \Big) \leq R. 
\end{align*}
\end{lemma}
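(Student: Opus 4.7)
The plan is to argue by contradiction. If no such $R$ works, I will extract a sequence of geodesic segments $g_1^{(n)},\dots,g_{N_n}^{(n)}$ in $(\Gamma,d_S)$ witnessing arbitrarily large Hausdorff distance. By the definition of Hausdorff distance, one of two failure modes occurs: either there exist indices $j_n$ with $H_\Omega(\rho(g_{j_n}^{(n)})p_0,\,[\rho(g_1^{(n)})p_0,\rho(g_{N_n}^{(n)})p_0]) \to \infty$, or there exist points $q_n$ on the chord with $\inf_j H_\Omega(q_n,\rho(g_j^{(n)})p_0) \to \infty$; in the second mode I will choose $j_n$ so that $\rho(g_{j_n}^{(n)})p_0$ nearly minimizes the distance from $q_n$ to the orbit. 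Translating by $\rho((g_{j_n}^{(n)})^{-1})$ and setting $h_k^{(n)} := (g_{j_n}^{(n)})^{-1} g_k^{(n)}$, the sequence $h_1^{(n)},\dots,h_{N_n}^{(n)}$ is a geodesic through $\id$ at position $j_n$; by $\rho(\Gamma)$-invariance of $H_\Omega$, the failure statement becomes that $p_0$ (respectively $\tilde q_n := \rho((g_{j_n}^{(n)})^{-1})q_n$) has Hilbert distance $\to \infty$ from the translated chord (respectively from every translated orbit point).

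The central step is to identify the limits of the endpoints $\rho(h_1^{(n)})p_0$ and $\rho(h_{N_n}^{(n)})p_0$ of the translated chord in $\overline\Omega$. Since $\id$ sits on the geodesic, the Gromov product $(h_1^{(n)}\mid h_{N_n}^{(n)})_{\id}$ vanishes for every $n$, so after extraction both endpoints escape to distinct boundary points $x^\pm\in\partial\Gamma$; in the mixed scenario where one end stays bounded, the corresponding orbit endpoint is essentially constant in $\Omega$ and the argument simplifies. Extracting further so that $(h_{N_n}^{(n)})^{-1}\to y\in\partial\Gamma$ and applying Lemma~\ref{lem:dynamics}, using that $p_0\in\Omega$ and $\eta(y)\in\partial\Omega^*$ (Corollary~\ref{cor:dynamics}) imply $p_0\notin[\ker\eta(y)]$, I obtain $\rho(h_{N_n}^{(n)})p_0 \to \xi(x^+)$ in the standard topology of $\overline\Omega$; symmetrically $\rho(h_1^{(n)})p_0 \to \xi(x^-)$. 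Lemma~\ref{lem:CC1} then shows that the open chord $(\xi(x^-),\xi(x^+))$ lies in $\Omega$.

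The contradiction will come from the fact that $H_\Omega$ induces the standard topology on $\Omega$ (Proposition~\ref{prop:hilbert_basic}). Choose an interior point $m$ of the limit chord together with approximating points $m_n$ on $[\rho(h_1^{(n)})p_0,\rho(h_{N_n}^{(n)})p_0]$ converging to $m$; then $H_\Omega(m_n,m)\to 0$, so $H_\Omega(p_0,m_n) = O(1)$, ruling out the first failure mode. For the second failure mode, extract a limit $q_\infty$ of $\tilde q_n$ inside $[\xi(x^-),\xi(x^+)]$: if $q_\infty$ is interior the same argument rules it out, while if $q_\infty=\xi(x^+)$ (the $\xi(x^-)$ case being analogous) then $\tilde q_n$ and the chord-endpoint $\rho(h_{N_n}^{(n)})p_0$ exit toward $\xi(x^+)$ along a common chord, and the explicit cross-ratio formula for $H_\Omega$ restricted to this chord, combined with the uniform positivity $\wt{\eta}(x^-)(\wt{\xi}(x^+)) > 0$ from the hypothesis of Theorem~\ref{thm:convex_exist}, should force $H_\Omega(\tilde q_n,\rho(h_{N_n}^{(n)})p_0) = O(1)$, contradicting the near-minimizing choice of $j_n$. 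I expect this last subcase to be the main obstacle: two points on a common Hilbert chord can have unbounded Hilbert distance even when they converge to the same boundary endpoint, and the only way to rule this out is via the uniform boundary pairing provided by the lifts $\wt{\xi},\wt{\eta}$.
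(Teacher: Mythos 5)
Your treatment of the first failure mode (an orbit point far from the chord) is essentially the paper's argument: translate the offending orbit point to $p_0$, extract a limiting bi-infinite geodesic with distinct endpoints $x^{\pm}\in\partial\Gamma$, use Lemma~\ref{lem:dynamics} to send the chord endpoints to $\xi(x^{\pm})$, and invoke Lemma~\ref{lem:CC1} to place $(\xi(x^-),\xi(x^+))$ inside $\Omega$, contradicting the divergence of $H_\Omega(p_0,\cdot)$. That half is sound.

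The gap is in the second failure mode, precisely where you flagged it. The implication you need --- two points on a common chord, both converging to $\xi(x^+)$ along a transverse limit chord, are at bounded Hilbert distance --- is false. For instance, if $b_n:=\rho(h_{N_n}^{(n)})p_0\to\xi(x^+)$ and $a_n$ is the Hilbert midpoint of the subsegment from a fixed interior point of the chord to $b_n$, then $a_n\to\xi(x^+)$ as well while $H_\Omega(a_n,b_n)\to\infty$. The uniform positivity $\wt{\eta}(x^-)\big(\wt{\xi}(x^+)\big)>0$ cannot repair this: it controls transversality of the limit chord (so that $(\xi(x^-),\xi(x^+))\subset\Omega$), which you have already used, but the cross-ratio $\tfrac12\log[u_n,a_n,b_n,v_n]$ then reduces to the ratio of the rates at which $a_n$ and $b_n$ approach the common endpoint $v_n\to\xi(x^+)$, and nothing in your setup bounds that ratio. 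The real reason a chord point cannot be far from every orbit point is that it is close to some \emph{intermediate} orbit point $\rho(h_k^{(n)})p_0$ with $j_n<k<N_n$, and no asymptotic analysis at the two endpoints will detect it. The paper disposes of this half with no limiting argument at all: once the first half produces $R_1$ with every $\rho(g_i)p_0$ within $R_1$ of the chord, let $p_i$ be a closest point on the chord to $\rho(g_i)p_0$ and let $C=\max_{s\in S}H_\Omega(p_0,\rho(s)p_0)$; then $H_\Omega(p_i,p_{i+1})\leq 2R_1+C$, the chain $p_1,\dots,p_N$ begins and ends at the endpoints of the chord, so since the chord is a geodesic every point of it lies between some $p_i$ and $p_{i+1}$ and is therefore within $R_1+\tfrac{C}{2}$ of one of them, hence within $2R_1+\tfrac{C}{2}$ of an orbit point. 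You should replace your entire second-failure-mode analysis with this chaining argument.
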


\begin{proof} We first claim that there exists some $R_1 > 0$ with the following property: if $g_1, \dots, g_N \in \Gamma$ is a geodesic in $(\Gamma, d_S)$, then 
\begin{align*}
\max_{1 \leq i \leq N} d_\Omega \Big( \rho(g_i) p_0, \left[\rho(g_1) p_0, \rho(g_N) p_0\right] \Big) \leq R_1.
\end{align*}
Suppose not, then after possibly translating by elements in $\Gamma$ we can assume: for any $n > 0$ there exists a geodesic 
\begin{align*}
g_{-M_n}^{(n)}, g_{-M_n+1}^{(n)}, \dots, g_{N_n}^{(n)}
\end{align*}
in $(\Gamma,d_S)$ such that $g_0^{(n)}=\id$ and
\begin{align*}
d_\Omega \left( \rho\left(g_0^{(n)}\right)p_0, \left[\rho\left(g_{-M_n}^{(n)}\right) p_0, \rho\left(g_{N_n}^{(n)}\right) p_0\right] \right)=d_\Omega \left( p_0, \left[\rho\left(g_{-M_n}^{(n)}\right) p_0, \rho\left(g_{N_n}^{(n)}\right) p_0\right] \right) > n.
\end{align*}
Notice that we must have $M_n, N_n \rightarrow \infty$. Now by passing to a subsequence, we can suppose that the limits
\begin{align*}
g_i = \lim_{n \rightarrow \infty} g_i^{(n)}
\end{align*}
exists for all $i$. Then 
\begin{align*}
\dots, g_{-2}, g_{-1}, g_0=\id, g_1, g_2, \dots
\end{align*}
is a geodesic in $(\Gamma,d_S)$. So there exist $x^+,x^- \in \partial \Gamma$ distinct such that 
\begin{align*}
\lim_{i \rightarrow \pm \infty} g_i = x^{\pm}.
\end{align*}
By the standard geodesic ray definition of the topology on $\Gamma \cup \partial \Gamma$, we have
\begin{align*}
\lim_{n \rightarrow \infty} g_{N_n}^{(n)} = x^+
\end{align*}
and
\begin{align*}
\lim_{n \rightarrow \infty} g_{-M_n}^{(n)} = x^-.
\end{align*}

Now  $\Pb(\ker\eta(x^-)) \cap \Omega = \emptyset$ and so Lemma~\ref{lem:dynamics} implies that
\begin{align*}
\lim_{n \rightarrow \infty} \rho\left(g_{N_n}^{(n)}\right) p_0 = \xi(x^+).
\end{align*}
The same reasoning implies that
\begin{align*}
\lim_{n \rightarrow \infty} \rho\left(g_{-M_n}^{(n)}\right) p_0 = \xi(x^-).
\end{align*}
Since $x^+,x^- \in \partial \Gamma$ are distinct, Lemma~\ref{lem:CC1} implies that $(\xi(x^-), \xi(x^+)) \subset \Omega$ and so
 \begin{align*}
 d_\Omega \left( p_0, (\xi(x^-), \xi(x^+)) \right) < \infty.
\end{align*}
Then since 
\begin{align*}
\infty = \lim_{n \rightarrow \infty} d_\Omega \left( p_0, \left[\rho\left(g_{-M_n}^{(n)}\right) p_0, \rho\left(g_{N_n}^{(n)}\right) p_0\right] \right) =d_\Omega \left( p_0, (\xi(x^-), \xi(x^+)) \right) < \infty
\end{align*}
we have a contradiction. Hence, there exists some $R_1 > 0$ such that: if $g_1, \dots, g_N \in \Gamma$ is a geodesic in $(\Gamma, d_S)$, then 
\begin{align*}
\max_{1 \leq i \leq N} d_\Omega \left( \rho(g_i) p_0, [\rho(g_1) p_0, \rho(g_N) p_0] \right) \leq R_1.
\end{align*}

Now let 
\begin{align*}
C = \max\{ d_\Omega(p_0, \rho(g) p_0) : g \in S\}.
\end{align*}
We claim that: if $g_1, \dots, g_N \in \Gamma$ is a geodesic in $(\Gamma, d_S)$ and if $p \in [\rho(g_1) p_0, \rho(g_N) p_0]$, then 
\begin{align*}
 d_\Omega\left( p, \{ \rho(g_1)p_0, \dots, \rho(g_N)p_0\}\right) \leq 2R_1 + C/2.
\end{align*}
For each $1 \leq i \leq N$, let $p_i$ be a closest point to $\rho(g_i)p_0$ in $[\rho(g_1) p_0, \rho(g_N) p_0]$. Then 
\begin{align*}
d_\Omega(p_i, p_{i+1}) 
&\leq d_\Omega(p_i, \rho(g_i)p_0) + d_\Omega(\rho(g_i)p_0, \rho(g_{i+1})p_0) + d_\Omega(\rho(g_{i+1})p_0, p_{i+1}) \\
& \leq R_1 + C+R_1 = 2R_1+C.
\end{align*}
Since $p_1 = \rho(g_1) p_0$ and $p_N = \rho(g_N) p_0$ we see that: for any $p \in [\rho(g_1) p_0, \rho(g_N) p_0]$ 
\begin{align*}
\min_{1 \leq i \leq N} d_\Omega(p, p_i) \leq \frac{1}{2} ( 2R_1+C) = R_1 +C/2
\end{align*}
and so 
\begin{align*}
d_\Omega\left( p, \{ \rho(g_1)p_0, \dots, \rho(g_N)p_0\}\right) \leq 2R_1 + C/2.
\end{align*}
So $R = 2R_1 + C/2$ satisfies the conclusion of the lemma. 
\end{proof}

\begin{lemma} With the notation above, suppose that $p_0 \in \Omega$. For any $N \geq 2$ there exists $C_N > 0$ such that: if
\begin{align*}
p = \left[ \sum_{i=1}^N \lambda_i \wt{\xi}(x_i)\right]
\end{align*}
where $\lambda_1, \dots, \lambda_N > 0$ and $x_1, \dots, x_N \in \partial \Gamma$ are distinct, then 
\begin{align*}
d_\Omega(p, \rho(\Gamma) \cdot p_0) \leq C_N.
\end{align*}
\end{lemma}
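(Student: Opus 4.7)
The plan is to induct on $N \geq 2$, using a bi-infinite geodesic in $\Gamma$ for the base case and a geodesic ray in $\Gamma$ together with Lemma~\ref{lem:hilbert_1} for the inductive step. I anticipate an estimate of the form $C_N = C_{N-1} + R$, where $R$ is the shadowing constant from Lemma~\ref{lem:geod_shadowing}, so that the resulting $C_N$ grows linearly in $N$.

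For the base case $N = 2$, the point $p$ lies on the projective segment $(\xi(x_1), \xi(x_2))$ by Lemma~\ref{lem:CC1}. I would pick a bi-infinite geodesic $(g_n)_{n \in \Zb}$ in $(\Gamma, d_S)$ with $g_n \to x_1$ as $n \to -\infty$ and $g_n \to x_2$ as $n \to \infty$. Since $p_0 \in \Omega$, Lemma~\ref{lem:CC2} gives $p_0 \notin \ker \eta(x)$ for any $x \in \partial \Gamma$, so Lemma~\ref{lem:dynamics} yields $\rho(g_{-M}) p_0 \to \xi(x_1)$ and $\rho(g_K) p_0 \to \xi(x_2)$. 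Working in a fixed affine chart, I would produce a point $r_{M,K}$ on the segment $[\rho(g_{-M}) p_0, \rho(g_K) p_0] \subset \Omega$ having the same affine parameter as $p$ on $[\xi(x_1), \xi(x_2)]$. Since $p$ is interior to $\Omega$, Proposition~\ref{prop:hilbert_basic} promotes the projective convergence $r_{M,K} \to p$ to Hilbert convergence; Lemma~\ref{lem:geod_shadowing} then places $r_{M,K}$ within Hilbert distance $R$ of $\rho(\Gamma) \cdot p_0$, and letting $M, K \to \infty$ gives $C_2 := R$.

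For the inductive step, set $q := [\sum_{i=1}^{N-1} \lambda_i \wt{\xi}(x_i)] \in \Omega$ (by Lemma~\ref{lem:CC1}) and use the inductive hypothesis to select $\gamma_0 \in \Gamma$ with $H_\Omega(q, q_0) \leq C_{N-1}$, where $q_0 := \rho(\gamma_0) p_0$. Take a geodesic ray $(h_k)_{k \geq 0}$ in $\Gamma$ with $h_0 = \gamma_0$ and $h_k \to x_N$, and set $q_k := \rho(h_k) p_0$, so $q_k \to \xi(x_N)$ by Lemma~\ref{lem:dynamics}. The same affine interpolation as in the base case produces a point $r_K$ on $[q, q_K]$ converging to $p$ in the Hilbert metric. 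Applying Lemma~\ref{lem:hilbert_1} to the segments $[q, q_K]$ and $[q_0, q_K]$ (both contained in $\Omega$) yields
\begin{align*}
H_\Omega(r_K, [q_0, q_K]) \leq H_\Omega(q, q_0) + H_\Omega(q_K, q_K) \leq C_{N-1},
\end{align*}
and Lemma~\ref{lem:geod_shadowing} places $[q_0, q_K]$ within Hausdorff distance $R$ of the orbit subset $\{q_j : 0 \leq j \leq K\}$. Combining these estimates gives $H_\Omega(p, \rho(\Gamma) \cdot p_0) \leq H_\Omega(p, r_K) + C_{N-1} + R$, and passing $K \to \infty$ yields the bound $C_N := C_{N-1} + R$.

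The main delicate point I foresee is the projective-to-Hilbert conversion for the interpolants $r_K$: ensuring $H_\Omega(r_K, p) \to 0$, rather than merely projective convergence, relies crucially on $p$ lying in the interior of $\Omega$ together with the topological compatibility from Proposition~\ref{prop:hilbert_basic}. A secondary technicality is the existence of geodesics (bi-infinite or rays) in the Cayley graph with the prescribed endpoints in $\partial \Gamma$, which is standard for word hyperbolic groups.
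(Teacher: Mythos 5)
Your proof is correct, and the overall strategy (induction on $N$, with the base case handled by shadowing a bi-infinite geodesic and the inductive step by Lemma~\ref{lem:hilbert_1} plus Lemma~\ref{lem:geod_shadowing}) matches the paper's. The base case is essentially identical to the paper's. Where you genuinely diverge is the inductive step: you peel off a single vertex, writing $p$ as an interior point of the segment from $q = \bigl[\sum_{i=1}^{N-1}\lambda_i\wt{\xi}(x_i)\bigr]$ to the boundary point $\xi(x_N)$, approximate along a geodesic ray from $\gamma_0$ to $x_N$, and move only one endpoint in Lemma~\ref{lem:hilbert_1}, obtaining $C_N = C_{N-1}+R$. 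The paper instead splits the sum into two halves $p_1, p_2$ (sharing half of the middle coefficient), applies the inductive hypothesis to both, and uses that $p\in[p_1,p_2]$ with both endpoints moved, giving the recursion $C_N = 2C_{\ceil{N/2}}+R$. Your version yields a linear constant and a single recursion, at the cost of repeating the limiting argument ($r_K\to p$ in the Hilbert metric, plus the existence of a geodesic ray from the prescribed basepoint $\gamma_0$ to $x_N$) inside every inductive step; the paper's version confines all limiting to the base case and keeps the inductive step purely metric. Since Carath\'eodory bounds $N$ by $d+1$ in the application, the difference in the size of $C_N$ is immaterial. Two small points worth making explicit in your write-up: the segment $[q,\xi(x_N)]$ you use lies in $\overline{\Omega}$ with interior in $\Omega$ because $\wt{\eta}(y)(v_q)>0$ for every $y\in\partial\Gamma$ (here $v_q=\sum_{i=1}^{N-1}\lambda_i\wt{\xi}(x_i)$ with $N-1\geq 2$), and the disjointness $\Omega\cap\ker\eta(x)=\emptyset$ needed to invoke Lemma~\ref{lem:dynamics} comes from the definition of $\Omega$ (as recorded in the proof of Lemma~\ref{lem:CC1}) rather than from Lemma~\ref{lem:CC2} itself.
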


\begin{proof} We induct on $N$. For the remainder of the proof let $R$ be the constant from Lemma~\ref{lem:geod_shadowing}.

For the $N=2$ case suppose that $x_1,x_2 \in \partial \Gamma$ are distinct. Then there exist sequences $g_n, h_n \in \Gamma$ such that $g_n \rightarrow x_1$ and $h_n \rightarrow x_2$. By Lemma~\ref{lem:dynamics} 
\begin{align*}
\rho(g_n)p_0 \rightarrow \xi(x_1) \text{ and } \rho(h_n)p_0 \rightarrow \xi(x_2).
\end{align*}
So if 
\begin{align*}
p =  \left[ \lambda_1\wt{\xi}(x_1)+\lambda_2\wt{\xi}(x_2)\right]
\end{align*}
for some $\lambda_1,\lambda_2 >0$, then there exists a sequence $p_n \in [\rho(g_n)p_0, \rho(h_n)p_0]$ such that $p_n \rightarrow p$.  Lemma~\ref{lem:geod_shadowing} implies that 
\begin{align*}
d_\Omega(p_n, \rho(\Gamma) \cdot p_0) \leq R
\end{align*}
and so 
\begin{align*}
d_\Omega(p, \rho(\Gamma) \cdot p_0) \leq R.
\end{align*}

Next suppose that $N > 2$ and consider 
\begin{align*}
p = \left[ \sum_{i=1}^N \lambda_i \wt{\xi}(x_i)\right]
\end{align*}
where $\lambda_1, \dots, \lambda_N > 0$ and $x_1, \dots, x_N \in \partial \Gamma$ are distinct. We claim that 
\begin{align*}
d_\Omega(p, \rho(\Gamma) \cdot p_0) \leq 2C_{\ceil{N/2}}+R.
\end{align*}

Let
\begin{align*}
p_1 = \left[ \sum_{1 \leq i < \ceil{N/2}} \lambda_i \wt{\xi}\left(x_i\right) + \frac{1}{2}\lambda_{\ceil{N/2}} \wt{\xi}\left( x_{\ceil{N/2}}\right)\right]
\end{align*}
and
\begin{align*}
p_2 = \left[\frac{1}{2}\lambda_{\ceil{N/2}} \wt{\xi}\left( x_{\ceil{N/2}}\right)+\sum_{\ceil{N/2} < i \leq N } \lambda_i \wt{\xi}(x_i)\right].
\end{align*}
Then, by induction there exist elements $g_1, g_2 \in \Gamma$ such that 
\begin{align*}
d_\Omega(p_i, \rho(g_i) \cdot p_0) \leq C_{\ceil{N/2}}.
\end{align*}
Now $p \in [p_1, p_2]$ and so by Lemma~\ref{lem:hilbert_1} there exists $q \in [\rho(g_1) \cdot p_0, \rho(g_2) \cdot p_0]$ such that 
\begin{align*}
d_\Omega(p,q) \leq 2C_{\ceil{N/2}}.
\end{align*}
Then Lemma~\ref{lem:geod_shadowing} implies that
\begin{align*}
d_\Omega(q, \rho(\Gamma) \cdot p_0) \leq R
\end{align*}
and hence
\begin{align*}
d_\Omega(p, \rho(\Gamma) \cdot p_0) \leq 2C_{\ceil{N/2}}+R.
\end{align*}
\end{proof}

\begin{proof}[Proof of Proposition~\ref{prop:CC_key_step}] By Carath{\'e}odory's convex hull theorem any $p \in \Cc$ can be written as 
\begin{align*}
p = \left[ \sum_{i=1}^{N} \lambda_i \wt{\xi}(x_i)\right]
\end{align*}
where $2 \leq N \leq d+1$; $\lambda_1, \dots, \lambda_{N} >0$; and $x_1, \dots, x_N \in \partial \Gamma$ are distinct. Thus by the previous lemma there exists some $M > 0$ such that 
\begin{align*}
\Cc= \cup_{g \in \Gamma} \rho(g) \left(\overline{B}_\Omega(p_0;M) \cap \Cc\right)
\end{align*}
where $\overline{B}_\Omega(p_0;M)$ is the closed metric ball of radius $M$ in $(\Omega, d_\Omega)$. 
\end{proof}

\begin{lemma}\label{lem:CC3} With the notation above, if $f \in \overline{\Omega^*}$, then there exists $1 \leq N \leq d+1$; $\lambda_1, \dots, \lambda_N > 0$; and $x_1, \dots, x_N \in \partial \Gamma$ distinct so that 
\begin{align*}
f = \left[ \sum_{i=1}^N \lambda_i \wt{\eta}(x_i) \right].
\end{align*}
\end{lemma}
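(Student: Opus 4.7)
My plan is to realize $\overline{\Omega^*}$ as the projectivization of a closed convex cone in $\Rb^{d*}$, identify that cone with the conic hull of $\wt{\eta}(\partial\Gamma)$ via the bipolar theorem, and then apply Carath\'eodory's theorem.

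First, I would introduce the lift $\Cc^+ := \{v \in \Rb^d : \wt{\eta}(x)(v) > 0 \text{ for all } x \in \partial\Gamma\}$, so that $\Omega$ is exactly the projectivization of $\Cc^+$. By Lemma~\ref{lem:CC1} $\Cc^+$ is a non-empty open convex cone, and since $\Omega$ is properly convex, $\Cc^+$ is pointed and full-dimensional. A standard argument (a non-vanishing linear functional on a pointed closed cone has constant sign there, and an interior point of the dual can be used to push boundary points inside by $f \mapsto f + \frac{1}{n} f_0$) shows that every $[f] \in \overline{\Omega^*}$ admits a representative in the closed dual cone
\[
  D := \{ f \in \Rb^{d*} : f(v) \geq 0 \text{ for all } v \in \overline{\Cc^+} \}.
\]

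Next, let $K$ denote the convex cone in $\Rb^{d*}$ generated by $\wt{\eta}(\partial\Gamma)$. I would show that $K = D$. The inclusion $K \subset D$ is immediate from the definitions. To see that $K$ is already closed, note that $\wt{\eta}(\partial\Gamma)$ is compact and $0 \notin \mathrm{conv}\,\wt{\eta}(\partial\Gamma)$, since evaluating any convex combination at a fixed $v_0 \in \Cc^+$ yields a strictly positive number. The bipolar theorem then reduces the equality $K = D$ to identifying the dual cone
\[
  K^\vee := \{v \in \Rb^d : \wt{\eta}(x)(v) \geq 0 \text{ for all } x \in \partial\Gamma\}
\]
with $\overline{\Cc^+}$. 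One inclusion is trivial; for the other, given $v \in K^\vee$, the sequence $v + \frac{1}{n}v_0$ lies in $\Cc^+$ and converges to $v$, so $v \in \overline{\Cc^+}$.

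Finally, given $[f] \in \overline{\Omega^*}$, I would pick a representative $f \in K \setminus \{0\}$ and apply Carath\'eodory's theorem for convex cones in the $d$-dimensional space $\Rb^{d*}$ to obtain an expression
\[
  f = \sum_{i=1}^N \lambda_i \wt{\eta}(x_i)
\]
with $\lambda_i > 0$ and the vectors $\wt{\eta}(x_i)$ linearly independent, so $N \leq d \leq d+1$. Injectivity of $\eta$ (and hence of $\wt{\eta}$) then forces the $x_i$ to be distinct. The main substantive step is the bipolar identification $K = D$; everything else is formal convex geometry.
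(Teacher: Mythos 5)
Your proof is correct and takes essentially the same route as the paper's: both identify $\overline{\Omega^*}$ with the projectivized convex/conic hull of $\wt{\eta}(\partial \Gamma)$ and then invoke Carath\'eodory's theorem. The paper simply asserts this identification ``by the definition of $\Omega$,'' whereas you justify it carefully via the bipolar theorem; your conic version of Carath\'eodory even yields the slightly sharper bound $N \leq d$.
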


\begin{proof}
By the definition of $\Omega$, the set $\overline{\Omega^*}$ is the image of 
\begin{align*}
\overline{{\rm ConvexHull}\left\{ \wt{\eta}(x) : x \in \partial \Gamma \right\}} \subset \Rb^{d*}
\end{align*}
in $\Pb(\Rb^{d*})$. Then since $\wt{\eta}: \partial \Gamma \rightarrow \Rb^{d*}$ is continuous, Carath{\'e}odory's convex hull theorem implies that $f$ can be written as 
\begin{align*}
f =  \left[ \sum_{i=1}^N \lambda_i \wt{\eta}(x_i) \right] 
\end{align*}
for some $1 \leq N \leq d+1$; $\lambda_1, \dots, \lambda_N > 0$; and $x_1, \dots, x_N \in \partial \Gamma$.
\end{proof}

\begin{lemma} With the notation above, 
\begin{align*}
\xi(\partial \Gamma) = \overline{\Cc} \cap \partial \Omega,
\end{align*}
every point in $\overline{\Cc} \cap \partial \Omega$ is a $C^1$ extreme point of $\Omega$, and
\begin{align*}
T_{\xi(x)} \partial \Omega =\Pb( \ker \eta(x))
\end{align*}
for all $x \in \partial \Gamma$. \end{lemma}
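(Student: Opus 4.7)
The plan is to prove the three assertions in order, using the sign convention on the lifts together with Lemmas~\ref{lem:CC1}--\ref{lem:CC3}.

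I would first establish $\xi(\partial \Gamma) = \overline{\Cc} \cap \partial \Omega$. The inclusion $\subseteq$ is immediate from Lemma~\ref{lem:CC2} combined with the containment $\xi(\partial \Gamma) \subseteq \overline{\Cc}$. For the reverse, given $p \in \overline{\Cc} \cap \partial \Omega$, Carath\'eodory's theorem applied inside $\Rb^d$ to the compact set $\wt{\xi}(\partial \Gamma)$ yields a lift $p = [\sum_{i=1}^{N} \lambda_i \wt{\xi}(x_i)]$ with $\lambda_i > 0$, distinct $x_i \in \partial \Gamma$, and $N \leq d+1$. Lemma~\ref{lem:CC1} says that the right-hand side lies in $\Omega$ whenever $N > 1$, which contradicts $p \in \partial \Omega$; hence $N = 1$ and $p = \xi(x_1)$.

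Next, I would establish the $C^1$-regularity of $\partial \Omega$ at each $\xi(x)$ together with the identification of the tangent hyperplane in a single stroke. A supporting functional $[g] \in \partial \Omega^*$ at $\xi(x)$ is characterized by $g(\wt{\xi}(x)) = 0$. Using Lemma~\ref{lem:CC3} to expand $g = \sum_{j=1}^{M} \mu_j \wt{\eta}(y_j)$ with $\mu_j > 0$ and distinct $y_j$, the relation $0 = \sum_j \mu_j \wt{\eta}(y_j)(\wt{\xi}(x))$ forces every summand to vanish, since each is non-negative by the sign hypothesis on the lifts; this in turn forces $y_j = x$ for every $j$, so $M = 1$ and $[g] = \eta(x)$. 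Thus $\eta(x)$ is the unique supporting functional at $\xi(x)$, giving both $C^1$-regularity and $T_{\xi(x)} \partial \Omega = \ker \eta(x)$.

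The extreme-point statement is the crux. Suppose $\xi(x) = (1-t)u + tv$ with $t \in (0,1)$ and $u, v \in \overline{\Omega}$; the previous paragraph forces the segment $[u, v]$ into $\ker \eta(x)$. After lifting, $\wt{u}, \wt{v} \in \overline{C} \cap \ker \wt{\eta}(x)$ with $\wt{\xi}(x) = (1-t)\wt{u} + t\wt{v}$; setting $w = \wt{u} - \wt{v}$ yields $\wt{u} = \wt{\xi}(x) + t w$ and $\wt{v} = \wt{\xi}(x) - (1-t) w$, and the goal is to show $w = 0$. The plan is to exploit Anosov dynamics to amplify the transverse part of $w$: the $\pm w$ terms entering $\wt{u}$ and $\wt{v}$ have opposite signs, so after applying a suitable group element and rescaling, the two resulting subsequential limits will be a pair $\pm e$ both lying in the closed cone $\overline{C}$, contradicting proper convexity. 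When $x$ is the attracting fixed point of some infinite-order $\gamma \in \Gamma$, this is carried out by iterating $\rho(\gamma^{-n})$: since $\ker \wt{\eta}(x)$ is a $\rho(\gamma)$-invariant hyperplane containing the top eigenvector $\wt{\xi}(x)$, rescaling by the largest eigenvalue of $\rho(\gamma^{-1})|_{\ker \wt{\eta}(x)}$ with nonzero coefficient in $w$ isolates the desired transverse eigendirection and produces the pair $\pm e$. The main obstacle is adapting this to general $x \in \partial \Gamma$, where no group element fixes $x$; for this I would use a sequence $\gamma_n \in \Gamma$ with $\gamma_n \to x$ and $\gamma_n^{-1} \to x^- \neq x$ in $\partial \Gamma$ (such sequences exist in the non-elementary case; the elementary case forces $d = 2$ by irreducibility of $\rho$, where the statement is immediate), and exploit that all of $\wt{u}, \wt{v}, \wt{\xi}(x)$ lie in the limit repelling hyperplane $\ker \wt{\eta}(x)$ of $\rho(\gamma_n^{-1})$, so the Anosov contraction controls their images uniformly and the same $\pm e$ extraction goes through.
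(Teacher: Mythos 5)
Your handling of the first two assertions is correct and is essentially the paper's own argument: $\xi(\partial \Gamma) = \overline{\Cc} \cap \partial \Omega$ via Lemmas~\ref{lem:CC1} and~\ref{lem:CC2} plus Carath\'eodory, and the $C^1$ statement together with $T_{\xi(x)}\partial\Omega = \ker\eta(x)$ by expanding an arbitrary supporting functional through Lemma~\ref{lem:CC3} and using the sign condition to kill all but one term.

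The extreme-point argument is where you diverge, and for general $x \in \partial\Gamma$ there is a genuine gap. Your reduction is fine, and for $x = x^+_\gamma$ the Jordan form of $\rho(\gamma)$ restricted to $\ker\wt{\eta}(x)$ really does give $\norm{\rho(\gamma^{-n})\wt{\xi}(x)} = o\big(\norm{\rho(\gamma^{-n})w}\big)$, so the pair $\pm e \in \overline{C}$ appears and contradicts proper convexity. But the step ``the Anosov contraction controls their images uniformly'' for a general sequence $\gamma_n \to x$ is not justified. Writing $\rho(\gamma_n) = k_n a_n l_n$ with singular values $\sigma_1 \geq \dots \geq \sigma_d$, the hyperplane $\ker\wt{\eta}(x)$ is only the \emph{limit} of the planes $k_n \Spanset\{e_1,\dots,e_{d-1}\}$; both $\wt{\xi}(x)$ and $w$ therefore carry components along the weakly contracted directions $k_n e_j$, $j \geq 3$, which tend to $0$ at an uncontrolled rate while being amplified by $\sigma_j^{-1}$ under $\rho(\gamma_n^{-1})$. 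The Anosov property controls only $\sigma_1/\sigma_2 \to \infty$ and says nothing about $\sigma_2/\sigma_d$, so the ratio $\norm{\rho(\gamma_n^{-1})\wt{\xi}(x)}/\norm{\rho(\gamma_n^{-1})w}$ need not tend to $0$; indeed the normalized images of $\wt{u}$ and $\wt{v}$ may both converge to the single lift $\wt{\xi}(x^-) \in \overline{C}$, in which case no pair $\pm e$ is produced. A density argument over attracting fixed points does not obviously repair this, since the offending segments could degenerate. The paper sidesteps the difficulty by using the cocompactness of $\rho(\Gamma)$ on $\Cc$ (proved in the immediately preceding lemma): points $q_n \to \xi(x)$ and $p_n \to p$ along rays from a basepoint are pulled back by $\rho(\gamma_n)^{-1}$ into a compact subset of the \emph{interior} of $\Omega$, which is disjoint from $\ker\eta(x^-)$, so the rank-one limit $T = \lim \rho(\gamma_n)$ from Lemma~\ref{lem:dynamics} applies nondegenerately and forces both $\xi(x)$ and $p$ to equal the image of $T$ --- a contradiction. (The paper also notes an alternative via the Benz\'ecri-type rescaling of Lemma~\ref{lem:E}, which again needs the cocompact action.) You should either import one of those arguments or restrict your dynamical computation to fixed points and supply a genuinely new mechanism for the remaining boundary points.
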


\begin{proof} Lemma~\ref{lem:CC1} and the definition of $\Cc$ imply that 
\begin{align*}
\xi(\partial \Gamma) = \overline{\Cc} \cap \partial \Omega.
\end{align*}

So suppose that $x \in \partial \Gamma$. We first show that $\xi(x)$ is a $C^1$ point of $\Omega$. Suppose that $H$ is a supporting hyperplane of $\Omega$ at $\xi(x)$. Then $H=\Pb(\ker f)$ for some $f \in \overline{\Omega^*}$. By Lemma~\ref{lem:CC3}
\begin{align*}
f = \left[ \sum_{i=1}^N \lambda_i \wt{\eta}(x_i) \right]
\end{align*}
for some $1 \leq N \leq d+1$; $\lambda_1, \dots, \lambda_N > 0$; and $x_1, \dots, x_N \in \partial \Gamma$ distinct. Since $f(\xi(x))=0$, we then have 
\begin{align*}
0=\sum_{i=1}^N \lambda_i \wt{\eta}(x_i)\left(\wt{\xi}(x)\right). 
\end{align*}
By hypothesis  
\begin{align*}
\wt{\eta}(y)\left(\wt{\xi}(z)\right) > 0
\end{align*}
when $y,z \in \partial \Gamma$ are distinct and so we must have $N=1$ and $x_1 = x$. Thus $f = \eta(x)$ and $H = \Pb(\ker \eta(x))$. Since $H$ was an arbitrary supporting hyperplane of $\Omega$ at $\xi(x)$ we see that $\xi(x)$ is a $C^1$ point of $\partial \Omega$ and 
\begin{align*}
T_{\xi(x)} \partial \Omega = \Pb(\ker \eta(x)).
\end{align*}

We next show that $\xi(x)$ is an extreme point of $\Omega$. This follows immediately from Lemma~\ref{lem:E} below, but we will provide a direct argument. Suppose for a contradiction that $\xi(x)$ is not an extreme point, then there exists $p^\prime, q^\prime \in \partial \Omega$ such that 
\begin{align*}
\xi(x) \in (p^\prime, q^\prime) \subset \partial \Omega.
\end{align*}
Fix a point $c_0 \in \Cc$ and consider a sequence of points $q_n$ along the line $[c_0, \xi(x))$ which converge to $\xi(x)$. Since $\rho(\Gamma)$ acts cocompactly on $\Cc$, there exist some $M_1 >0$ and elements $\gamma_n \in \Gamma$ such that 
\begin{align*}
d_\Omega(\rho(\gamma_n) c_0, q_n) \leq M_1.
\end{align*}
Next fix some $p \in (p^\prime, q^\prime) \subset \partial \Omega$ with $p \neq \xi(x)$. Then, by the definition of the Hilbert metric, we can find a sequence of points $p_n$ along the line $[c_0,p)$ such that 
\begin{align*}
M_2:=\sup_{n \geq 0} d_\Omega(p_n, q_n) < +\infty.
\end{align*}
Next let $k_n = \rho(\gamma_n)^{-1} q_n$ and $\ell_n = \rho(\gamma_n)^{-1} p_n$. Then 
\begin{align*}
k_n, \ell_n \in \overline{B}_\Omega(c_0; M_1+M_2)
\end{align*}
where $\overline{B}_\Omega(c_0; M_1+M_2)$ is closed metric ball of radius $M_1+M_2$ in $(\Omega, d_\Omega)$. Since the Hilbert metric is proper, we can pass to a subsequence such that $k_n \rightarrow k \in \Omega$ and $\ell_n \rightarrow \ell \in \Omega$. Then 
\begin{align*}
\lim_{n \rightarrow \infty} d_\Omega(\rho(\gamma_n) k, \rho(\gamma_n) k_n) =\lim_{n \rightarrow \infty} d_\Omega(k, k_n)= 0
\end{align*}
which implies from the definition of the Hilbert metric, see Observation~\ref{obs:dist_zero}, that 
\begin{align*}
\lim_{n \rightarrow \infty} \rho(\gamma_n) k =  \lim_{n \rightarrow \infty} \rho(\gamma_n) k_n =\xi(x).
\end{align*}
The same reasoning shows that
\begin{align*}
\lim_{n \rightarrow \infty} \rho(\gamma_n) \ell =  \lim_{n \rightarrow \infty} \rho(\gamma_n) \ell_n =p.
\end{align*}

Next view $\PGL_{d}(\Rb)$ as a subset of $\Pb(\End(\Rb^{d}))$ and pass to a subsequence so that $\rho(\gamma_n)$ converges to some $T$ in $\Pb(\End(\Rb^{d}))$. By Lemma~\ref{lem:dynamics}, $T$ has image $\xi(x^+)$ and kernel $\ker \eta(x^-)$ for some $x^+, x^- \in \partial \Gamma$. Since $\Pb(\ker \eta(x^-))\cap\Omega = \emptyset$ we see that 
\begin{align*}
\xi(x^+)=T(k) = \lim_{n \rightarrow \infty} \rho(\gamma_n)k  = \xi(x).
\end{align*}
However, by the same reasoning we have
\begin{align*}
\xi(x^+) = T(\ell) = \lim_{n \rightarrow \infty} \rho(\gamma_n)\ell = p.
\end{align*}
Hence $\xi(x) = p$ which is a contradiction. Thus $\xi(x)$ is an extreme point of $\Omega$.

\end{proof}

\subsection{Proof of Corollary~\ref{cor:convex_cocompact}}\label{sec:cor_convex_cocompact}

For the rest of this subsection suppose that $\Gamma$ is a word hyperbolic group and $\rho: \Gamma \rightarrow \PGL_{d}(\Rb)$ is an irreducible projective Anosov representation. Let $\xi: \partial \Gamma \rightarrow \Rb(\Rb^d)$ and $\eta : \partial \Gamma \rightarrow \Pb(\Rb^{d*})$ denote the boundary maps associated to $\rho$. Then define
 \begin{align*}
 V = \Spanset_{\Rb} \{ \xi(x)  \otimes \xi(x) : x \in \partial \Gamma\} \subset \Sym_{d}(\Rb)
 \end{align*}
where we make the identification $v \otimes v = v \prescript{t}{}{v} \in  \Sym_{d}(\Rb)$ when $v \in \Rb^d$.

Let $\rho_S : \Gamma \rightarrow \PGL(V)$ be the representation 
\begin{align*}
\rho_S(\gamma) X = \rho(\gamma)X \, \prescript{t}{}{\rho(\gamma)}.
\end{align*}
Using Theorem~\ref{thm:proper_action_implies} it is enough to show that $\rho_S$ is an irreducible projective Anosov representation and there exists a properly convex domain $\Omega_0 \subset \Pb(V)$ such that $\rho_S(\Gamma) \leq \Aut(\Omega_0)$. 

\begin{lemma} There exists a properly convex domain $\Omega_0 \subset \Pb(V)$ such that $\rho_S(\Gamma) \leq \Aut(\Omega_0)$. 
\end{lemma}

\begin{proof} As in Example~\ref{ex:pos_def_cone}, let 
\begin{align*}
\Pc := \{ [X] \in \Pb( \Sym_{d}(\Rb) ): X > 0\}.
\end{align*}
Then $\Pc$ is a properly convex domain in $\Pb(\Sym_{d}(\Rb))$. Since $\rho$ is irreducible, there exists $x_1, \dots, x_d \in \partial \Gamma$ such that $\xi(x_1),\dots, \xi(x_d)$ span $\Rb^d$. If $v_1, \dots, v_d \in \Rb^d$ are representatives of $\xi(x_1),\dots, \xi(x_d)$ respectively, then 
\begin{align*}
\left[\sum_{i=1}^d v_i \otimes v_i \right]\in \Pc \cap V.
\end{align*}
So $\Omega_0: = \Pc \cap \Pb(V)$ is a non-empty properly convex domain in $\Pb(V)$ and by construction $\rho_S(\Gamma) \leq \Aut(\Omega_0)$. 
\end{proof}

Given $\gamma \in \Gamma$ with infinite order, let $x^+_\gamma \in \partial \Gamma$ be the attracting fixed point of $\gamma$. And given a vector space $W$ and  $g \in \PGL(W)$ proximal let $\ell_g^+ \in \Pb(W)$ be the eigenline of $g$ corresponding to the eigenvalue of maximal modulus. 

\begin{lemma}\label{lem:preserve_proximality}\label{lem:rhoS_proximal} If $\gamma \in \Gamma$ has infinite order, then $g = \rho_S(\gamma)$ is proximal and 
\begin{align*}
\ell_g^+ = \xi(x_\gamma^+)  \otimes \xi(x_\gamma^+).
\end{align*}
\end{lemma}

\begin{proof} If $\lambda_1 > \lambda_2 \geq \dots \geq \lambda_d$ are the absolute values of the eigenvalues of $\rho(\gamma)$ normalized to have product one, then there exists $C > 0$ such that some subset of
\begin{align*}
C\lambda_i \lambda_j \text{ for } 1 \leq i \leq j \leq d
\end{align*}
are the the absolute values of the eigenvalues of $g=\rho_S(\gamma)$ normalized to have product one. By construction $\xi(x^+_\gamma) \otimes \xi(x^+_\gamma) \in V$ and is the eigenline corresponding to $C \lambda_1^2$, so $g$ is proximal and
\begin{align*}
\ell_g^+ = \xi(x_\gamma^+)  \otimes \xi(x_\gamma^+).
\end{align*}
\end{proof}

\begin{lemma}\label{lem:rhoS_irred} $\rho_S$ is irreducible. \end{lemma}

\begin{proof} Let $G$ be the Zariski closure of $\rho(\Gamma)$ in $\PGL_d(\Rb)$ and consider the representation 
\begin{align*}
\tau : G \rightarrow \PGL(V)
\end{align*}
given by 
\begin{align*}
\tau(g) X =gX \, \prescript{t}{}{g}.
\end{align*}

Since $\rho$ is an irreducible representation, $G$ acts irreducibly on $\Rb^d$. So $G$ acts minimally on the set 
\begin{align*}
\{ \ell^+_g : g \in G \text{ is proximal} \} \subset \Pb(\Rb^d),
\end{align*}
see for instance~\cite[Lemma 2.5]{B2000}. So $\tau(G)$ acts minimally on the set 
\begin{align*}
X=\{ \ell^+_g \otimes \ell_g^+: g \in G \text{ is proximal} \} \subset \Pb(\Sym_d(\Rb)). 
\end{align*}
Since $X \cap \Pb(V) \neq \emptyset$, $\tau(G)$ acts minimally on $X$, and $\tau(G) \cdot V = V$, we see that $X \subset \Pb(V)$. Further, $X$ spans $V$ by the definition of $V$.

Since $G$ is semisimple (see for instance~\cite[Lemma 2.19]{BCLS2015}), we can decompose $V= \oplus_{i=1}^m W_i$ where each $W_i \leq V$ is $\tau(G)$-invariant and the induced representation $G \rightarrow \PGL(W_i)$ is irreducible (see for instance~\cite[Chapter 5, Theorem 13]{OV1990}). 

Fix some $\gamma \in \Gamma$ with infinite order and let $h =\rho(\gamma)$. Then $\tau(h) \leq \PGL(V)$ is proximal by Lemma~\ref{lem:preserve_proximality}. Viewing $\PGL(V)$ as a subset of $\Pb(\End(V))$, Observation~\ref{obs:proximal_iterates} implies that 
\begin{align*}
 T = \lim_{n \rightarrow \infty} \phi(h)^n
\end{align*}
in $\Pb(\End(V))$ and the image of $T$ is $\ell^+_h\otimes \ell_h^+ $. By relabeling the $W_i$, we can suppose that there exists some element $w \in W_1 \setminus \ker T$. Then 
\begin{align*}
\ell^+_h\otimes \ell_h^+ = T([w]) = \lim_{k \rightarrow \infty} \phi(h)^{n_k} [w] \subset W_1.
\end{align*}
Then since $\tau(G)$ acts minimally on the set 
\begin{align*}
X=\{ \ell^+_g\otimes \ell^+_g : g \in G \text{ is proximal} \} 
\end{align*}
and $X$ spans $V$, we see that $W_1 = V$. Hence $\tau: G \rightarrow \PGL(V)$ is an irreducible representation. Since $\rho(\Gamma)$ is Zariski dense in $G$ and $\rho_S = \tau \circ \rho$, we then see that $\rho_S$ is also irreducible. 
\end{proof}

\begin{lemma}\label{lem:rhoS_proj} $\rho_S$ is projective Anosov. \end{lemma}

\begin{proof}
We define boundary maps $\xi_S : \partial \Gamma \rightarrow \Pb(V)$ and $\eta_S : \partial \Gamma \rightarrow \Pb(V^*)$ as follows. First, let
\begin{align*}
\xi_S(x) = \xi(x)  \otimes \xi(x).
\end{align*}
Next, let $f \in \Rb^{d*}$ be a lift of $\eta(x)$ and pick $w \in \Rb^d$ such that $f(v) = \prescript{t}{}{w} v$. Then define $\eta_S(x)$ by
\begin{align*}
\eta_S(x)\left(X \right) =  \prescript{t}{}{w} X w.
\end{align*}
By construction the maps $\xi_S$, $\eta_S$ are $\rho_S$-equivariant and continuous. Since the maps $\xi$, $\eta$ are transverse and
\begin{align*}
\eta_S(x)\left(\xi_S(y) \right) = \eta(x) \left( \xi(y) \right)^2,
\end{align*}
the maps $\xi_S$, $\eta_S$ are also transverse. Thus $\rho_S$ is projective Anosov by Proposition 4.10 in~\cite{GW2012}.
\end{proof}

\section{Basic properties of convex cocompact actions}

In this section we establish some basic properties of convex cocompact actions on properly convex domains. 

\subsection{Quasi-isometries}

The fundamental lemma of geometric group theory (see~\cite[Chapter IV, Theorem 23]{dlH2000})  immediately implies the following.

\begin{proposition}\label{thm:QI}
 Suppose $\Omega \subset \Rb(\Rb^{d})$ is a properly convex domain and $\Lambda \leq \Aut(\Omega)$ is a discrete convex cocompact group. Then $\Lambda$ is finitely generated and for any $p_0 \in \Omega$ the map 
\begin{align*}
\varphi \in \Lambda \rightarrow \varphi p_0
\end{align*}
induces an quasi-isometric embedding $\Lambda \rightarrow (\Omega, d_\Omega)$. 
\end{proposition}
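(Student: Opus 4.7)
The plan is to recognize this as a direct application of the \v{S}varc--Milnor lemma. By hypothesis there exists a non-empty closed convex subset $\Cc \subset \Omega$ preserved by $\Lambda$ with $\Lambda \backslash \Cc$ compact. First I would verify that $(\Cc, H_\Omega|_\Cc)$ is a proper geodesic metric space: geodesicity follows from Proposition~\ref{prop:hilbert_basic}, because for any $p,q \in \Cc$ the line segment $[p,q]$ lies in $\Cc$ (by convexity) and is a geodesic for $H_\Omega$; properness follows from the standard fact that $(\Omega,H_\Omega)$ is a proper metric space (closed balls are bounded away from $\partial \Omega$ in any affine chart) together with the fact that $\Cc$ is closed in $\Omega$.

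Next I would observe that $\Lambda$ acts on $\Cc$ by isometries (since $H_\Omega$ is $\Aut(\Omega)$-invariant by Proposition~\ref{prop:hilbert_basic}), and that this action is properly discontinuous: a discrete subgroup of the isometry group of a proper metric space on which it acts by isometries automatically acts properly, and $\Lambda$ is discrete in $\Aut(\Omega)$ by hypothesis. Combined with cocompactness on $\Cc$, the \v{S}varc--Milnor lemma then applies: $\Lambda$ is finitely generated, and for any basepoint $q_0 \in \Cc$ the orbit map $\varphi \in \Lambda \mapsto \varphi q_0 \in \Cc$ is a quasi-isometry between $\Lambda$ (with any word metric) and $(\Cc, H_\Omega|_\Cc)$.

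Finally, to remove the restriction $p_0 \in \Cc$, I would note that for an arbitrary $p_0 \in \Omega$ and any fixed $q_0 \in \Cc$, the quantity
\[
\abs{ H_\Omega(\varphi p_0, \psi p_0) - H_\Omega(\varphi q_0, \psi q_0) } \leq 2 H_\Omega(p_0,q_0)
\]
for all $\varphi,\psi \in \Lambda$, since $\varphi,\psi$ act isometrically. Thus the orbit maps through $p_0$ and $q_0$ are within bounded additive distance, so the first is a quasi-isometric embedding into $(\Omega, H_\Omega)$ precisely when the second is; and the second is the composition of the \v{S}varc--Milnor quasi-isometry $\Lambda \to (\Cc, H_\Omega|_\Cc)$ with the tautological isometric embedding $(\Cc, H_\Omega|_\Cc) \hookrightarrow (\Omega, H_\Omega)$. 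I do not expect any real obstacle here; the only mild subtlety is verifying properness of the $\Lambda$-action on $\Cc$, which is immediate once one records that $\Aut(\Omega)$ acts properly on $(\Omega, H_\Omega)$.
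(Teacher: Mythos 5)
Your proof is correct and follows essentially the same route as the paper: the paper also applies the fundamental lemma of geometric group theory (\v{S}varc--Milnor) to the isometric, properly discontinuous, cocompact action of $\Lambda$ on the proper geodesic space $(\Cc, H_\Omega)$, and then passes to an arbitrary basepoint $p_0 \in \Omega$ via the identical estimate $\abs{H_\Omega(\varphi_1 p_0, \varphi_2 p_0) - H_\Omega(\varphi_1 q_0, \varphi_2 q_0)} \leq 2 H_\Omega(p_0, q_0)$. The extra details you supply (geodesicity of line segments in $\Cc$, properness of the $\Aut(\Omega)$-action) are exactly the facts the paper takes as understood.
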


  \subsection{Rescaling}
  
Given a finite dimensional real vector space $V$, let $K(V)$ denote the set of all compact subsets in $\Pb(V)$ equipped with the Hausdorff topology (with respect to a distance on $\Pb(V)$ induced by a Riemannian metric). 

Next let $\Xb(V)$ denote the set of properly convex open sets in $\Pb(V)$. Then the map 
\begin{align*}
\Omega \in \Xb(V) \rightarrow \overline{\Omega} \in K(V)
\end{align*}
is injective and so $\Xb(V)$ has a natural topology coming from $K(V)$. Finally, we let 
\begin{align*}
\Xb_0(V) = \left\{ (\Omega, x): \Omega \in \Xb(V), x \in \Omega\right\}
\end{align*}
equipped with the product topology.

In the 1960's Benz{\'e}cri proved the following theorem.

\begin{theorem} [Benz{\'e}cri's theorem]\label{thm:bens_thm}The group $\PGL(V)$ acts properly and cocompactly on $\Xb_0(V)$. Moreover, if $\Omega \subset \Pb(V)$ is a properly convex domain and $\Aut(\Omega)$ acts cocompactly on $\Omega$, then the orbit $\PGL(V) \cdot \Omega$ is closed in $\Xb(V)$. 
\end{theorem}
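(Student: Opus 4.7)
The plan is to build a compact cross-section for the $\PGL(V)$-action using a canonical ellipsoid construction. The key technical input is a continuous $\PGL(V)$-equivariant map $E : \Xb_0(V) \to \mathcal{E}(V)$ from pointed convex domains to the space $\mathcal{E}(V)$ of pointed ellipsoids in $\Pb(V)$ (ellipsoids with a marked center), with the two-sided containment $E(\Omega, x) \subset \Omega \subset k \cdot E(\Omega, x)$ for a dimensional constant $k$ (dilation about $x$). This is the classical technical heart of Benz{\'e}cri's theorem: one produces $E(\Omega, x)$ by a $\PGL(V)$-equivariant variational problem, such as an adaptation of the L\"owner-John construction, with continuity following from the standard Hausdorff continuity of maximum-volume problems.

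Fix a reference pointed ellipsoid $(B, x_0) \in \mathcal{E}(V)$ coming from an affine chart $U_0 \subset \Pb(V)$, basepoint $x_0 \in U_0$, and Euclidean unit ball $B$. Since $\PGL(V)$ acts transitively on $\mathcal{E}(V)$ with compact stabilizer $\OO(U_0) \cong \OO(d-1)$ at $(B, x_0)$, one identifies $\mathcal{E}(V) = \PGL(V)/\OO(d-1)$. The fiber $N := E^{-1}(B, x_0)$ is compact in $\Xb_0(V)$: any $(\Omega, x_0) \in N$ satisfies $B \subset \Omega \subset k B$, and such a family is Hausdorff-compact by the Blaschke selection theorem. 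Equivariance of $E$ then realizes $\Xb_0(V)$ as the associated bundle $\PGL(V) \times_{\OO(d-1)} N$, from which both conclusions are immediate: cocompactness, since $\PGL(V) \backslash \Xb_0(V) \cong N/\OO(d-1)$ is compact; and properness, since $\PGL(V)$ acts properly on $\PGL(V)/\OO(d-1)$ (compact stabilizer) and the fiber $N$ is compact.

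For the closedness assertion, suppose $\Aut(\Omega)$ acts cocompactly on $\Omega$ and $g_n \Omega \to \Omega'$ in $\Xb(V)$. Pick $x' \in \Omega'$; for large $n$ one has $g_n^{-1} x' \in \Omega$, and cocompactness of $\Aut(\Omega)$ supplies $\varphi_n \in \Aut(\Omega)$ and a compact $K \subset \Omega$ with $y_n := \varphi_n g_n^{-1} x' \in K$. Setting $h_n := g_n \varphi_n^{-1}$, we have $h_n \Omega = g_n \Omega$ and $h_n y_n = x'$. After passing to a subsequence with $y_n \to y \in K$, the pairs $(\Omega, y_n) \to (\Omega, y)$ and $h_n(\Omega, y_n) = (g_n \Omega, x') \to (\Omega', x')$ lie in compact subsets of $\Xb_0(V)$; by the properness established in the first part, $h_n$ admits a subsequential limit $h \in \PGL(V)$ with $h\Omega = \Omega'$, so $\Omega'$ belongs to the $\PGL(V)$-orbit of $\Omega$.

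The main obstacle is constructing the equivariant ellipsoid $E$ with the two-sided containment centered at $x$. The naive symmetrization $\Omega \cap (2x - \Omega)$ followed by its John ellipsoid does not suffice when $x$ is close to $\partial \Omega$, since $\Omega$ can be much larger than its symmetrization about $x$; the classical resolution uses a more delicate variational choice (Benz{\'e}cri's original argument), and once this is in hand the rest of the proof is the clean fiber-bundle argument above.
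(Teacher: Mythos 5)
The paper does not prove this statement: it is Benz\'ecri's classical theorem from the 1960's, stated and used as a black box (the paper's own contribution in this direction is Theorem~\ref{thm:rescaling}, which builds on it). So there is no in-paper proof to compare against, and your proposal has to be judged on its own terms.

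Your architecture is the standard one, and the soft parts are fine: granted a continuous $\PGL(V)$-equivariant map $E$ to pointed ellipsoids with the two-sided containment, properness follows from equivariance plus properness of the $\PGL(V)$-action on the homogeneous space of pointed ellipsoids (compact stabilizer $\OO(d-1)$), cocompactness follows from Blaschke-type compactness of the fiber $\{(\Omega,x_0): B\subset \Omega\subset kB\}$, and your closedness argument is complete and correct --- it is essentially the same properness-extraction the paper itself runs inside its proof of Theorem~\ref{thm:rescaling}. The genuine gap is the one you concede in your final paragraph: the equivariant ellipsoid with $E(\Omega,x)\subset\Omega\subset k\cdot E(\Omega,x)$, dilation taken about the marked point $x$, is asserted rather than constructed, and that construction \emph{is} the theorem; everything else is formal. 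It cannot be done affinely --- the affine group does not act cocompactly on pointed bounded convex bodies (fix a simplex, whose affine stabilizer is finite, and let the marked point run to a vertex), which is exactly why your symmetrize-then-John attempt degenerates as $x\to\partial\Omega$. One genuinely needs a projective normalization, and the standard one is dual: the marked point $x$ determines an affine cross-section of the dual cone of $\Omega$, whose center of mass is a linear functional with kernel a hyperplane disjoint from $\overline{\Omega}$; taking that hyperplane as the hyperplane at infinity gives a canonical, $\PGL(V)$-equivariant affine chart, and only in that chart does a John-ellipsoid or center-of-mass normalization produce the two-sided bound with a dimensional constant. Until that lemma is written out, what you have is a correct reduction of Benz\'ecri's theorem to its key lemma, not a proof of it.
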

  
In this section we will use a result of Benoist to prove an analogue of Benz{\'e}cri's theorem for convex cocompact actions.
 
\begin{theorem}\label{thm:rescaling}
Suppose $\Omega \subset \Pb(\Rb^{d})$ is a properly convex domain, $G \leq \Aut(\Omega)$ is a subgroup, and there exists a closed convex subset $\Cc \subset \Omega$ such that $g\Cc=\Cc$ for all $g \in G$ and $G \backslash \Cc$ is compact. Assume $V \subset \Pb(\Rb^{d})$ is a subspace that intersects $\Cc$, $c_n \in \Cc \cap V$, and $h_n \in \PGL(V)$ satisfy
\begin{enumerate}
\item $h_n(\Omega \cap V) \rightarrow \Omega_V$ where $\Omega_V$ is a properly convex domain in $\Pb(V)$,
\item $h_n(\overline{\Cc} \cap V) \rightarrow \Cc_V$ where $\Cc_V$ is a properly convex closed set in $\Pb(V)$,
\item $h_n(c_n) \rightarrow p_\infty \in \Omega_V$.
\end{enumerate}
Then there exists some $\varphi \in \PGL_{d}(\Rb)$ so that 
\begin{align*}
\varphi(\Omega) \cap V= \Omega_V
\end{align*}
and 
\begin{align*}
\varphi(\overline{\Cc}) \cap V \supset \Cc_V.
\end{align*}
\end{theorem}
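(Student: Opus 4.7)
The first step is to exploit the cocompact action of $G$ on $\Cc$ to obtain a convergent sequence of base points. By cocompactness, there exist $g_n \in G$ such that $g_n c_n$ remains in a fixed compact subset of $\Cc$, so after passing to a subsequence $g_n c_n \to c_\infty \in \Cc$. By compactness of the Grassmannian, the subspaces $V_n := g_n(V)$ converge, along a further subsequence, to a subspace $V_\infty$ containing $c_\infty$. Since $c_\infty \in \Omega$, the intersection $V_\infty \cap \Omega$ is a nonempty properly convex open subset of $V_\infty$, and $\Omega \cap V_n$ and $\overline\Cc \cap V_n$ converge in Hausdorff distance to $\Omega \cap V_\infty$ and $\overline\Cc \cap V_\infty$ respectively.

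Next, for each $n$, form the projective isomorphism $\alpha_n := h_n \circ (g_n|_V)^{-1}\colon V_n \to V$ and extend it to an element $\tilde\varphi_n \in \PGL_d(\Rb)$. Since $g_n\Omega = \Omega$ and $g_n\Cc = \Cc$, we have $\Omega \cap V_n = g_n(\Omega \cap V)$ and $\overline\Cc \cap V_n = g_n(\overline\Cc \cap V)$, which gives
\begin{align*}
\tilde\varphi_n(\Omega) \cap V = \tilde\varphi_n(\Omega \cap V_n) = h_n(\Omega \cap V) \to \Omega_V,
\end{align*}
and analogously $\tilde\varphi_n(\overline\Cc) \cap V = h_n(\overline\Cc \cap V) \to \Cc_V$, while $\tilde\varphi_n(g_n c_n) = h_n(c_n) \to p_\infty$.

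The remaining task---and the main obstacle---is to ensure that the $\tilde\varphi_n$ admit a convergent subsequence in $\PGL_d(\Rb)$. Since $(\Omega, g_n c_n) \to (\Omega, c_\infty)$ in $\Xb_0(\Rb^d)$ and the second coordinate of $\tilde\varphi_n \cdot (\Omega, g_n c_n) = (\tilde\varphi_n \Omega, h_n c_n)$ already converges to $p_\infty$, the properness of the $\PGL_d(\Rb)$-action on $\Xb_0(\Rb^d)$ (the first assertion of Benz\'ecri's Theorem~\ref{thm:bens_thm}) reduces the problem to arranging that $\tilde\varphi_n(\Omega)$ stay inside a compact subset of $\Xb(\Rb^d)$. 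The extensions of $\alpha_n$ form a coset of the parabolic stabilizer of $V_n$, which gives considerable freedom in modifying $\tilde\varphi_n$ without disturbing its restriction to $V_n$; combining this freedom with a Benoist-type rescaling result for $\Aut(\Omega)$-orbits on the space of properly convex domains, one normalizes the choice of extension so that $\tilde\varphi_n(\Omega)$ does not degenerate, yielding a convergent subsequence $\tilde\varphi_n \to \varphi \in \PGL_d(\Rb)$.

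Finally, by continuity of the $\PGL_d(\Rb)$-action on $\Xb(\Rb^d)$, the limit satisfies $\varphi(\Omega) \cap V = \Omega_V$, proving the first assertion. The second assertion $\varphi(\overline\Cc) \cap V \supset \Cc_V$ follows by the same limiting argument applied to $\overline\Cc$; the inclusion can fail to be an equality because Hausdorff limits of closed convex sets may acquire boundary points not arising as limits of sequences inside $V$.
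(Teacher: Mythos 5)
Your overall architecture matches the paper's: use cocompactness of $G \backslash \Cc$ to control a basepoint, extend $h_n$ to elements of $\PGL_{d}(\Rb)$, invoke properness of the $\PGL_d(\Rb)$-action on $\Xb_0(\Rb^d)$ (Benz\'ecri) to extract a convergent subsequence, and pass to the limit. The bookkeeping is slightly different (you move the basepoint first via $g_n \in G$ and extend $\alpha_n = h_n \circ (g_n|_V)^{-1}$ from $V_n = g_n(V)$, whereas the paper extends $h_n$ from $V$ directly and composes with $\gamma_n \in G$ afterwards, using the Hilbert metric to see that $\varphi_n p_0$ stays a bounded distance from $h_n(c_n)$), but these are equivalent reorganizations.

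The genuine gap is exactly at the point you yourself call ``the main obstacle.'' The claim that one can choose the extensions $\tilde\varphi_n$ of $\alpha_n$ so that $\tilde\varphi_n(\Omega)$ subconverges to a properly convex \emph{open} domain is not a consequence of the ``considerable freedom'' in the coset of extensions together with an unspecified ``Benoist-type rescaling result'' --- it \emph{is} the Benoist rescaling result, and it is the main technical input of the entire proof. In the paper this is the very first line: Lemma 2.8 of~\cite{B2003a} produces $g_n \in \PGL_d(\Rb)$ with $g_n|_V = h_n$ and a properly convex domain $\Omega'$ with $g_n\Omega \to \Omega'$ and $\Omega' \cap V = \Omega_V$; everything else in the argument (cocompactness, properness, Observation~\ref{obs:rescaling2}) is downstream of this. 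As written, your proof restates what needs to be shown at this step rather than showing it: a generic choice of extension of $\alpha_n$ will typically collapse $\Omega$ onto a lower-dimensional set or blow it up to a non-properly-convex set, and ruling this out requires the specific construction (or citation) of Benoist's lemma. The rest of your argument --- the properness reduction, the identity $\tilde\varphi_n(\Omega)\cap V = h_n(\Omega\cap V)$, the passage to the limit via Hausdorff convergence, and the explanation of why one only gets $\supset$ for $\Cc_V$ --- is correct and consistent with the paper.
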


Before starting the proof of the theorem we make two observation about the Hausdorff topology.

\begin{observation}\label{obs:rescaling1} Suppose $\Omega_n \rightarrow \Omega$ in $\Xb(\Rb^d)$ and $K \subset \Omega$ is a compact set. Then $K \subset \Omega_n$ for $n$ sufficiently large. 
\end{observation}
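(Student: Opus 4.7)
The plan is to perform a Hahn–Banach separation argument in an affine chart. Since $\Omega$ is properly convex, I would first fix a projective hyperplane $H \subset \Pb(\Rb^d)$ disjoint from the compact set $\overline{\Omega}$. Hausdorff convergence of closures ensures that $H$ is also disjoint from $\overline{\Omega_n}$ for all sufficiently large $n$, so the argument can be carried out entirely in the affine chart $\Pb(\Rb^d) \setminus H \cong \Rb^{d-1}$, where each $\overline{\Omega_n}$ and $\overline{\Omega}$ is a compact convex subset.

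Next I would proceed by contradiction: suppose, after passing to a subsequence, that there exist points $p_n \in K$ with $p_n \notin \Omega_n$. By the separation theorem in the affine chart, I can choose for each $n$ a linear functional $f_n$ of unit norm and a constant $c_n$ such that $f_n \leq c_n$ on $\overline{\Omega_n}$ while $f_n(p_n) \geq c_n$. Compactness of $K$ together with the uniform boundedness of the sets $\overline{\Omega_n}$ (which converge to the bounded set $\overline{\Omega}$) make all of $p_n$, $f_n$, $c_n$ bounded; after a further subsequence, $p_n \to p \in K$, $f_n \to f$ with $\|f\| = 1$, and $c_n \to c$.

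The key step is to pass to the limit. For any $x \in \overline{\Omega}$, picking $x_n \in \overline{\Omega_n}$ with $x_n \to x$ (possible by Hausdorff convergence) gives $f(x) = \lim f_n(x_n) \leq c$; and $f(p) = \lim f_n(p_n) \geq c$. Since $p \in K \subset \Omega$ lies in the interior of $\overline{\Omega}$, there is an open ball $B(p,\varepsilon) \subset \overline{\Omega}$ on which the affine function $x \mapsto f(x) - c$ is nonpositive yet vanishes at the interior point $p$. An affine function attaining a maximum at an interior point of an open set must be constant there, so $f$ is constant on $B(p,\varepsilon)$ and hence identically zero, contradicting $\|f\| = 1$.

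I do not anticipate a significant obstacle here: the observation is a standard consequence of the Hahn–Banach separation theorem combined with the definition of the Hausdorff topology on closed sets. The only subtlety worth highlighting is that Hausdorff convergence of closures does not a priori control interiors, which is exactly why proper convexity (entering through the separation theorem applied in an affine chart) must be invoked.
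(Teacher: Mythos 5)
Your argument is correct and follows essentially the same route as the paper: both pass to an affine chart where all the sets become bounded convex bodies, argue by contradiction with points $p_n\in K\setminus\Omega_n$, and invoke a separating hyperplane for the convex set $\Omega_n$ at $p_n$ together with the fact that the limit point $p$ has a Euclidean ball inside $\Omega$. The only cosmetic difference is how the contradiction is closed: the paper exhibits a point of that ball lying at definite distance from $\Omega_{n_j}$, contradicting Hausdorff convergence directly, whereas you pass the unit functionals to a limit and contradict $\norm{f}=1$ via the maximum principle for affine functions.
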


\begin{proof} We can pick an affine chart $\mathbb{A} \subset \Pb(\Rb^d)$ such that $\Omega$ is relatively compact in $\mathbb{A}$. Then for $n$ sufficiently large, $\Omega_n$ is also relatively compact in $\mathbb{A}$. Then we can identify $\mathbb{A}$ with $\Rb^{d-1}$ and view $\Omega_n, \Omega$ as convex subsets of $\Rb^{d-1}$ (at least for $n$ sufficiently large). Then $\overline{\Omega_n} \rightarrow \overline{\Omega}$ is the Hausdorff distance induced by the Euclidean distance on $\Rb^{d-1}$. 

Now suppose, for a contradiction, that there exist $n_j \rightarrow \infty$ and $k_j \in K$ such that $k_j \notin \Omega_{n_j}$. By passing to a subsequence we can assume that $k_j \rightarrow k$. Now since $\Omega$ is open, there exists some $\epsilon > 0$ such that 
\begin{align*}
\{ x \in \Rb^{d-1} : \norm{k-x} \leq \epsilon\} \subset \Omega.
\end{align*}
Since each $\Omega_{n_j}$ is convex, we can find an real hyperplane $H_j$ such that $k_j \in H_j$ and $\Omega_{n_j} \cap H_j = \emptyset$. Then for $j$ sufficiently large, there exists some $x_j \in \Rb^{d}\setminus \Omega_{n_j}$ such that $d_{{\rm Euc}}(x_j, H_j) \geq \epsilon/2$ and $\norm{k-x_j} \leq \epsilon$. But then $x_j \in \Omega$ and so
\begin{align*}
d_{{ \rm Euc}}^{{\rm Haus}}(\overline{\Omega}_{n_j}, \Omega) \geq d_{{\rm Euc}}(\Omega_{n_j}, x_j) \geq \epsilon/2
\end{align*}
which is a contradiction. 
\end{proof}

\begin{observation}\label{obs:rescaling2}Suppose $\Omega_n \rightarrow \Omega$ in $\Xb(\Rb^d)$.  If $V \subset \Pb(\Rb^d)$ is a subspace and $V \cap \Omega \neq \emptyset$, then $\Omega_n \cap V \rightarrow \Omega \cap V$ in $\Xb(V)$.
\end{observation}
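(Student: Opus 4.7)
The plan is to verify the two Hausdorff inclusions separately, leveraging Observation~\ref{obs:rescaling1} (which is already proven in the excerpt) together with the convexity of the $\Omega_n$ and $\Omega$. First I would check that the statement makes sense: since $V \cap \Omega$ is a non-empty open convex subset of the projective subspace $V$, and since $\Omega$ is properly convex in $\Pb(\Rb^d)$, the intersection $\Omega \cap V$ lies in $\Xb(V)$. Next, I would record the basic identity
\begin{align*}
\overline{\Omega \cap V} = \overline{\Omega} \cap V
\end{align*}
as subsets of $\Pb(\Rb^d)$: the inclusion $\subset$ is trivial from $V$ being closed, and the reverse inclusion follows by fixing $p \in \Omega \cap V$ and observing that for any $x \in \overline{\Omega} \cap V$, the open segment from $p$ to $x$ lies in $\Omega$ (by convexity plus openness) and in $V$ (since both endpoints do), hence in $\Omega \cap V$, so $x$ is a limit of points in $\Omega \cap V$. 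The same identity will hold for $\Omega_n$ once $\Omega_n \cap V \neq \emptyset$, which we verify below. After these preliminaries I would pass to an affine chart $\mathbb{A}$ containing $\overline{\Omega}$, and note that by Observation~\ref{obs:rescaling1} (applied to any compact set containing a neighborhood of $\overline\Omega$), we may assume $\overline{\Omega_n} \subset \mathbb{A}$ for all $n$ large, so we can work with the Euclidean Hausdorff metric on $W := V \cap \mathbb{A}$.

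For the first Hausdorff inclusion I would show that every $x \in \overline{\Omega} \cap V$ is approximated by points of $\overline{\Omega_n} \cap V$. Fix $p \in \Omega \cap V$; then $z_t := (1-t)p + tx \in \Omega \cap V$ for all $t \in [0,1)$ (using $p \in \Omega$ open convex and $x \in \overline{\Omega}$). Each $\{z_t\}$ is a compact subset of $\Omega$, so by Observation~\ref{obs:rescaling1} we have $z_t \in \Omega_n$ for $n$ large depending on $t$. A standard diagonal argument produces $t_n \to 1$ with $z_{t_n} \in \Omega_n \cap V$, and then $z_{t_n} \to x$, giving
\begin{align*}
\sup_{x \in \overline{\Omega \cap V}} d\bigl(x, \overline{\Omega_n \cap V}\bigr) \to 0.
\end{align*}
(In particular this also shows $\Omega_n \cap V \neq \emptyset$ for $n$ large, justifying the identity $\overline{\Omega_n \cap V} = \overline{\Omega_n} \cap V$ used below.)

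For the opposite inclusion, suppose $x_n \in \overline{\Omega_n \cap V} = \overline{\Omega_n} \cap V$; I would show $d(x_n, \overline{\Omega \cap V}) \to 0$. Since $\overline{\Omega_n} \to \overline{\Omega}$ in the Euclidean Hausdorff metric on $\mathbb{A}$, there exist $y_n \in \overline{\Omega}$ with $\|y_n - x_n\| \to 0$. If not, passing to a subsequence, $x_n \to x$ for some $x \in V$ (since $V$ is closed) with $d(x, \overline{\Omega \cap V}) \geq \epsilon > 0$; but then $y_n \to x$ as well, so $x \in \overline{\Omega} \cap V = \overline{\Omega \cap V}$, a contradiction. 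This gives
\begin{align*}
\sup_{x \in \overline{\Omega_n \cap V}} d\bigl(x, \overline{\Omega \cap V}\bigr) \to 0,
\end{align*}
completing the Hausdorff convergence $\overline{\Omega_n \cap V} \to \overline{\Omega \cap V}$ in $K(V)$, which is exactly convergence in $\Xb(V)$.

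The main thing to be careful about is the identification $\overline{\Omega \cap V} = \overline{\Omega} \cap V$, without which the final contradiction argument would not directly land in the desired set; the rest is a routine application of the previous observation together with convexity, so I would not expect any serious obstacle.
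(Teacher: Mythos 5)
Your proof is correct and rests on exactly the same two ingredients as the paper's: the convexity identity $\overline{\Omega \cap V} = \overline{\Omega} \cap V$ and Observation~\ref{obs:rescaling1}. The only difference is packaging — the paper invokes compactness of $K(V)$ and shows every convergent subsequence of $\overline{\Omega_n \cap V}$ has limit $\overline{\Omega \cap V}$ via two set inclusions, while you estimate the two halves of the Hausdorff distance directly; this is not a substantively different argument.
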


\begin{proof}
Since $K(V)$ is compact, it is enough to show that every convergent subsequence of $\overline{\Omega_n \cap V}$ converges to $\overline{\Omega \cap V}$. So suppose that $\overline{\Omega_n \cap V} \rightarrow C$ in $K(V)$. 

Then by the definition of the Hausdorff topology we have $C \subset \overline{\Omega \cap V}$. 

Since $\Omega$ is convex, we have $\overline{\Omega} \cap V = \overline{\Omega \cap V}$. So we can pick a sequence $K_m \subset \Omega \cap V$ of compact sets such that 
\begin{align*}
 \overline{\cup K_m} = \overline{\Omega \cap V}.
\end{align*}
Fix $m$. Then $K_m \subset \Omega_n$ for $n$ sufficiently large by Observation~\ref{obs:rescaling1}. So $K_m \subset C$. Since $m$ was arbitrary 
\begin{align*}
 \overline{\Omega \cap V} =  \overline{\cup K_m}  \subset C.
\end{align*}
Hence $C = \overline{\Omega \cap V}$. 
\end{proof}

\begin{proof}[Proof of Theorem~\ref{thm:rescaling}] By Lemma 2.8 in~\cite{B2003a} there exists $g_n \in \PGL_{d}(\Rb)$ and a properly convex domain $\Omega^\prime \subset \Pb(\Rb^{d})$ such that
\begin{enumerate}
 \item $g_n|_V = h_n$,
 \item $\Omega_n := g_n\Omega \rightarrow \Omega^\prime$, and 
 \item $\Omega^\prime \cap V = \Omega_V$.
\end{enumerate}

Now fix a point $p_0 \in \Cc$. Then there exist $R \geq 0$ and a sequence $\gamma_n \in G$ such that 
\begin{align*}
d_\Omega(c_n, \gamma_n p_0) \leq R.
\end{align*}
Next consider the element $\varphi_n = g_n \gamma_n$. Note that  
\begin{align*}
d_{\Omega_n}(\varphi_np_0, g_n c_n) = d_{\Omega}(\gamma_np_0, c_n) \leq R.
\end{align*}
Then since 
\begin{align*}
\lim_{n \rightarrow \infty} g_nc_n = \lim_{n \rightarrow \infty} h_n c_n =   p_\infty \in \Omega^\prime
\end{align*}
and $d_{\Omega_n}$ converges locally uniformly to $d_{\Omega^\prime}$ we can pass to a subsequence so that $\varphi_np_0 \rightarrow q_\infty \in \Omega^\prime$. 

Then $\varphi_n(\Omega, p_0) \rightarrow (\Omega^\prime, q_\infty)$ and since $\PGL_{d}(\Rb)$ acts properly on $\Xb_0(\Rb^d)$, we can pass to a subsequence such that $\varphi_n \rightarrow \varphi \in \PGL_{d}(\Rb)$. 

Then by the Observation~\ref{obs:rescaling2}
\begin{align*}
\varphi(\Omega) \cap V = \lim_{n \rightarrow \infty} \varphi_n(\Omega) \cap V = \lim_{n \rightarrow \infty} g_n(\Omega) \cap V = \Omega^\prime \cap V = \Omega_V.
\end{align*}

By passing to a subsequence we can suppose that the sequence $g_n(\overline{\Cc}) \cap V$ converges in $K(V)$. Then, by the definition of the Hausdorff topology,
 \begin{align*}
\varphi(\overline{\Cc}) \cap V 
& \supset \lim_{n \rightarrow \infty} \varphi_n(\overline{\Cc}) \cap V  = \lim_{n \rightarrow \infty} g_n(\overline{\Cc}) \cap V \\
& \supset  \lim_{n \rightarrow \infty} h_n(\overline{\Cc} \cap V) \cap V =  \lim_{n \rightarrow \infty} h_n(\overline{\Cc} \cap V) = \Cc_V.
\end{align*}
\end{proof}

\section{Regular convex cocompactness implies projective Anosovness}\label{sec:convex_cocpct_implies_A}

In this section we prove Theorem~\ref{thm:RCC_anosov_intro} from the introduction. The proof uses many ideas from Benoist's work on the Hilbert metric~\cite{B2003a, B2004}.

\begin{theorem}\label{thm:RCC_anosov}
Suppose $\Omega \subset \Pb(\Rb^{d})$ is a properly convex domain and $\Lambda \leq \Aut(\Omega)$ is a discrete convex cocompact subgroup. Let $\Cc$ be a closed convex subset of $\Omega$ such that $g\Cc = \Cc$ for all $g \in \Lambda$ and $\Lambda \backslash \Cc$ is compact. If $\Lambda$ is an irreducible subgroup of $\PGL_{d}(\Rb)$, then the following are equivalent: 
\begin{enumerate}
\item every point in $\overline{\Cc} \cap \partial \Omega$ is a $C^1$ point of $\partial \Omega$, 
\item every point in $\overline{\Cc} \cap \partial \Omega$ is an extreme point of $\partial \Omega$ 
\end{enumerate}
Moreover, when these conditions are satisfied $\Lambda$ is word hyperbolic and the inclusion representation $\Lambda \hookrightarrow \PGL_{d}(\Rb)$ is projective Anosov.
\end{theorem}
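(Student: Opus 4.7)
My plan is to use projective duality combined with the rescaling machinery of Theorem~\ref{thm:rescaling} to reduce one implication to the other, and then to prove a single direction by a blow-up argument. Let $\Omega^\ast \subset \Pb(\Rb^{d\ast})$ be the dual properly convex domain, carrying the contragredient action of $\Lambda$. Let $\Lambda^\ast \subset \partial\Omega^\ast$ be the set of $[f]$ with $\ker f$ a supporting hyperplane of $\Omega$ at some point of $\overline{\Cc}\cap\partial\Omega$, and let $\Cc^\vee\subset\Omega^\ast$ be the closed convex hull of $\Lambda^\ast$ inside $\Omega^\ast$. Mimicking the construction of Section~\ref{sec:construction}, I expect to show that $\Lambda$ preserves $\Cc^\vee$ and acts cocompactly on it, and that the projective duality ``point $\leftrightarrow$ tangent hyperplane'' interchanges the roles of $C^1$ and extreme boundary points of $(\Omega,\Cc)$ versus $(\Omega^\ast,\Cc^\vee)$. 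Granting this, (1) and (2) are interchanged by passing to the dual, so it is enough to prove (2)$\Rightarrow$(1). To that end, suppose $\xi_0\in\overline{\Cc}\cap\partial\Omega$ is not $C^1$ and has two distinct supporting hyperplanes $H_1,H_2$. Pick $c_n\in\Cc$ converging to $\xi_0$ along a transverse direction and projective transformations $g_n\in\PGL_d(\Rb)$ realizing a blow-up at $\xi_0$ sending $c_n$ to a fixed basepoint; the two hyperplanes $H_1,H_2$ force any limit $\Cc_\infty$ of $g_n(\overline{\Cc})$ to contain a nondegenerate segment in $\partial\Omega_\infty$, where $\Omega_\infty$ is a limit of $g_n(\Omega)$. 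Theorem~\ref{thm:rescaling} then produces $\varphi\in\PGL_d(\Rb)$ with $\varphi(\Omega)=\Omega_\infty$ and $\varphi(\overline{\Cc})\supset\Cc_\infty$; pulling back through $\varphi$ gives a nondegenerate segment in $\partial\Omega$ contained in $\overline{\Cc}$, contradicting (2).

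\textbf{Word hyperbolicity and the projective Anosov property.} Once (1) and (2) both hold, I will show $(\Cc,H_\Omega)$ is Gromov hyperbolic using Proposition~\ref{prop:GH_suff} with the canonical straight-line geodesics in $\Cc$. If uniform thinness failed, rescaling a sequence of increasingly fat triangles, together with Lemma~\ref{lem:GP}, would yield either a nondegenerate segment in $\overline{\Cc}\cap\partial\Omega$ (violating (2)) or two distinct boundary points sharing a supporting hyperplane (violating (1) after duality). Combined with Proposition~\ref{thm:QI}, this shows $\Lambda$ is word hyperbolic. The boundary map $\xi\colon\partial\Lambda\to\overline{\Cc}\cap\partial\Omega$ is then defined by: given $x\in\partial\Lambda$ and a geodesic word ray $\gamma_n\to x$, set $\xi(x)=\lim_n\gamma_n p_0$; existence and uniqueness of the limit, as well as continuity and $\Lambda$-equivariance, follow from the quasi-isometric embedding and Lemma~\ref{lem:GP} together with (2). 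Define $\eta(x):=T_{\xi(x)}\partial\Omega\in\Pb(\Rb^{d\ast})$, which is well-posed by (1). Transversality $\xi(y)\notin\ker\eta(x)$ for $x\ne y$ comes from (1) plus injectivity of $\xi$ (using (2)); dynamics-preservation follows from the North–South dynamics of infinite order $\gamma\in\Lambda$, which can be read off from the Cartan decomposition acting on a properly convex domain with $C^1$-extreme boundary points. Finally, to obtain the uniform gap $\log\lambda_1(\rho(\gamma))/\lambda_2(\rho(\gamma))\geq C\ell_S(\gamma)-c$, I will convert the quasi-isometric displacement estimate from Proposition~\ref{thm:QI} into a singular value gap by comparing the $H_\Omega$-translation length of $\gamma$ with $\tfrac{1}{2}\log\lambda_1/\lambda_d$ (standard for Hilbert geometries) and then upgrading $\lambda_1/\lambda_d$ to $\lambda_1/\lambda_2$ using that every attracting fixed point $\xi(\gamma^+)$ is a $C^1$-extreme point, which forces $\gamma$ to be biproximal with a single top-modulus eigenvalue.

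\textbf{Main obstacle.} The delicate step is the rescaling in the (2)$\Rightarrow$(1) direction, namely engineering the sequence $g_n$ so that the limit $(\Omega_\infty,\Cc_\infty)$ genuinely records the failure of $C^1$-ness at $\xi_0$ (producing an honest segment in $\partial\Omega_\infty\cap\overline{\Cc_\infty}$ rather than collapsing) and so that Theorem~\ref{thm:rescaling} actually applies; this is where irreducibility of $\Lambda$ is needed, since without it one can produce reducible counterexamples like Example~\ref{ex:bad_example} in which a segment survives in $\partial\Omega$. A secondary technical point is verifying that the dual convex hull $\Cc^\vee$ is $\Lambda$-cocompact without circularly invoking the conclusions of the theorem; I anticipate handling this by running the construction of Section~\ref{sec:construction} applied to the continuous maps $\xi,\eta$ furnished by (1) and (2).
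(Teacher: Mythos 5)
Your overall skeleton (rescaling via Theorem~\ref{thm:rescaling} for the boundary regularity statements, thin triangles via Proposition~\ref{prop:GH_suff} for hyperbolicity, and the tangent-hyperplane map for $\eta$) matches the paper, and your (2)$\Rightarrow$(1) blow-up is essentially the paper's argument. But there are two genuine gaps. First, the duality reduction of (1)$\Rightarrow$(2) to (2)$\Rightarrow$(1) does not close: to apply the (2)$\Rightarrow$(1) direction to the dual pair $(\Omega^{*},\Cc^{\vee})$ you must know that $\Lambda$ acts cocompactly on $\Cc^{\vee}$, and your proposed fix --- running the construction of Section~\ref{sec:construction} on the maps $\xi,\eta$ --- is circular, since those maps, their continuity, and the positivity of the pairing $\wt{\eta}(y)(\wt{\xi}(x))$ are only available after the theorem (word hyperbolicity and the Anosov property) has been proved. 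You flag this yourself, but it is not a secondary technical point; it is the whole difficulty of that direction. The paper avoids duality entirely and proves (1)$\Rightarrow$(2) directly (Lemma~\ref{lem:E}) by a second rescaling: assuming a non-extreme point $q\in\overline{\Cc}\cap\partial\Omega$, one blows up transversally to the segment through $q$ and exhibits, inside the image of $\overline{\Cc}$ under the limiting $\varphi$, a boundary point of $\Omega$ that is not $C^{1}$. You should replace the duality step with an argument of this kind.

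Second, your route to the uniform gap $\log\bigl(\lambda_1(\rho(\gamma))/\lambda_2(\rho(\gamma))\bigr)\geq C\ell_S(\gamma)-c$ does not work as described: comparing the Hilbert translation length with $\tfrac12\log\bigl(\lambda_1/\lambda_d\bigr)$ and then invoking biproximality of each individual $\gamma$ only gives a pointwise statement; it provides no uniform linear lower bound on $\log\bigl(\lambda_1/\lambda_2\bigr)$, since the ratio $\lambda_2/\lambda_d$ could a priori absorb almost all of the top-to-bottom gap along a sequence of elements. The paper does not attempt this computation at all: having produced continuous, $\Lambda$-equivariant, transverse, dynamics-preserving maps $\xi$ and $\eta$ (with $\ker\eta(x)=T_{\xi(x)}\partial\Omega$ and transversality coming from extremality), it concludes by citing Proposition 4.10 of~\cite{GW2012}, which converts exactly this package into the projective Anosov property for irreducible representations. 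Either cite that result or supply a genuinely uniform contraction estimate; the step as you have written it is a gap. (A smaller point: in the hyperbolicity argument the paper only translates the fat triangles by group elements and passes to limits, using extremality alone to derive the contradiction; no rescaling and no appeal to (1) via duality is needed there.)
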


\begin{remark} In the special case when $\Omega = \Cc$, Theorem~\ref{thm:RCC_anosov} was established by Benoist~\cite{B2004}, see Theorem~\ref{thm:ben_char} in the introduction. \end{remark}

For the rest of the section fix a properly convex domain $\Omega \subset \Pb(\Rb^{d})$, a discrete convex cocompact subgroup $\Lambda \leq \Aut(\Omega)$, and a closed convex subset $\Cc \subset \Omega$ which satisfy the hypothesis of Theorem~\ref{thm:RCC_anosov}.

Notice that $\Cc$ has non-empty interior since $\Lambda$ is irreducible and preserves the subspace $ \Spanset_{\Rb} \left\{ c: c \in  \Cc \right\}$.

\begin{lemma}\label{lem:E} With the notation above, if each $q \in \partial \Omega \cap \overline{\Cc}$ is a $C^1$ point of $\partial \Omega$, then each $q \in \partial \Omega \cap \overline{\Cc}$ is an extreme point of $\Omega$.
\end{lemma}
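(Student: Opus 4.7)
Suppose for contradiction that some $q \in \overline{\Cc} \cap \partial\Omega$ is not extreme, so there exists $p \in \partial\Omega \setminus \{q\}$ with $[q,p] \subset \partial\Omega$. My plan is to adapt the argument from the proof in Section~\ref{sec:construction} (that the image of the boundary map consists of extreme points), the main difference being that here no boundary map is available and the rank-one behavior of the limit endomorphism must be extracted from the $C^1$ hypothesis and irreducibility rather than from a dynamical hypothesis.

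First I would fix $c_0 \in \Cc$ and set $q_n = (1-t_n)c_0 + t_n q$ and $p_n = (1-t_n)c_0 + t_n p$ with $t_n \nearrow 1$. Using $\overline{\Cc} \cap \Omega = \Cc$ (which follows from $\Cc$ being closed in $\Omega$), the points $q_n$ lie in $\Cc$. Because $[q,p] \subset \partial\Omega$, Lemma~\ref{lem:hilbert_1} supplies a uniform bound $H_\Omega(q_n, p_n) \leq K$. By cocompactness of $\Lambda \curvearrowright \Cc$, choose $\gamma_n \in \Lambda$ with $k_n := \gamma_n^{-1}(q_n)$ in a compact subset of $\Cc$; setting $\ell_n := \gamma_n^{-1}(p_n)$, the Hilbert bound keeps $\ell_n$ in a bounded Hilbert ball. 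After passing to subsequences, $k_n \to k \in \Cc$, $\ell_n \to \ell \in \Omega$, and $\gamma_n \to T$ in $\Pb(\End(\Rb^{d}))$. Hilbert continuity (via $H_\Omega(\gamma_n k, \gamma_n k_n) = H_\Omega(k, k_n) \to 0$ and $\gamma_n k_n = q_n \to q$) yields $\gamma_n k \to q$ and $\gamma_n \ell \to p$ projectively, forcing $k \neq \ell$ since $q \neq p$, and arguing as in Section~\ref{sec:construction}, $k, \ell \notin \ker T$, so $T(k) = q$ and $T(\ell) = p$; in particular $T$ has rank at least $2$.

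The crux is to contradict this. The $C^1$ hypothesis at $q$ forces the supporting hyperplane at $q$ to be the unique hyperplane $T_q \partial\Omega$, and this hyperplane contains $[q, p]$. Tracking $\gamma_n c_0$ via a Hilbert-boundedness argument, its limit $r$ cannot lie in $\Omega$ (otherwise the orbit would be too tame relative to $\gamma_n^{-1} q_n \to k$ and $q_n \to q \in \partial\Omega$), so $r \in \overline{\Cc} \cap \partial\Omega$ with $r = T(c_0) \in \Imag(T)$, and $r$ is itself a $C^1$ boundary point. I would then observe that $\Imag(T) \cap \overline{\Omega}$ is a convex subset of $\overline{\Omega}$ whose boundary points in $\Imag(T)$ are boundary points of $\Omega$, each constrained by the $C^1$ hypothesis to lie in a unique tangent hyperplane. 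Varying $c_0$ within $\Cc$ and invoking the irreducibility of $\Lambda$ to produce enough distinct tangent hyperplanes through points of $\overline{\Cc} \cap \partial\Omega$, one should trap $\Imag(T)$ inside a single point, contradicting $\mathrm{rank}(T) \geq 2$.

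The principal obstacle is making this final rank reduction precise: in Section~\ref{sec:construction}, Lemma~\ref{lem:dynamics} provided rank-one limits for free from the projective Anosov hypothesis, whereas here the same conclusion must be derived purely from the $C^1$ regularity on $\overline{\Cc} \cap \partial\Omega$ and the irreducibility of $\Lambda$. This is where the geometric content of the lemma resides, and I expect it to require a careful analysis of how the convex set $\Imag(T) \cap \overline{\Omega}$ sits inside $\overline{\Omega}$ near $q$.
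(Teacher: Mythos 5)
Your setup is fine as far as it goes: pulling $q_n\in[c_0,q)$ and $p_n\in[c_0,p)$ back by $\gamma_n\in\Lambda$, extracting $T=\lim\gamma_n$ in $\Pb(\End(\Rb^{d}))$, and concluding $T(k)=q$, $T(\ell)=p$, hence $\mathrm{rank}(T)\geq 2$, is a legitimate (and standard) reduction, provided you justify $k,\ell\notin\ker T$ without the Anosov machinery --- in Section~\ref{sec:construction} that came from Lemma~\ref{lem:dynamics}, which is unavailable here, so you would instead need the general fact that $[\ker T]\cap\Omega=\emptyset$ for any limit $T$ of a diverging sequence in $\Aut(\Omega)$. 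The genuine gap is the final step, which you yourself flag: you never derive a contradiction from $\mathrm{rank}(T)\geq 2$, and the mechanism you sketch for doing so does not work. The points of $\overline{\Cc}\cap\partial\Omega$ your construction produces inside $[\Imag T]$ --- namely $q=T(k)$, $r=T(c_0)$, and more generally $T(\Cc)$ --- all lie in the \emph{relative interior} of the convex open set $T(\Omega)\subset[\Imag T]$. At such a point the unique supporting hyperplane guaranteed by the $C^1$ hypothesis must contain every segment of $\partial\Omega$ through that point in whose relative interior the point sits, hence contains the affine hull of $T(\Omega)$, which is all of $[\Imag T]$. So every tangent hyperplane you can produce this way contains $[\Imag T]$ entirely, and intersecting them cannot ``trap $\Imag(T)$ inside a single point.'' (A stadium-shaped planar domain shows the underlying issue: a boundary point can be $C^1$ and still lie in the interior of a boundary segment.) The $C^1$ hypothesis would only bite at relative boundary points of the face $[\Imag T]\cap\overline{\Omega}$, and nothing in your argument places such a point in $\overline{\Cc}$. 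In effect you have reduced the lemma to the assertion that the $C^1$ hypothesis forces all limits $T$ to have rank one, which is essentially equivalent in difficulty to the lemma itself.

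For comparison, the paper avoids the dynamical route entirely and argues by rescaling: after normalizing coordinates so that the segment $\{[1:t:0:\cdots:0]:t\in[-1,1]\}$ lies in $\partial\Omega$ with $q$ in its interior, it blows up transversally to that segment inside a $2$-dimensional slice $V$ using maps $h_n$ that stretch the direction pointing into $\Omega$. Theorem~\ref{thm:rescaling} (the Benz\'ecri-type compactness statement for convex cocompact actions) then realizes the limit half-strip as $\varphi(\Omega)\cap V$ for an honest $\varphi\in\PGL_{d}(\Rb)$, with the limit of $\Cc\cap V$ contained in $\varphi(\overline{\Cc})\cap V$. The point at vertical infinity of the half-strip is a corner --- a non-$C^1$ point --- lying in $\varphi(\overline{\Cc})$, and pulling it back by $\varphi^{-1}$ produces a non-$C^1$ point of $\partial\Omega$ in $\overline{\Cc}$, contradicting the hypothesis directly. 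If you want to complete a proof in your dynamical spirit, the natural substitute is a duality argument (segments in $\partial\Omega$ through points of $\overline{\Cc}$ correspond to non-$C^1$ points of the dual domain met by the dual convex core), but that requires setting up cocompactness of the dual action, which your proposal does not do.
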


\begin{proof} 
Suppose for a contradiction that there exists a point $q \in \partial \Omega \cap \overline{\Cc}$ which is not an extreme point of $ \Omega$. Then after making a change of coordinates we can assume the following: 
\begin{enumerate}
\item $q=[1:0:\dots:0] \in \partial \Omega \cap \overline{\Cc}$, 
\item $[1:0:1:0:\dots:0] \in \Cc$, 
\item $\Omega \subset \{ [1:x_1:x_2:\dots : x_{d-1}] \in \Pb(\Rb^{d})  : x_2 >0\}$, and
\item $\{ [1:t:0:\dots:0] \in \Pb(\Rb^{d}) : t \in [-1,1]\} \subset \partial \Omega$.
\end{enumerate}

Now let 
\begin{align*}
V = \{ [x_1 : x_2 : x_3: 0 : \dots : 0]  \in \Pb(\Rb^{d}) : x_1, x_2, x_3 \in \Rb\},
\end{align*}
$c_n = \left[ 1: 0:\frac{1}{n} : 0 : \dots : 0\right] \in \Cc \cap V$, and $h_n \in \PGL(V)$ be given by
\begin{align*}
h_n[x_1:x_2:x_3:0:\dots:0] = [x_1:x_2:nx_3:0:\dots:0].
\end{align*}
Then $h_nc_n \rightarrow [1:0:1:0:\dots:0]$, 
\begin{align*}
h_n(\Omega \cap V) \rightarrow \Omega_V := \{ [1:s:t:0:\dots:0] \in \Pb(\Rb^{d}) : [1:s:0:\dots:0] \in \partial \Omega \text{ and } t > 0\},
\end{align*}
and
\begin{align*}
h_n(\Cc \cap V) \rightarrow \Cc_V := \{ [1:s:t:0:\dots:0] \in \Pb(\Rb^{d}) : [1:s:0:\dots:0] \in \partial \Omega \cap \overline{\Cc} \text{ and } t > 0\}.
\end{align*}
Clearly $\Omega_V$ is properly convex and so by Theorem~\ref{thm:rescaling}, there exists some $\varphi \in \PGL_{d}(\Rb)$ such that $\varphi(\Omega) \cap V = \Omega_V$ and $\Cc_V \subset \varphi(\overline{\Cc}) \cap V$. But then
\begin{align*}
[0:0:1:0:\dots:0] 
\end{align*} 
is a not a $C^1$ point of $\partial \Omega_V$ and hence 
\begin{align*}
\varphi^{-1}[0:0:1:0:\dots:0] \in \overline{\Cc} \cap \partial \Omega
\end{align*}
 is not a $C^1$ point of $\partial \Omega$. So we have a contradiction. 
\end{proof}

\begin{lemma} With the notation above, if each $q \in \partial \Omega \cap \overline{\Cc}$ is an extreme point of $\Omega$, then each $q \in \partial \Omega \cap \overline{\Cc}$ is a $C^1$ point of $\partial\Omega$.
\end{lemma}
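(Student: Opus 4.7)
The plan is to argue by contradiction, adapting the rescaling strategy of the previous lemma in the opposite direction. A non-$C^1$ boundary point $q$, viewed in a generic $2$-plane $V$ through $q$ and through an interior point $c_0$ of $\Cc$, becomes a ``corner'' of $\Omega \cap V$; blowing $V$ up around $q$ by a diagonal rescaling sends this corner to a projective triangle $\Omega_V$ whose ``edge at infinity'' is a segment of non-extreme boundary points. The rescaling theorem (Theorem~\ref{thm:rescaling}) then transfers this segment back to $\partial \Omega$ inside $\overline{\Cc}$, contradicting the extremality hypothesis.

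Concretely, suppose for contradiction that $q \in \overline{\Cc} \cap \partial \Omega$ has two distinct supporting hyperplanes $H_1, H_2$ (the $d = 2$ case is trivial, so assume $d \geq 3$). Since $\Lambda$ is irreducible and preserves $\Cc$, the linear span of $\Cc$ is all of $\Rb^d$, so $\interior(\Cc) \neq \emptyset$ and one may pick $c_0 \in \interior(\Cc)$. Generically, a $3$-dimensional subspace $V \subset \Rb^d$ containing both $q$ and $c_0$ satisfies $V \cap H_1 \neq V \cap H_2$ in $V$, and after a linear change of coordinates on $V$ one may arrange $q = [1:0:0]$, $V \cap H_1 = \{y_2 = 0\}$, $V \cap H_2 = \{y_3 = 0\}$, and $c_0 = [1:1:1]$. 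Set $h_n[y_1:y_2:y_3] = [y_1: n y_2: n y_3] \in \PGL(V)$ and $c_n = [1:1/n:1/n]$; the latter lies on the segment $[q, c_0]$, hence in $\Cc \cap V$, and $h_n(c_n) = c_0$. In the affine chart $y_1 = 1$, $h_n$ is dilation by $n$ about $q$, so $h_n(\Omega \cap V)$ converges in Hausdorff topology to the projective closure $\Omega_V$ of the tangent cone of $\Omega \cap V$ at $q$. Because that tangent cone lies in the closed first quadrant (from the two supporting lines) and contains the direction $(1,1)$ in its interior (since $c_0$ is interior to $\Omega \cap V$), $\Omega_V$ is a projective triangle in $\Pb(V)$; likewise, $h_n(\overline{\Cc} \cap V)$ converges to a properly convex closed set $\Cc_V \subset \overline{\Omega_V}$, and since $h_n([q, c_0])$ limits to the segment from $q$ to $[0:1:1]$, we have $[0:1:1] \in \Cc_V$.

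Applying Theorem~\ref{thm:rescaling} would then yield $\varphi \in \PGL_d(\Rb)$ with $\varphi(\Omega) \cap V = \Omega_V$ and $\Cc_V \subset \varphi(\overline{\Cc}) \cap V$. The point $[0:1:1]$ lies on the ``edge at infinity'' of the triangle $\overline{\Omega_V}$, which is a line segment in $V \cap \{y_1 = 0\}$ whose endpoints are the two extremal infinity directions of the tangent cone; since $(1,1)$ is interior to that cone, $[0:1:1]$ is strictly interior to this edge, and thus a non-extreme point of $\Omega_V$. This edge lies in $\partial \Omega_V = V \cap \partial \varphi(\Omega) \subset \partial \varphi(\Omega)$, so $[0:1:1]$ is non-extreme in $\varphi(\Omega)$, and hence $\varphi^{-1}[0:1:1] \in \overline{\Cc} \cap \partial \Omega$ is a non-extreme point of $\Omega$, contradicting the hypothesis. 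The most delicate step is identifying $\lim_n h_n(\Omega \cap V)$ with the projective closure of the tangent cone: the supporting-hyperplane structure at $q$ bounds the cone inside the closed first quadrant, while the interior point $c_0$ ensures the cone is $2$-dimensional with $(1,1)$ in its interior, so the limit is a genuine triangle rather than a degenerate wedge.
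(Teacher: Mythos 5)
Your proof is correct and follows essentially the same route as the paper's: contradiction via two supporting hyperplanes at $q$, a projective plane $V$ through $q$ and an interior point of $\Cc$ with $V\cap H_1\neq V\cap H_2$, dilation $h_n$ about $q$, Theorem~\ref{thm:rescaling}, and the segment at infinity of the tangent cone. The only (harmless) difference is in extracting the contradiction: the paper pushes a two-dimensional germ of $\Cc$ at $q$ through the blow-up to land an entire segment inside $\varphi(\overline{\Cc})\cap\partial\varphi(\Omega)$, whereas you land the single point $[0:1:1]$ of $\varphi(\overline{\Cc})$ in the relative interior of the edge at infinity, which already lies in $\partial\varphi(\Omega)$ and so witnesses non-extremality.
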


\begin{proof}
Suppose for a contradiction that there exists a point $q \in \partial \Omega \cap \overline{\Cc}$ which is not a $C^1$ point of $\partial \Omega$. Then there exist two different hyperplanes $H_1, H_2$ such that $q \in H_1 \cap H_2$ and $H_1 \cap \Omega = H_2 \cap \Omega = \emptyset$. Since $\Cc$ has non-empty interior, there exists a two dimensional subspace $V \subset \Pb(\Rb^{d})$ so that $V$ intersects the interior of $\Cc$, and $V \cap H_1 \neq V \cap H_2$.

By making a change of coordinates, we can assume that 
\begin{enumerate}
\item $q = [1:0:\dots:0]$, 
\item $V = \{ [x_1 : x_2 : x_3: 0 : \dots : 0]  \in \Pb(\Rb^{d}) : x_1, x_2, x_3 \in \Rb\}$,
\item $\Omega \cap V \subset \{ [1:x_1:x_2:0:\dots:0] \in \Pb(\Rb^{d}) : x_2 > 0\}$,
\item $[1:0:1:\dots:0]$ is contained in the interior of $\Cc$, and
\item there exists $\alpha_1 < 0 < \alpha_2$ such that 
\begin{align*}
H_i \cap V = \{ [1:t:\alpha_i t:0:\dots:0] \in \Pb(\Rb^{d}): t \in \Rb\} \cup\{[0:1:\alpha_i]\}.
\end{align*}
\end{enumerate}

Now since $[1:0:1:\dots:0]$ is contained in the interior of $\Cc$, there exists $\epsilon>0$ and $\beta_1 < 0 < \beta_2$ such that 
\begin{align*}
\{ [1:t:\beta_2 t:0:\dots:0] \in \Pb(\Rb^{d}) : 0 < t < \epsilon \} \subset \Cc
\end{align*}
and 
\begin{align*}
\{ [1:t:\beta_1 t:0:\dots:0] \in \Pb(\Rb^{d}) : -\epsilon < t < 0 \} \subset \Cc.
\end{align*}

Next consider the points $c_n = [1:0:\frac{1}{n}:0:\dots:0]$ and let $h_n \in \PGL(V)$ be given by
\begin{align*}
h_n[x_1:x_2:x_3:0:\dots:0] = \left[x_1: n x_2:n x_3:0:\dots:0\right].
\end{align*}
Then $h_nc_n \rightarrow [1:0:1:0:\dots:0]$, $h_n(\Omega \cap V)$ converges to the tangent cone $\mathcal{TC}_q(\Omega \cap V)$ of $\Omega \cap V$ at $q$, and $h_n(\Cc \cap V)$ converges to the tangent cone $\mathcal{TC}_q(\Cc \cap V)$ of $\Cc \cap V$ at $q$. 

By construction $\mathcal{TC}_q(\Omega \cap V)$ is a properly convex domain in $V$. So by Theorem~\ref{thm:rescaling}, there exists some $\varphi \in \PGL_{d+1}(\Rb)$ such that $\varphi(\Omega) \cap V = \mathcal{TC}_g(\Omega \cap V)$ and $\varphi(\overline{\Cc}) \cap V \supset \overline{\mathcal{TC}_g(\Cc \cap V)}$. But then 
\begin{align*}
\varphi^{-1} \{  [0:1:s :0:\dots:0] \in \Pb(\Rb^{d}) : \beta_1 \leq s \leq \beta_2 \}\subset \overline{\Cc} \cap \partial \Omega
\end{align*}
which contradicts the fact that every point in $\overline{\Cc} \cap \partial \Omega$ is an extreme point. 
\end{proof}

For the remainder of the section we assume, in addition, that 
\begin{enumerate}
\item every point in $\overline{\Cc} \cap \partial \Omega$ is a $C^1$ point of $\partial \Omega$ and
\item every point in $\overline{\Cc} \cap \partial \Omega$ is an extreme point of $\partial \Omega$. 
\end{enumerate}

\begin{lemma} With the notation above, $\Lambda$ is word hyperbolic. \end{lemma}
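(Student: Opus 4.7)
By Proposition~\ref{thm:QI}, $\Lambda$ is finitely generated and quasi-isometric to the proper geodesic space $(\Cc, H_\Omega)$ (proper because it is closed in the proper space $(\Omega, H_\Omega)$, geodesic because line segments remain in $\Cc$ by convexity and are geodesics by Proposition~\ref{prop:hilbert_basic}). Since word hyperbolicity is a quasi-isometry invariant of finitely generated groups, it suffices to show that $(\Cc, H_\Omega)$ is Gromov hyperbolic. I will apply Proposition~\ref{prop:GH_suff} with $\sigma_{x,y}$ the linear segment from $x$ to $y$, reducing the task to showing triangles of line segments in $\Cc$ are uniformly $\delta$-thin.

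Suppose, for contradiction, there exist $x_n, y_n, z_n \in \Cc$ and $p_n \in [x_n, y_n]$ with $D_n := H_\Omega(p_n, [x_n, z_n] \cup [y_n, z_n]) \to \infty$. Using the cocompact action of $\Lambda$ on $\Cc$, translate by $\gamma_n \in \Lambda$ so that $\gamma_n p_n$ stays in a fixed compact subset of $\Cc$; replacing each point by its $\gamma_n$-image and passing to subsequences, I may assume $p_n \to p \in \Cc$ and $x_n \to x$, $y_n \to y$, $z_n \to z$ in $\overline{\Omega}$. Lemma~\ref{lem:hilbert_1} (applied with the pairings $(x_n, y_n) \sim (x_n, z_n)$ and $(x_n, y_n) \sim (y_n, z_n)$) gives $D_n \leq \min\left(H_\Omega(x_n, z_n),\, H_\Omega(y_n, z_n)\right)$, so if any one of $x, y, z$ lay in $\Omega$, the distance $D_n$ would stay bounded. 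Hence $x, y, z \in \overline{\Cc} \cap \partial \Omega$, and each is an extreme, $C^1$ point of $\Omega$. Since $p_n \in [x_n, y_n]$ and $p_n \to p \in \Omega$, the segment $[x_n, y_n]$ cannot collapse in projective space, forcing $x \neq y$ and $p \in (x, y)$.

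The essential geometric input is the following dichotomy: for distinct $\xi, \eta \in \overline{\Cc} \cap \partial \Omega$, one has $(\xi, \eta) \subset \Omega$. Indeed, the projective line through $\xi, \eta$ meets $\overline{\Omega}$ in a compact segment, yielding the alternative $(\xi, \eta) \subset \Omega$ or $[\xi, \eta] \subset \partial \Omega$; in the second case the interior of $[\xi, \eta]$ would lie in $\overline{\Cc}$ (by convexity of $\overline{\Cc}$) but its points would not be extreme, contradicting the hypothesis. To derive the final contradiction, for each $n$ choose $q_n \in [x_n, z_n]$ and $r_n \in [y_n, z_n]$ realizing (within $1$) the distance-to-side; so $H_\Omega(p_n, q_n), H_\Omega(p_n, r_n) \to \infty$. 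After subsequence, $q_n \to q$ and $r_n \to r$ in $\overline{\Omega}$; since the Hilbert distances to $p_n$ diverge, both $q, r \in \partial \Omega$, and by construction $q \in [x, z]$, $r \in [y, z]$. The dichotomy applied to any two boundary points among $\{x, y, z\}$ now forces $q \in \{x, z\}$ and $r \in \{y, z\}$.

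A finite case analysis on the four possibilities for $(q, r)$ produces the contradiction. When $q = x$ (respectively $r = y$), the segment $[x_n, z_n]$ contains the endpoint $x_n \to x = q$, and a direct cross-ratio computation — using that $q_n$ and $x_n$ both approach $x$ along the same segment $[x_n, z_n]$ — bounds $H_\Omega(p_n, q_n)$ in terms of the Hilbert-limit distance from $p$ to the approach direction, contradicting $H_\Omega(p_n, q_n) \to \infty$. The main obstacle is the degenerate case $q = r = z$ (with $x, y, z$ distinct): here Lemma~\ref{lem:GP} combined with the dichotomy above forces the Gromov products $(p_n | q_n)_p^\Omega$ and $(p_n | r_n)_p^\Omega$ to remain bounded (since the limit segments $[x, z], [y, z]$ enter $\Omega$), and the $C^1$ condition at $z$ — pinning down the unique supporting hyperplane $T_z \partial \Omega$ — controls the limiting geometry of the segments $[x_n, z_n]$ and $[y_n, z_n]$ well enough to produce a uniform bound on $D_n$, the desired contradiction. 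The interplay of the extreme-point dichotomy and the $C^1$ rigidity at $z$ is exactly where the hypotheses are essential.
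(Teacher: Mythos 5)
Your setup — reduce to Gromov hyperbolicity of $(\Cc,H_\Omega)$ via Proposition~\ref{thm:QI} and Proposition~\ref{prop:GH_suff}, translate a putative sequence of fat triangles back to a compact piece of $\Cc$, pass to limits, conclude $x,y,z\in\overline{\Cc}\cap\partial\Omega$ with $x\neq y$ and $p\in(x,y)$, and invoke the extreme-point dichotomy $(\xi,\eta)\subset\Omega$ for distinct $\xi,\eta\in\overline{\Cc}\cap\partial\Omega$ — is exactly the paper's argument and is correct as far as it goes. The problem is your final paragraph. There is a genuine gap: the ``main obstacle'' case $q=r=z$ is not actually resolved. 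You assert that Lemma~\ref{lem:GP} plus the $C^1$ condition at $z$ ``controls the limiting geometry well enough to produce a uniform bound on $D_n$,'' but no argument is given, and it is not clear how one would extract a bound on $D_n$ from the uniqueness of the supporting hyperplane at $z$. As written, the proof is incomplete at precisely the step you flag as essential. (A secondary issue: in the case $q=x$ you say the contradiction comes from a cross-ratio computation showing $H_\Omega(p_n,q_n)$ is bounded; but $q_n\to x\in\partial\Omega$ while $p_n\to p\in\Omega$ forces $H_\Omega(p_n,q_n)\to\infty$ regardless, so the contradiction cannot arise that way — it must come from exhibiting some \emph{other} point of the side at bounded distance from $p_n$, contradicting near-minimality of $q_n$.)

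The fix is to discard the closest points $q_n,r_n$ and the case analysis entirely; they are a detour. Since $x\neq y$, at least one of $z\neq x$, $z\neq y$ holds — say $z\neq x$. The dichotomy gives $(x,z)\subset\Omega$. The segments $[x_n,z_n]$ converge in the Hausdorff sense to $[x,z]$, so for any fixed $w\in(x,z)$ there exist $w_n\in[x_n,z_n]$ with $w_n\to w$, whence
\begin{align*}
D_n \leq H_\Omega(p_n,w_n) \rightarrow H_\Omega(p,w) < \infty,
\end{align*}
contradicting $D_n\to\infty$. This is how the paper concludes; note that only the extreme-point hypothesis is used — the $C^1$ condition plays no role in this lemma (it enters later, when constructing the transverse boundary map $\eta$).
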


\begin{proof}
Fix a finite symmetric generating set $S$ of $\Lambda$. By Proposition~\ref{thm:QI}, $(\Lambda, d_S)$ is quasi-isometric to $(\Cc, d_\Omega)$ and so it is enough to show that $(\Cc, d_\Omega)$ is Gromov hyperbolic. Now for each $x,y \in \Cc$ let $\sigma_{x,y}$ be the geodesic joining $x$ to $y$ which parametrizes the line segment joining them. By Proposition~\ref{prop:GH_suff} it is enough to show that there exists an $\delta > 0$ such that every geodesic triangle in $(\Cc, d_\Omega)$ of the form  $\sigma_{x,y}, \sigma_{y,z}, \sigma_{z,x}$ is $\delta$-thin. Suppose not. Then for every $n > 0$ there exists points $x_n, y_n, z_n, u_n \in \Cc$ such that $u_n \in \sigma_{x_n, y_n}$ and 
\begin{align*}
d_\Omega(u_n, \sigma_{y_n,z_n} \cup \sigma_{z_n, x_n}) > n.
\end{align*}
By replacing the the points $x_n, y_n, z_n,u_n$ by $g_n x_n, g_n y_n, g_n z_n, g_n u_n$ for some $g_n \in \Lambda$ we can  assume that the sequence $u_n$ is relatively compact in $\Cc$. Then by passing to a subsequence we can suppose that $u_n \rightarrow u \in \Cc$. By passing to another subsequence we can assume that $x_n, y_n, z_n \rightarrow x,y,z \in \overline{\Cc}$. Since 
\begin{align*}
d_\Omega(u_n, \{x_n,y_n,z_n\}) > n
\end{align*}
we must have $x,y,z \in \overline{\Cc} \cap \partial \Omega$. The image of $\sigma_{x_n,y_n}$ converges to a line segment containing $x,y,u$. Since $u \in \Cc$ and $x,y \in \partial \Omega$ we must have $x \neq y$. Then either $z \neq x$ or $z \neq y$. By relabeling we may assume that $z \neq x$. Then the image of $\sigma_{x_n,z_n}$ converges to the line segment $[x,z]$. Since every point in $\overline{\Cc} \cap \partial \Omega$ is an extreme point of $\partial \Omega$ and $x \neq z$, we must have $(x,z) \subset \Omega$. So 
\begin{align*}
\infty = \lim_{n \rightarrow \infty} d_\Omega(u_n, \sigma_{z_n,x_n}) = d_\Omega(u, (z,x)) < \infty.
\end{align*}
So we have a contradiction and hence $\Lambda$ is word hyperbolic. 
\end{proof}

\begin{lemma} With the notation above, there exists a $\Lambda$-equivariant homeomorphism $\xi : \partial \Lambda \rightarrow \overline{\Cc} \cap \partial \Omega$.
\end{lemma}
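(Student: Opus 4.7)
The plan is to define $\xi(x)$ as the limit of orbit points $\gamma_n p_0$ for $\gamma_n \to x$ in $\partial \Lambda$, and to use the Karlsson--Noskov Gromov-product estimates of Lemma~\ref{lem:GP} to verify that this is a well-defined $\Lambda$-equivariant homeomorphism.

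\textbf{Construction of $\xi$.} Fix $p_0 \in \Cc$ and a finite symmetric generating set $S$ of $\Lambda$ with induced word metric $d_S$. By Proposition~\ref{thm:QI} the orbit map $\gamma \mapsto \gamma p_0$ is a $\Lambda$-equivariant quasi-isometric embedding of $(\Lambda, d_S)$ into $(\Cc, H_\Omega)$, so Gromov products on the two sides differ only by additive and multiplicative constants. For $x \in \partial \Lambda$ pick any $\gamma_n \in \Lambda$ with $\gamma_n \to x$; then the Gromov products $(\gamma_n | \gamma_m)$ in $(\Lambda, d_S)$ tend to $\infty$, and hence so do $(\gamma_n p_0 | \gamma_m p_0)_{p_0}^{\Omega}$. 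Passing to a subsequential limit $u$ of $\gamma_n p_0$ in the compact space $\overline{\Cc}$ and using that $H_\Omega(p_0, \gamma_n p_0) \to \infty$, we find $u \in \overline{\Cc} \cap \partial \Omega$. If $u$ and $v$ are two such subsequential limits, Lemma~\ref{lem:GP}(2) gives $[u, v] \subset \partial \Omega$; if $u \neq v$ then the midpoint $w = \tfrac{1}{2}(u+v)$ lies in $\overline{\Cc}$ by convexity of $\overline{\Cc}$, so $w \in \overline{\Cc} \cap \partial \Omega$ is an extreme point of $\Omega$ by hypothesis, yet $w$ lies in the relative interior of the nondegenerate segment $[u,v] \subset \overline{\Omega}$, a contradiction. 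Hence the full sequence $\gamma_n p_0$ converges to a point $\xi(x) \in \overline{\Cc} \cap \partial \Omega$, independent of the choice of $\gamma_n$ (apply the same argument to the sequence obtained by merging two candidate sequences), and $\Lambda$-equivariance $\xi(\delta x) = \delta \xi(x)$ is immediate from continuity of the $\PGL_d(\Rb)$-action.

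\textbf{Bijectivity.} For injectivity, suppose $\xi(x) = \xi(y)$. Pick $\gamma_n \to x$ and $\delta_n \to y$; since $\gamma_n p_0$ and $\delta_n p_0$ both converge to $\xi(x) = \xi(y)$, Lemma~\ref{lem:GP}(1) yields $(\gamma_n p_0 | \delta_n p_0)_{p_0}^{\Omega} \to \infty$, which via the quasi-isometry forces $(\gamma_n | \delta_n) \to \infty$ in $(\Lambda, d_S)$, and hence $x = y$ in $\partial \Lambda$. For surjectivity, given $q \in \overline{\Cc} \cap \partial \Omega$ choose $c_n \in \Cc$ with $c_n \to q$; cocompactness of the $\Lambda$-action on $\Cc$ provides $\gamma_n \in \Lambda$ with $\gamma_n^{-1} c_n$ lying in a fixed compact subset of $\Cc$, so $H_\Omega(\gamma_n p_0, c_n)$ is uniformly bounded and $\gamma_n p_0 \to q$. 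Then $d_S(\id, \gamma_n) \to \infty$, and after extracting a subsequence $\gamma_n \to x \in \partial \Lambda$ with $\xi(x) = q$.

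\textbf{Continuity and the main obstacle.} Since $\partial \Lambda$ is compact and $\overline{\Cc} \cap \partial \Omega$ is Hausdorff, a continuous bijection between them is automatically a homeomorphism, so it remains only to check continuity. If $x_n \to x$ in $\partial \Lambda$, a diagonal approximation selects $\gamma_n \in \Lambda$ with $\gamma_n p_0$ within $1/n$ of $\xi(x_n)$ in $\Pb(\Rb^d)$ and with $(\gamma_n | \gamma_m) \to \infty$ in $(\Lambda, d_S)$; the construction of $\xi$ then forces $\gamma_n p_0 \to \xi(x)$, whence $\xi(x_n) \to \xi(x)$. The delicate step is the well-definedness of $\xi$, specifically upgrading ``$[u,v] \subset \partial \Omega$'' to ``$u = v$'': this is exactly where both the extremality hypothesis on $\overline{\Cc} \cap \partial \Omega$ and the convexity of $\overline{\Cc}$ are used, through the observation that the midpoint of such a segment lands back in $\overline{\Cc} \cap \partial \Omega$ and so must itself be extreme.
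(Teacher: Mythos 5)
Your argument is correct and follows exactly the route the paper intends: the paper's proof is the single sentence ``Since every point in $\overline{\Cc} \cap \partial \Omega$ is an extreme point, this follows from Lemma~\ref{lem:GP},'' and your write-up is precisely the expansion of that hint, using the Karlsson--Noskov Gromov-product estimates together with convexity of $\overline{\Cc}$ and extremality of the points of $\overline{\Cc}\cap\partial\Omega$ to rule out nondegenerate limit segments. The only implicit ingredients worth flagging are that quasi-preservation of Gromov products under the orbit quasi-isometry uses the Gromov hyperbolicity of $(\Cc,H_\Omega)$ established in the preceding lemma, and that the surjectivity step ``$\gamma_n p_0 \to q$'' silently reuses the same midpoint-extremality argument; both are fine.
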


\begin{proof}
Since every point in $\overline{\Cc} \cap \partial \Omega$ is an extreme point, this follows from Lemma~\ref{lem:GP}.
\end{proof}

\begin{lemma} With the notation above, the inclusion representation $\Lambda \hookrightarrow \PGL_{d}(\Rb)$ is projective Anosov. \end{lemma}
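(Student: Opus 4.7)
The plan is to construct the dual boundary map $\eta : \partial \Lambda \to \Pb(\Rb^{d*})$, verify the four structural conditions for projective Anosov (equivariance, continuity, transversality, dynamics-preserving), and then extract the uniform eigenvalue gap from the convex cocompact dynamics.

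First I would define $\eta(x)$ to be the unique supporting hyperplane of $\Omega$ at $\xi(x) \in \overline{\Cc} \cap \partial \Omega$, which exists and is unique precisely because every such point is assumed to be a $C^1$ point of $\partial \Omega$. Equivariance is immediate from $\Lambda \leq \Aut(\Omega)$ and the equivariance of $\xi$; continuity follows by a subsequence argument since any accumulation point of $\{\eta(x_n)\}$ for $x_n \to x$ is a supporting hyperplane at $\xi(x)$, which is unique. For transversality, suppose $x \neq y$ and $\xi(x) \in \ker \eta(y)$. Then the segment $[\xi(x),\xi(y)]$ lies inside $\overline{\Omega}\cap \ker\eta(y) \subset \partial \Omega$, and it also lies in $\overline{\Cc}$ by convexity. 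But then its midpoint is a point of $\overline{\Cc} \cap \partial \Omega$ sitting in the open interior of a projective segment inside $\partial \Omega$, contradicting the assumption that every such point is extreme in $\Omega$.

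For the dynamics-preserving condition, let $\gamma \in \Lambda$ have infinite order with attracting fixed point $x^+$ and repelling fixed point $x^-$ in $\partial\Lambda$. Continuity and equivariance of $\xi$ give $\rho(\gamma)^n \xi(y) \to \xi(x^+)$ for $y \neq x^-$, which is attracting dynamics on $\xi(\partial\Lambda)$. To upgrade this to attracting dynamics on all of $\Pb(\Rb^d)$ and to prove proximality of $\rho(\gamma)$, I would apply the rescaling machinery of Theorem~\ref{thm:rescaling} at the fixed point $\xi(x^+)$: because $\xi(x^+)$ is both a $C^1$ and an extreme point of $\partial\Omega$, every rescaling limit of $\Omega$ at $\xi(x^+)$ is a properly convex cone with apex $\xi(x^+)$ and a single tangent hyperplane $\ker\eta(x^+)$, which forces $\rho(\gamma)$ to be proximal with attracting line $\xi(x^+)$ and attracting hyperplane complement $\ker\eta(x^+)$. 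The same argument applied to $\Omega^*$ gives the dynamics-preserving property for $\eta$.

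The main obstacle, and final step, is the uniform gap estimate $\log(\lambda_1(\rho(\gamma))/\lambda_2(\rho(\gamma))) \geq C\,\ell_S(\gamma) - c$. Proposition~\ref{thm:QI} shows that the orbit map $\Lambda \to (\Cc, H_\Omega)$ is a quasi-isometric embedding, so $\ell_S(\gamma)$ is comparable to the Hilbert translation length $\ell_\Omega(\gamma) = \tfrac{1}{2}\log(\lambda_1(\gamma)/\lambda_d(\gamma))$ for infinite order $\gamma$. Hence it suffices to show that $\log(\lambda_1/\lambda_2)$ is a definite proportion of $\log(\lambda_1/\lambda_d)$, uniformly in $\gamma$. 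I would argue by contradiction: suppose there is a sequence $\gamma_n \in \Lambda$ with $\ell_S(\gamma_n) \to \infty$ and $\log(\lambda_1/\lambda_2)(\gamma_n)/\ell_S(\gamma_n) \to 0$. Using the compactness of $\Lambda \backslash \Cc$, conjugate each $\gamma_n$ by an element of $\Lambda$ so that its axis crosses a fixed compact fundamental region, then feed the resulting sequence into the rescaling Theorem~\ref{thm:rescaling}. Any subsequential rescaled limit must fix a $C^1$ extreme point of a properly convex domain while failing to be proximal, contradicting the rigidity observed in Step 3. This compactness-plus-rescaling step is where the argument is most delicate, since it is only here that the \emph{uniformity} of the exponential gap — rather than its validity element by element — is extracted from the geometric input.
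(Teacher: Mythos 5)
Your construction of $\eta$ and the verification of continuity, equivariance, compatibility, and transversality coincide with the paper's: $\ker \eta(x)$ is the unique supporting hyperplane of $\Omega$ at $\xi(x)$ (unique by the $C^1$ hypothesis), and transversality holds because a nondegenerate segment $[\xi(x),\xi(y)]$ contained in $\ker \eta(y) \cap \overline{\Omega}$ would lie in $\overline{\Cc} \cap \partial \Omega$ and produce a non-extreme point there. Where you diverge is in the last two conditions. The paper does not prove the dynamics-preserving property or the uniform gap $\log\big(\lambda_1(\rho(\gamma))/\lambda_2(\rho(\gamma))\big) \geq C \ell_S(\gamma) - c$ at all: it invokes Proposition 4.10 of~\cite{GW2012}, which asserts that for an \emph{irreducible} representation the existence of continuous, equivariant, compatible, transverse boundary maps already implies the representation is projective Anosov. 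This is exactly where the standing hypothesis that $\Lambda$ is an irreducible subgroup of $\PGL_{d}(\Rb)$ is used; your write-up never invokes irreducibility.

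Your replacement for that citation has a genuine gap. The assertion that ``every rescaling limit of $\Omega$ at $\xi(x^+)$ is a properly convex cone with a single tangent hyperplane, which forces $\rho(\gamma)$ to be proximal'' is not an argument: Theorem~\ref{thm:rescaling} produces limits of the pair $(\Omega,\Cc)$ under sequences $h_n \in \PGL(V)$ chosen along a subspace, and nothing in its conclusion controls the Jordan structure of a fixed element $\gamma$. More seriously, the final compactness-plus-rescaling step for the uniform gap does not parse as stated: the $\gamma_n$ are distinct group elements, a subsequential limit of $\rho(\gamma_n)$ in $\Pb(\End(\Rb^{d}))$ is a singular endomorphism rather than an automorphism of a limiting domain, and there is no rigidity statement in your Step 3 for such a limit to ``fix a $C^1$ extreme point while failing to be proximal'' and thereby contradict. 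Converting the $C^1$/extreme-point hypothesis into a \emph{uniform} eigenvalue gap is the hard content of this implication (it is the substance of Proposition 4.10 in~\cite{GW2012} and of the independent treatment in~\cite{DGK2017b}), and it is precisely the part your proposal leaves unproved. Either cite that proposition as the paper does, making explicit where irreducibility enters, or supply an actual quantitative expansion estimate at the limit set.
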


\begin{proof} Let $\xi : \partial \Lambda \rightarrow \overline{\Cc} \cap \partial \Omega$ be the $\Lambda$-equivariant homeomorphism from the previous lemma. Since every point in $\overline{\Cc} \cap \partial \Omega$ is a $C^1$ point, the map $\eta: \partial \Lambda \rightarrow \Pb(\Rb^{d*})$ with
\begin{align*}
\Pb(\ker \eta(x)) = T_{\xi(x)} \partial \Omega
\end{align*}
is well defined, continuous, and $\Lambda$-equivariant. 

We claim that 
\begin{align*}
\xi(x) + \ker \eta(y) = \Rb^{d}
\end{align*}
for $x,y \in \partial \Gamma$ distinct. If not, then 
\begin{align*}
[\xi(x), \xi(y)] \subset \overline{\Cc} \cap \Pb(\ker \eta(y)) =\overline{\Cc} \cap T_{\xi(y)} \partial \Omega \subset \overline{\Cc} \cap \partial\Omega.
\end{align*}
But since each $q \in \partial \Omega \cap \overline{\Cc}$ is an extreme point of $\partial \Omega$ we see that this is impossible. 

Then Proposition 4.10 in~\cite{GW2012} implies that the inclusion representation is projective Anosov. 

\end{proof}

\section{Proof of Theorem~\ref{thm:main}}\label{subsec:pf_main_thm}

We now prove Theorem~\ref{thm:main} from the introduction: 

\begin{theorem}
Suppose $G$ is a semisimple Lie group with finite center and $P \leq G$ is a parabolic subgroup. Then there exists a finite dimensional real vector space $V$ and an irreducible representation $\phi:G \rightarrow \PSL(V)$ with the following property: if $\Gamma$ is a word hyperbolic group and $\rho:\Gamma \rightarrow G$ is a Zariski dense representation with finite kernel, then the following are equivalent:
\begin{enumerate}
\item $\rho$ is $P$-Anosov,
\item there exists a properly convex domain $\Omega \subset \Pb(V)$ such that $(\phi\circ\rho)(\Gamma)$ is a regular convex cocompact subgroup of $\Aut(\Omega)$. 
\end{enumerate}
\end{theorem}

For the rest of the section fix $G$ a semisimple Lie group with finite center and $P \leq G$ a parabolic subgroup.

By Theorem~\ref{thm:GW}, there exist a finite dimensional real vector space $V_0$ and an irreducible representation $\phi_0:G \rightarrow \PSL(V_0)$ with the following property: if $\Gamma$ is a word hyperbolic group and $\rho:\Gamma \rightarrow G$ is a representation, then the following are equivalent:
\begin{enumerate}
\item $\rho$ is $P$-Anosov,
\item $\phi_0 \circ \rho$ is projective Anosov.
\end{enumerate}

We will construct a new representation of $G$ by taking the tensor product of $\phi_0$ with itself. In general, this will not produce an irreducible representation and so we will construct a subspace of $V_0 \otimes V_0$ where $\phi_0 \otimes \phi_0$ acts irreducibly.  

For a proximal element $g \in \PSL(V_0)$ let $\ell^+_g \in \Pb(V_0)$ be the eigenline of $g$ corresponding to the eigenvalue of largest absolute value.  Then consider the vector space 
\begin{align*}
V =\Spanset_{\Rb}\{ \ell^+_g\otimes \ell^+_g: g \in \phi_0(G) \text{ is proximal} \}
\end{align*}
and the representation $\phi: G \rightarrow \SL(V)$ given by 
\begin{align*}
\phi(g)(v \otimes v) = (\phi_0(g)v) \otimes (\phi_0(g)v).
\end{align*}
Notice that we can assume that $V \neq (0)$, for otherwise there is nothing to prove.

\begin{lemma}\label{lem:phi0_proximal} With the notation above, if $g \in G$ and $\phi_0(g)$ is proximal, then $\phi(g)$ is proximal and $\ell_{\phi_0(g)}^+ \otimes \ell_{\phi_0(g)}^+$ is the eigenline of $\phi(g)$ corresponding to the eigenvalue of largest absolute value. 
\end{lemma}

\begin{proof} The argument is similar to the proof of Lemma~\ref{lem:rhoS_proximal}.
\end{proof}

\begin{lemma} With the notation above, $\phi: G \rightarrow \SL(V)$ is an irreducible representation. \end{lemma}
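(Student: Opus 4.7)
The plan is to identify $V$ with the \emph{Cartan component} of $\Sym^2 V_0$, namely the unique irreducible $\phi(G)$-submodule of $\Sym^2 V_0$ that contains $v_\lambda \otimes v_\lambda$, where $v_\lambda \in V_0$ is a highest weight vector of the irreducible representation $\phi_0$.

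First I would verify that $V$ is $\phi(G)$-invariant. Given $g \in G$ and a proximal $h \in \phi_0(G)$ with top eigenline $\ell_h^+$, the conjugate $\phi_0(g) h \phi_0(g)^{-1}$ lies in $\phi_0(G)$ and is again proximal, with top eigenline $\phi_0(g) \ell_h^+$. Hence
\begin{align*}
\phi(g)\bigl(\ell_h^+ \otimes \ell_h^+\bigr) = \bigl(\phi_0(g) \ell_h^+\bigr) \otimes \bigl(\phi_0(g) \ell_h^+\bigr)
\end{align*}
is again one of the spanning tensors of $V$, so $\phi(G) V \subseteq V$.

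Next I would show $V$ contains the Cartan component $V_{2\lambda} \subseteq \Sym^2 V_0$. Pick a maximal $\Rb$-split torus $A \leq G$ with Lie algebra $\mathfrak{a}$ and a positive Weyl chamber, and choose a regular $H \in \mathfrak{a}$ sufficiently deep in that chamber so that the highest restricted weight $\lambda$ of $\phi_0$ strictly dominates every other weight on $H$. Then $\phi_0(\exp H)$ is diagonalizable on $V_0$ with unique largest eigenvalue $e^{\lambda(H)}$ and one-dimensional top eigenline $[v_\lambda]$, and is therefore proximal; hence $v_\lambda \otimes v_\lambda \in V$. Because $V$ is $\phi(G)$-invariant and $v_\lambda \otimes v_\lambda$ is a highest weight vector of weight $2\lambda$ in $\Sym^2 V_0$, the $\phi(G)$-submodule it generates is the irreducible Cartan component $V_{2\lambda}$, which must therefore lie in $V$.

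For the reverse inclusion $V \subseteq V_{2\lambda}$, the key claim is that for every proximal $h \in \phi_0(G)$ the attracting line $\ell_h^+$ lies in the orbit $\phi_0(G) \cdot [v_\lambda]$. Granted this, writing $\ell_h^+ = \phi_0(g)[v_\lambda]$ yields $\ell_h^+ \otimes \ell_h^+ = \phi(g)(v_\lambda \otimes v_\lambda) \in V_{2\lambda}$, and the inclusion follows. To establish the claim I would exploit three facts: the orbit $\phi_0(G)\cdot[v_\lambda]$ is closed in $\Pb(V_0)$ (being the minimal $G$-orbit, with stabilizer the parabolic subgroup preserving the highest weight line); the set of proximal elements of $\phi_0(G)$ is open; and the map $h \mapsto \ell_h^+$ is continuous on this open set. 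Any proximal $h$ can then be approximated by regular hyperbolic elements of the form $h_n = \phi_0(k_n a_n k_n^{-1})$ with $a_n \in A$ regular in the positive chamber, for which manifestly $\ell_{h_n}^+ = \phi_0(k_n)[v_\lambda] \in \phi_0(G)\cdot[v_\lambda]$; passing to the limit places $\ell_h^+$ in the closed orbit.

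The main obstacle is this approximation step, which requires a Jordan-decomposition analysis of an arbitrary proximal element of $\phi_0(G)$ and a suitable perturbation of the semisimple part into a regular hyperbolic element. A cleaner alternative is to cite the result of Benoist on limit sets of irreducible semisimple subgroups of $\PGL(V_0)$, which identifies the closure of the set of attracting fixed points of proximal elements with the minimal $G$-orbit $\phi_0(G)\cdot[v_\lambda]$, directly giving the needed inclusion. Once $V = V_{2\lambda}$ is established, irreducibility of $\phi:G \to \SL(V)$ is immediate from irreducibility of the Cartan component.
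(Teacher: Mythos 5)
Your argument is correct in substance but takes a genuinely different route from the paper. You identify $V$ explicitly as the Cartan component $V_{2\lambda}\subset \Sym^2 V_0$ via highest-weight considerations, whereas the paper argues more softly: it invokes the minimality of the $\phi_0(G)$-action on the set of attracting lines $\{\ell_g^+\}$ (citing Benoist), decomposes $V=\oplus_i W_i$ into irreducibles by semisimplicity, and uses the dynamics of powers of a single proximal element (a limit $\phi(h)^{n_k}\to T$ in $\Pb(\End(V))$) to show that some $W_i$ contains a point of the minimal set, whence $W_i=V$. Your version buys an explicit description of $V$, at the cost of needing the fact that every attracting line of a proximal element of $\phi_0(G)$ lies in the closed orbit $\phi_0(G)\cdot[v_\lambda]$; the paper's version avoids any structure theory beyond complete reducibility.

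One step of your proof as written does not work: regular hyperbolic conjugates $k\exp(H)k^{-1}$ with $H$ regular in the positive chamber are \emph{not} dense in the set of proximal elements, because a proximal element may have non-real eigenvalues away from the top one (for instance, in $\SL_3(\Rb)$ an element with eigenvalues $4,\ \tfrac{1}{2}e^{\pm i\theta}$ is proximal but is not a limit of elements with real positive spectrum). So ``approximate $h$ by regular hyperbolic elements and pass to the limit'' fails. The gap is easily repaired, either by your proposed citation of Benoist's description of the limit set, or by a short dynamical argument in the spirit of the paper: the closed orbit $Y=\phi_0(G)\cdot[v_\lambda]$ spans $V_0$ (its span is invariant and $\phi_0$ is irreducible), so for a proximal $h$ one may choose $y\in Y$ outside the repelling hyperplane of $h$; then $h^n y\in Y$ converges to $\ell_h^+$, and since $Y$ is closed this gives $\ell_h^+\in Y$. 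With that step in place your identification $V=V_{2\lambda}$ holds and irreducibility follows.
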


\begin{proof} The argument is similar to the proof of Lemma~\ref{lem:rhoS_irred}. 
\end{proof}

We now complete the proof of the theorem.

\begin{lemma} With the notation above, if $\Gamma$ is a word hyperbolic group and $\rho:\Gamma \rightarrow G$ is a Zariski dense representation with finite kernel, then the following are equivalent:
\begin{enumerate}
\item $\rho$ is $P$-Anosov,
\item there exists a properly convex domain $\Omega \subset \Pb(V)$ such that $(\phi\circ\rho)(\Gamma)$ is a regular convex cocompact subgroup of $\Aut(\Omega)$. 
\end{enumerate}
\end{lemma}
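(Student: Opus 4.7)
By Theorem~\ref{thm:GW}, $\rho$ is $P$-Anosov if and only if $\phi_0\circ\rho$ is projective Anosov; moreover, by the preceding lemma together with Zariski density of $\rho$, $(\phi\circ\rho)(\Gamma)$ is an irreducible subgroup of $\PSL(V)$. My plan is to reduce the theorem to the auxiliary claim that $\phi_0\circ\rho$ is projective Anosov if and only if $\phi\circ\rho$ is projective Anosov. Once this claim is in hand, the full equivalence will follow by combining it with Theorem~\ref{thm:RCC_anosov_intro} in one direction and Theorem~\ref{thm:proper_action_implies} together with Benoist's theorem on positive proximality in the other.

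For the auxiliary claim, I would identify $V$ with a $G$-invariant subspace of $\Sym^2(V_0)$ so that $\phi(g)=(g\otimes g)|_V$, and use the $G$-equivariant topological embedding $\iota_0\colon \Pb(V_0)\to\Pb(V)$ defined by $[v]\mapsto [v\otimes v]$, which is well-defined on the Furstenberg limit set of $\phi_0(G)$ in $\Pb(V_0)$. Given boundary maps $\xi_0,\eta_0$ for $\phi_0\circ\rho$, the compositions $\xi=\iota_0\circ\xi_0$ and its dual $\eta$ yield continuous, $\rho$-equivariant, dynamics-preserving, transverse maps for $\phi\circ\rho$; since $\lambda_1(\phi(g))=\lambda_1(\phi_0(g))^2$ and $\lambda_2(\phi(g))\leq \lambda_1(\phi_0(g))\lambda_2(\phi_0(g))$, the eigenvalue-gap condition transfers from $\phi_0\circ\rho$ to $\phi\circ\rho$. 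Conversely, starting with boundary maps $\xi,\eta$ for $\phi\circ\rho$, for each infinite-order $\gamma\in\Gamma$ the attracting fixed point $\xi(x_\gamma^+)$ is forced to equal $\iota_0(\ell^+_{\phi_0(\rho(\gamma))})$; since such points are dense in $\partial\Gamma$ and $\iota_0(\Pb(V_0))$ is closed in $\Pb(V)$, the image $\xi(\partial\Gamma)$ lies in $\iota_0(\Pb(V_0))$, so pulling back along $\iota_0$ produces $\xi_0$ (and dually $\eta_0$). A representation-theoretic computation on the Cartan component $V\subset\Sym^2(V_0)$ then gives $\lambda_2(\phi(g))=\lambda_1(\phi_0(g))\lambda_2(\phi_0(g))$ for the relevant proximal $g$, which transfers the eigenvalue-gap condition back to $\phi_0\circ\rho$.

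With the auxiliary claim established, direction $(1)\Rightarrow(2)$ follows because every proximal element of $\phi(G)$ has positive top eigenvalue (it is a square), so the irreducible group $\phi(\rho(\Gamma))$ is positively proximal and, by Benoist's theorem, preserves a properly convex domain in $\Pb(V)$; Theorem~\ref{thm:proper_action_implies} then produces the desired $\Omega$. Direction $(2)\Rightarrow(1)$ follows from Theorem~\ref{thm:RCC_anosov_intro}, which ensures $\phi\circ\rho$ is projective Anosov, combined with the auxiliary claim and Theorem~\ref{thm:GW}. The main obstacle is the converse half of the auxiliary claim: rigorously identifying $\lambda_2(\phi(g))$ when $V$ is only the Cartan irreducible component of $\Sym^2(V_0)$, since one must check that the mixed-weight vector $v_1\cdot v_2$ (or a suitable surrogate) is not killed by the projection onto $V$, so that the eigenvalue-gap condition in $\PSL(V)$ genuinely implies the same condition in $\PSL(V_0)$.
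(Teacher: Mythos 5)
Your overall architecture matches the paper's: reduce to $\phi_0\circ\rho$ via Theorem~\ref{thm:GW}, get direction $(1)\Rightarrow(2)$ from the fact that the symmetric square preserves a properly convex cone together with Theorem~\ref{thm:proper_action_implies} (the paper packages this as Corollary~\ref{cor:convex_cocompact}, using the positive definite cone directly rather than Benoist's positive-proximality criterion, but both work), and get direction $(2)\Rightarrow(1)$ from Theorem~\ref{thm:RCC_anosov_intro} plus pulling the boundary maps back along $[v]\mapsto[v\otimes v]$, using that the attracting fixed points are dense in $\partial\Gamma$ and that the Veronese-type image is closed and $\phi(G)$-invariant. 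That pull-back step is carried out in the paper exactly as you describe, except that the paper produces the single needed proximal element $(\phi_0\circ\rho)(\varphi)$ from Zariski density of $\rho(\Gamma)$ (citing~\cite{P1994}) rather than claiming proximality of $(\phi_0\circ\rho)(\gamma)$ for every infinite-order $\gamma$; the latter is not automatic from proximality of $(\phi\circ\rho)(\gamma)$ on the subrepresentation $V$.

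The genuine gap is the step you yourself flag as the main obstacle: transferring the eigenvalue-gap condition from $\phi\circ\rho$ back to $\phi_0\circ\rho$ by computing $\lambda_2(\phi(g))$ on the irreducible component $V\subset\Sym^2(V_0)$. This computation is delicate (you must control which weight vectors of $g\otimes g$ survive in $V$, and you need the resulting inequality uniformly in $\gamma$, not just for proximal elements), and your proposal does not close it. The paper avoids it entirely: once you have continuous, $(\phi_0\circ\rho)$-equivariant, dynamics-preserving, transverse maps $\xi_0,\eta_0$ and you know $\phi_0\circ\rho$ is irreducible (which follows from irreducibility of $\phi_0$ and Zariski density of $\rho(\Gamma)$), Proposition 4.10 of~\cite{GW2012} already yields that $\phi_0\circ\rho$ is projective Anosov — no eigenvalue estimate is required. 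Replacing your representation-theoretic computation by that citation closes the gap and completes the argument.
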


\begin{proof}
If $\rho$ is $P$-Anosov, then $\phi_0 \circ \rho$ is projective Anosov representation by our choice of $\phi_0$.   Let $\xi_0: \partial \Gamma \rightarrow \Pb(V_0)$ and $\eta_0: \partial \Gamma \rightarrow \Pb(V_0^*)$ be the associated boundary maps. Since $\phi_0: G \rightarrow \PSL(V_0)$ is irreducible and $\rho(\Gamma) \leq G$ is Zariski dense, we see that $\phi_0 \circ \rho : \Gamma \rightarrow \PSL(V_0)$ is irreducible. So by Corollary~\ref{cor:convex_cocompact}, if 
\begin{align*}
V^\prime = \Spanset\{ \xi_0(x) \otimes \xi_0(x) : x \in \partial \Gamma\},
\end{align*}
then there exists a properly convex domain $\Omega \subset \Pb(V^\prime)$ so that  $(\phi \circ \rho)(\Gamma)$ is a regular convex cocompact subgroup of $\Aut(\Omega)$.  Since $\phi: G \rightarrow \PSL(V)$ is irreducible and $\rho(\Gamma) \leq G$ is Zariski dense, we see that $\phi \circ \rho : \Gamma \rightarrow \PSL(V)$ is irreducible. Then since $V^\prime \subset V$, we must have $V^\prime = V$. 

Next suppose that there exists some properly convex domain $\Omega \subset \Pb(V)$ such that $(\phi \circ \rho)(\Gamma) \leq \Aut(\Omega)$ is a regular convex cocompact subgroup.  Since $\phi: G \rightarrow \PSL(V)$ is irreducible and $\rho(\Gamma) \leq G$ is Zariski dense, we see that $\phi \circ \rho : \Gamma \rightarrow \PSL(V)$ is irreducible. Hence Theorem~\ref{thm:RCC_anosov}  implies that $\phi \circ \rho$ is a projective Anosov representation. Let $\xi: \partial \Gamma \rightarrow \Pb(V)$ and $\eta: \partial \Gamma \rightarrow \Pb(V^*)$ be the associated boundary maps. 

We claim that there exist maps $\xi_0: \partial \Gamma \rightarrow \Pb(V_0)$ and $\eta_0: \partial \Gamma \rightarrow \Pb(V_0^*)$ such that 
\begin{align*}
\xi(x) = \xi_0(x) \otimes \xi_0(x)
\end{align*}
and 
\begin{align*}
\eta(x) = \eta_0(x) \otimes \eta_0(x)
\end{align*}
for all $x \in \partial \Gamma$. Since $\rho(\Gamma)$ is Zariski dense in $G$ and $\phi_0(G)$ contains proximal elements, there exists some $\varphi \in \Gamma$ such that $(\phi_0 \circ \rho)(\varphi)$ is proximal, see for instance~\cite{P1994}. Let $x^+ \in \partial \Gamma$ be the attracting fixed point of $\varphi$ in $\partial \Gamma$. Then $\xi(x^+)$ is the eigenline of $(\phi \circ \rho)(\varphi)$ whose eigenvalue has maximal absolute value. Since  $(\phi_0 \circ \rho)(\varphi)$ is proximal, Lemma~\ref{lem:phi0_proximal} says that
\begin{align*}
\xi(x^+) = \ell^+ \otimes \ell^+
\end{align*}
where $\ell^+ \in \Pb(V)$ is the eigenline of $(\phi_0 \circ \rho)(\varphi)$ whose eigenvalue has maximal absolute value. Now 
\begin{enumerate}
\item $\xi: \partial \Gamma \rightarrow \Pb(V)$ is continuous and $(\phi \circ \rho)$-equivariant, 
\item the set 
\begin{align*}
A=\{ [v \otimes v] : v \in V_0 \setminus \{0\}\} \subset \Pb(V)
\end{align*}
is closed and $\phi(G)$-invariant, and 
\item the set $ \Gamma \cdot x^+$ is dense in $\partial \Gamma$.
\end{enumerate}
Since $\xi(x^+) \in A$, the three properties above imply that $\xi(\partial \Gamma) \subset A$. Hence there exists a map $\xi_0: \partial \Gamma \rightarrow \Pb(V_0)$ such that 
\begin{align*}
\xi(x) = \xi_0(x) \otimes \xi_0(x)
\end{align*}
for all $x \in \partial \Gamma$. Since 
\begin{align*}
[v] \in \Pb(V_0) \rightarrow [v \otimes v] \in A
\end{align*}
is a diffeomorphism, the map $\xi_0$ is continuous. Finally, by construction, the map $\xi_0$ is $(\phi_0 \circ \rho)$-equivariant. 

Applying this same argument to $\eta$ yields a continuous $(\phi_0 \circ \rho)$-equivariant map $\eta_0: \partial \Gamma \rightarrow \Pb(V_0^*)$ such that 
\begin{align*}
\eta(x) = \eta_0(x) \otimes \eta_0(x)
\end{align*}
for all $x \in \partial \Gamma$. 

If $x,y \in \partial \Gamma$, then
\begin{align*}
\eta(y) \left(\xi(x)\right) = \eta_0(y) \otimes \eta_0(y)\left( \xi_0(x) \otimes \xi_0(x) \right) = \eta_0(y)\left(\xi_0(x)\right)^2.
\end{align*}
Since $\xi$ and $\eta$ are transverse, this implies that $\xi_0$ and $\eta_0$ are transverse. 

Finally, since the representation $\phi_0: G \rightarrow \PSL(V_0)$ is irreducible and $\rho(\Gamma) \leq G$ is Zariski dense, we see that $\phi_0 \circ \rho : \Gamma \rightarrow \PSL(V_0)$ is irreducible. Hence by Proposition 4.10 in~\cite{GW2012} we see that $\phi_0 \circ \rho: \Gamma \rightarrow G$ is a projective Anosov representation. Thus by our choice of $\phi_0$ we see that $\rho: \Gamma \rightarrow G$ is $P$-Anosov.
 
\end{proof}

\section{Entropy rigidity}\label{sec:ent_rigidity}

The proof of Theorem~\ref{thm:ent_rig} has three steps: first we use results of Coornaert-Knieper, Coornaert, and Cooper-Long-Tillmann to transfer to the Hilbert metric setting, then we use a result of Tholozan to transfer to the Riemmanian metric setting, and finally we use an argument of Liu to prove rigidity. This general approach is based on the arguments in~\cite{BMZ2015}.

It will also be more notationally convenient in this section to work with $\Pb(\Rb^{d+1})$ instead of $\Pb(\Rb^d)$. 

\subsection{Some notation}

Suppose $(X,d)$ is a proper metric space and $x_0 \in X$ is some point. If $G \leq \Isom(X,d)$ is a discrete subgroup, then define the \emph{Poincar{\'e} exponent of $G$} to be 
\begin{align*}
\delta_{G}(X,d) := \limsup_{r \rightarrow \infty} \frac{1}{r} \log \# \left\{ g \in G : d(x_0, g x_0) \leq r \right\}.
\end{align*}
Notice that $\delta_G(X,d)$ does not depend on $x_0$. If $X$ has a measure $\mu$ one can also define the \emph{volume growth entropy relative to $\mu$} as
\begin{equation*}
h_{vol}(X,d,\mu) := \limsup_{r \rightarrow \infty} \frac{1}{r} \log \mu \left( \left\{ x \in X : d(x, x_0) \leq r \right\} \right).
\end{equation*}
If the measure $\mu$ is $\Isom(X,d)$-invariant, finite on bounded sets, and positive on open sets, then 
\begin{equation*}
\delta_G(X,d) \leq h_{vol}(X,d,\mu)
\end{equation*}
by the proof of Proposition 2 in~\cite{LW2010}. In the case in which $(X,g)$ is a Riemannian manifold, we will let 
\begin{align*}
h_{vol}(X,g):=h_{vol}(X,d, \Vol)
\end{align*}
where $d$ is the distance induced by $g$ and $\Vol$ is the Riemannian volume associated to $g$.

\subsection{Transferring to the Hilbert metric setting} As in the introduction, we define the Hilbert entropy of a representation $\rho: \Gamma \rightarrow \PGL_{d}(\Rb)$ to be 
\begin{align*}
H_\rho= \limsup_{r \rightarrow \infty} \frac{1}{r} \log \# \left\{ [\gamma] \in [\Gamma] : \frac{1}{2} \log \left(\frac{\lambda_{1}(\rho(\gamma))}{\lambda_{d}(\rho(\gamma))} \right)\leq r\right\}
\end{align*}
where $[\Gamma]$ is the set of conjugacy classes of $\Gamma$. By combining results of Coornaert-Knieper, Coornaert, and Cooper-Long-Tillmann, we will establish the following proposition.

\begin{proposition}\label{prop:CK} Suppose $\Gamma$ is a word hyperbolic group, $\rho: \Gamma \rightarrow \PGL_{d+1}(\Rb)$ is an irreducible projective Anosov representation, and  $\Omega \subset \Pb(\Rb^{d+1})$ is a properly convex domain such that $\rho(\Gamma) \leq \Aut(\Omega)$ is a regular convex cocompact subgroup. Then
\begin{align*}
H_{\rho} = \delta_{\rho(\Gamma)}(\Omega, d_\Omega).
\end{align*}
Moreover, for any $p_0 \in \Omega$ there exists $C \geq 1$ such that
\begin{align*}
 \frac{1}{C} e^{H_{\rho} r} \leq \# \left\{ \gamma \in \Gamma : d_\Omega(p_0, \rho(\gamma) p_0) \leq r \right\} \leq C e^{H_{\rho} r}.
\end{align*}
\end{proposition}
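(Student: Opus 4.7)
The plan is to reduce the statement to standard orbit and conjugacy-class counting theorems in Gromov hyperbolic geometry (Coornaert, and Coornaert--Knieper) by first identifying the Hilbert translation length of $\rho(\gamma)$ on $\Omega$ with the eigenvalue ratio $\tfrac12\log(\lambda_1/\lambda_{d+1})$.

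The first step is to verify: for every $\gamma \in \Gamma$ of infinite order,
\[
\tau_\Omega(\rho(\gamma)) := \inf_{p \in \Omega} H_\Omega(p, \rho(\gamma) p) = \frac{1}{2}\log\frac{\lambda_1(\rho(\gamma))}{\lambda_{d+1}(\rho(\gamma))}.
\]
Since $\rho$ is projective Anosov, $\xi(x_\gamma^+)$ is the attracting fixed line of $\rho(\gamma)$ in $\Pb(\Rb^{d+1})$ and $\eta(x_\gamma^+)$ an attracting fixed point in $\Pb(\Rb^{(d+1)*})$, so $\rho(\gamma)$ is biproximal, with $\lambda_1 > \lambda_2 \geq \dots \geq \lambda_d > \lambda_{d+1}$. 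Because $\xi(x_\gamma^\pm)$ lie in $\overline{\Cc}\cap\partial\Omega$ and are extreme $C^1$ points of $\Omega$ (by regular convex cocompactness), the open segment $(\xi(x_\gamma^-),\xi(x_\gamma^+))$ lies in $\Omega$ and is a $\rho(\gamma)$-invariant axis; a direct cross-ratio computation in the Hilbert metric along this segment (cf.\ Cooper--Long--Tillmann) yields the displayed formula.

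The second step is to obtain the ``moreover'' clause from Coornaert's orbit-counting theorem. By Theorem~\ref{thm:RCC_anosov_intro} and Proposition~\ref{thm:QI}, $(\Cc, H_\Omega)$ is a proper geodesic Gromov hyperbolic space on which $\rho(\Gamma)$ acts properly, cocompactly, and isometrically. Coornaert's theorem then gives a constant $C \geq 1$ with
\[
\frac{1}{C}\,e^{\delta r} \;\leq\; \#\{\gamma \in \Gamma : H_\Omega(p_0, \rho(\gamma) p_0) \leq r\} \;\leq\; C\, e^{\delta r}
\]
for all $r \geq 0$ and $p_0 \in \Cc$, where $\delta := \delta_{\rho(\Gamma)}(\Omega, H_\Omega)$. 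A general $p_0 \in \Omega$ is handled via the triangle inequality using $\lvert H_\Omega(p_0, \rho(\gamma) p_0) - H_\Omega(q_0, \rho(\gamma) q_0)\rvert \leq 2H_\Omega(p_0, q_0)$ with $q_0 \in \Cc$, at the cost of enlarging $C$.

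The third step is to identify $H_\rho$ with $\delta$. Here I would invoke the theorem of Coornaert--Knieper on the growth of conjugacy classes in groups acting geometrically on proper geodesic Gromov hyperbolic spaces, applied to $\rho(\Gamma) \curvearrowright (\Cc, H_\Omega)$: the exponential growth rate of $\#\{[\gamma] : \tau_\Omega(\rho(\gamma)) \leq r\}$ is exactly $\delta$. Since $\rho$ has finite kernel, conjugacy classes in $\Gamma$ correspond, in a uniformly bounded-to-one way, to conjugacy classes in $\rho(\Gamma)$, and by the first step the translation length $\tau_\Omega(\rho(\gamma))$ coincides with $\tfrac12\log(\lambda_1(\rho(\gamma))/\lambda_{d+1}(\rho(\gamma)))$. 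Hence this growth rate equals $H_\rho$.

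The main obstacle I expect is the translation-length identification in step one: one must verify carefully that the putative axis $(\xi(x_\gamma^-),\xi(x_\gamma^+))$ actually lies in $\Omega$ (using the extreme/$C^1$ hypothesis and Lemma~\ref{lem:GP}) and that $\rho(\gamma)$ is honestly biproximal on $\Rb^{d+1}$ (not only projectively proximal). Invoking Coornaert--Knieper requires the geometric formulation adapted to the Hilbert displacement rather than the word metric, but this is immediate from Proposition~\ref{thm:QI} and the bounded-to-one correspondence on conjugacy classes; the remaining steps are then routine.
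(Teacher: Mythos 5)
Your proposal is correct and follows essentially the same route as the paper: Cooper--Long--Tillmann for the lower bound on the Hilbert translation length, the observation that the axis $(\xi(x_\gamma^-),\xi(x_\gamma^+))$ lies in the convex core (via $C^1$ extremality of $\overline{\Cc}\cap\partial\Omega$) for the matching upper bound, Coornaert--Knieper for the conjugacy-class growth rate, and Coornaert for the orbit count. The only detail the paper adds is an initial application of Selberg's lemma to pass to a finite-index subgroup with torsion-free image, which disposes of the finitely many torsion conjugacy classes where the translation-length identity is not available.
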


\begin{proof} Let $\Cc \subset \Omega$ be a closed convex subset such that $g\Cc = \Cc$ for all $g \in \rho(\Gamma)$, $\rho(\Gamma) \backslash \Cc$ is compact, and every point in $\overline{\Cc} \cap \partial \Omega$ is a $C^1$ extreme point of $\Omega$.

Using Selberg's lemma, we can find a finite index subgroup $\Gamma_0 \leq \Gamma$ such that $\rho(\Gamma_0)$ is torsion free.  Then $H_{\rho} = H_{\rho|_{\Gamma_0}}$, $\delta_{\rho(\Gamma_0)}(\Omega, d_\Omega)=\delta_{\rho(\Gamma)}(\Omega, d_\Omega)$, and $\rho(\Gamma_0) \backslash \Cc$ is compact.

For $\gamma \in \Gamma_0$ define 
\begin{align*}
\tau(\gamma) = \inf_{c \in \Cc} d_\Omega(\rho(\gamma)c,c).
\end{align*}
Since $(\Cc, d_\Omega)$ is a proper geodesic metric space, $\rho(\Gamma_0)$ acts cocompactly on $\Cc$, $\Gamma_0$ is word hyperbolic, and $\ker \rho$ is finite, Theorem 1.1 in~\cite{CK2002} says that
\begin{align}
\label{eq:entropy_periodic_orbits}
\delta_{\rho(\Gamma_0)}(\Omega, d_\Omega) =  \lim_{r \rightarrow \infty} \frac{1}{r} \log \# \left\{ [\gamma] \in [\Gamma_0] : \tau(\gamma) \leq r \right\}.
\end{align}
Next we claim that 
\begin{align*}
\tau(\gamma) = \frac{1}{2} \log \left(\frac{\lambda_{1}(\rho(\gamma))}{\lambda_{d+1}(\rho(\gamma))} \right)
\end{align*}
for every $\gamma \in \Gamma_0$. Fix some $\gamma \in \Gamma_0$. Then Proposition 2.1 in~\cite{CLT2015} says that 
\begin{align*}
\inf_{x \in \Omega} d_\Omega(\rho(\gamma)x,x) = \frac{1}{2} \log \left(\frac{\lambda_{1}(\rho(\gamma))}{\lambda_{d+1}(\rho(\gamma))} \right).
\end{align*}
and so 
\begin{align*}
\tau(\gamma) \geq \frac{1}{2} \log \left(\frac{\lambda_{1}(\rho(\gamma))}{\lambda_{d+1}(\rho(\gamma))} \right).
\end{align*}
Since $\gamma$ has infinite order we see that $\rho_0(\gamma)$ is biproximal, that is $\rho_0(\gamma)$ and $\rho_0(\gamma)^{-1}$ are proximal. So if $\ell^+$ and $\ell^-$ are the attracting and repelling eigenlines of $\rho_0(\gamma)$ respectively, then Corollary~\ref{cor:dynamics} implies that $\ell^+, \ell^- \in \overline{\Cc} \cap \partial \Omega$. Since every point in $\overline{\Cc} \cap \partial \Omega$ is an extreme point of $\Omega$, we then see that $(\ell^+, \ell^-) \subset \Cc$. But if $p \in (\ell^+, \ell^-)$, then
\begin{align*}
d_\Omega(\rho_0(\gamma) p,p) = \frac{1}{2} \log \left(\frac{\lambda_{1}(\rho(\gamma))}{\lambda_{d+1}(\rho(\gamma))} \right)
\end{align*}
by the definition of the Hilbert distance.
Hence 
\begin{align}
\label{eq:min_translate}
\tau(\gamma) = \frac{1}{2} \log \left(\frac{\lambda_{1}(\rho(\gamma))}{\lambda_{d+1}(\rho(\gamma))} \right).
\end{align}

Then by Equations~\eqref{eq:entropy_periodic_orbits} and~\eqref{eq:min_translate}
\begin{align*}
\delta_{\rho(\Gamma_0)} & (\Omega, d_\Omega) =  \lim_{r \rightarrow \infty} \frac{1}{r} \log \# \left\{ [\gamma] \in [\Gamma_0] : \tau(\gamma) \leq r \right\}  \\ 
&=  \lim_{r \rightarrow \infty} \frac{1}{r} \log \# \left\{ [\gamma] \in [\Gamma_0] : \frac{1}{2} \log \left(\frac{\lambda_{1}(\rho(\gamma))}{\lambda_{d+1}(\rho(\gamma))} \right) \leq r \right\} =  H_{\rho|_{\Gamma_0}}
\end{align*}
and so
\begin{align*}
H_{\rho} = H_{\rho|_{\Gamma_0}}=\delta_{\rho(\Gamma_0)}(\Omega, d_\Omega)=\delta_{\rho(\Gamma)}(\Omega, d_\Omega).
\end{align*}
Finally by Th{\'e}or{\`e}me 7.2 in~\cite{C1993}, for any $p_0 \in \Omega$ there exists $C \geq 1$ such that
\begin{align*}
 \frac{1}{C} e^{H_{\rho} r} \leq \# \left\{ \gamma \in \Gamma : d_\Omega(p_0, \rho(\gamma) p_0) \leq r \right\} \leq C e^{H_{\rho} r}.
\end{align*}

\end{proof}

\subsection{Transferring to the Riemannian setting} 

Associated to every properly convex domain $\Omega \subset \Pb(\Rb^{d+1})$ is a Riemannian distance $B_\Omega$ on $\Omega$ called the \emph{Blaschke distance} (see, for instance, \cite{Lof2001,BH2013}). This Riemannian distance is $\Aut(\Omega)$-invariant and by a result of Calabi~\cite{C1972} has Ricci curvature bounded below by $-(d-1)$. Since the Ricci curvature is bounded below by $-(d-1)$, the Bishop-Gromov volume comparison theorem implies that 
\begin{align*}
h_{vol}(\Omega, B_\Omega) \leq d-1.
\end{align*}

Benz{\'e}cri's  theorem (see Theorem~\ref{thm:bens_thm}) provides a simple proof that the Hilbert distance and the Blaschke distance are bi-Lipschitz (see for instance~\cite[Section 9.2]{M2014}) and Tholozan recently proved the following refined relationship between the two distances:

\begin{theorem}\cite{Tho2015}\label{thm:tho}
If $\Omega \subset \Pb(\Rb^{d+1})$ is a properly convex domain, then 
\begin{equation*}
B_\Omega < d_\Omega +1.
\end{equation*}
In particular,  if $\Gamma \leq \Aut(\Omega)$ is a discrete group, then 
\begin{equation*}
\delta_\Gamma(\Omega, d_\Omega) \leq \delta_\Gamma(\Omega, B_\Omega) \leq h_{vol}(\Omega, B_\Omega) \leq d-1.
\end{equation*}
\end{theorem}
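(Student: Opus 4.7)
The plan is to establish the core pointwise bound $B_\Omega < H_\Omega + 1$; the chain of three further inequalities in the ``in particular'' clause then follows from standard comparison geometry and orbital counting.

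For the core inequality, I would proceed in two stages. First, a soft comparison: both the Blaschke Riemannian norm and the Hilbert Finsler norm are continuous and $\PGL(\Rb^{d+1})$-invariant as functions of $(\Omega, x, v)$ with $v \in T_x\Omega$. By Benz\'ecri's theorem (Theorem~\ref{thm:bens_thm}), $\PGL(\Rb^{d+1})$ acts cocompactly on $\Xb_0(\Rb^{d+1})$, so the ratio of these norms descends to a continuous, hence bounded, function on the corresponding quotient of the pointed unit tangent bundle. Integrating along curves yields a bound of the form $B_\Omega \leq c_d H_\Omega$ for some constant $c_d$ depending only on $d$. To upgrade this to the sharp affine form $B_\Omega < H_\Omega + 1$, I would follow Tholozan's strategy of working directly on the Cheng--Yau affine sphere $\Sigma \subset C$ inside the properly convex cone $C \subset \Rb^{d+1}$ over $\Omega$. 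Here the Blaschke metric is the affine metric induced on $\Sigma$, while the Hilbert length of a projective segment $[p,q] \subset \Omega$ is computed by the classical cross-ratio at the two boundary points of the line through $p,q$. A geodesic-by-geodesic comparison along the lift of $[p,q]$ to $\Sigma$, together with the convexity of $\Sigma$ relative to the origin and the asymptotic behavior of the affine metric near $\partial C$, should produce the sharp coefficient $1$ in front of $H_\Omega$, with an additive slack of at most $1$ accruing from behavior at the two endpoints.

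Given $B_\Omega < H_\Omega + 1$, the remaining inequalities are routine. For any $x_0 \in \Omega$ and $r > 0$, the inclusion
\begin{equation*}
\{ g \in \Gamma : H_\Omega(x_0, g x_0) \leq r\} \subset \{ g \in \Gamma : B_\Omega(x_0, g x_0) \leq r+1\}
\end{equation*}
is immediate and yields $\delta_\Gamma(\Omega, H_\Omega) \leq \delta_\Gamma(\Omega, B_\Omega)$ after taking $\limsup \tfrac{1}{r}\log$. The inequality $\delta_\Gamma(\Omega, B_\Omega) \leq h_{vol}(\Omega, B_\Omega)$ is the general fact recalled in the excerpt, applied to the Blaschke Riemannian volume, which is $\Aut(\Omega)$-invariant, finite on bounded sets, and positive on open sets. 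Finally, Calabi's theorem provides the lower bound $\Ric \geq -(d-1)$ for the $d$-dimensional Blaschke metric, and the Bishop--Gromov volume comparison then dominates the volume of a Blaschke $r$-ball by that of a geodesic $r$-ball in real hyperbolic $d$-space of curvature $-1$, giving $h_{vol}(\Omega, B_\Omega) \leq d-1$.

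The main obstacle is moving from the soft Benz\'ecri comparison, which only produces an unspecified dimension-dependent multiplicative constant, to the sharp affine form with multiplicative constant exactly $1$ and additive slack exactly $1$. Both sharpness statements matter for the intended application to the entropy inequality $H_\rho \leq d-2$ in Theorem~\ref{thm:ent_rig}, since any multiplicative loss would be amplified into a looser entropy bound. The extraction of these sharp constants is the real geometric content and is what genuinely requires the explicit affine-sphere analysis; pure compactness arguments will not suffice.
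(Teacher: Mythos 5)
The paper does not prove the pointwise inequality $B_\Omega < H_\Omega + 1$ at all: it is quoted directly from Tholozan's paper \cite{Tho2015}, and the only reasoning the paper supplies for this theorem is the derivation of the displayed chain of inequalities from that pointwise bound. Your handling of that chain is correct and coincides with the paper's implicit argument: the inclusion $\{ g \in \Gamma : H_\Omega(x_0, gx_0) \leq r\} \subset \{ g \in \Gamma : B_\Omega(x_0, gx_0) \leq r+1\}$ gives $\delta_\Gamma(\Omega,H_\Omega) \leq \delta_\Gamma(\Omega,B_\Omega)$; the general inequality $\delta_G(X,d) \leq h_{vol}(X,d,\mu)$ recalled in the preliminaries applies because the Blaschke volume is $\Aut(\Omega)$-invariant, finite on bounded sets, and positive on open sets; and Calabi's bound $\Ric \geq -(d-1)$ together with Bishop--Gromov gives $h_{vol}(\Omega,B_\Omega) \leq d-1$, exactly as stated in the paragraph preceding the theorem.

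Your sketch of the core inequality itself attempts more than the paper does, but as written it is a plan rather than a proof: the Benz\'ecri compactness argument yields only a bi-Lipschitz comparison between $B_\Omega$ and $H_\Omega$ (the paper makes this same observation), and the passage to multiplicative constant exactly $1$ with additive slack exactly $1$ --- the ``geodesic-by-geodesic comparison along the lift to the Cheng--Yau affine sphere'' --- is precisely the content of \cite{Tho2015} and is not carried out; you flag this gap yourself. Since the paper treats the pointwise bound as a black-box citation, this is not a divergence from the paper's proof, but if the goal were a self-contained argument, the affine-sphere estimate would still have to be supplied, and it is the only genuinely nontrivial step. Everything in the proposal that corresponds to reasoning actually present in the paper is correct and follows the same route.
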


\subsection{Rigidity in the Riemannian setting} The Bishop-Gromov volume comparison theorem implies that amongst the class of Riemannian $d$-manifolds with $\Ric \geq -(d-1)$ the volume growth entropy is maximized when $(X,g)$ is isometric to real hyperbolic $d$-space. There are many other examples which maximize volume growth entropy, but if $X$ has ``enough'' symmetry then it is reasonable to suspect that $h_{vol}(X,g)=d-1$ if and only if $X$ is isometric to real hyperbolic $d$-space. This was proved by Ledrappier and Wang when $X$ covers a compact manifold: 

\begin{theorem}[Ledrappier-Wang \cite{LW2010}] Let $(X,g)$ be a complete simply connected Riemannian $d$-manifold with $\Ric \geq -(d-1)$. Suppose that $X$ is the Riemannian universal cover of a compact manifold. Then $h_{vol}(X,g)=d-1$ if and only if $X$ is isometric to real hyperbolic $d$-space. 
\end{theorem}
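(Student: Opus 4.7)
The \emph{if} direction is immediate: direct computation of spherical volume in $\Hb^d$ gives $h_{vol}(\Hb^d) = d-1$, and the argument is independent of the cocompactness hypothesis. So the content is the converse, and I would approach it as follows.

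The plan is to reduce a global volume-growth statement to a pointwise curvature statement via Busemann functions. Fix a basepoint $o \in X$ and, for each $\xi$ in the ideal boundary, let $b_\xi$ be the corresponding Busemann function, normalised so that $|\nabla b_\xi|\equiv 1$. Bishop--Gromov applied along $r\mapsto -b_\xi$ (in the sense of distributions/viscosity) gives $\Delta b_\xi \leq d-1$ on all of $X$ under $\Ric \geq -(d-1)$, with equality at a point only when the infinitesimal geometry of the horosphere through that point matches that of $\Hb^d$. Integrating this mean-curvature estimate against volume shows that $h_{vol}(X,g) \leq d-1$ comes from a pointwise inequality. I would then argue that the equality $h_{vol}(X,g)=d-1$ forces, via a Fubini/averaging argument using a fundamental domain (this is where the cocompactness hypothesis is essential), the pointwise identity $\Delta b_\xi(x) = d-1$ for almost every pair $(x,\xi)$ with respect to a natural measure on $X \times \partial X$.

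Once this pointwise identity is in hand, the Bochner formula applied to the $1$-Lipschitz function $b_\xi$ gives
\begin{equation*}
0 \;=\; \tfrac{1}{2}\Delta|\nabla b_\xi|^2 \;=\; \bigl|\nabla^2 b_\xi\bigr|^2 + \bigl\langle \nabla b_\xi, \nabla \Delta b_\xi\bigr\rangle + \Ric(\nabla b_\xi, \nabla b_\xi).
\end{equation*}
Since $|\nabla b_\xi|\equiv 1$ the Hessian vanishes in the $\nabla b_\xi$ direction, so the Hessian lives on the horospherical distribution of dimension $d-1$ with trace $\Delta b_\xi = d-1$. Cauchy--Schwarz forces $|\nabla^2 b_\xi|^2 \geq (d-1)$, with equality iff $\nabla^2 b_\xi$ is the orthogonal projection onto $(\nabla b_\xi)^\perp$. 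Combining with $\Ric(\nabla b_\xi,\nabla b_\xi) \geq -(d-1)$, both inequalities must be equalities: the horospheres are totally umbilic of principal curvature $1$, and $\Ric(\nabla b_\xi,\nabla b_\xi) = -(d-1)$.

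From here I would promote this infinitesimal rigidity along Busemann directions to full constant sectional curvature. The Ricci equality along every direction (obtained by varying $\xi$ and using that the set of Busemann gradients at a fixed point is dense in the unit tangent sphere by cocompactness) combined with the umbilic horosphere condition implies $K \equiv -1$ in every $2$-plane containing such a direction, and hence $K \equiv -1$ everywhere; completeness and simple-connectedness then give $X \cong \Hb^d$ by Killing--Hopf. The main obstacle I expect is the averaging step: extracting a pointwise equation $\Delta b_\xi(x)=d-1$ from the single global number $h_{vol}=d-1$. This requires exhibiting the entropy as an integral of $\Delta b_\xi$ against a suitable $(X\times\partial X)$-measure (built from the Patterson--Sullivan or harmonic measure class on $\partial X$) and carefully using a compact fundamental domain to turn the integral equality into a pointwise one via a Fatou-type argument.
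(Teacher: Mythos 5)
Your reduction of the problem to the converse direction is right, and your overall strategy is recognizably the original Ledrappier--Wang route: an integral formula expressing the entropy as an average of $\Delta b_\xi$ over a measure on $X\times\partial X$, followed by Bochner. But the proposal has two genuine gaps, and both occur exactly where you flag uncertainty. First, the averaging step is not a technicality you can defer: the identity $h_{vol}(X,g)=\int \Delta b_\xi\,d\mu$ for a suitable measure $\mu$ \emph{is} the hard direction of the theorem, and no elementary Fubini argument over a fundamental domain produces it, because $h_{vol}$ is defined by the growth of metric balls while $\Delta b_\xi$ is an infinitesimal quantity attached to a boundary point; relating the two requires either the Patterson--Sullivan/harmonic-measure machinery of \cite{LW2010} or a substitute. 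Second, the globalization step fails as stated: under only $\Ric\geq-(d-1)$ there is no nonpositive-curvature hypothesis, so there is no reason that the set of Busemann gradients at a fixed point is dense in the unit tangent sphere (not every direction need point along a ray), and even granting density you would still need to upgrade an ``a.e.\ $(x,\xi)$'' equality to the everywhere statement required to run the Riccati equation and conclude $K\equiv-1$. You also never address the regularity of $b_\xi$, which a priori is only Lipschitz, so the Bochner formula cannot be applied directly.

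For comparison: this paper does not reprove the Ledrappier--Wang theorem (it is cited), but the appendix proves the closely related Proposition~\ref{prop:liu} by Liu's argument, which sidesteps both of your obstacles. Instead of boundary measures, it works with the distance function $d_0(x)=d(x,x_0)$: the Laplacian comparison theorem gives $\limsup_{x\to\infty}\Delta d_0(x)\leq d-1$, maximal volume growth forces the \emph{average} of $\Delta d_0$ over large annuli $A_n$ to converge to $d-1$, and a packing lemma (this is where cocompactness, or the orbit-counting lower bound, enters) shows that group translates of a fixed ball $B_R(x_0)$ fill a definite fraction of each $A_n$. Hence some translate $\gamma_n B_R(x_0)$ carries average $\Delta d_0$ close to $d-1$; pulling back by $\gamma_n$, the functions $d(x_0,\gamma_n\cdot)-d(x_0,\gamma_n x_0)$ subconverge to a limit $f$ on $B_R(x_0)$ with $\Delta f\equiv d-1$ distributionally, elliptic regularity makes $f$ smooth, and the Busemann description gives $\norm{\nabla f}\equiv 1$. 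Letting $R\to\infty$ produces a single global function $u$ with $\norm{\nabla u}\equiv1$ and $\Delta u\equiv d-1$, at which point Theorem~6 of \cite{LW2010} (a warped-product splitting argument, not a density-of-directions argument) yields $X\cong\Hb^d_{\Rb}$. If you want to complete your write-up, the realistic options are to import the measure-theoretic formula from \cite{LW2010} wholesale, or to abandon the boundary measure and adopt the distance-function/packing scheme above.
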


Later Liu~\cite{L2011} provided an alternative proof of Ledrappier and Wang's result and Liu's argument can be adapted  to prove the following.

\begin{proposition}\label{prop:liu} Let $(X,g)$ be a complete simply connected Riemannian $d$-manifold with $\Ric \geq -(d-1)$ and bounded sectional curvature. Suppose $\Gamma \leq \Isom(X,g)$ is a discrete subgroup and there exist $C,r_0>0$ and $x_0 \in X$ such that
\begin{align*}
C e^{(d-1)r} \leq \#\{ \gamma \in \Gamma : d_X(x_0, \gamma x_0) \leq r \}
\end{align*}
 for every $r > r_0$. Then $X$ is isometric to real hyperbolic $d$-space. 
\end{proposition}

We will prove this result in Section~\ref{sec:Lius_argument} of the appendix.

\subsection{Proof of Theorem~\ref{thm:ent_rig}}

Suppose $\Gamma$ is a finitely generated word hyperbolic group, $\rho:\Gamma \rightarrow \PGL_{d+1}(\Rb)$ is an irreducible projective Anosov representation, and $\rho(\Gamma)$ preserves a properly convex domain in $\Pb(\Rb^{d+1})$. Using Theorem~\ref{thm:proper_action_implies}, there exists a properly convex domain  $\Omega \subset \Pb(\Rb^{d+1})$ such that $\rho(\Gamma) \leq \Aut(\Omega)$ is a regular convex cocompact subgroup. 

Combining Proposition~\ref{prop:CK} and Theorem~\ref{thm:tho} we see that 
\begin{align*}
H_\rho = \delta_{\rho(\Gamma)}(\Omega, d_\Omega) \leq d-1.
\end{align*}

Now suppose that $H_{\rho} = d-1$. By Theorem~\ref{thm:tho} and Proposition~\ref{prop:CK} there exists some $C_0 > 0$ such that 
\begin{align*}
 C_0 e^{(d-1)r} \leq \# \left\{ \gamma \in \Gamma : B_\Omega(p_0, \rho(\gamma) p_0) \leq r \right\}
\end{align*}
for all $r \geq 0$.  Moreover, Benz{\'e}cri's theorem implies that $B_\Omega$ has bounded sectional curvature (see for instance~\cite[Lemma 3.1]{BMZ2015}). So by Proposition~\ref{prop:liu}, $(\Omega, B_\Omega)$ is isometric to the real hyperbolic $d$-space. Hence $(\Omega, d_{\Omega})$ is projectively equivalent to the Klein-Beltrami model of hyperbolic space (see~\cite{VLS1991}). In particular, by conjugating $\rho(\Gamma)$ we can assume 
\begin{align*}
\Omega = \left\{ [1:x_1 : \dots : x_d] \in \Pb(\Rb^d) : \sum_{i=1}^d x_i^2 < 1\right\}
\end{align*}
and $\Aut(\Omega)=\PO(1,d)$. Then $\rho(\Gamma)$ is a convex cocompact subgroup of $\PO(1,d)$ in the classical sense. 

Since
\begin{align*}
\delta_{\rho(\Gamma)}(\Omega, d_\Omega) = d-1,
\end{align*}
Theorem D in~\cite{T1984} implies that 
\begin{align*}
\partial \Omega \cap \overline{\Cc} = \partial \Omega.
\end{align*}
Then since $\Cc$ is convex we see that $\Cc=\Omega$. Then since $\rho(\Gamma) \backslash \Cc= \rho(\Gamma) \backslash \Omega$ is compact, we see that $\rho(\Gamma)\leq \PO(1,d)$ is a co-compact lattice.

\section{Regularity rigidity}\label{sec:reg_rigid}

In this section we will prove Theorems~\ref{thm:main_reg_rigid_intro} and~\ref{thm:main_reg_rigid_hitchin_intro} from the introduction. The proof of both theorems are based on the following observation.

\begin{observation}\label{obs:dynamics} Suppose $g \in \PGL_{d}(\Rb)$ is proximal and $\ell_g^+ \in \Pb(\Rb^d)$ is the eigenline of $g$ corresponding to the eigenvalue of largest absolute value. Let $d_{\Pb}$ is a distance on $\Pb(\Rb^{d})$ induced by a Riemannian metric. If $v \neq \ell^+_g$ and $g^n v \rightarrow \ell^+_g$, then
\begin{align*}
\log\frac{\lambda_2(g)}{\lambda_1(g)} \geq \limsup_{n \rightarrow \infty} \frac{1}{n} \log d_{\Pb}\Big(g^n v, \ell^+_g \Big).
\end{align*}
Moreover, there exists a proper subspace $V \subset \Pb(\Rb^d)$ such that: if $v \in \Pb(\Rb^{d}) \setminus V$ and
$g^n v \rightarrow \ell^+_g$, then
\begin{align*}
\log\frac{\lambda_2(g)}{\lambda_1(g)} = \lim_{n \rightarrow \infty} \frac{1}{n} \log d_{\Pb}\Big(g^n v, \ell^+_g \Big).
\end{align*}
\end{observation}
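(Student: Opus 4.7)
The plan is to reduce the problem to linear-algebraic estimates on iterates of $g$ via its Jordan decomposition. Since $g$ is proximal, the unique eigenvalue of maximal absolute value is real and simple, so $\ell_g^+$ lifts to a one-dimensional eigenline $L$, and there is a unique $g$-invariant complement $W$ with $\Rb^d = L \oplus W$; every eigenvalue of $g|_W$ then has absolute value at most $\lambda_2(g)$. I will further split $W = W_1 \oplus W_2$, where $W_1$ is the sum of the generalized eigenspaces of $g|_W$ with eigenvalue of absolute value exactly $\lambda_2(g)$ and $W_2$ is the sum of those with strictly smaller absolute value. Note that $W_1 \neq 0$ because $\lambda_2(g)$ is by definition the absolute value of some eigenvalue.

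First I would observe that any $v \in \Pb(\Rb^d) \setminus \{\ell_g^+\}$ with $g^n v \to \ell_g^+$ must satisfy $v \notin \Pb(W)$: indeed, $\Pb(W)$ is closed and $g$-invariant and does not contain $\ell_g^+$. Thus $v$ admits a lift of the form $v_1 + w$ with $v_1 \in L \setminus \{0\}$ and $w \in W \setminus \{0\}$. A direct computation in the affine chart of $\Pb(\Rb^d)$ centered at $\ell_g^+$ coming from the splitting $L \oplus W$ then yields, for all sufficiently large $n$,
$$d_\Pb(g^n v, \ell_g^+) \asymp \lambda_1(g)^{-n} \|g^n w\|,$$
with implicit constants depending only on $v$ and any fixed choice of norm on $W$. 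Here one may replace the given $d_\Pb$ by the Fubini--Study distance, to which it is bi-Lipschitz since $\Pb(\Rb^d)$ is compact.

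The inequality then follows from Gelfand's spectral radius formula applied to $g|_W$:
$$\limsup_{n \to \infty} \frac{1}{n} \log \|g^n w\| \leq \log \lambda_2(g).$$
For the equality statement, set $V = \Pb(L \oplus W_2)$, which is a proper projective subspace of $\Pb(\Rb^d)$ because $W_1 \neq 0$. For $v \notin V$, the $W_1$-component $w_1$ of $w$ is nonzero; since every eigenvalue of $g|_{W_1}$ has absolute value exactly $\lambda_2(g)$, Gelfand's formula applied to $g^{-1}|_{W_1}$ also gives the matching lower bound $\|g^n w_1\| \geq c_\epsilon \lambda_2(g)^n e^{-\epsilon n}$ for every $\epsilon > 0$. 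Combined with the upper bound on $\|g^n w_2\|$, this forces $\lim_{n\to\infty} \tfrac{1}{n} \log \|g^n w\| = \log \lambda_2(g)$, and hence equality in the projective estimate.

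I expect the only technical point to be the bi-Lipschitz estimate for $d_\Pb$ near $\ell_g^+$; but this is purely local and routine because the target is a fixed compact Riemannian manifold, so it reduces to a coordinate computation in a single affine chart.
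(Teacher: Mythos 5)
Your argument is correct and is exactly the route the paper takes: the paper's entire proof is the single line ``This follows easily once $g$ is written in Jordan normal form,'' and your splitting $\Rb^d = L \oplus W$ with $W = W_1 \oplus W_2$, the affine-chart estimate $d_{\Pb}\big(g^n v, \ell_g^+\big) \asymp \lambda_1(g)^{-n}\norm{g^n w}$, and the two applications of Gelfand's formula are precisely the details that line leaves to the reader. Nothing is missing.
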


We give a proof of the observation in Appendix~\ref{app:linear_maps}.

\subsection{Proof of Theorem~\ref{thm:main_reg_rigid_intro}} We begin by recalling the theorem. 

\begin{theorem}\label{thm:main_reg_rigid} Suppose $d>2$, $\Gamma$ is a word hyperbolic group, and $\rho:\Gamma \rightarrow \PGL_{d}(\Rb)$ is an irreducible projective Anosov representation with boundary map $\xi: \partial \Gamma \rightarrow \Pb(\Rb^{d})$. If 
\begin{enumerate}
\item $M = \xi(\partial \Gamma)$ is a $C^2$ $k$-dimensional submanifold of $\Pb(\Rb^{d})$ and
\item the representation $\wedge^{k+1} \rho : \Gamma \rightarrow \PGL(\wedge^{k+1} \Rb^{d})$ is irreducible,
\end{enumerate}
then
\begin{align*}
\frac{\lambda_1(\rho(\gamma))}{\lambda_2(\rho(\gamma))} = \frac{\lambda_{k+1}(\rho(\gamma))}{\lambda_{k+2}(\rho(\gamma))}
\end{align*}
for all $\gamma \in \Gamma$. 
\end{theorem}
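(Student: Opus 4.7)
My plan is to pass to the $(k+1)$-st exterior power representation, where the $C^2$ regularity of $M = \xi(\partial\Gamma)$ can be leveraged to relate contraction rates at an attracting fixed point, and to extract the desired eigenvalue identity by comparing two different expressions for the same rate.

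First I would construct a companion boundary map $\hat\xi : \partial\Gamma \to \Pb(\wedge^{k+1}\Rb^d)$ by sending $x$ to the Pl\"ucker line $[\wedge^{k+1} W_x]$, where $W_x \subset \Rb^d$ is the $(k+1)$-dimensional linear subspace whose projectivization is the projective tangent space of $M$ at $\xi(x)$ (note that $W_x$ automatically contains the line $\xi(x)$, since $\xi(x)\in M$). The $C^1$ regularity of the tangent bundle of $M$ makes $\hat\xi$ continuous, and the $\rho(\Gamma)$-invariance of $M$ makes $\hat\xi$ a $\wedge^{k+1}\rho$-equivariant map.

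Next, I would fix $\gamma \in \Gamma$ of infinite order with attracting fixed point $x^+ \in \partial\Gamma$ and argue that $\hat\xi(x^+)$ is the attracting fixed line of $\wedge^{k+1}\rho(\gamma)$. It is a fixed line (since $W_{x^+}$ is $\rho(\gamma)$-invariant), and equivariance together with the fact that $\gamma^n y \to x^+$ for $y\ne x^+$ gives $\wedge^{k+1}\rho(\gamma)^n \hat\xi(y) = \hat\xi(\gamma^n y) \to \hat\xi(x^+)$. Since the stable set of a non-top fixed line of a projective transformation is contained in a proper projective subspace, and since the linear span of $\hat\xi(\partial\Gamma)$ is $\Gamma$-invariant, the irreducibility of $\wedge^{k+1}\rho$ forces $\hat\xi(x^+)$ to be the unique attracting fixed line. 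Consequently $W_{x^+}$ is spanned by the top $k+1$ eigenvectors of $\rho(\gamma)$, so $\lambda_{k+1}(\rho(\gamma)) > \lambda_{k+2}(\rho(\gamma))$, and the top two eigenvalues of $\wedge^{k+1}\rho(\gamma)$ are $\mu_1 = \lambda_1 \cdots \lambda_{k+1}$ and $\mu_2 = \lambda_1 \cdots \lambda_k \lambda_{k+2}$, with ratio $\mu_1/\mu_2 = \lambda_{k+1}/\lambda_{k+2}$. The $C^2$ hypothesis then makes $p \mapsto T_p M$ locally Lipschitz on $M$, giving $d_\Pb(\hat\xi(p), \hat\xi(q)) \le C\, d_\Pb(\xi(p), \xi(q))$ near $\xi(x^+)$. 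Combined with Observation~\ref{obs:dynamics} applied to both $\rho(\gamma)$ and $\wedge^{k+1}\rho(\gamma)$, this yields
\begin{equation*}
\log(\lambda_{k+2}/\lambda_{k+1}) \;=\; \lim_{n\to\infty}\tfrac{1}{n}\log d_\Pb(\hat\xi(\gamma^n y),\hat\xi(x^+)) \;\le\; \limsup_{n\to\infty}\tfrac{1}{n}\log d_\Pb(\xi(\gamma^n y),\xi(x^+)) \;\le\; \log(\lambda_2/\lambda_1),
\end{equation*}
for $y$ chosen so that $\hat\xi(y)$ avoids the exceptional subspace from the second part of Observation~\ref{obs:dynamics} applied to $\wedge^{k+1}\rho(\gamma)$, which can be arranged using irreducibility of $\wedge^{k+1}\rho$. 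This produces the inequality $\lambda_1/\lambda_2 \le \lambda_{k+1}/\lambda_{k+2}$.

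The main obstacle will be the reverse inequality $\lambda_1/\lambda_2 \ge \lambda_{k+1}/\lambda_{k+2}$. My plan is to work in an affine chart around $\xi(x^+)$ in which $M$ is realized as a $C^2$ graph of a map $F$ over $T_{\xi(x^+)}M$ into the normal directions; the $\rho(\gamma)$-invariance of $M$ imposes an exact functional equation of the form $F(A\cdot u) = B\cdot F(u)$, where $A$ is the linear action of $\rho(\gamma)$ on $T_{\xi(x^+)}M$ (with eigenvalues $\lambda_2/\lambda_1,\dots,\lambda_{k+1}/\lambda_1$) and $B$ is its linear action on a normal complement (whose dominant eigenvalue is $\lambda_{k+2}/\lambda_1$). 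Matching the dominant second-order Taylor coefficients of $F$ on both sides then forces precisely $\lambda_{k+2}/\lambda_1 = (\lambda_{k+1}/\lambda_1)(\lambda_2/\lambda_1)$, i.e.\ the desired equality. The delicate point is excluding the degenerate case in which the relevant second-order coefficient vanishes at every attracting fixed point; by density of attracting fixed points in $\partial\Gamma$ and continuity this would force the second fundamental form of $M$ to vanish identically, so $M$ would be totally geodesic and contained in a proper projective subspace, contradicting the irreducibility of $\rho$ (when $k>1$, the convex cocompact structure from Theorem~\ref{thm:condC} is invoked here to pin down the required rigidity).
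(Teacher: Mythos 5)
Your construction of the Gauss--Pl\"ucker map $\hat\xi = \Phi\circ\xi$, the proof that $\wedge^{k+1}\rho(\gamma)$ is proximal with attracting line $\hat\xi(x^+)$, and the derivation of the inequality $\lambda_1/\lambda_2 \le \lambda_{k+1}/\lambda_{k+2}$ from the upper Lipschitz bound on $\Phi$ together with Observation~\ref{obs:dynamics} are all correct and coincide with the first half of the paper's argument. The problem is your reverse inequality. Writing $M$ near $\xi(x^+)$ as a graph $v=F(u)=Q(u)+o(|u|^2)$ over $T_{\xi(x^+)}M$ in an eigen-adapted affine chart, the invariance equation $Q(\bar A u)=\bar B Q(u)$ only says that each quadratic coefficient $q^{(m)}_{ij}$ (pairing the tangential eigendirections for $\lambda_{i},\lambda_{j}$ with the normal eigendirection for $\lambda_{m}$) is either zero or satisfies $\lambda_i\lambda_j=\lambda_1\lambda_m$. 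To extract the specific identity $\lambda_2\lambda_{k+1}=\lambda_1\lambda_{k+2}$ you would need the particular coefficient with $(i,j,m)=(2,k+1,k+2)$ to be nonzero; if instead, say, only $q^{(m)}_{k+1,k+1}$ is nonzero, you get $\lambda_{k+1}^2=\lambda_1\lambda_m\le\lambda_1\lambda_{k+2}$, which does not imply $\lambda_2\lambda_{k+1}\le\lambda_1\lambda_{k+2}$. Your degeneracy analysis (density of attracting fixed points plus total geodesy) only excludes the case where the entire second fundamental form vanishes, not the case where the wrong coefficients vanish, so the step ``matching the dominant second-order Taylor coefficients forces precisely the desired equality'' does not go through as stated.

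The paper closes this gap differently: it upgrades $\Phi$ from a $C^1$ map to a $C^1$ immersion (hence, $M$ being compact and $\Phi$ injective, a bi-Lipschitz embedding). Injectivity of $\Phi$ is proved by showing $T_{\xi(x)}M\subset\ker\eta(x)$, using for $k>1$ the regular convex cocompact domain supplied by Theorem~\ref{thm:condC} so that $\ker\eta(x)$ is the tangent hyperplane to $\partial\Omega$ at the $C^1$ point $\xi(x)$; then $T_{\xi(x)}M=T_{\xi(y)}M$ would force $\xi(x)\in\ker\eta(y)$, whence $x=y$. Full rank of $d\Phi$ at one point follows from injectivity, and minimality of the $\Gamma$-action on $M$ propagates it everywhere. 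The resulting \emph{lower} Lipschitz bound $d_2(\Phi(p),\Phi(q))\ge C^{-1}d_1(p,q)$ lets you run your rate comparison in the opposite direction: choosing $x$ with $\xi(x)$ outside the exceptional subspace $V_1$ for $\rho(\gamma)$ gives $\log(\lambda_2/\lambda_1)=\lim\frac1n\log d_1(\xi(\gamma^nx),\xi(x^+))\le\limsup\frac1n\log d_2(\hat\xi(\gamma^nx),\hat\xi(x^+))\le\log(\lambda_{k+2}/\lambda_{k+1})$, which is exactly the missing inequality. I would recommend replacing your normal-form argument with this immersion argument; your setup already contains all the needed ingredients.
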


For the rest of the subsection, fix a word hyperbolic group $\Gamma$ and a projective Anosov representation $\rho: \Gamma \rightarrow \PGL_{d}(\Rb)$ which satisfy the hypothesis of  Theorem~\ref{thm:main_reg_rigid}.

Define a map $\Phi: M \rightarrow \Pb(\wedge^{k+1} \Rb^{d})$ by 
\begin{align*}
\Phi(m) = [v_1 \wedge v_2 \wedge \dots \wedge v_{k+1}]
\end{align*}
where $T_mM = \Pb(\Spanset_{\Rb}\{ v_1, \dots, v_{k+1}\})$. Since $M$ is a $C^2$ submanifold, $\Phi$ is a $C^1$ map. 

\begin{lemma}\label{lem:C1embedding} With the notation above, $\Phi: M \rightarrow \Pb(\wedge^{k+1} \Rb^{d})$ is a $C^1$ immersion. \end{lemma}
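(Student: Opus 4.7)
The plan is to compute $\Phi$ explicitly in a local chart, verify $C^1$ regularity directly, and then reduce the immersion assertion to a linear-algebraic non-degeneracy that can be enforced by the hypotheses.

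\textbf{Step 1 ($C^1$ regularity).} Fix $m_0 \in M$ and a $C^2$ local parametrization $\phi: U \subset \Rb^k \to M$ near $m_0$; choose a $C^2$ lift $\tilde\phi: U \to \Rb^d \setminus \{0\}$ with $[\tilde\phi] = \phi$. The vectors $\tilde\phi(u), \partial_1\tilde\phi(u), \dots, \partial_k\tilde\phi(u)$ span the $(k+1)$-dimensional subspace of $\Rb^d$ projecting to $T_{\phi(u)} M$, so
\begin{equation*}
F(u) := \tilde\phi(u) \wedge \partial_1\tilde\phi(u) \wedge \cdots \wedge \partial_k\tilde\phi(u)
\end{equation*}
is a nowhere-vanishing $C^1$ map $U \to \wedge^{k+1} \Rb^d$ with $\Phi(\phi(u)) = [F(u)]$. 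This exhibits $\Phi$ as a $C^1$ map.

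\textbf{Step 2 (Local differential of $\Phi$).} Pick a basis $e_0, \dots, e_{d-1}$ of $\Rb^d$ so that $m_0 = [e_0]$ and $T_{m_0} M = [\Spanset\{e_0, \dots, e_k\}]$. Then $M$ is locally a graph $\phi(u) = [1 : u_1 : \cdots : u_k : f_1(u) : \cdots : f_{d-1-k}(u)]$ with $f = (f_l): U \to \Rb^{d-1-k}$ of class $C^2$ satisfying $f(0) = 0$ and $df(0) = 0$. With the natural lift one has $F(0) = e_0 \wedge e_1 \wedge \cdots \wedge e_k$, and a direct computation (the terms coming from $\partial_j\tilde\phi(0) = e_j$ vanish because $e_j$ already appears in the wedge) gives
\begin{equation*}
\partial_j F(0) \;=\; \sum_{i=1}^k \sum_{l=1}^{d-1-k} \frac{\partial^2 f_l}{\partial u_i\, \partial u_j}(0)\; e_0 \wedge e_1 \wedge \cdots \wedge \underset{\text{slot } i}{e_{k+l}} \wedge \cdots \wedge e_k.
\end{equation*}
The basic wedges on the right are linearly independent among themselves and independent of $F(0)$, so the kernel of $d\Phi_{m_0}$ equals $\bigl\{c \in \Rb^k : \sum_j c_j \tfrac{\partial^2 f_l}{\partial u_i \partial u_j}(0) = 0 \text{ for all } i,l\bigr\}$.

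\textbf{Step 3 (Global non-degeneracy).} The critical locus $Z := \{m \in M : \ker d\Phi_m \neq 0\}$ is closed in $M$. Since $\rho(\gamma) M = M$ implies $\Phi \circ \rho(\gamma) = \wedge^{k+1}\rho(\gamma) \circ \Phi$, the set $Z$ is $\rho(\Gamma)$-invariant. Because $\Gamma$ acts minimally on $\partial \Gamma$ and $\xi: \partial\Gamma \to M$ is a continuous $\rho$-equivariant surjection, $\rho(\Gamma)$ acts minimally on $M$, so $Z$ is empty or all of $M$. I expect the main obstacle to be ruling out $Z = M$. The idea is that if $d\Phi$ had a non-trivial kernel at every point of $M$, then $\Phi(M)$ would be contained in a proper closed subvariety of $\Pb(\wedge^{k+1}\Rb^d)$, and its linear span would be a proper $\wedge^{k+1}\rho(\Gamma)$-invariant subspace of $\wedge^{k+1}\Rb^d$, contradicting irreducibility of $\wedge^{k+1}\rho$. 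A cleaner alternative is to fix a hyperbolic element $\gamma \in \Gamma$ with attracting boundary fixed point $x^+$ and compute the Hessian block in Step 2 at $m_0 = \xi(x^+)$ directly, using the fact that the eigenvalue ratios of $\rho(\gamma)$ forced by the projective Anosov hypothesis make the relevant linear map injective.
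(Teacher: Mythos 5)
Your Steps 1 and 2 are fine: the local-chart computation correctly exhibits $\Phi$ as $C^1$ and correctly identifies $\ker d\Phi_{m_0}$ with the radical of the second fundamental form of $M$ at $m_0$ (the set of $c$ with $II(c,\cdot)=0$). Your Step 3 is where all the content lies, and it has a genuine gap. The reduction via minimality to ``$Z=\emptyset$ or $Z=M$'' is correct, but your proposed way of excluding $Z=M$ does not work: having $\ker d\Phi_m\neq 0$ at every point only forces $\Phi(M)$ to be a ``small'' (measure-zero, or at best lower-dimensional) subset of $\Pb(\wedge^{k+1}\Rb^d)$; it does not put $\Phi(M)$ inside a proper \emph{linear} subspace. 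A lower-dimensional invariant subset can perfectly well span $\wedge^{k+1}\Rb^d$ (think of a rational normal curve), so irreducibility of $\wedge^{k+1}\rho$ --- which rules out proper invariant subspaces, not proper invariant subvarieties --- yields no contradiction. Your ``cleaner alternative'' is not carried out and is also doubtful: the projective Anosov hypothesis controls only $\lambda_1/\lambda_2$, and nothing a priori forces the Hessian block at $\xi(x^+_\gamma)$ to have trivial radical; the eigenvalue information you would need is essentially the conclusion of Theorem~\ref{thm:main_reg_rigid_intro}, so this risks circularity.

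For comparison, the paper's proof supplies exactly the ingredient you are missing, and does so differently in the two cases. For $k=1$, a nonzero kernel means $d\Phi\equiv 0$ on the $1$-dimensional $M$, so the tangent line $T_mM$ is locally constant and $M$ lies in a $2$-dimensional subspace, contradicting that $\xi(\partial\Gamma)$ spans $\Rb^d$ (irreducibility of $\rho$ itself); note this uses $k=1$ essentially, since for $k>1$ a nontrivial kernel does not make $\Phi$ locally constant. For $k>1$, the paper invokes Theorem~\ref{thm:condC} to realize $\rho(\Gamma)$ as a regular convex cocompact subgroup of some $\Aut(\Omega)$, deduces $T_{\xi(x)}M\subset T_{\xi(x)}\partial\Omega=\ker\eta(x)$, and concludes from transversality that the Gauss-type map $x\mapsto T_{\xi(x)}M$, i.e.\ $\Phi$, is \emph{injective}; an injective $C^1$ map must have full rank at some point, and minimality then spreads full rank over all of $M$. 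The substantive input is thus the injectivity of the tangent map, obtained from the convexity structure --- your proposal never engages with this, and without it the immersion property does not follow from irreducibility alone.
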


\begin{proof} We break the proof into two cases: when $k = 1$ and when $k >1$. \medskip

\noindent \textbf{Case 1:} Assume $k=1$.  We first consider the case when $d(\Phi)_m = 0$ for every $m \in M$. Then there exists a two dimensional subspace $V \subset \Rb^{d}$ such that $T_mM = \Pb(V)$ for all $m$. Then we must have $M \subset \Pb(V)$. Since $\rho$ is irreducible, the elements in $M$ span $\Rb^{d}$ and so $d \leq 2$. Thus we have a contradiction. So $d(\Phi)_m \neq 0$ on an open set in $M$. But since
\begin{align*}
\Phi \circ \rho(\gamma) =( \wedge^{2} \rho(\gamma)) \circ \Phi
\end{align*}
for every $\gamma \in \Gamma$ and $\Gamma$ acts minimally on $M$, we see that $d(\Phi)_m \neq 0$ for every $m$. \medskip

\noindent \textbf{Case 2:} Assume $k > 1$. Then by Theorem~\ref{thm:condC} there exists a properly convex domain $\Omega \subset \Pb(\Rb^{d})$ such that $\rho(\Gamma) \leq \Aut(\Omega)$ is a regular convex cocompact subgroup. Suppose $\Cc \subset \Omega$ is a closed convex subset such that $g\Cc = \Cc$ for all $g \in \rho(\Gamma)$, $\rho(\Gamma) \backslash \Cc$ is compact, and every point in $\partial \Omega \cap \overline{\Cc}$ is a $C^1$ extreme point of $\Omega$. 

We first claim that $\Phi$ is injective. By Lemma~\ref{lem:dynamics} we have 
\begin{align*}
\xi(\partial \Gamma) \subset \partial \Omega \cap \overline{\Cc}
\end{align*}
and 
\begin{align*}
\eta(\partial \Gamma) \subset \partial\Omega^*.
\end{align*}
Then since $\xi(x)$ is a $C^1$ point of $\partial \Omega$ we have
\begin{align*}
T_{\xi(x)} \partial \Omega = \Pb(\ker \eta(x)).
\end{align*}
Further since $M \subset \partial \Omega$  we see that 
 \begin{align*}
T_{\xi(x)} M \subset T_{\xi(x)} \partial \Omega = \Pb(\ker \eta(x))
 \end{align*}
for every $x \in \partial \Gamma$. Now suppose that $T_{\xi(x)}M =T_{\xi(y)}M$ for some $x,y \in \partial \Gamma$. Then 
\begin{align*}
\xi(x) \in T_{\xi(x)}M  = T_{\xi(y)}M  \subset \Pb(\ker \eta(y)).
\end{align*}
So $x=y$ and hence $\Phi$ is injective. 

Since $\Phi$ is injective and $C^1$, $d(\Phi)$ must have full rank at some point. By continuity, $d(\Phi)$ has full rank on an open set. But since
\begin{align*}
\Phi \circ \rho(\gamma) =( \wedge^{k+1} \rho(\gamma)) \circ \Phi
\end{align*}
for every $\gamma \in \Gamma$ and $\Gamma$ acts minimally on $M$, we see that $d(\Phi)$ has full rank everywhere. Hence, since $M$ is compact and $\Phi$ is injective, $\Phi$ is a $C^1$ embedding. 
\end{proof}

Next fix distances $d_1$ on $\Pb(\Rb^{d})$ and $d_2$ on $\Pb(\wedge^{k+1} \Rb^{d})$ which are induced by Riemannian metrics. Since $\Phi$ is a $C^1$ immersion, there exists $C \geq 1$ such that 
\begin{align}
\label{eq:biLip_PHI}
\frac{1}{C} d_1(m_1, m_2) \leq d_2(\Phi(m_1), \Phi(m_2)) \leq C d_1(m_1, m_2)
\end{align}
for all $m_1, m_2 \in M$ sufficiently close. 

Now fix some $\gamma \in \Gamma$ with infinite order and let $g \in \GL_d(\Rb)$ be a lift of $\rho(\gamma)$ with $\det g=\pm 1$. Suppose that 
\begin{align*}
\lambda_1 \geq \lambda_2 \geq \dots \geq \lambda_{d}
\end{align*}
are the absolute values of the eigenvalues of $g$. Then the absolute values of the eigenvalues of $\wedge^{k+1} g$ have the form 
\begin{align*}
\lambda_{i_1}\lambda_{i_2} \cdots\lambda_{i_{k+1}}
\end{align*}
for $1 \leq i_1 < i_2 < \dots < i_{k+1} \leq d$. In particular, 
\begin{align*}
\lambda_1 \lambda_2 \cdots \lambda_{k+1}
\end{align*}
is the absolute value of the largest eigenvalue of $\wedge^{k+1} g$ and 
\begin{align*}
\lambda_1 \lambda_2 \cdots \lambda_k \lambda_{k+2}
\end{align*}
is the absolute value of the second largest eigenvalue of $\wedge^{k+1} g$. 

Next let $x^+, x^-$ be the attracting and repelling fixed points of $\gamma$ in $\partial \Gamma$. 

\begin{lemma} With the notation above,  $\wedge^{k+1} \rho(\gamma)$ is proximal with attracting fixed point $\Phi(\xi(x^+))$.
\end{lemma}

\begin{proof} We first show that $\Phi(\xi(x^+))$ is an eigenline of $\wedge^{k+1} g$ whose eigenvalue has absolute value $\lambda_1 \cdots \lambda_{k+1}$.

Fix a norm on $\End(\wedge^{k+1} \Rb^{d})$. Then we can find a sequence $n_m \rightarrow \infty$ such that $\frac{1}{\norm{(\wedge^{k+1} g)^{n_m}}}(\wedge^{k+1}g)^{n_m}$ converges to some $T \in \End(\wedge^{k+1} \Rb^{d})$. Then
\begin{align}
\label{eq:limit_of_T}
T(v) = \lim_{m \rightarrow \infty} (\wedge^{k+1} \rho(\gamma))^{n_m} v
\end{align}
for every $v \in \Pb(\wedge^k \Rb^d) \setminus \Pb(\ker T)$. 

By Observation~\ref{obs:span_gen_eigenvectors}, every element in the image of $T$ is a sum of generalized complex eigenvectors of $\wedge^{k+1} g$ whose eigenvalue has maximal absolute value (that is, $\lambda_1 \cdots \lambda_{k+1}$). We will show that the image of $T$ is $\Phi(\xi(x^+))$ and hence $\Phi(\xi(x^+))$ is an eigenline of $\wedge^{k+1} g$ whose eigenvalue has absolute value $\lambda_1 \cdots \lambda_{k+1}$.

Now since $\wedge^{k+1} \rho : \Gamma \rightarrow \PGL(\wedge^{k+1} \Rb^d)$ is irreducible, there exists $x_1, \dots, x_N \in \partial \Gamma$ such that 
\begin{align*}
\Phi(\xi(x_1)), \dots, \Phi(\xi(x_N))
\end{align*}
span $\wedge^{k+1} \Rb^d$. By perturbing the $x_i$ (if necessary) we can also assume that
\begin{align*}
x^- \notin \{x_1, \dots, x_N\}.
\end{align*}
Next by relabelling the $x_i$ we can also assume that there exists $1 \leq m \leq N$ such that
\begin{align*}
\Phi(\xi(x_1)) + \dots + \Phi(\xi(x_m)) + \ker T = \wedge^{k+1} \Rb^d
\end{align*}
and 
\begin{align*}
\Big( \Phi(\xi(x_1)) + \dots + \Phi(\xi(x_m)) \Big) \cap \ker T = (0).
\end{align*}
Then by Equation~\eqref{eq:limit_of_T} 
\begin{align*}
T ( \Phi(\xi(x_i)) ) = \lim_{m \rightarrow \infty} (\wedge^{k+1} \rho(\gamma))^{n_m} \Phi(\xi(x_i)) =  \lim_{m \rightarrow \infty} \Phi( \xi( \gamma^{n_m} x)) = \Phi(\xi(x^+))
\end{align*}
for $1 \leq i \leq m$. So the image of $T$ is $\Phi(\xi(x^+))$ and so Observation~\ref{obs:span_gen_eigenvectors} implies that $\Phi(\xi(x^+))$ is an eigenline of $\wedge^{k+1} g$ whose eigenvalue has absolute value $\lambda_1 \cdots \lambda_{k+1}$.

We next argue that $\wedge^{k+1} \rho(\gamma)$ is proximal. Suppose not, then by Observation~\ref{obs:no_proximal_same_speed} there exists a proper subspace $V \subset \Pb(\wedge^{k+1} \Rb^d)$ such that: if $v \in \Pb(\wedge^{k+1} \Rb^d) \setminus V$, then 
\begin{align*}
0  = \lim_{n \rightarrow \infty} \frac{1}{n} \log d_2\Big((\wedge^{k+1}\rho(\gamma))^{n} v, \Phi(\xi(x^+))\Big).
\end{align*}
Since $\wedge^{k+1} \rho : \Gamma \rightarrow \PGL(\wedge^{k+1} \Rb^d)$ is irreducible there exists $x \in \partial \Gamma$ such that $\Phi(\xi(x)) \notin V$. Then by perturbing $x$ (if necessary) we can also assume that $x \neq x^-$. Then 
\begin{align*}
\rho(\gamma)^n \xi(x) = \xi(\gamma^nx) \rightarrow \xi(x^+) \text{ and } (\wedge^{k+1}\rho(\gamma))^{n} \Phi(\xi(x)) =\Phi(\xi(\gamma^nx))\rightarrow \Phi(\xi(x^+)).
\end{align*}
So by Observation~\ref{obs:dynamics} applied to $\rho(\gamma)$
\begin{align*}
0 > \log \frac{\lambda_2}{\lambda_1} & \geq \limsup_{n \rightarrow \infty} \frac{1}{n} \log d_1\Big(\rho(\gamma)^n \xi(x), \xi(x^+)\Big) \\
&= \limsup_{n \rightarrow \infty} \frac{1}{n} \log d_1\Big(\xi(\gamma^nx), \xi(x^+)\Big) \\
&=  \limsup_{n \rightarrow \infty} \frac{1}{n} \log d_2\Big( \Phi(\xi(\gamma^nx)),\Phi( \xi(x^+))\Big) \\
& = \limsup_{n \rightarrow \infty} \frac{1}{n} \log d_2\Big( ( \wedge^{k+1} \rho(\gamma) )^n\Phi(\xi(x)),\Phi( \xi(x^+))\Big) =0.
\end{align*}
Notice that we used Equation~\eqref{eq:biLip_PHI} in the second equality. So we have a contradiction and hence $\wedge^{k+1} \rho(\gamma)$ is proximal.
\end{proof}

By Observation~\ref{obs:dynamics}, there exists a proper subspace $V_1 \subset \Pb(\Rb^{d})$ such that
\begin{align*}
\log \frac{\lambda_2}{\lambda_1} = \lim_{n \rightarrow \infty} \frac{1}{n} \log d_1\Big(\rho(\gamma)^n v, \xi(x^+)\Big)
\end{align*}
for all $v \in \Pb(\Rb^d) \setminus V_1$ with $\rho(\gamma)^n v \rightarrow \xi(x^+)$. By the same observation, there exists a proper subspace $V_2 \subset  \Pb(\wedge^{k+1} \Rb^{d+1})$ such that 
\begin{align*}
\log \frac{\lambda_{k+2}}{\lambda_{k+1}}=\log \frac{\lambda_1 \lambda_2 \cdots \lambda_k \lambda_{k+2}}{\lambda_1 \lambda_2 \cdots \lambda_{k+1}} = \lim_{n \rightarrow \infty} \frac{1}{n} \log d_2\Big((\wedge^{k+1}\rho(\gamma))^{n} w, \Phi(\xi(x^+_\gamma))\Big)
\end{align*}
for all $w \in  \Pb(\wedge^{k+1} \Rb^{d}) \setminus V_2$ with $(\wedge^{k+1} \rho(\gamma))^n w \rightarrow  \Phi(\xi(x^+_\gamma))$. 

Since $\rho$ is irreducible, $\{ \xi(x) : x\in \partial \Gamma\}$ spans $\Rb^d$. So we can pick some $x \in \partial \Gamma$ such that $\xi(x) \notin V_1$. By perturbing $x$ (if necessary) we can also assume that $x \neq x^-$. Then $\gamma^n x \rightarrow x^+$ and so 
\begin{align*}
\log \frac{\lambda_2}{\lambda_1} 
&=  \lim_{n \rightarrow \infty} \frac{1}{n} \log d_1\Big(\rho(\gamma)^n \xi(x), \xi(x^+)\Big)
= \lim_{n \rightarrow \infty} \frac{1}{n} \log d_1\Big( \xi(\gamma^n x), \xi(x^+)\Big)\\
&=  \lim_{n \rightarrow \infty} \frac{1}{n} \log d_2\Big( \Phi(\xi(\gamma^n x)),\Phi( \xi(x^+))\Big)\\
& = \lim_{n \rightarrow \infty} \frac{1}{n} \log d_2\Big( ( \wedge^{k+1} \rho(\gamma) )^n\Phi(\xi(x)),\Phi( \xi(x^+))\Big). 
\end{align*}
Notice that we used Equation~\eqref{eq:biLip_PHI} in the third equality. Then applying Observation~\ref{obs:dynamics} to $\wedge^{k+1} \rho(\gamma)$ we have
\begin{align*}
\log \frac{\lambda_2}{\lambda_1}  \leq \log \frac{\lambda_{k+1}}{\lambda_k}.
\end{align*}

We prove the opposite inequality in exactly the same way. Since $\wedge^{k+1} \rho$ is irreducible, $\{ \Phi(\xi(x)) : x\in \partial \Gamma\}$ spans $\Rb^d$. So we can pick some $x \in \partial \Gamma$ such that $\Phi(\xi(x)) \notin V_2$. By perturbing $x$ (if necessary) we can assume that $x \neq x^-$. Then $\gamma^n x \rightarrow x^+$ and so
\begin{align*}
\log \frac{\lambda_{k+1}}{\lambda_k}  
&= \lim_{n \rightarrow \infty} \frac{1}{n} \log d_2\Big( ( \wedge^{k+1} \rho(\gamma) )^n\Phi(\xi(x)),\Phi( \xi(x^+))\Big) \\
&=  \lim_{n \rightarrow \infty} \frac{1}{n} \log d_2\Big( \Phi(\xi(\gamma^n x)),\Phi( \xi(x^+))\Big)= \lim_{n \rightarrow \infty} \frac{1}{n} \log d_1\Big( \xi(\gamma^n x), \xi(x^+)\Big)\\
& =  \lim_{n \rightarrow \infty} \frac{1}{n} \log d_1\Big(\rho(\gamma)^n \xi(x), \xi(x^+)\Big) \leq \log \frac{\lambda_2}{\lambda_1}.
\end{align*}
Hence 
\begin{align*}
\frac{\lambda_2}{\lambda_1}  = \frac{\lambda_{k+1}}{\lambda_k}
\end{align*}
and since $\gamma \in \Gamma$ was an arbitrary element with infinite order this proves the theorem. 

 \subsection{Proof of Theorem~\ref{thm:main_reg_rigid_hitchin_intro}} We begin by recalling the theorem. 
 
 \begin{theorem}\label{thm:main_reg_rigid_hitchin} Suppose that $\Gamma \leq \PSL_2(\Rb)$ is a torsion-free cocompact lattice  and  $\rho : \Gamma \rightarrow \PSL_d(\Rb)$ is in the Hitchin component. If $\xi: \partial \Gamma \rightarrow \Pb(\Rb^{d})$ is the associated boundary map and $\xi(\partial \Gamma)$ is a $C^2$ submanifold of $\Pb(\Rb^d)$, then 
\begin{align*}
\frac{\lambda_1(\rho(\gamma))}{\lambda_2(\rho(\gamma))} = \frac{\lambda_2(\rho(\gamma))}{\lambda_3(\rho(\gamma))}
\end{align*}
for all $\gamma \in \Gamma$. 
\end{theorem}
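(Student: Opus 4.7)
The plan is to follow the argument for Theorem~\ref{thm:main_reg_rigid_intro} with $k=1$, substituting hyperconvexity of the Frenet curve for the irreducibility of $\wedge^{2}\rho$ (which can genuinely fail when $\rho$ is Hitchin, e.g.\ for $\PSp(2n,\Rb)$-Hitchin representations). Labourie's theorem gives that $\xi:\partial\Gamma\to\Pb(\Rb^{d})$ extends to a hyperconvex Frenet curve with osculating flags $\xi^{(1)}\subset\xi^{(2)}\subset\dots\subset\xi^{(d-1)}$, and the osculating $2$-plane $\xi^{(2)}(x)$ coincides with the tangent space $T_{\xi(x)}M$ of the $C^{2}$ curve $M=\xi(\partial\Gamma)$ at $\xi(x)$. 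So the Plücker map $\Phi:M\to\Pb(\wedge^{2}\Rb^{d})$ used in the proof of Theorem~\ref{thm:main_reg_rigid_intro} is nothing but $\xi^{(2)}$ in Plücker coordinates, and it is a $C^{1}$ immersion by exactly the $k=1$ argument of Lemma~\ref{lem:C1embedding} (which only uses $d>2$ together with irreducibility of $\rho$, both of which hold for Hitchin representations).

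Fix $\gamma\in\Gamma$ of infinite order, and let $x^\pm\in\partial\Gamma$ be its attracting/repelling fixed points. Since $\rho$ is Hitchin, $\rho(\gamma)$ has simple real positive eigenvalues $\lambda_{1}>\lambda_{2}>\dots>\lambda_{d}>0$ with eigenlines $\ell_{1},\dots,\ell_{d}$; then $\wedge^{2}\rho(\gamma)$ is proximal, its unique attracting eigenline is $\ell_{1}\wedge\ell_{2}$, and by the Frenet/dynamics-preserving property $\Phi(\xi(x^{+}))=\xi^{(2)}(x^{+})=\ell_{1}+\ell_{2}=\ell_{1}\wedge\ell_{2}$. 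This is the analogue of the ``proximality'' lemma in the proof of Theorem~\ref{thm:main_reg_rigid_intro}, obtained here from the Frenet structure rather than from irreducibility of $\wedge^{2}\rho$.

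The only remaining step where irreducibility of $\wedge^{2}\rho$ was used is the selection of a point $x\neq x^{-}$ with $\Phi(\xi(x))$ outside the ``bad hyperplane'' $V_{2}\subset\Pb(\wedge^{2}\Rb^{d})$ supplied by Observation~\ref{obs:dynamics} (the hyperplane is the span of all $\ell_{i}\wedge\ell_{j}$ other than $\ell_{1}\wedge\ell_{3}$, which is the eigenline of the second-largest eigenvalue of $\wedge^{2}\rho(\gamma)$). This is where a direct Frenet computation replaces the irreducibility hypothesis. Locally parametrize $M$ near $\xi(x^{+})$ as $C^{2}$ lift $e(t)=v_{1}+tv_{2}+\tfrac{t^{2}}{2}(v_{3}+av_{1}+bv_{2})+O(t^{3})$ with $v_{i}\in\ell_{i}\setminus\{0\}$, where the identifications $e'(0)\in\ell_{1}+\ell_{2}$ and $e''(0)\in\ell_{1}+\ell_{2}+\ell_{3}$ come from $\xi^{(2)}(x^{+})=\ell_{1}+\ell_{2}$ and $\xi^{(3)}(x^{+})=\ell_{1}+\ell_{2}+\ell_{3}$. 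A brief expansion gives $e(t)\wedge e'(t)=(1+tb)\,v_{1}\wedge v_{2}+t\,v_{1}\wedge v_{3}+O(t^{2})$, so the $C^{1}$ curve $t\mapsto\Phi(\xi(t))$ crosses $V_{2}$ transversally at $t=0$ in the direction $[v_{1}\wedge v_{3}]\notin V_{2}$. Hence for any $x\neq x^{+}$ sufficiently close to $x^{+}$ (which will automatically satisfy $x\neq x^{-}$), we have $\Phi(\xi(x))\notin V_{2}$.

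With these substitutions, the dynamics argument in the proof of Theorem~\ref{thm:main_reg_rigid_intro} runs identically: comparing the exponential rate of $\rho(\gamma)^{n}\xi(x)\to\xi(x^{+})$ to the rate of $(\wedge^{2}\rho(\gamma))^{n}\Phi(\xi(x))\to\Phi(\xi(x^{+}))$ via the bi-Lipschitz property of the $C^{1}$ immersion $\Phi$ and Observation~\ref{obs:dynamics}, we obtain $\log(\lambda_{2}/\lambda_{1})\le\log(\lambda_{3}/\lambda_{2})$ from one direction and $\log(\lambda_{3}/\lambda_{2})\le\log(\lambda_{2}/\lambda_{1})$ from the other, hence $\lambda_{1}/\lambda_{2}=\lambda_{2}/\lambda_{3}$ for all $\gamma\in\Gamma$. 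The main obstacle, as flagged above, is verifying that the Frenet first-order behaviour of $\xi^{(2)}$ at $x^{+}$ is actually transverse to the obstruction hyperplane $V_{2}$; this is handled by the explicit two-jet calculation just described.
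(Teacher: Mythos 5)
Your overall architecture is the same as the paper's: identify the tangent map $\Phi$ with the osculating curve $\xi^{(2)}$ of Labourie's Frenet flag, show $\Phi$ is a $C^1$ immersion, observe that $\wedge^2\rho(\gamma)$ is proximal with attracting line $\xi^{(2)}(x^+_\gamma)=\ell_1\wedge\ell_2$, and then compare contraction rates on $M$ and on $\Phi(M)$ via Observation~\ref{obs:dynamics}. The one place you genuinely diverge is the verification that some $x$ has $\Phi(\xi(x))$ outside the bad subspace $V_2=\bigl[\Spanset\{v_i\wedge v_j:\{i,j\}\neq\{1,3\}\}\bigr]$, and that is where your argument has a gap.

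Your two-jet computation writes $e(t)=v_1+tv_2+\tfrac{t^2}{2}(v_3+av_1+bv_2)+O(t^3)$ and normalizes the $\ell_3$-component of $e''(0)$ to be nonzero, justifying this by ``$e''(0)\in\ell_1+\ell_2+\ell_3$ comes from $\xi^{(3)}(x^+)=\ell_1+\ell_2+\ell_3$.'' That identification is not automatic. The Frenet subspace $\xi^{(3)}(x^+)$ is defined via Labourie's limit/hyperconvexity properties of the curve $\xi$, not via the analytic $2$-jet of a local $C^2$ parametrization of the set $M$. If $e''(0)\in\Spanset\{e(0),e'(0)\}=\ell_1+\ell_2$ (a second-order inflection, which a merely $C^2$ curve can have), then the tri-secant planes $e(0)\wedge e(t)\wedge e(s)$ degenerate at leading order and their limit $\xi^{(3)}(x^+)$ is governed by terms beyond the $2$-jet; no contradiction with hyperconvexity arises, yet your expansion becomes $e(t)\wedge e'(t)=(1+tb)\,v_1\wedge v_2+o(t)$, the first-order $v_1\wedge v_3$ term vanishes, and the transversality conclusion $\Phi(\xi(x_t))\notin V_2$ fails. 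So the step you yourself flagged as ``the main obstacle'' is indeed not closed. The paper sidesteps this entirely by using Labourie's hyperconvexity property in the form $\xi^{(2)}(y)+\xi^{(d-3)}(x^-_\gamma)+\bigl(\xi^{(2)}(x^+_\gamma)\cap\xi^{(d-1)}(x^-_\gamma)\bigr)=\Rb^d$ (a direct sum): writing $\xi^{(2)}(x)=[w_1\wedge w_2]$ in the eigenbasis, this forces the coefficient $\alpha_1\beta_3-\alpha_3\beta_1$ of $v_1\wedge v_3$ to be nonzero for \emph{every} $x\notin\{x^\pm_\gamma\}$, which is exactly the statement $\xi^{(2)}(x)\notin V_2$ that you need. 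Replacing your jet computation by this algebraic consequence of the Frenet property repairs the proof; the remaining steps of your proposal then go through as written.
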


For the rest of the section suppose that $\Gamma \leq \PSL_2(\Rb)$ is a torsion-free cocompact lattice  and  $\rho : \Gamma \rightarrow \PSL_d(\Rb)$ is in the Hitchin component.
 
 Let $\Fc(\Rb^d)$ denote the full flag manifold of $\Rb^d$. Then by Theorem 4.1 and Proposition 3.2 in~\cite{L2006} there exists a continuous, $\rho$-equivariant map $F=(\xi^{(1)},\dots, \xi^{(d)}): \partial \Gamma \rightarrow \Fc(\Rb^d)$ such that:
 \begin{enumerate}
 \item\label{item:hitchin1} $\xi=\xi^{(1)}$.
 \item\label{item:hitchin2} If $x,y,z \in \partial \Gamma$ are distinct, $k_1,k_2,k_3 \geq 0$, and $k_1+k_2+k_3 =d$, then 
 \begin{align*}
 \xi^{(k_1)}(x) + \xi^{(k_2)}(y) + \xi^{(k_3)}(z) = \Rb^d
 \end{align*}
 is a direct sum.
  \item\label{item:hitchin3} If $x,y,z \in \partial \Gamma$ are distinct and $0\leq k < d-2$, then
 \begin{align*}
 \xi^{(k+1)}(y) + \xi^{(d-k-2)}(x) + \Big(\xi^{(k+1)}(z) \cap \xi^{(d-k)}(x) \Big)= \Rb^d
 \end{align*}
 is a direct sum.
 \item\label{item:hitchin4} If $\gamma \in \Gamma \setminus \{1\}$, then the absolute values of the eigenvalues of $\rho(\gamma)$ satisfy
 \begin{align*}
 \lambda_1(\rho(\gamma)) > \dots > \lambda_d(\rho(\gamma)).
 \end{align*}
 \item\label{item:hitchin5} If $\gamma \in \Gamma \setminus \{1\}$ and $x^+_\gamma \in \partial \Gamma$ is the attracting fixed point of $\gamma$, then $ \xi^{(k)}(x^+_\gamma)$ is the span of the eigenspaces of $\rho(\gamma)$ corresponding to the eigenvalues 
 \begin{align*}
  \lambda_1(\rho(\gamma)), \dots, \lambda_k(\rho(\gamma)).
  \end{align*} 
\end{enumerate}

Throughout the following argument we will identify a $k$-dimensional subspace $W = \Spanset \{ w_1, \dots, w_k\}$ of $\Rb^d$ with the point $[w_1 \wedge \dots \wedge w_k] \in \Pb(\wedge^k \Rb^d)$. 

Next fix distances $d_1$ on $\Pb(\Rb^{d})$ and $d_2$ on $\Pb(\wedge^{2} \Rb^{d})$ which are induced by Riemannian metrics.

\begin{lemma}\label{lem:hithcin_eigenvalue_est} With the notation above, if $\gamma \in \Gamma \setminus \{1\}$ and $x \in \partial \Gamma \setminus \{x^+_\gamma, x^-_\gamma\}$, then 
\begin{align*}
\log \frac{\lambda_2(\rho(\gamma))}{\lambda_1(\rho(\gamma))} = \lim_{n \rightarrow \infty} \frac{1}{n} \log d_1\Big(\xi(\gamma^nx), \xi(x^+_\gamma)\Big)
\end{align*}
and 
\begin{align*}
\log \frac{\lambda_3(\rho(\gamma))}{\lambda_2(\rho(\gamma))} =\lim_{n \rightarrow \infty} \frac{1}{n} \log d_2\Big(\xi^{(2)}(\gamma^nx), \xi^{(2)}(x^+_\gamma)\Big)
\end{align*}
\end{lemma}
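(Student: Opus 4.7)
The plan is to obtain both limits from Observation~\ref{obs:dynamics} applied to two proximal elements: $\rho(\gamma) \in \PGL_d(\Rb)$ for the first equality and $\wedge^2 \rho(\gamma) \in \PGL(\wedge^2 \Rb^d)$ for the second. Property (4) forces $\rho(\gamma)$ to have $d$ distinct eigenvalue moduli, so both elements are proximal; property (5) then identifies the span of the top $k$ (resp.\ bottom $k$) eigenlines of $\rho(\gamma)$ with $\xi^{(k)}(x_\gamma^+)$ (resp.\ $\xi^{(k)}(x_\gamma^-)$). Since $\gamma^n x \to x_\gamma^+$ in $\partial \Gamma$ and each $\xi^{(j)}$ is continuous and $\rho$-equivariant, $\rho(\gamma)^n \xi(x) = \xi(\gamma^n x) \to \xi(x_\gamma^+)$ and $(\wedge^2 \rho(\gamma))^n \xi^{(2)}(x) = \xi^{(2)}(\gamma^n x) \to \xi^{(2)}(x_\gamma^+)$, so the upper-bound half of Observation~\ref{obs:dynamics} applies automatically in each case. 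The remaining work is to exhibit, in each case, that the point we are iterating lies off the exceptional projective subspace produced by the observation.

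For the first equality, the exceptional subspace $V_1 \subset \Pb(\Rb^d)$ is, in Jordan coordinates for $\rho(\gamma)$, the span of every eigenline except the $\lambda_2$-eigenline. Using property (5) this equals $\xi^{(1)}(x_\gamma^+) + \xi^{(d-2)}(x_\gamma^-)$. Property (2) applied to the distinct triple $(x, x_\gamma^+, x_\gamma^-)$ with multiplicities $(k_1, k_2, k_3) = (1, 1, d-2)$ yields the direct-sum decomposition
\begin{align*}
\xi^{(1)}(x) + \xi^{(1)}(x_\gamma^+) + \xi^{(d-2)}(x_\gamma^-) = \Rb^d,
\end{align*}
whence $\xi(x) \notin V_1$ and the first equality follows.

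For the second equality, the eigenvalues of $g := \wedge^2 \rho(\gamma)$ are the products $\lambda_i \lambda_j$ with $i < j$; strict inequality of the $\lambda_i$ makes $\lambda_1 \lambda_2$ the unique maximum (with eigenline $\xi^{(2)}(x_\gamma^+)$) and $\lambda_1 \lambda_3$ the unique second maximum, so the ratio appearing in Observation~\ref{obs:dynamics} is $\lambda_3/\lambda_2$. The exceptional hyperplane $V_2 \subset \Pb(\wedge^2 \Rb^d)$ is the projective span of every eigenline of $g$ except the one for $\lambda_1 \lambda_3$. A short Pl\"ucker calculation shows that a simple $2$-form $v \wedge w$ lies in $V_2$ precisely when the $2$-plane $\Spanset(v, w)$ meets the sum of the $\lambda_2$-eigenline of $\rho(\gamma)$ and the span of the bottom $d-3$ eigenlines; by property (5) these are $\xi^{(2)}(x_\gamma^+) \cap \xi^{(d-1)}(x_\gamma^-)$ and $\xi^{(d-3)}(x_\gamma^-)$ respectively. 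Property (3) with $k=1$ applied to the triple $(x_\gamma^-, x, x_\gamma^+)$ (playing the roles of $x, y, z$) delivers the direct-sum decomposition
\begin{align*}
\xi^{(2)}(x) + \xi^{(d-3)}(x_\gamma^-) + \bigl(\xi^{(2)}(x_\gamma^+) \cap \xi^{(d-1)}(x_\gamma^-)\bigr) = \Rb^d,
\end{align*}
which prevents $\xi^{(2)}(x)$ from lying in $V_2$.

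The main obstacle is the bookkeeping step in which the exceptional hyperplane coming from the Jordan form of $\wedge^2 \rho(\gamma)$ is recast as a direct-sum transversality statement covered by axiom (3). A secondary subtlety is that axiom (3) with $k = 1$ requires $d > 3$; when $d = 3$ one can instead identify $\wedge^2 \Rb^3$ with $(\Rb^3)^*$ via the Hodge star, so that $\wedge^2 \rho(\gamma)$ becomes a scalar multiple of the contragredient representation, and the second equality then reduces to the first equality applied to the dual Hitchin representation, whose boundary curve is precisely $\xi^{(2)}$.
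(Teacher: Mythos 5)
Your argument is correct and follows essentially the same route as the paper: apply Observation~\ref{obs:dynamics} to $\rho(\gamma)$ and $\wedge^2\rho(\gamma)$, identify the exceptional subspaces via property (5), and rule out membership using the transversality properties (2) and (3); your geometric rephrasing of the Pl\"ucker step (the $2$-plane $\xi^{(2)}(x)$ missing $\Spanset\{v_2,v_4,\dots,v_d\}$) is exactly the paper's computation that the coefficient $\alpha_1\beta_3-\alpha_3\beta_1$ is nonzero. Your observation that property (3) with $k=1$ formally requires $d>3$, together with the Hodge-star/dual-representation workaround for $d=3$, is a legitimate point that the paper does not address explicitly, and the fix is sound since the contragredient of a Hitchin representation is Hitchin with boundary curve $\xi^{(2)}$ and eigenvalue ratio $\lambda_2(\rho^*)/\lambda_1(\rho^*)=\lambda_3/\lambda_2$.
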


\begin{proof} Fix $\gamma \in \Gamma \setminus \{1\}$ and let $\lambda_i = \lambda_i(\rho(\gamma))$. Then let $v_1, \dots, v_d \in \Rb^d$ be eigenvectors of $\rho(\gamma)$ corresponding to $\lambda_1, \dots, \lambda_d$. Then by Property~\eqref{item:hitchin5}
\begin{align*}
\xi^{(k)}(x_\gamma^+) = \Spanset\{ v_1,\dots, v_k\}
\end{align*}
and
 \begin{align*}
\xi^{(k)}(x_\gamma^-) = \Spanset\{ v_{d-k+1},\dots, v_d\}.
\end{align*}
Further, if $w \notin \Spanset\{ v_1,v_3,\dots, v_d\}$ then 
\begin{align*}
\log \frac{\lambda_2}{\lambda_1} = \lim_{n \rightarrow \infty} \frac{1}{n} \log d_1\Big(\rho(\gamma)^n[w], \xi(x^+_\gamma)\Big).
\end{align*}
Notice, if $x \in \partial \Gamma \setminus \{x^+_\gamma, x^-_\gamma\}$ then Property~\eqref{item:hitchin2} implies that 
\begin{align*}
\xi(x) \notin \Pb(\xi(x^+_\gamma) \oplus \xi^{(d-2)}(x^-_\gamma)) = \Pb(\Spanset\{ v_1,v_3,\dots, v_d\})
\end{align*}
and so 
\begin{align*}
\log \frac{\lambda_2}{\lambda_1} = \lim_{n \rightarrow \infty} \frac{1}{n} \log d_1\Big(\xi(\gamma^nx), \xi(x^+_\gamma)\Big).
\end{align*}

For the second equality, notice that $v_i \wedge v_j$ are  eigenvectors of $\wedge^2 \rho(\gamma)$. So $\lambda_1 \lambda_2$ is the absolute value of the largest eigenvalue of $\wedge^2 \rho(\gamma)$ and  $\lambda_1 \lambda_3$ is the absolute value of the second largest eigenvalue of $\wedge^2 \rho(\gamma)$. So if 
\begin{align*}
w \notin \Spanset\{ v_i \wedge v_j  : \{i,j\} \neq \{ 1,3\} \},
\end{align*}
 then 
\begin{align*}
\log \frac{\lambda_3 }{\lambda_2} =\log \frac{\lambda_1\lambda_3 }{\lambda_1\lambda_2} = \lim_{n \rightarrow \infty} \frac{1}{n} \log d_2\Big( (\wedge^2\rho(\gamma))^n[w], \xi^{(2)}(x^+_\gamma)\Big).
\end{align*}

Now we claim that $\xi^{(2)}(x)  \notin \Pb(\Spanset\{ v_i \wedge v_j  : \{i,j\} \neq \{ 1,3\} \})$ when $x \in \partial \Gamma \setminus \{x^+_\gamma, x^-_\gamma\}$. Suppose that $\xi^{(2)}(x) = [ w_1 \wedge w_2]$
where 
\begin{align*}
w_1 = \sum_{i=1}^d \alpha_i v_i \text{ and } w_2 = \sum_{i=1}^d \beta_i v_i.
\end{align*}
Then 
\begin{align*}
\xi^{(2)}(x) = \left[ \sum_{1 \leq i < j \leq d} (\alpha_i \beta_j - \alpha_j \beta_i) v_i \wedge v_j \right].
\end{align*}
Now Property~\eqref{item:hitchin3} implies that 
\begin{align*}
\xi^{(2)}(x) +\xi^{(d-3)}(x^-_\gamma) + \Big(\xi^{(2)}(x^+_\gamma) \cap \xi^{(d-1)}(x^-_\gamma) \Big) = \Rb^d
\end{align*}
is direct. Since 
\begin{align*}
\xi^{(d-3)}(x^-_\gamma) + \Big(\xi^{(2)}(x^+_\gamma) \cap \xi^{(d-1)}(x^-_\gamma) \Big)= \Spanset\{v_2, v_4,\dots, v_d\}
\end{align*}
we see that $\alpha_1 \beta_3 - \alpha_3 \beta_1 \neq 0$. Thus $\xi^{(2)}(x)  \notin \Pb(\Spanset\{ v_i \wedge v_j  : \{i,j\} \neq \{ 1,3\} \})$. So
\begin{equation*}
\log \frac{\lambda_3 }{\lambda_2} = \lim_{n \rightarrow \infty} \frac{1}{n} \log d_2\Big( \xi^{(2)}(\gamma^n x), \xi^{(2)}(x^+_\gamma)\Big).\qedhere
\end{equation*}
\end{proof}

Now assume that $M = \xi(\partial \Gamma)$ is a $C^2$ submanifold in $\Pb(\Rb^d)$. Then define a map $\Phi: M \rightarrow \Pb(\wedge^{2} \Rb^{d})$ by
\begin{align*}
\Phi(m) = [v_1 \wedge v_2]
\end{align*}
where $T_mM = \Pb(\Spanset_{\Rb}\{ v_1,v_2\})$. Since $M$ is a $C^2$ submanifold, $\Phi$ is a $C^1$ map. 

\begin{lemma} With the notation above, $\Phi(\xi(x)) = \xi^{(2)}(x)$ for all $x \in \partial \Gamma$.
\end{lemma}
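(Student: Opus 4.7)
The target identity says that the $C^1$ tangent plane $T_{\xi(x)} M \subset \Pb(\Rb^d)$ equals the projectivization of the $2$-dimensional subspace $\xi^{(2)}(x) \subset \Rb^d$. Since $\Gamma$ is a surface group, $\partial \Gamma$ is homeomorphic to $S^1$ and $\xi$ is a topological embedding, so $M$ is a $1$-dimensional $C^2$ submanifold of $\Pb(\Rb^d)$; hence $T_{\xi(x)}M$ is a projective line, corresponding to a $2$-dimensional linear subspace of $\Rb^d$, and by definition $\Phi(\xi(x))$ is the class of this subspace in $\Pb(\wedge^2 \Rb^d)$.

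The plan is to identify this tangent subspace as a limit of secants and to compare with the Frenet property of the full boundary map $F = (\xi^{(1)}, \dots, \xi^{(d)})$. First, since $M$ is a $C^1$ submanifold, the standard secant-line characterization of the tangent line gives: for any sequence $y_n \in \partial \Gamma$ with $y_n \to x$ and $y_n \neq x$, the projective line joining $\xi(x)$ to $\xi(y_n)$ — equivalently, the $2$-dimensional subspace $\xi^{(1)}(x) + \xi^{(1)}(y_n) \subset \Rb^d$ — converges in the Grassmannian of $2$-planes in $\Rb^d$ to the lift of $T_{\xi(x)} M$, which is the lift of $\Phi(\xi(x))$.

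Second, by Labourie's theorem (\cite[Theorem 1.4]{L2006}, cited above), $F$ is a hyperconvex Frenet curve. Applied to the partition $(n_1, n_2) = (1,1)$ with $n_1 + n_2 = 2 \leq d$, the Frenet limit property yields
\begin{align*}
\lim_{\substack{y_1, y_2 \to x \\ y_1 \neq y_2}} \Big( \xi^{(1)}(y_1) + \xi^{(1)}(y_2) \Big) = \xi^{(2)}(x)
\end{align*}
in the Grassmannian of $2$-planes in $\Rb^d$. Specializing $y_1 = x$ (constant) and $y_2 = y_n \to x$ with $y_n \neq x$ gives $\xi^{(1)}(x) + \xi^{(1)}(y_n) \to \xi^{(2)}(x)$. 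Comparing with the previous paragraph yields $\Phi(\xi(x)) = \xi^{(2)}(x)$.

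I do not anticipate a real obstacle: the Frenet property does essentially all of the work, and the remaining content is just the standard fact that secant subspaces of a $C^1$ submanifold converge to its tangent subspace. The only small point of care is to verify that Labourie's formulation of the Frenet limit permits holding one argument fixed at $x$; if one prefers to be strict and require all arguments to approach $x$ while remaining pairwise distinct, the same conclusion can be reached by taking two sequences $y_n, z_n \to x$ with $y_n \neq z_n$ and comparing the secant line $[\xi^{(1)}(y_n) + \xi^{(1)}(z_n)]$ to $[\xi^{(2)}(x)]$ on the Frenet side and to $T_{\xi(x)} M$ on the submanifold side.
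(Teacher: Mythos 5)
Your proof is correct, but it follows a genuinely different route from the paper's. You invoke the osculation (contact) part of Labourie's hyperconvex Frenet curve property --- the limit $\xi^{(1)}(y_1)\oplus\xi^{(1)}(y_2)\to\xi^{(2)}(x)$ as $y_1,y_2\to x$ with $y_1\neq y_2$ --- and match it against the standard fact that secant lines of a $C^1$ curve converge to the tangent line; this gives $\Phi(\xi(x))=\xi^{(2)}(x)$ at every point directly. The paper instead works only with the properties it explicitly extracts from Labourie (the transversality statements and the eigenvalue/eigenspace description of $F$ at fixed points): it first proves the identity at the attracting fixed point $x_\gamma^+$ of each $\gamma\neq 1$ by observing that for $y\notin\{x_\gamma^+,x_\gamma^-\}$ one has $\xi(y)\notin\xi(x_\gamma^+)\oplus\xi^{(d-2)}(x_\gamma^-)$, so the orbit $\rho(\gamma)^n\xi(y)$ converges to $\xi(x_\gamma^+)$ tangentially to the plane spanned by the top two eigenlines, i.e.\ $\xi^{(2)}(x_\gamma^+)$; it then extends to all of $\partial\Gamma$ by density of attracting fixed points and continuity of $\Phi\circ\xi$ and $\xi^{(2)}$. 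Your argument is shorter and pointwise but imports a stronger piece of Labourie's theorem; the paper's is a dynamical argument at periodic points plus a density step, self-contained relative to the properties (1)--(5) it lists. Your caveat about fixing one argument at $x$ is handled correctly: a pair $(x,y_n)$ with $y_n\to x$, $y_n\neq x$, is a legitimate subfamily of the pairwise-distinct tuples converging to $(x,x)$, and your fallback with two moving points also works since secants through two points of a $C^1$ curve that both converge to $\xi(x)$ still converge to the tangent line (mean value theorem componentwise in a local $C^1$ graph parametrization).
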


\begin{proof} Since $\{ x^+_\gamma : \gamma \in \Gamma \setminus \{1\} \}$ is dense in $\partial \Gamma$, it is enough to show that $\Phi(\xi(x_\gamma^+)) = \xi^{(2)}(x_\gamma^+)$ for $\gamma \in \Gamma \setminus \{1\}$. By property (5) above, $\xi^{(k)}(x_\gamma^+)$ is the span of the eigenspaces of $\rho(\gamma)$ corresponding to the eigenvalues 
 \begin{align*}
  \lambda_1(\rho(\gamma)),\dots,  \lambda_k(\rho(\gamma))
  \end{align*} 
while $\xi^{(k)}(x_\gamma^-)$ is the span of the eigenspaces of $\rho(\gamma)$ corresponding to the eigenvalues 
 \begin{align*}
  \lambda_{d-k+1}(\rho(\gamma)), \dots, \lambda_d(\rho(\gamma)).
  \end{align*} 
  
  Now fix $y \in \partial \Gamma \setminus \{x^+_\gamma, x^-_\gamma\}$. By Properties~\eqref{item:hitchin1} and~\eqref{item:hitchin2}, 
  \begin{align*}
  \xi(y) \notin \Pb(\xi(x_\gamma^+) \oplus \xi^{(d-2)}(x_\gamma^-))
  \end{align*}
  and so $\xi(\gamma^n y) = \rho(\gamma)^n \xi(y)$ approaches $\xi(x^+_\gamma)$ along an orbit tangential to $\xi^{(2)}(x^+_\gamma)$. Which implies that $\Phi(\xi(x^+_\gamma)) = \xi^{(2)}(x^+_\gamma)$.
\end{proof}

\begin{lemma}
With the notation above, $\Phi: M \rightarrow \Pb(\wedge^{2} \Rb^{d})$ is a $C^1$ embedding. 
\end{lemma}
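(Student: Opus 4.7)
The plan is to verify, in turn, that $\Phi$ is $C^1$, injective, and an immersion; compactness of $M$ then automatically upgrades this to an embedding. The $C^1$ regularity of $\Phi$ is built in: since $M$ is a $C^2$ $1$-submanifold, its Gauss-type map $m \mapsto T_m M$ is $C^1$, and by construction this map is exactly $\Phi$.

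For injectivity, suppose $x \neq y$ in $\partial \Gamma$ and choose any third point $z \in \partial \Gamma \setminus \{x,y\}$ (available since $\partial \Gamma$ is infinite). Property (2) of the Frenet flag, applied with indices $(k_1,k_2,k_3) = (2,1,d-3)$, yields the direct sum
\[
\xi^{(2)}(x) \oplus \xi^{(1)}(y) \oplus \xi^{(d-3)}(z) = \Rb^d,
\]
so $\xi(y) = \xi^{(1)}(y) \not\subset \xi^{(2)}(x)$. But $\xi(y) \subset \xi^{(2)}(y)$ inside the flag at $y$, so $\xi^{(2)}(x) \neq \xi^{(2)}(y)$, and using the identity $\Phi(\xi(\cdot)) = \xi^{(2)}(\cdot)$ from the previous lemma we conclude $\Phi(\xi(x)) \neq \Phi(\xi(y))$.

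For the immersion step, the intertwining $\Phi \circ \rho(\gamma) = \wedge^{2}\rho(\gamma) \circ \Phi$ on $M$ shows that the closed subset $Z := \{m \in M : d\Phi_m = 0\}$ is $\rho(\Gamma)$-invariant. Since the action of $\Gamma$ on $\partial \Gamma$ is minimal and $\xi : \partial \Gamma \to M$ is a homeomorphism, $\rho(\Gamma)$ acts minimally on $M$, so $Z$ is either empty or all of $M$. If $Z = M$, then $d\Phi$ vanishes identically on the connected $C^2$ $1$-manifold $M$, forcing $\Phi$ to be constant, say $\Phi \equiv [V]$ for some $2$-plane $V \subset \Rb^d$. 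Then every tangent line $T_m M$ lies in the projective line $\Pb(V)$, so the connected curve $M$ itself must lie in $\Pb(V)$. Consequently $\xi(\partial \Gamma) \subset \Pb(V)$, contradicting the irreducibility of $\rho$ (which for Hitchin representations with $d \geq 3$ follows immediately from the Frenet property: three distinct points $\xi(x),\xi(y),\xi^{(d-2)}(z)$ already span $\Rb^d$). Hence $Z = \emptyset$ and $\Phi$ is an immersion.

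Finally, $M$ is compact as the continuous image of $\partial \Gamma$, so the continuous injective immersion $\Phi$ is automatically a $C^1$ embedding. The one genuinely nontrivial point is ruling out $Z = M$; this is precisely where minimality of the $\Gamma$-action on $\partial \Gamma$ interacts with the Frenet/irreducibility structure to deliver the contradiction.
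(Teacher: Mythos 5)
Your proof is correct and follows essentially the same route as the paper: injectivity via the Frenet transversality property combined with $\Phi(\xi(\cdot))=\xi^{(2)}(\cdot)$, non-vanishing of $d\Phi$ propagated by equivariance and minimality of the $\Gamma$-action on $M$, and compactness to upgrade the injective immersion to an embedding. The only (harmless) deviation is that you rule out $d\Phi\equiv 0$ by deducing $M\subset\Pb(V)$ and contradicting irreducibility, whereas the paper gets the same conclusion directly from injectivity of $\Phi$.
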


\begin{proof} By the previous lemma and Property~\eqref{item:hitchin2}, $\Phi$ is injective. Since $\Phi$ is also $C^1$, $d(\Phi)_m \neq 0$ for some $m \in M$. So $d(\Phi)_m \neq 0$ on an open set. But since
\begin{align*}
\Phi \circ \rho(\gamma) =( \wedge^{2} \rho(\gamma)) \circ \Phi
\end{align*}
for every $\gamma \in \Gamma$ and $\Gamma$ acts minimally on $M$, we see that $d(\Phi)_m \neq 0$ for all $m \in M$. Hence, since $M$ is compact and $\Phi$ is injective, $\Phi$ is a $C^1$ embedding. 
\end{proof}

Since $\Phi$ is a $C^1$ embedding, there exists $C \geq 1$ such that 
\begin{align*}
\frac{1}{C} d_1(m_1, m_2) \leq d_2(\Phi(m_1), \Phi(m_2)) \leq C d_1(m_1, m_2)
\end{align*}
for all $m_1, m_2 \in M$. Then by Lemma~\ref{lem:hithcin_eigenvalue_est} we have 
\begin{align*}
\frac{\lambda_3(\rho(\gamma))}{\lambda_2(\rho(\gamma))} = \frac{\lambda_2(\rho(\gamma))}{\lambda_1(\rho(\gamma))}
\end{align*}
for all $\gamma \in \Gamma$.

\appendix 

\section{An argument of Liu}\label{sec:Lius_argument}

In this section we explain how an argument of Liu~\cite{L2011} can be adapted to prove the following. 

\begin{proposition} Let $(X,g)$ be a complete simply connected Riemannian $d$-manifold with $\Ric \geq -(d-1)$ and bounded sectional curvature. Suppose $\Gamma \leq \Isom(X,g)$ is a discrete subgroup and there exist $C,r_0>0$ and $x_0 \in X$ such that
\begin{align*}
C e^{(d-1)r} \leq \#\{ \gamma \in \Gamma : d_X(x_0, \gamma x_0) \leq r \}
\end{align*}
 for every $r > r_0$. Then $X$ is isometric to real hyperbolic $d$-space. 
\end{proposition}

Essentially the only change in Liu's argument is replacing the words ``by a standard covering technique'' with the proof of Lemma~\ref{lem:Liu2} below. 

Suppose for the rest of the section that $(X,g)$ is a Riemannian manifold and $\Gamma \leq \Isom(X,g)$ is a discrete subgroup which satisfy the hypothesis of the theorem. Let $d_X:X \rightarrow X \rightarrow \Rb$ be the distance, ${\rm Vol}$ denote the volume form, $\nabla$ denote the gradient, and let $\Delta$ denote the Laplace-Beltrami operator on $(X,g)$. Also, for $x \in X$ and $r > 0$ define
\begin{align*}
B_r(x) = \{ y \in X : d_X(x,y) < r\}.
\end{align*}

We begin by recalling a result of Ledrappier and Wang. 

\begin{lemma}\cite{LW2010}\label{lem:LW} If there exists a $C^\infty$ function $u: X \rightarrow \Rb$ such that $\norm{\nabla u} \equiv 1$ and $\Delta u \equiv d-1$, then $X$ is isometric to real hyperbolic space.
\end{lemma}

\begin{proof} Define $\phi = e^{(d-1)u}$. Then $\phi$ is positive and by the chain rule
\begin{align*}
\Delta(\phi) = e^{(d-1)u}\left( (d-1)^2\norm{\nabla u}^2 - (d-1) \Delta u\right) = 0.
\end{align*}
Further, $\norm{\nabla \log \phi} =(d-1)\norm{\nabla u} \equiv d-1$. So by Theorem 6 in~\cite{LW2010}, $X$ is isometric to real hyperbolic space.
\end{proof}

Next fix  a point $x_0 \in X$ and some very large $R >0$. Let $d_0 : X \rightarrow \Rb$ be the function $d_0(x) = d_X(x,x_0)$. Next let $\Cc_{0} \subset X$ denote the cut locus of $x_0$. Then $d_0$ is smooth on $X \setminus (\Cc_{0}\cup \{x_0\})$ and $\Vol(\Cc_{0}) = 0$. 

\begin{lemma}\label{lem:Liu1} There exists $r_n \rightarrow \infty$ such that: if 
\begin{align*}
A_n = \{ x \in X : r_n-50R \leq d_X(x_0, x) \leq r_n + 50R \},
\end{align*}
then 
\begin{align*}
\lim_{n \rightarrow \infty} \frac{1}{\Vol(A_n)} \int_{A_n \setminus \Cc_{0}} \Delta d_0(x)dV = d-1.
\end{align*}
\end{lemma}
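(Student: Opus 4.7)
The plan is to show that the limit in fact holds for \emph{every} sequence $r_n \to \infty$, so that existence is automatic. The idea is to combine Bishop--Gromov volume comparison with the orbit-growth hypothesis to pin down precise asymptotics for $V(r) := \Vol B(x_0,r)$ and $A(r) := \Vol \partial B(x_0,r)$, and then sandwich $\int_{A_n}\Delta d_0\, dV$ between a divergence-theorem lower bound and a Laplacian-comparison upper bound.

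First I would establish that $V(r) \sim L^\ast \varphi(r)$ and $A(r) \sim L^\ast \sinh^{d-1}(r)$ for some $L^\ast > 0$, where $\varphi(r) := \int_0^r \sinh^{d-1}(s)\,ds \sim \frac{1}{(d-1)2^{d-1}}e^{(d-1)r}$. The orbit-growth hypothesis gives $V(r) \geq C e^{(d-1)r}$ for large $r$: pick $\rho > 0$ so the balls $\{B(\gamma x_0,\rho):\gamma\in\Gamma\}$ are pairwise disjoint (possible by discreteness), and note $V(r+\rho) \geq \#\{\gamma : d(x_0,\gamma x_0)\le r\}\cdot \Vol B(x_0,\rho)$. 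Bishop--Gromov under $\Ric \geq -(d-1)$ asserts that both $V(r)/\varphi(r)$ and $A(r)/\sinh^{d-1}(r)$ are non-increasing in $r$. The former is non-increasing and bounded below by a positive constant, hence converges to some $L^\ast > 0$; a Stolz--Ces\`{a}ro argument (using $A = V'$ and $\sinh^{d-1} = \varphi'$, with $\varphi \to \infty$) then forces $A(r)/\sinh^{d-1}(r)$ to converge to the same limit. Consequently
\[
\frac{A(r+50R) - A(r-50R)}{V(r+50R) - V(r-50R)} \longrightarrow d - 1 \quad \text{as } r \to \infty.
\]

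Next I would sandwich the integral. The upper bound is immediate from the Laplacian comparison theorem: under $\Ric \geq -(d-1)$ one has $\Delta d_0(x) \leq (d-1)\coth(d_0(x))$ at every smooth point, and since $\coth(s) \to 1$ as $s\to\infty$ and $d_0 \geq r_n - 50R$ on $A_n$,
\[
\int_{A_n}\Delta d_0\, dV \;\leq\; (d-1)(1+o(1))\,\Vol(A_n).
\]
For the lower bound I would apply the divergence theorem distributionally to the Lipschitz function $d_0$: the distributional Laplacian decomposes as $(\Delta d_0)_{\mathrm{ac}} + (\Delta d_0)_{\mathrm{sing}}$, where $(\Delta d_0)_{\mathrm{ac}}$ equals the pointwise Laplacian almost everywhere, and $(\Delta d_0)_{\mathrm{sing}}$ is supported on the cut locus and is non-positive there (by semiconcavity of $d_0$). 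Since the outward unit normal to $\partial B(x_0,r)$ coincides with $\nabla d_0$ at every smooth boundary point, integrating against the indicator of $B(x_0,r)$ yields $\int_{B(x_0,r)} d(\Delta d_0) = A(r)$. Discarding the non-positive singular part gives
\[
\int_{A_n}\Delta d_0\, dV \;\geq\; A(r_n+50R) - A(r_n-50R).
\]
Dividing both bounds by $\Vol(A_n) = V(r_n+50R) - V(r_n-50R)$ and invoking the ratio limit from the previous step, both the liminf and the limsup of the average equal $d-1$.

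The main technical point will be justifying the sign of the cut-locus contribution to the distributional Laplacian of $d_0$: because we only assume $\Ric \geq -(d-1)$ (not nonpositive sectional curvature), the cut locus of $x_0$ may be nonempty, and one must confirm via semiconcavity of $d_0$ that the singular part of $\Delta d_0$ is a non-positive measure. This is the standard ingredient in Liu's argument that makes the divergence-theorem inequality go the right way.
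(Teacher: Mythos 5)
Your argument is correct and is essentially the same as the paper's, which simply cites Claims 1 and 2 of Liu's paper: the sandwich between the distributional divergence-theorem lower bound $\int_{A_n}\Delta d_0 \geq A(r_n+50R)-A(r_n-50R)$ (using non-positivity of the cut-locus part) and the Laplacian-comparison upper bound $(d-1)\coth(r_n-50R)\Vol(A_n)$, combined with Bishop--Gromov and the orbit-growth hypothesis to identify the limiting ratio, is precisely that argument. In fact, because the hypothesis here gives a genuine lower bound $V(r)\geq C'e^{(d-1)r}$ (after accounting for the finite stabilizer of $x_0$), your version yields the limit along \emph{every} sequence $r_n\to\infty$, which is slightly stronger than the stated existence claim.
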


\begin{proof} This is essentially claim 1 and claim 2 from~\cite{L2011}. First, the Laplacian comparison theorem (see Theorem~\cite[Theorem 2.2]{Zhu1997}) immediately implies that 
\begin{align*}
\limsup_{n \rightarrow \infty} \frac{1}{\Vol(A_n)} \int_{A_n \setminus \Cc_{0}} \Delta d_0(x) dV \leq d-1
\end{align*}
and so we just have to prove 
\begin{align*}
\liminf_{n \rightarrow \infty} \frac{1}{\Vol(A_n)} \int_{A_n \setminus \Cc_{0}} \Delta d_0(x) dV \geq d-1.
\end{align*}

Let $S_{x_0} X$ denote the unit tangent sphere at $x_0$. For $v \in S_{x_0} X$ let 
\begin{align*}
 \tau(v) = \min\{ t > 0 : \exp_{x_0}(tv) \in \Cc_0\}.
\end{align*}
Next for $r >0$ define 
\begin{align*}
 C(r) = \{ v \in S_{x_0} X : r < \tau(v) \}.
\end{align*}
Let $J(r,v)$ be the non-negative function defined on $\cup_{r > 0} \{r \} \times C(r)$ such that: if $\varphi \in L^1(X, dV)$, then 
\begin{align*}
 \int_X \varphi(x) dV = \int_0^\infty \int_{C(r)} \varphi(\exp_{x_0}(rv) ) J(r,v) d\mu(v)dr
\end{align*}
where $d\mu$ is the Lebesgue meaure on $S_{x_0}X$. 

For $r > 0$ let 
\begin{align*}
 S_r = \int_{C(r)} J(r,v) d\mu(v).
\end{align*}
Then by Fubini's theorem
\begin{align}\label{eq:Fubini}
 \int_0^R S_r dr= { \rm Vol}(B_g(x_0, R))
\end{align}
for every $R > 0$. We claim that there exists $r_n \rightarrow \infty$ such that 
\begin{align*}
 \liminf_{n \rightarrow \infty} \frac{S_{r_n+50R}}{S_{r_n-50R}} \geq e^{100(d-1)R}.
\end{align*}
Suppose such a sequence does not exist, then there exists $\epsilon > 0$ and $R_0 > 0$ such that 
\begin{align*}
\frac{S_{r+50R}}{S_{r-50R}} \leq e^{100(d-1)R}(1-\epsilon)
\end{align*}
for every $r > R_0$. But then an iteration argument implies that 
\begin{align*}
S_{r} \leq C(1-\epsilon)^{\frac{r}{100R}} e^{(d-1)r}
\end{align*}
for some $C > 0$ which is independent of $r$. But then Equation~\eqref{eq:Fubini} implies that $h_{vol}(X,g) < (d-1)$. So we have a contradiction and hence there exists $r_n \rightarrow \infty$ such that 
\begin{align*}
 \liminf_{n \rightarrow \infty} \frac{S_{r_n+50R}}{S_{r_n-50R}} \geq e^{100(d-1)R}.
\end{align*}

Next for $v \in S_{x_0}X$ and $r \in(0,\tau(v))$, define $H(r,v) = (\Delta d_0)( \exp_p(rv))$. We have the following well known relationship between $J$ and $H$, see for instance~\cite[Equation 1.159]{CLN2006},
\begin{align}\label{eq:H_J_relationship}
 H(r,v) J(r,v) = \frac{\partial}{\partial r} J(r,v). 
\end{align}
Next define
\begin{align*}
a_n(v): = \min\{\tau(v), r_n-50R\} \text{ and } b_n(v) := \min\{ r_n+50R, \tau(v) \}.
\end{align*}
Then by Equation~\eqref{eq:H_J_relationship}
\begin{align*}
  \int_{A_n \setminus \Cc_{0}} & \Delta d_0(x) dV  = \int_{S_{x_0} X} \int_{a_n(v)}^{b_n(v)} H(r,v)J(r,v) dr d\mu(v) \\
  &= \int_{S_{x_0} X} \int_{a_n(v)}^{b_n(v)} \frac{\partial J}{\partial r}(r,v) dr d\mu(v)  = \int_{S_{x_0} X} J(b_n(v), v) - J(a_n(v), v) d \mu(v) \\
  & = S_{r_n+50R}-S_{r_n-50R} + \int_{\{r_n-50R < \tau(v) < r_n+50R\}}   J(b_n(v), v) d \mu(v)\\
  & \geq S_{r_n+50R} - S_{r_n - 50R}. 
\end{align*}
By using the volume comparison theorem for annuli, see~\cite[Theorem 3.1]{Zhu1997}, we have
\begin{align*}
 \limsup_{n \rightarrow \infty} \frac{\Vol(A_n)}{S_{r_n-50R_n}} \leq \frac{1}{d-1}\left( e^{100(d-1)R_n}-1 \right) 
\end{align*}
and so
\begin{align*}
\liminf_{n \rightarrow \infty} & \frac{1}{\Vol(A_n)} \int_{A_n \setminus \Cc_{0}} \Delta d_0(x) dV  \geq \liminf_{n \rightarrow \infty} \frac{S_{r_n+50R} - S_{r_n - 50R}}{\Vol(A_n)} \\
& \geq \left(\liminf_{n \rightarrow \infty} \frac{S_{r_n+50R}}{S_{r_n-50R}}-1 \right)\liminf_{n \rightarrow \infty} \frac{S_{r_n-50R}}{\Vol(A_n)} \\
& \geq d-1.
\end{align*}
\end{proof}

Next let $M_n \subset \Gamma$ be a maximal set such that
\begin{enumerate}
\item if $\gamma \in M_n$, then $\gamma B_R( x_0) \subset A_n$, 
\item if $\gamma_1, \gamma_2 \in M_n$ are distinct, then 
\begin{align*}
\gamma_1 B_R(x_0) \cap \gamma_2B_R(x_0) = \emptyset.
\end{align*}
\end{enumerate}
Then let 
\begin{align*}
E_n = M_n \cdot B_R(x_0) \subset A_n.
\end{align*}

\begin{lemma}\label{lem:Liu2} For $R>0$ sufficiently large, 
\begin{align*}
 \liminf_{n \rightarrow \infty}\frac{\Vol(A_n)}{e^{(d-1)r_n}} >0
\end{align*}
and
\begin{align*}
\liminf_{n \rightarrow \infty}\frac{ \Vol(E_n)}{\Vol(A_n)} >0.
\end{align*}
\end{lemma}

\begin{proof} We prove the second inequality first. Fix some $\delta \in (0,R)$ such that: if $\gamma_1, \gamma_2 \in \Gamma$ and
\begin{align*}
\gamma_1 B_\delta(x_0) \cap \gamma_2 B_{\delta}(x_0) \neq \emptyset,
\end{align*}
then $\gamma_1 x_0= \gamma_2x_0$. Then let $s_0 = \#\{ \gamma \in \Gamma : \gamma x_0 = x_0\}$ and
\begin{align*}
N_n = \{ \gamma \in \Gamma : r_n -49R < d(\gamma x_0,x_0) \leq r_n+49R \}
\end{align*}
Then 
\begin{align*}
\Vol( N_n \cdot B_{\delta}(x_0)) = \frac{\Vol(B_\delta(x_0))}{s_0} \# N_n.
\end{align*}
Moreover, since $M_n$ was chosen maximally, we have
\begin{align*}
N_n \cdot B_{\delta}(x_0) \subset M_n \cdot B_{2R+\delta}(x_0).
 \end{align*}
Then since 
\begin{align*}
\frac{\Vol (M_n \cdot B_{2R+\delta}(x_0)) }{\Vol (M_n \cdot B_{R}(x_0))} \leq \frac{\Vol (B_{2R+\delta}(x_0)) }{\Vol(B_{R}(x_0))}
\end{align*}
we have
\begin{align*}
\liminf_{n \rightarrow \infty}\frac{ \Vol(E_n)}{\# N_n} >0.
\end{align*}
So it is enough to show that 
\begin{align*}
\liminf_{n \rightarrow \infty}\frac{ \# N_n}{\Vol(A_n)} >0.
\end{align*}

Now 
\begin{align*}
\#N_n  
&= \#\{ \gamma \in \Gamma : d_X(x_0, \gamma x_0) \leq r_n+49R \} - \#\{ \gamma \in \Gamma : d_X(x_0, \gamma x_0) \leq r_n-49R \} \\
& \geq C e^{(d-1)(r_n+49R)}-\frac{s_0}{\Vol B_\delta(x_0)} \Vol B_{r_n-49R}(x_0). 
\end{align*}
By the Bishop volume comparison theorem, see~\cite[Corollary 3.3]{Zhu1997},  there exists $V_0 > 0$ so that 
\begin{align*}
\Vol B_{r_n-49R}(x_0) \leq V_0 e^{(d-1)(r_n-49R)}
\end{align*}
for all $n > 0$. So
\begin{align*}
\#N_n \geq e^{(d-1)r_n} \Big( Ce^{49R} - \frac{s_0V_0}{\Vol B_\delta(x_0)}e^{-49R} \Big).
\end{align*}
Now $C, s_0,V_0, \delta$ do not depend on $R>0$ and $R$ is some very large number so we may assume that
\begin{align*}
\Big( Ce^{49R} - \frac{s_0V_0}{\Vol B_\delta(x_0)}e^{-49R} \Big) \geq 1.
\end{align*}
Then
\begin{align}\label{eq:bd_on_NN}
\#N_n \geq e^{(d-1)r_n}.
\end{align}
Finally, by the volume comparison theorem for annuli (see~\cite[Theorem 3.1]{Zhu1997}) we have
\begin{align*}
\liminf_{n \rightarrow \infty}\frac{ e^{(d-1)r_n}}{\Vol(A_n)} >0
\end{align*}
and so
\begin{align*}
\liminf_{n \rightarrow \infty}\frac{ \# N_n}{\Vol(A_n)} >0.
\end{align*}
This proves the second inequality. 

To prove the first inequality, notice that 
\begin{align*}
 \Vol(A_n) \geq \frac{\Vol B_\delta(x_0)}{s_0}\# N_n 
 \end{align*}
 and then use Equation~\ref{eq:bd_on_NN}.

\end{proof}

\begin{lemma}\label{lem:integration_by_parts} There exists a sequence $\epsilon_n > 0$ with $\lim_{n \rightarrow \infty} \epsilon_n = 0$ such that: if $\varphi: A_n \rightarrow [0,1]$ is a $C^\infty$ function compactly supported in $A_n$, then
\begin{align*}
\abs{\int_X d_0 \Delta \varphi dV - \int_{X \setminus \Cc_0} \varphi \Delta d_0 dV } \leq \epsilon_n \Vol(A_n).
\end{align*}
\end{lemma}

\begin{remark} When $d_0$ is smooth on $X \setminus \{x_0\}$ and $\varphi$ is compactly supported in $X \setminus \{x_0\}$, then 
 \begin{align*}
\int_X d_0 \Delta \varphi dV = \int_X \varphi \Delta d_0 dV 
\end{align*}
by integration by parts. So Lemma~\ref{lem:integration_by_parts} says that we can still do integration by parts in the case when $d_0$ is not smooth, but at the cost of some additive error which depends on the support of $\varphi$. 
\end{remark}

\begin{proof} Let $\tau(v)$, $J(r,v)$, $a_n(v)$, and $b_n(v)$ be as in the proof of Lemma~\ref{lem:Liu1}. Let $I =\int_X d_0 \Delta \varphi dV$. Since integration by parts holds for Lipschitz functions,
we have
\begin{align*}
I = - \int_X \nabla d_0 \cdot \nabla \varphi dV = -\int_{S_{x_0} X} \int_{a_n(v)}^{b_n(v)} \frac{\partial \wh{\varphi}}{\partial r}(r,v)J(r,v) dr d\mu(v)
\end{align*}
where $\wh{\varphi}(r,v) = \varphi(\exp_{x_0}(rv))$. Integrating by parts again and using Equation~\eqref{eq:H_J_relationship}
\begin{align*}
I & = \int_{S_{x_0} X} \int_{a_n(v)}^{b_n(v)} \wh{\varphi}(r,v) \frac{\partial J}{\partial r}(r,v) dr d\mu(v) -  \left. \int_{S_{x_0} X}  \wh{\varphi}(r,v) J(r,v)\right|_{a_n(v)}^{b_n(v)}  d\mu(v) \\
&  = \int_{X \setminus C_0} \varphi \Delta d_0 d V -  \left. \int_{S_{x_0} X}  \wh{\varphi}(r,v) J(r,v)\right|_{a_n(v)}^{b_n(v)}  d\mu(v).
\end{align*}

Next we estimate the absolute value of the second term in last equation. If $\tau(v) > r_n+50R$, then
\begin{align*}
\wh{\varphi}(a_n(v), v) = \wh{\varphi}(b_n(v), v)=0
\end{align*}
since $\varphi$ is compactly supported in $A_n$. Further, if $\tau(v) < r_n - 50R$, then $a_n(v) = b_n(v)$. Hence, if
\begin{align*}
\left. \wh{\varphi}(r,v) J(r,v)\right|_{a_n(v)}^{b_n(v)}  \neq 0,
\end{align*}
then we must have $\tau(v) \in [r_n-50R, r_n+50R]$. By the volume comparison theorem, there exists $J_0> 0$ such that 
\begin{align*}
J(r,v) \leq J_0e^{(d-1)r}.
\end{align*}
Then since $\abs{\wh{\varphi}} \leq 1$, we have
\begin{align*}
\abs{  \left. \int_{S_{x_0} X}  \wh{\varphi}(r,v) J(r,v)\right|_{a_n(v)}^{b_n(v)}  d\mu(v) } \leq J_0e^{(d-1)r} \mu\left(\left\{ v : \tau(v) \in  [r_n-50R, r_n+50R] \right\} \right).
\end{align*}
Since $\mu$ is a finite measure, we have
\begin{align*}
\lim_{n \rightarrow \infty}  \mu\left(\left\{ v : \tau(v) \in  [r_n-50R, r_n+50R] \right\} \right) = 0.
\end{align*}
Then by the first part of Lemma~\ref{lem:Liu2},
\begin{align*}
\epsilon_n := J_0 \frac{ e^{(d-1)r} }{\Vol(A_n)}  \mu\left(\left\{ v : \tau(v) \in  [r_n-50R, r_n+50R] \right\} \right)
\end{align*}
satisfies the conclusion of the lemma. 
\end{proof}

Let $\{ \chi_i : i \in \Nb\}$ be a partition of unity for $B_R(x_0)$. Then define $\phi_n = \sum_{i=1}^n \chi_i$. Then each $\phi_n$ is smooth, maps into $[0,1]$, and has compact support in $B_R(x_0)$. Moreover, $\phi_1 \leq \phi_2 \leq \dots$ and if $K \subset B_R(x_0)$ is a compact set, then $K \subset \phi_n^{-1}(1)$ for $n$ sufficiently large. 

Next let $\wt{\phi}_n = \sum_{\gamma \in M_n} \phi_n \circ \gamma^{-1}$. Then $\wt{\phi}_n$ is compactly supported in $E_n$ and
\begin{align}
\label{eq:limits_wt}
\lim_{n \rightarrow \infty} \frac{1}{\Vol(E_n)}\int_X 1_{E_n}-\wt{\phi}_n dV = 0.
\end{align}

\begin{lemma}\label{lem:construct_seqn} There exists a sequence $\gamma_n \in M_n$ such that 
\begin{align*}
\lim_{n \rightarrow \infty} \int_{X}d_0(\gamma_n x) \Delta \phi_n(x) dV = (d-1)\Vol(B_R(x_0))
\end{align*}
\end{lemma}

\begin{proof} Let 
\begin{align*}
c_n = \frac{1}{\Vol(B_R(x_0))}\max_{ \gamma \in M_n} \int_{X}d_0(\gamma x) \Delta \phi_n(x) dV. 
\end{align*}
By the Laplacian comparison theoem (see Theorem~\cite[Theorem 2.2]{Zhu1997}),
\begin{align*}
\limsup_{x \rightarrow \infty} \Delta d_0(x) \leq d-1
\end{align*}
in the sense of distributions, so
\begin{align*}
\limsup_{n \rightarrow \infty} c_n \leq d-1.
\end{align*}
 And we just have to prove that 
 \begin{align*}
 \liminf_{n \rightarrow \infty} c_n \geq d-1.
 \end{align*}
 Using Lemma~\ref{lem:Liu1} and the Laplacian comparison theorem we have
\begin{align*}
d-1 & = \lim_{n \rightarrow \infty} \frac{1}{\Vol(A_n)} \int_{A_n \setminus \Cc_0} \Delta d_0 dV \\
&= \lim_{n \rightarrow \infty} \frac{1}{\Vol(A_n)}\left( \int_{(A_n \setminus E_n) \setminus \Cc_0}  \Delta d_0 dV + \int_{E_n \setminus \Cc_0} \Delta d_0 dV\right) \\
& \leq \liminf_{n \rightarrow \infty} \frac{1}{\Vol(A_n)}\left( (d-1) \Vol(A_n \setminus E_n)+ \int_{E_n \setminus \Cc_0} \Delta d_0(x) dV\right) \\
& =  \liminf_{n \rightarrow \infty} \frac{1}{\Vol(A_n)}\left( (d-1) \Vol(A_n \setminus E_n)+ \int_{X \setminus \Cc_0} \wt{\phi}_n \Delta d_0 dV\right).
\end{align*}
In the last equality above we used Equation~\eqref{eq:limits_wt} and the fact that $\Delta d_0$ is uniformly bounded. 

Then by Lemma~\ref{lem:integration_by_parts} and the definition of $c_n$
\begin{align*}
d-1 & \leq \liminf_{n \rightarrow \infty} \frac{1}{\Vol(A_n)}\left( (d-1) \Vol(A_n \setminus E_n) + \int_{X \setminus \Cc_0}d_0 \Delta \wt{\phi}_n  dV\right) \\
& \leq (d-1)+\liminf_{n \rightarrow \infty} \frac{(c_n-d+1)\Vol(E_n)}{\Vol(A_n)}.
\end{align*}
So by Lemma~\ref{lem:Liu2}, we must have $\liminf_{n \rightarrow \infty} c_n \geq d-1$.
\end{proof}

Next consider the functions $f_n : B_R(x_0) \rightarrow \Rb$ given by 
\begin{align*}
f_n(x) = (d_0 \circ \gamma_n)(x) - (d_0 \circ \gamma_n)(x_0) = d(x_0,\gamma_n x) - d(x_0, \gamma_n x_0)
\end{align*}
Then each $f_n$ is 1-Lipschitz and $f_n(x_0)=0$, so we can pass to a subsequence such that $f_n$ converges locally uniformally to a function $f:B_R(x_0) \rightarrow \Rb$.

\begin{lemma} $f$ is $C^\infty$, $\Delta f \equiv d-1$, and $\norm{\nabla f}\equiv 1$. \end{lemma}

\begin{proof} Using elliptic regularity, to show the first two assertions it is enough to verify that $\Delta f \equiv d-1$ in the sense of distributions on $B_R(x_0)$. Let $\varphi$ be a positive $C^\infty$ function compactly supported in $B_R(x_0)$. We can assume that $\varphi \leq 1$. Then
\begin{align*}
 \int_X f(x) \Delta \varphi(x) dV= \lim_{n \rightarrow \infty} \int_{B_R(x_0)} d_0(\gamma_n x)\Delta \varphi(x) dV.
\end{align*}
So by the Laplacian comparison theorem (see Theorem~\cite[Theorem 2.2]{Zhu1997})
\begin{align*}
 \int_X f(x) \Delta \varphi(x) dV \leq (d-1) \int_X \varphi(x) dV. 
\end{align*}

By Lemma~\ref{lem:construct_seqn}
\begin{align*}
\lim_{n \rightarrow \infty} & \int_X d_0(\gamma_n x) \Delta (\phi_n-\varphi)(x) dV = (d-1)\Vol(B_R(x_0)) - \int_X f(x) \Delta \varphi(x) dV.
\end{align*}
Since $\varphi \leq 1$ and is compactly supported in $B_R(x_0)$, the function $\phi_n - \varphi$ is non-negative for large $n$ and so by the Laplacian comparison theorem 
\begin{align*}
\lim_{n \rightarrow \infty} & \int_X d_0(\gamma_n x) \Delta (\phi_n-\varphi)(x) dV \leq (d-1)\lim_{n \rightarrow \infty} \int_X (\phi_n-\varphi) dV \\
& = (d-1)\Vol(B_R(x_0)) - (d-1)\int_X \varphi(x) dV
\end{align*}
Thus
\begin{align*}
\int_X f(x) \Delta \varphi(x) dV \geq (d-1)\int_X \varphi(x) dV.
\end{align*}
Hence $\Delta f \equiv d-1$ on $B_R(x_0)$.

Finally, by construction $f$ is the restriction of some Busemann function to $B_R(x_0)$ and so $\norm{\nabla f}\equiv 1$ on $B_R(x_0)$ by Lemma 1 part (1) in~\cite{LW2010}.
\end{proof}

Now we fix a sequence $R_n \rightarrow \infty$ and repeat the above argument to obtain functions $h_n : B_{R_n}(x_0) \rightarrow \Rb$ which satisfy $\norm{\nabla h_n} \equiv 1$ and $\Delta h_n \equiv d-1$ on $B_{R_n}(x_0)$. Since each $h_n$ is 1-Lipschitz and $h_n(x_0)=0$, we can pass to a subsequence so that $h_n \rightarrow h$ where $h : X \rightarrow \Rb$ satisfies $\norm{\nabla h} \equiv 1$ and $\Delta h \equiv d-1$. Then $X$  is isometric to real hyperbolic space by Lemma~\ref{lem:LW}.

\section{Eigenvalues of certain subgroups}\label{sec:eigenvalues}

\begin{proposition}
Suppose $d \geq 3$, $\Lambda \leq \PSL_{d}(\Rb)$ is a discrete subgroup, and $G \leq \PSL_d(\Rb)$ is the Zariski closure of $\Lambda$. If 
\begin{enumerate}
\item $G = \PSL_d(\Rb)$, 
\item $d = 2n>2$ and $G$ is conjugate to $\PSp(2n,\Rb)$, 
\item $d = 2n+1 > 3$ and $G$ is conjugate to $\PSO(n,n+1)$, or
\item $d=7$ and $G$ is conjugate to the standard realization of $G_2$ in $\PSL_{7}(\Rb)$,
\end{enumerate}
then there exists some $\gamma \in \Lambda$ such that 
\begin{align*}
\frac{\lambda_1(\gamma)}{\lambda_2(\gamma)} \neq \frac{\lambda_2(\gamma)}{\lambda_3(\gamma)}.
\end{align*}
\end{proposition}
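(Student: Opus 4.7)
The plan is to reduce the claim to a single linear condition on the Cartan subalgebra of $G$ and then invoke the main theorem of Benoist~\cite{B1997}, which says that for a Zariski dense subgroup $\Lambda$ of a connected semisimple real Lie group $G$, the set of Jordan projections $\{\mu(\gamma) : \gamma \in \Lambda\} \subset \mathfrak{a}^+$ accumulates on a dense subset of an open sub-cone of the positive Weyl chamber $\mathfrak{a}^+$. (If $G$ is not connected one first passes to $G^\circ$: the intersection $\Lambda \cap G^\circ$ has finite index in $\Lambda$ and is still Zariski dense in $G^\circ$, which suffices.)

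Writing everything in logarithmic coordinates on the Cartan, the equation $\lambda_1(\gamma)/\lambda_2(\gamma) = \lambda_2(\gamma)/\lambda_3(\gamma)$ is equivalent to the \emph{single} linear relation $2\log\lambda_2 = \log\lambda_1 + \log\lambda_3$, so it cuts out a hyperplane $H \subset \mathfrak{a}$. Thus it is enough to show, case by case, that this relation does not hold identically on $\mathfrak{a}^+$, i.e.\ that $\mathfrak{a}^+ \setminus H$ is a non-empty open subset; by Benoist's theorem the Jordan projection of some $\gamma \in \Lambda$ then falls in this open set, giving the desired element.

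I would verify the four cases in turn. For case (1), with $G = \PSL_d(\Rb)$ the functions $\log\lambda_1, \log\lambda_2, \log\lambda_3$ are part of a coordinate system on $\mathfrak{a}$, so the relation is obviously non-trivial. For cases (2) and (3), the eigenvalues on $\Rb^d$ are $(\lambda_1, \dots, \lambda_n, \lambda_n^{-1}, \dots, \lambda_1^{-1})$ for $\PSp(2n,\Rb)$ with $n \geq 2$, and $(\lambda_1, \dots, \lambda_n, 1, \lambda_n^{-1}, \dots, \lambda_1^{-1})$ for $\PSO(n,n+1)$ with $n \geq 2$; when $n \geq 3$ the top three logarithms are independent on $\mathfrak{a}^+$, and when $n = 2$ the relation reduces to $3\log\lambda_2 = \log\lambda_1$ (symplectic) or $2\log\lambda_2 = \log\lambda_1$ (orthogonal), each a non-trivial linear condition on the $2$-dimensional Cartan.

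The only case requiring any care is (4). The standard $7$-dimensional representation of $G_2$ has weight set $\{0\} \cup \{\pm\alpha, \pm(\alpha+\beta), \pm(2\alpha+\beta)\}$, where $\alpha$ is the short simple root and $\beta$ the long one. On the positive Weyl chamber the three largest logarithmic eigenvalues are $(2\alpha+\beta)(H), (\alpha+\beta)(H), \alpha(H)$, so the obstruction evaluates to
\[
2(\alpha+\beta)(H) - (2\alpha+\beta)(H) - \alpha(H) = \beta(H) - \alpha(H),
\]
which vanishes only on the hyperplane $\alpha = \beta$ in the $2$-dimensional Cartan. This is a proper subset of $\mathfrak{a}^+$, so Benoist's theorem produces an element $\gamma \in \Lambda$ with $\beta(\mu(\gamma)) \neq \alpha(\mu(\gamma))$, and we are done. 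The main (quite mild) obstacle is simply ordering the weights correctly on $\mathfrak{a}^+$ in the $G_2$ case; the remaining verifications are purely bookkeeping.
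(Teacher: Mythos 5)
Your proposal is correct and follows essentially the same route as the paper: both invoke the main theorem of~\cite{B1997} to reduce the problem to showing that the relation $\lambda_1\lambda_3=\lambda_2^2$ is not identically satisfied on $G$ (equivalently, that the corresponding linear functional is not identically zero on the positive Weyl chamber), and then verify this case by case. The only difference is cosmetic: in case (4) the paper exhibits an explicit one-parameter family in the split-octonion realization of $G_2$ with eigenvalues $e^{\pm t}, e^{\pm s}, e^{\pm(s+t)}, 1$, whereas you read the same conclusion off the weight set $\{0,\pm\alpha,\pm(\alpha+\beta),\pm(2\alpha+\beta)\}$ of the $7$-dimensional representation; your weight ordering and the resulting obstruction $\beta-\alpha$ agree with the paper's condition $s\neq t-s$.
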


\begin{proof}
By conjugating, we can assume that either $G=\PSL_d(\Rb)$, $d = 2n>2$ and $G=\PSp(2n,\Rb)$, $d = 2n+1 > 3$ and $G=\PSO(n,n+1)$, or $d=7$ and $G$ coincides with the standard realization of $G_2$ in $\PSL_{7}(\Rb)$.

By the main theorem in~\cite{B1997} it is enough to find some element $g \in G$ such that 
\begin{align*}
\frac{\lambda_1(g)}{\lambda_2(g)} \neq \frac{\lambda_2(g)}{\lambda_3(g)}.
\end{align*}
This is clearly possible when $G = \PSL_d(\Rb)$ and $d \geq 3$.

Consider the case when $d = 2n>2$ and $G = \PSp(2n,\Rb)$. Then for any $\sigma_1, \dots, \sigma_n \in \Rb$, $G$ contains the matrix 
\begin{align*}
{\small \begin{bmatrix} 
e^{\sigma_1} & & & & & \\
& \ddots & & & & \\
& & e^{\sigma_n} & & & \\
& & & e^{-\sigma_1} & & \\
& & & & \ddots & \\
& & & & & e^{-\sigma_n}
\end{bmatrix}.}
\end{align*}
So picking $\sigma_1 > \sigma_2  > \dots  > \sigma_n > 0$ with $\sigma_1-\sigma_2 \neq \sigma_2-\sigma_3$ does the job. 

Consider the case when $d=2n+1 > 3$ and $G = \PSO(n,n+1)$.  Then for any $\sigma_1, \dots, \sigma_n \in \Rb$, $G$ contains a matrix $g$ which is conjugate to the block diagonal matrix 
\begin{align*}
 {\small \begin{bmatrix} 
\cosh(\sigma_1) & \sinh(\sigma_1) & & & & \\
\sinh(\sigma_1) &  \cosh(\sigma_1) & & & &  \\
& & \ddots & & & \\
& & & \cosh(\sigma_n) & \sinh(\sigma_n) &  \\
& & & \sinh(\sigma_n) &  \cosh(\sigma_n) &  \\
& & & & & 1
\end{bmatrix}.}
\end{align*}
Notice that this matrix has eigenvalues $e^{\sigma_1}, e^{-\sigma_1}, \dots, e^{\sigma_n}, e^{-\sigma_n}, 1$. So picking $\sigma_1 > \sigma_2  > \dots  > \sigma_n > 0$ with $\sigma_1-\sigma_2 \neq \sigma_2-\sigma_3$ does the job when $n \geq 3$ and picking $\sigma_1 > \sigma_2 > 0$ with $\sigma_1 - \sigma_2 \neq \sigma_2$ does the job when $n=2$. 

Finally consider the case when $d=7$ and $G$ coincides with the standard realization of $G_2$ in $\PSL_{7}(\Rb)$. The standard realization of $G_2$ in $\PSL_7(\Rb)$ can be described as follows. First let 
\begin{align*}
\Hb = \{ a_1 + a_2 i + a_3 j + a_4 k : a_1, \dots, a_4 \in \Rb\}
\end{align*}
be the quaternions. Then define the split Cayley algebra $\mathfrak{C}^{\prime} = \Hb \oplus \Hb e$ with multiplication 
\begin{align*}
(a+be)(c+de) = (ac+\overline{d}b)+(b\overline{c}+da)e.
\end{align*}
This is an 8-dimensional algebra over $\Rb$ with conjugation 
\begin{align*}
\overline{(a+be)} = \overline{a} - be.
\end{align*}
Next let $\Gb_2$ be the $\Rb$-linear transformations of $\mathfrak{C}^{\prime}$ which satisfy
\begin{align*}
\alpha(xy) = \alpha(x)\alpha(y).
\end{align*}
Then for $\alpha \in \Gb_2$ and $x \in \mathfrak{C}^{\prime}$ it is straightforward to verify that $\alpha(\overline{x}) = \overline{\alpha(x)}$ (see for instance~\cite[Proposition 2]{Y1977}). So $\Gb_2$ preserves the subspace 
\begin{align*}
\Spanset_{\Rb} \{ i, j, k, e, ie, je, ke\} 
\end{align*}
of purely imaginary elements. Since $\alpha(1) = 1$ for every $\alpha \in  \Gb_2$,  if we identify $ i, j, k, e, ie, je, ke$ with  $e_1, \dots, e_7$ the standard basis of $\Rb^7$ we obtain an embedding $\Gb_2 \hookrightarrow \PSL_7(\Rb)$. 

Now if $t,s \in \Rb$ a tedious calculation shows that 
\begin{align*}
{\small \begin{bmatrix} 
\cosh(t) & 0 & 0 & 0 & \sinh(t) & 0 & 0 \\
0 & \cosh(s) & 0 & 0 & 0 & \sinh(s) & 0 \\
0 & 0 & \cosh(s+t) & 0 & 0 & 0 & \sinh(s+t) \\
0 & 0 & 0 & 1 & 0 & 0 & 0 \\
\sinh(t) & 0 & 0 & 0 & \cosh(t) & 0 & 0 \\
0 & \sinh(s) & 0 & 0 & 0 & \cosh(s) & 0 \\
0 & 0 & \sinh(s+t) & 0 & 0 & 0 & \cosh(s+t) 
\end{bmatrix}}
\end{align*}
is contained in the image of this embedding. This matrix has eigenvalues 
\begin{align*}
e^t, e^{-t}, e^s, e^{-s}, e^{s+t}, e^{-(s+t)}, 1.
\end{align*}
 So picking $t > s > 0$ with $s \neq t -s$ does the job. 
\end{proof}

\section{Facts about linear transformations}\label{app:linear_maps}

In this section we describe some basic properties of the action of $\PGL_d(\Rb)$ on $\Pb(\Rb^d)$. These facts are used in Section~\ref{sec:reg_rigid} and are all simple consequences of Gelfand's formula. In this section we let $\norm{v}$ denote the Euclidean norm of a vector $v \in \Rb^d$. 

For a non-zero $d$-by-$d$ real matrix $A$ let 
\begin{align*}
\lambda_d(A) \leq \dots \leq \lambda_1(A)
\end{align*}
to be the absolute values of the eigenvalues of $A$ (counting multiplicity) and let 
\begin{align*}
\sigma_d(A) \leq \dots \leq \sigma_1(A)
\end{align*}
denote the singular values of $A$. 

\begin{theorem}[Gelfand's Formula]\label{thm:Gelfand} Suppose that $A$ is a non-zero $d$-by-$d$ real matrix. Then 
\begin{align*}
\lambda_1(A) = \lim_{n \rightarrow \infty} \sigma_1(A^n)^{1/n}.
\end{align*}
Moreover, there exists a proper subspace $V \subset \Rb^d$ such that 
\begin{align*}
\log \lambda_1(A) = \lim_{n \rightarrow \infty} \frac{1}{n} \log \norm{A^nv }
\end{align*}
for all $v \in \Rb^d \setminus V$. 
\end{theorem}

Since the ``moreover'' part is usually not included in statements of Gelfand's formula we sketch the proof. 

\begin{proof}[Proof of the ``Moreover'' part] Notice that the first part of Gelfand's formula implies that
\begin{align*}
\limsup_{n \rightarrow \infty} \frac{1}{n} \log \norm{A^nv } \leq \limsup_{n \rightarrow \infty} \frac{1}{n} \log \left(\sigma_1(A^n)\norm{v }\right) = \log \lambda_1(A)
\end{align*}
for nonzero $v \in \Rb^d$. So we just have to show that there exists a proper subspace $V \subset \Rb^d$ such that 
\begin{align*}
\liminf_{n \rightarrow \infty} \frac{1}{n} \log \norm{A^nv }\geq \log \lambda_1(A).
\end{align*}
for all $v \in \Rb^d \setminus V$. 

Using the Jordan decomposition we can write $A$ as a product of three commuting matrices $A = E S U$ where $E$ is elliptic, $S$ is real diagonalizable, and $U$ is unipotent. Let $\chi_1, \dots, \chi_k$ be the eigenvalues of $S$ (not counting multiplicity) and let $\Rb^d = \oplus_{i=1}^k V_i$ denote the corresponding eigenspace decomposition. Then let
\begin{align*}
V = \oplus \{ V_i : \abs{\chi_i} \neq \lambda_1(S)\}.
\end{align*}
Also, define a new norm $\norm{\cdot}_*$ on $\Rb^d$ by 
\begin{align*}
\norm{w}_* = \sqrt{\sum_{i=1}^k \norm{v_i}^2}
\end{align*}
where $w = \sum_{i=1}^k w_i$ and $w_i \in V_i$. 

Since $E$ is elliptic, there exists $C > 1$ such that:
\begin{align*}
\frac{1}{C} \norm{w} \leq \norm{E^nw} \leq C \norm{w}
\end{align*}
for all $n \in \Zb$ and $w \in \Rb^d$. Further, since $U^{-1}$ is unipotent, Gelfand's formula implies that
\begin{align*}
\lim_{n \rightarrow \infty} \frac{1}{n} \log \sigma_1(U^{-n}) =0
\end{align*}
Then if $v \in \Rb^d \setminus V$ we have 
\begin{align*}
\liminf_{n \rightarrow \infty} & \frac{1}{n} \log \norm{A^nv } = \liminf_{n \rightarrow \infty} \frac{1}{n} \log \norm{E^nS^nU^nv } \\
& = \liminf_{n \rightarrow \infty} \frac{1}{n} \log \norm{U^nS^nv } \geq \liminf_{n \rightarrow \infty} \frac{1}{n} \log \left( \frac{1}{\sigma_1(U^{-n}) }\norm{S^nv } \right)\\
&= \liminf_{n \rightarrow \infty} \frac{1}{n} \log \norm{S^n v}.
\end{align*}
Then, by the equivalence of finite dimensional norms, 
\begin{align*}
\liminf_{n \rightarrow \infty} \frac{1}{n} \log \norm{S^n v} = \liminf_{n \rightarrow \infty} \frac{1}{n} \log \norm{S^n v}_* = \liminf_{n \rightarrow \infty} \frac{1}{n} \log \left( \lambda_1(A)^n \norm{v} \right)= \log \lambda_1(A).
\end{align*}

\end{proof}

For the rest of the section, let $d_{\Pb}$ be a distance on $\Pb(\Rb^{d})$ induced by a Riemannian metric. We will use the following estimate.

\begin{observation}\label{obs:biLip_affine_chart} Suppose $\mathbb{A} \subset \Pb(\Rb^d)$ is an affine chart and $\iota : \Rb^{d-1} \rightarrow \mathbb{A}$ is an affine automorphism. Then for any compact set $K \subset \Rb^{d-1}$ there exists $C>1$ such that 
\begin{align*}
\frac{1}{C} \norm{v-w} \leq d_{\Pb}(\iota(v), \iota(w)) \leq C \norm{v-w}
\end{align*}
for all $v,w \in K$. 
\end{observation}

\begin{proof} This follows from a compactness argument. \end{proof}

\begin{observation}\label{obs:dynamics_app} Suppose $g \in \PGL_{d}(\Rb)$ is proximal and $\ell_g^+ \in \Pb(\Rb^d)$ is the eigenline of $g$ corresponding to the eigenvalue of largest absolute value.  If $v \neq \ell^+_g$ and $g^n v \rightarrow \ell^+_g$, then
\begin{align*}
\log\frac{\lambda_2(g)}{\lambda_1(g)} \geq \limsup_{n \rightarrow \infty} \frac{1}{n} \log d_{\Pb}\Big(g^n v, \ell^+_g \Big).
\end{align*}
Moreover, there exists a proper subspace $V \subset \Pb(\Rb^d)$ such that: if $v \in \Pb(\Rb^{d}) \setminus V$ and
$g^n v \rightarrow \ell^+_g$, then
\begin{align*}
\log\frac{\lambda_2(g)}{\lambda_1(g)} = \lim_{n \rightarrow \infty} \frac{1}{n} \log d_{\Pb}\Big(g^n v, \ell^+_g \Big).
\end{align*}
\end{observation}

\begin{proof} By changing coordinates we can assume that 
\begin{align*}
g = \begin{bmatrix}
\lambda & 0 \\ 
0 & A 
\end{bmatrix},
\end{align*} 
$\ell_g^+ = [1:0:\dots:0]$, $\abs{\lambda} = \lambda_1(g)$, and $\lambda_1(A) = \lambda_2(g)$. 

Through out the proof we will use the notation $[v_1 : v_2] \in \Pb(\Rb^d)$ where $v_1 \in \Rb$ and $v_2\in \Rb^{d-1}$. With this notation 
\begin{align}
\label{eq:apply_g}
g^n \cdot [v_1 : v_2] = [ \lambda^n v_1 : A^nv_2 ] = \left[ v_1 : \frac{A^n}{\lambda^n} v_2 \right].
\end{align}
By Gelfand's formula $\frac{A^n}{\lambda^n} \rightarrow 0$ and so $g^n \cdot v \rightarrow \ell_g^+$ if and only if $v_1 \neq 0$. 

Next we fix a small neighborhood $U$ of $\ell_g^+$ such that 
\begin{align*}
\overline{U} \subset \{ [v_1 : v_2] : v_1 \neq 0\}.
\end{align*}
By Observation~\ref{obs:biLip_affine_chart} there exists $C > 1$ such that if $v=[v_1 : v_2]$ and $w = [w_1 : w_2] $ are in $U$, then 
\begin{align}
\label{eq:bi_lip}
\frac{1}{C} \norm{ v_2/v_1 - w_2/w_1} \leq d_{\Pb}(v,w) \leq C \norm{ v_2/v_1 - w_2/w_1}.
\end{align}
So if $v=[v_1 : v_2] \in \Pb(\Rb^d)$ and $g^n v \rightarrow \ell_g^+$, then by Equations~\eqref{eq:apply_g} and~\eqref{eq:bi_lip} we have
\begin{align*}
\limsup_{n \rightarrow \infty} & \frac{1}{n} \log d_{\Pb}\Big(g^n v, \ell^+_g \Big) =  \limsup_{n \rightarrow \infty} \frac{1}{n} \log\left( \frac{1}{\abs{\lambda}^n} \norm{A^n v_2} \right)\\
& \leq \limsup_{n \rightarrow \infty} \frac{1}{n} \log\left( \frac{1}{\abs{\lambda}^n} \sigma_1(A^n) \right)= \log\frac{\lambda_2(g)}{\lambda_1(g)}.
\end{align*}

Using the ``moreover'' part of Gelfand's formula, there exists a proper subspace $V_0 \subset \Rb^{d-1}$ such that 
\begin{align*}
\log \lambda_1(A) = \lim_{n \rightarrow \infty} \frac{1}{n} \log \norm{A^nv }
\end{align*}
for all $v \in \Rb^{d-1} \setminus V_0$. Then let 
\begin{align*}
V = \{ [v_1 : v_2 ] \in \Pb(\Rb^d) : v_2 \in V_0\}.
\end{align*}
Then if $v=[v_1:v_2] \in  \Pb(\Rb^d)  \setminus V$ and  $g^n v \rightarrow \ell_g^+$, Equations~\eqref{eq:apply_g} and~\eqref{eq:bi_lip} imply that
 \begin{align*}
\lim_{n \rightarrow \infty} & \frac{1}{n} \log d_{\Pb}\Big(g^n v, \ell^+_g \Big) =  \lim_{n \rightarrow \infty} \frac{1}{n} \log \left(\frac{1}{\abs{\lambda}^n} \norm{A^n v_2} \right)= \log\frac{\lambda_2(g)}{\lambda_1(g)}. \qedhere
\end{align*}
\end{proof}

\begin{observation}\label{obs:span_gen_eigenvectors} Suppose that $A \in \GL_d(\Rb)$ and there exists $n_k \rightarrow \infty$ such that 
\begin{align*}
T = \lim_{k \rightarrow \infty} \frac{1}{\norm{A^{n_k}}} A^{n_k}
\end{align*}
in $\End(\Rb^d)$. If $v \in { \rm Im}(T)$, then there exists generalized eigenvectors $v_1, \dots, v_m \in \Cb^d$ of $A$ such that 
\begin{align*}
v= v_1 + \dots + v_m
\end{align*}
and the eigenvalues corresponding to $v_1, \dots, v_m$ all have absolute value $\lambda_1(A)$. 
\end{observation}

\begin{proof} By changing coordinates we can assume that 
\begin{align*}
A = \begin{pmatrix} A_1 & 0 \\ 0 & A_2 \end{pmatrix}
\end{align*}
where $A_1 \in \GL_{k}(\Rb)$, $A_2 \in \GL_{d-k}(\Rb)$, every eigenvalue of $A_1$ has absolute value $\lambda_1(A)$, and every eigenvalue of $A_2$ has absolute value strictly less than $\lambda_1(A)$. Then every $v \in \Spanset\{ e_1,\dots, e_k\}$ can be written as a linear combination of generalized eigenvectors in $\Cb^d$ whose corresponding eigenvalues have absolute value $\lambda_1(A)$. Further by Gelfand's formula
\begin{align*}
0=\lim_{k \rightarrow \infty} \frac{1}{\norm{A^{n_k}}} A_2^{n_k}.
\end{align*}
and so
\begin{align*}
T = \begin{pmatrix} T_1 & 0 \\ 0 & 0 \end{pmatrix}
\end{align*}
for some $k$-by-$k$ matrix $T_1$. 
\end{proof}

\begin{observation}\label{obs:no_proximal_same_speed} Suppose that $g \in \GL_d(\Rb)$, $\lambda_1(g) = \lambda_2(g)$, and $v_0 \in \Rb^d$ is an eigenvector of $g$ whose eigenvalue has absolute value $\lambda_1(g)$. Then there exists a proper subspace $V \subset \Pb(\Rb^d)$ such that: 
 \begin{align*}
0=\lim_{n \rightarrow \infty} & \frac{1}{n} \log d_{\Pb}\Big(g^n v, [v_0] \Big) 
\end{align*}
for every $v \in \Pb(\Rb^d) \setminus V$. 
\end{observation}

\begin{proof}Suppose that $gv_0 = \lambda v_0$. Let $e_1,\dots, e_m$ be the standard basis of $\Rb^d$. By making a change of coordinates we can assume that $v_0 = e_1$ and
\begin{align*}
 g = \begin{pmatrix} J & 0 \\ 0 & A \end{pmatrix}
\end{align*}
where $J$ is a $m$-by-$m$ upper triangular matrix with $\lambda, \dots, \lambda$ down the diagonal. 

By Observation~\ref{obs:biLip_affine_chart}, we can fix a small neighborhood $U$ of $[e_1]$ and $C > 1$ such that: if $w=[w_1:\dots:w_d] \in U$, then
\begin{align}
\label{eq:dist_est_C5}
\frac{1}{C} \norm{ (w_2/w_1,\dots, w_d/w_1) } \leq d_{\Pb}([e_1],w) \leq C \norm{ (w_2/w_1,\dots, w_d/w_1) }.
\end{align}
Then fix $\delta > 0$ such that: if $w \notin U$, then $d_{\Pb}(w,[e_1]) \geq \delta$. 

We consider two cases: \\

\noindent \textbf{Case 1:} $m> 1$. Since $J$ is upper triangular with $\lambda, \dots, \lambda$ on the diagonal, 
\begin{align*}
ge_i \in \lambda v_i + \Spanset\{ e_1,\dots, e_{i-1}\} \text{ for } i = 1,\dots, m.
\end{align*}
Let
\begin{align*}
V = [\Spanset\{e_1,\dots, e_{m-1}, e_{m+1}, \dots, e_d \}].
\end{align*}

Suppose that $v=\prescript{t}{}{(v_1,\dots,v_d)} \in \Rb^d$ and $[v] \notin V$. Then $v_m \neq 0$. Let 
\begin{align*}
\prescript{t}{}{\left(v_1^{(n)}, \dots, v_d^{(n)}\right)} := g^n v.
\end{align*}
Then 
\begin{align*}
 \abs{v_1^{(n)}} \leq \norm{g^n v} \leq \sigma_1(g^n)\norm{v}
\end{align*}
and $v_m^{(n)} = \lambda^n v_m$. Since $d_{\Pb}$ has finite diameter we see that 
 \begin{align*}
0 \geq \limsup_{n \rightarrow \infty} & \frac{1}{n} \log d_{\Pb}\Big(g^n [v], [e_1] \Big).
\end{align*}
If $g^n [v] \notin U$, then 
 \begin{align*}
 \frac{1}{n} & \log d_{\Pb}\Big(g^n [v], [e_1] \Big) \geq \frac{1}{n} \log \delta.
 \end{align*}
And if $g^n [v] \in U$, then by Equation~\eqref{eq:dist_est_C5}
 \begin{align*}
 \frac{1}{n} & \log d_{\Pb}\Big(g^n v, [e_1] \Big) \geq \frac{-1}{n}\log(C) +  \frac{1}{n} \log \abs{ \frac{ v_m^{(n)}}{v_1^{(n)}}} \\
 & \geq \frac{-1}{n}\log(C) +  \frac{1}{n} \log\abs{ \lambda} -\frac{1}{n} \log\sigma_1(g^n)-\frac{1}{n} \log \norm{v}.
\end{align*}
Hence Gelfand's formula implies that 
  \begin{align*}
0 \leq \liminf_{n \rightarrow \infty} & \frac{1}{n} \log d_{\Pb}\Big(g^n [v], [e_1] \Big).
\end{align*}
So 
  \begin{align*}
0= \lim_{n \rightarrow \infty} & \frac{1}{n} \log d_{\Pb}\Big(g^n [v], [e_1] \Big).
\end{align*}

\noindent \textbf{Case 2:} $m = 1$. Then
\begin{align*}
g = \begin{pmatrix} \lambda & 0 \\ 0 & A \end{pmatrix}
\end{align*}
where $A \in \GL_{d-1}(\Rb)$. Since $\lambda_1(g) = \lambda_2(g)$, we see that $\lambda_1(A) = \lambda_1(g)$. By the ``moreover'' part of Gelfand's formula there exists some proper subspace $V_0 \subset \Rb^{d-1}$ such that
\begin{align}
\label{eq:moreover_Gelfand_pr_lemC5}
\log \lambda_1(A) = \lim_{n \rightarrow \infty} \frac{1}{n} \log \norm{A^nv }
\end{align}
for all $v \in \Rb^{d-1} \setminus V_0$. 

We will use the notation $[v_1 : v_2] \in \Pb(\Rb^d)$ where $v_1 \in \Rb$ and $v_2\in \Rb^{d-1}$. With this notation 
\begin{align*}
g^n \cdot [v_1 : v_2] = [ \lambda^n v_1 : A^nv_2 ] = \left[ v_1 : \frac{A^n}{\lambda^n} v_2 \right].
\end{align*}
Then define
\begin{align*}
V = \{ [v_1 : v_2 ] \in \Pb(\Rb^d) : v_2 \in V_0\}.
\end{align*}

Fix some $v \in \Pb(\Rb^d) \setminus V$. Since $d_{\Pb}$ has finite diameter we see that 
 \begin{align*}
0 \geq \limsup_{n \rightarrow \infty} & \frac{1}{n} \log d_{\Pb}\Big(g^n [v], [e_1] \Big).
\end{align*}
If $g^n [v] \notin U$, then 
 \begin{align*}
 \frac{1}{n} & \log d_{\Pb}\Big(g^n [v], [e_1] \Big) \geq \frac{1}{n} \log \delta.
 \end{align*}
And if $g^n [v] \in U$, then by Equation~\eqref{eq:dist_est_C5}
\begin{align*}
\frac{1}{n} \log d_{\Pb}\Big(g^n [v], [e_1] \Big) & \geq - \frac{1}{n} \log C +  \frac{1}{n} \log \norm{ \frac{1}{\lambda^n} A^nv_2}\\
&  = - \frac{1}{n} \log C +- \log \abs{\lambda} + \frac{1}{n} \log \norm{ A^nv_2}.
\end{align*}
So by Equation~\eqref{eq:moreover_Gelfand_pr_lemC5}
\begin{align*}
\liminf_{n \rightarrow \infty} & \frac{1}{n} \log d_{\Pb}\Big(g^n [v], [e_1] \Big) \geq 0.
\end{align*} 
So 
  \begin{align*}
0= \lim_{n \rightarrow \infty} & \frac{1}{n} \log d_{\Pb}\Big(g^n [v], [e_1] \Big).
\end{align*}

\end{proof}

\bibliographystyle{plain}
\bibliography{geom}

\end{document}